\documentclass[11pt,reqno,a4paper]{amsart}
\usepackage[margin=1in]{geometry}
\usepackage{amsmath, amsthm, amssymb}
\usepackage{enumitem}
\usepackage[colorlinks=true, pdfstartview=FitV, linkcolor=blue,citecolor=blue, urlcolor=blue]{hyperref}
\usepackage[usenames,dvipsnames]{xcolor}
\usepackage{subcaption}
\usepackage{mathtools}
\usepackage{bbm}
\usepackage{upgreek}
\usepackage{comment}

\usepackage{dsfont}

%%%%%%%%%%%%%%%%%$ MACROS

%% COLOURS

%%% DERIVATIVES
\def\dd{{\rm d}}

%%% PARAMETERS
\def\eps{\varepsilon}
\def\epsilon{\varepsilon}
\def\e{{\rm e}} 
\def\TS{\textstyle}

%%% SETS
\def\RR {\mathbb{R}}

\def\NN {\mathbb{N}}

%%% OPERATORS
\def\Re{{\rm Re}}
\def\Im{{\rm Im}}
\newcommand{\norm}[1]{\left\lVert #1 \right\rVert}
\newcommand{\jap}[1]{\left\langle #1 \right\rangle}

%%% BOLD SYMBOLS
\def\bu {\boldsymbol{u}}

%%% MATHCAL
\newcommand{\cA}{\mathcal{A}}

\newcommand{\cD}{\mathcal{D}}
\newcommand{\cE}{\mathcal{E}}
\newcommand{\cF}{\mathcal{F}}

\newcommand{\cH}{\mathcal{H}}
\newcommand{\cI}{\mathcal{I}}

\newcommand{\cK}{\mathcal{K}}
\newcommand{\cL}{\mathcal{L}}

\newcommand{\cN}{\mathcal{N}}
\newcommand{\cO}{\mathcal{O}}
\newcommand{\cP}{\mathcal{P}}

\newcommand{\cR}{\mathcal{R}}
\newcommand{\cS}{\mathcal{S}}
\newcommand{\cT}{\mathcal{T}}

\newcommand{\cX}{\mathcal{X}}
\newcommand{\cY}{\mathcal{Y}}
\newcommand{\cZ}{\mathcal{Z}}

%%% MATHSF

\newcommand{\sfw}{\mathsf{w}}
\newcommand{\adv}{\mathsf{adv}}
\newcommand{\diff}{\mathsf{diff}}

%%% MATHRM

\newcommand{\app}{\mathrm{app}}
\newcommand{\BS}{\mathrm{BS}}

\newcommand{\Ein}{\mathrm{Ein}}

\newcommand{\Ker}{\mathrm{Ker}}
\newcommand{\Ran}{\mathrm{Ran}}
\newcommand{\Rey}{\mathrm{Re}}
\newcommand{\mrm}{\mathrm{m}}
\newcommand{\mrM}{\mathrm{M}}
\newcommand{\mrI}{\mathrm{I}}

\newcommand{\err}{\mathrm{r}}

%%% MATHBBM
\newcommand{\1}{\mathbbm{1}}

%%% Dummy variable
\newcommand{\ww}{w}

%%% THEOREMS AND STUFF
%%%%%%%%%%%%%%%%%%%%%%%%%%%%%%%%%%%%%%%%%%%%
\newtheorem{proposition}{Proposition}[section]
\newtheorem{theorem}[proposition]{Theorem}
\newtheorem{corollary}[proposition]{Corollary}
\newtheorem{lemma}[proposition]{Lemma}
\theoremstyle{definition}
\newtheorem{definition}[proposition]{Definition}
\newtheorem{remark}[proposition]{Remark}

\newtheorem{hypotheses}[proposition]{Hypotheses}
\theoremstyle{definition}

\numberwithin{equation}{section}
%%%%%%%%%%%%%%%%%%%%%%%%%%%%%%%%%%%%%%%%%%%%

%%%%%%%%%%%%%%%%%%%%%%%%%%%%%%%%%%%%%%%%%%%% 

\title{The long way of a viscous vortex dipole}

\author{Michele Dolce}
\address{Institute of Mathematics, EPFL, Station 8, 1015 Lausanne, Switzerland}
\email{michele.dolce@epfl.ch}

\author{Thierry Gallay}
\address{Institut Fourier, Université Grenoble Alpes, CNRS, Institut Universitaire
de France, 38000 Grenoble, France}
\email{thierry.gallay@univ-grenoble-alpes.fr}

\begin{document}

%\maketitle

\begin{abstract}
We consider the evolution of a viscous vortex dipole in $\RR^2$ originating from
a pair of point vortices with opposite circulations.  At high Reynolds number
$\Rey \gg 1$, the dipole can travel a very long way, compared to the distance
between the vortex centers, before being slowed down and eventually destroyed by
diffusion. In this regime we construct an accurate approximation of the solution
in the form of a two-parameter asymptotic expansion involving the aspect ratio
of the dipole and the inverse Reynolds number.  We then show that the exact solution 
of the Navier-Stokes equations remains close to the approximation on a time interval 
of length $\cO(\Re^\sigma)$, where $\sigma < 1$ is arbitrary. This improves 
upon previous results which were essentially restricted to $\sigma = 0$. As an 
application, we provide a rigorous justification of an existing formula which gives 
the leading order correction to the translation speed of the dipole due to finite 
size effects.
\end{abstract}

\maketitle

%%%%%%%%%%%%%%%%%%%%%%%%%%%%%%%%%%%%%%%%%%%%%%%%%%%%%%%%%%%%%%%%%%%%%%%%%%%%%%

\section{Introduction}\label{sec:intro}

Numerical investigations of freely decaying two-dimensional turbulence show that 
the distribution of vorticity tends to concentrate in a relatively small fraction
of the spatial domain, so as to form a collection of coherent vortices which interact 
over a long period of time and grow in size due to diffusion and merging 
\cite{McW84,FoxDav2010}. In contrast, the small scales of the flow correspond to thin 
filaments of vorticity, which may be created during rare events such as vortex mergers.
A precise description of vortex interactions thus appears as a necessary step toward 
a better understanding of the dynamics of two-dimensional flows at high Reynolds 
number \cite{DiVer2002,Meunier2005}. 

As long as the distances between the vortex centers remain substantially
larger than the sizes of the vortex cores, the dynamics of a finite collection of
vortices in $\RR^2$ is well approximated by the {\em point vortex system}: 
\begin{equation}\label{eq:PV}
  z_i'(t) \,=\, \sum_{j \neq i} \frac{\Gamma_j}{2\pi}\,\frac{\bigl(z_i(t) - z_j(t)\bigr)^\perp
  }{|z_i(t) - z_j(t)|^2}\,, \qquad i,j = 1,\dots,N\,,
\end{equation}
where $z_1(t),\dots,z_N(t) \in \RR^2$ denote the positions of the vortex centers
and $\Gamma_1,\dots,\Gamma_N \in \RR$ are the circulations of the corresponding
vortices. The ODE system \eqref{eq:PV}, which was already studied by Helmholtz
and Kirchhoff, can be rigorously derived as an asymptotic model for the
evolution of sharply concentrated solutions of the two-dimensional Euler or
Navier-Stokes equations \cite{MarPul94,Mar98,Gallay2011,CeSeis24}. Note
that \eqref{eq:PV} only makes sense if $z_i(t) \neq z_j(t)$ when $i \neq j$,
but this condition is not always preserved under evolution. Indeed, if $N \ge 3$
and if the circulations $\Gamma_i$ do not have the same sign, there are
examples of solutions of \eqref{eq:PV} for which three vortices collide in
finite time \cite{Gr77}. However, the set of initial data leading to such
collisions is negligible in the sense of Lebesgue's measure \cite{MarPul94}. 

To formulate more precisely the relation between the point vortex system
\eqref{eq:PV} and the fundamental equations of fluid motion, we start from 
the two-dimensional vorticity equation
\begin{equation}\label{eq:NS}
  \partial_t\omega(x,t) + \bu(x,t)\cdot\nabla \omega(x,t) \,=\, \nu
  \Delta \omega(x,t)\,, \qquad x\in \RR^2\,,\quad t>0\,,
\end{equation}
where $\bu(x,t) = \bigl(u_1(x,t),u_2(x,t)\bigr)\in \RR^2$ denotes the velocity of the
fluid at point $x \in \RR^2$ and time $t > 0$, and $\omega := \partial_1 u_2 -
\partial_2 u_1$ is the associated vorticity. The constant parameter $\nu > 0$
represents the kinematic viscosity of the fluid. The velocity field $\bu$ is
divergence-free, and can be expressed in terms of the vorticity $\omega$ by
the {\em Biot-Savart formula} $\bu(\cdot,t) = \BS[\omega(\cdot,t)]$, where
\begin{equation}\label{def:BS}
  \BS[\omega](x) \,:=\, \frac{1}{2\pi}\int_{\RR^2} \frac{(x-y)^\perp}{
  |x-y|^2}\,\omega(y)\,\dd y\,, \qquad x \in \RR^2\,.
\end{equation}
As in \eqref{eq:PV}, if $x = (x_1,x_2) \in \RR^2$, we denote $x^\perp = (-x_2,x_1)$
and $|x|^2 = x_1^2 + x_2^2$. The most important conserved quantities for
the dynamics of \eqref{eq:NS} are the {\em mean} $\mrM[\omega]$ and the
{\em linear momentum} $\mrm[\omega] = (\mrm_1[\omega],\mrm_2[\omega])$ defined by
\begin{equation}\label{def:mean-mom}
  \mrM[\omega] \,=\, \int_{\RR^2}\omega(x)\,\dd x\,, \qquad 
  \mrm_i[\omega] \,=\, \int_{\RR^2} x_i\omega(x)\,\dd x, \qquad i=1,2\,.
\end{equation} 

It is known that the Cauchy problem for the evolution equation \eqref{eq:NS} is
globally well-posed if the initial vorticity $\omega^{in}$ is a finite Radon
measure on $\RR^2$ \cite{GaGa2005,BedMas2014}. In particular, given any integer 
$N \ge 1$, we can consider the discrete measure
\begin{equation}\label{def:initvort}
  \omega^{in} \,=\, \sum_{i = 1}^N \Gamma_i\,\delta(\cdot - x_i)\,,
\end{equation}
which represents a finite collection of point vortices of circulations
$\Gamma_1,\dots,\Gamma_N \in \RR$ located at the positions $x_1,\dots,x_N \in \RR^2$.
Without loss of generality, we assume that $\Gamma_i \neq 0$ and that $x_i \neq x_j$
when $i \neq j$. The point vortex system \eqref{eq:PV} with initial data $z_i(0) = x_i$ 
is then well posed on the time interval $[0,T]$ for some $T > 0$, and we denote
\begin{equation}\label{def:dGam}
  d \,:=\, \min_{t \in [0,T]}\min_{i \neq j}|z_i(t) - z_j(t)| \,>\, 0\,, \qquad
  |\Gamma| \,:=\, |\Gamma_1| + \dots + |\Gamma_N| \,>\, 0\,.
\end{equation}

The following statement is the starting point of our analysis.

\begin{theorem}\label{thm:gal11} {\bf \cite{Gallay2011}}
Assume that the point vortex system \eqref{eq:PV} with circulations
$\Gamma_i$ and initial positions $x_i$ is well posed on the time
interval $[0,T]$. There exists a constant $C_0 > 0$ such that the unique
solution $\omega^\nu$ of the vorticity equation \eqref{eq:NS} with initial
data \eqref{def:initvort} satisfies
\begin{equation}\label{eq:Nvortconv}
  \frac{1}{|\Gamma|}\int_{\RR^2} \Bigl|\,\omega^\nu(x,t) \,-\, \sum_{i=1}^N
  \frac{\Gamma_i}{4\pi\nu t}\,\e^{-\frac{|x - z_i(t)|^2}{4\nu t}}\Bigr|\,\dd x \,\le\,
  C_0\,\frac{\nu t}{d^2}\,, \qquad \text{for all}~\, t \in (0,T)\,,
\end{equation}
where $z_1(t)\,\dots,z_N(t)$ is the solution of \eqref{eq:PV} such that
$z_i(0) = x_i$ for $i = 1,\dots,N$.
\end{theorem}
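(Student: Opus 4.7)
My plan is to construct an explicit approximation $\omega^{app}$ of $\omega^\nu$ by superposing diffusing Gaussians centered at the point-vortex trajectories and to estimate the $L^1$ defect $R := \omega^\nu - \omega^{app}$ via a Gronwall-type argument. Specifically, set
\[
\omega^{app}(x,t) \,:=\, \sum_{i=1}^N G_i(x,t)\,, \qquad G_i(x,t) \,:=\, \frac{\Gamma_i}{4\pi\nu t}\,e^{-|x-z_i(t)|^2/(4\nu t)}\,.
\]
As $t\to 0^+$, $\omega^{app}(\cdot,t)\to\omega^{in}$ in the sense of measures, so $R(\cdot,0)=0$ weakly; moreover, $\mrM[R]\equiv 0$ and $\mrm[R]$ vanishes up to exponentially small corrections, which will be crucial to project $R$ out of the neutral modes of the linearized operator around each Oseen vortex.

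The first step is to show that the residual $\cR := \partial_t \omega^{app} + \BS[\omega^{app}]\cdot\nabla\omega^{app} - \nu\Delta\omega^{app}$ is small. Two cancellations simplify $\cR$ drastically: each $G_i$ satisfies $(\partial_t - \nu\Delta)G_i = -z_i'(t)\cdot\nabla G_i$ because it solves the heat equation in its moving frame, while the radial symmetry of $G_j$ around $z_j(t)$ makes $\BS[G_j]\cdot\nabla G_j$ vanish pointwise. Since $\BS[G_i](z_j(t),t)$ equals the point-vortex contribution $\tfrac{\Gamma_i}{2\pi}(z_j{-}z_i)^\perp/|z_j{-}z_i|^2$ up to errors of order $e^{-d^2/(C\nu t)}$, the ODE \eqref{eq:PV} gives
\[
\cR(x,t) \,=\, \sum_{j}\sum_{i\neq j}\bigl(\BS[G_i](x,t) - \BS[G_i](z_j(t),t)\bigr)\cdot\nabla G_j(x,t) \,+\, (\text{exp.~small})\,.
\]
Since $\nabla G_j$ is localized on the scale $\sqrt{\nu t}$ around $z_j(t)$ whereas $\BS[G_i]$ is analytic on the $d$-ball around $z_j(t)$ with derivative bounds of order $|\Gamma_i|/d^2$, a Taylor expansion in $(x-z_j(t))$ produces an $L^1$-bound on $\cR$ that, after integration in time, yields exactly the factor $\nu t/d^2$.

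The second step is to control the remainder equation
\[
\partial_t R + \bu^\nu\cdot\nabla R + \BS[R]\cdot\nabla\omega^{app} - \nu\Delta R \,=\, -\cR\,, \qquad R(\cdot,0) = 0\,,
\]
with $\bu^\nu := \BS[\omega^{app}+R]$. Transport and diffusion preserve the $L^1$ norm, leaving only the cross-term $\BS[R]\cdot\nabla\omega^{app}$ and the self-interaction $\BS[R]\cdot\nabla R$. The main obstacle is that $\BS$ is unbounded on $L^1$, so a pure $L^1$ Gronwall argument cannot close on its own. I would resolve this by rescaling parabolically around each core via $\xi = (x-z_j(t))/\sqrt{\nu t}$: the linear parabolic operator then becomes the Fokker-Planck operator $\cL$, whose spectrum on the Gaussian-weighted space $L^2(e^{|\xi|^2/4}\,\dd\xi)$ is discrete with a gap on the subspace orthogonal to mass and first moments. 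Thanks to the moment properties of $R$ recorded above, $R$ lies in this gap subspace, which yields a closeable weighted $L^2$ estimate; interpolation with a standard $L^\infty$-Biot--Savart bound on the nonlinear term, followed by a bootstrap exploiting the small-time concentration of $\omega^{app}$, finally delivers the announced rate $C_0|\Gamma|\,\nu t/d^2$.
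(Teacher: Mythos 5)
Your proposal has the right overall architecture (explicit approximation plus a remainder estimate in self-similar variables), but it fails at a critical quantitative point in the first step, and this is precisely where the cited proof of \cite{Gallay2011} does something you omit. After the two cancellations you correctly identify, the leading part of the residual near $z_j$ is $D\BS[G_i](z_j)\,(x-z_j)\cdot\nabla G_j(x)$: the constant term of the Taylor expansion is removed by the point-vortex ODE, but the \emph{linear} term survives. Writing $y=x-z_j$ and $\nabla G_j=-\tfrac{y}{2\nu t}G_j$, this term is $-\tfrac{1}{2\nu t}(Sy\cdot y)G_j$ with $S$ a strain matrix of size $|\Gamma_i|/d^2$; its $L^1$ norm is of order $|\Gamma|^2/d^2$, with \emph{no} factor of $\nu$. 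Integrating in time gives $|\Gamma|^2 t/d^2=|\Gamma|\,t/T_\adv$, which is $O(|\Gamma|)$ on the relevant time scale — off from the claimed bound by a full factor of the Reynolds number $\Gamma/\nu$. Equivalently, in self-similar variables the residual of the pure-Gaussian ansatz is $\eps^2\cH_2+\cO(\eps^3)$ with $\cH_2$ as in \eqref{def:cH2}, but it enters the perturbation equation multiplied by $\delta^{-1}$, so the effective source is $\cO(\eps^2/\delta)=\cO(t/T_\adv)$. Neither the spectral gap of $\cL$ nor the vanishing of $\mrM[R]$ and $\mrm[R]$ helps here: those moment conditions only kill the $n=0,1$ angular modes, whereas $\cH_2$ is a pure second harmonic. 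So your claim that the Taylor expansion ``yields exactly the factor $\nu t/d^2$'' is false, and the Gronwall/spectral-gap argument in your second step cannot close.

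The missing idea — the core of \cite{Gallay2011} and of Section~\ref{sec:appsol} of this paper — is that one must \emph{deform the Gaussian profiles}: add corrections $\eps^2\Omega_2+\eps^3\Omega_3+\dots$ obtained by inverting the skew-adjoint advection operator $\Lambda$ (e.g.\ $\Lambda\Omega_2+\cH_2=0$), up to fourth order in $\eps$ for this theorem, so that the residual of the improved approximation is of high enough order to survive the $\delta^{-1}$ amplification. The rate $\nu t/d^2$ in \eqref{eq:Nvortconv} then comes from the size of these profile deformations themselves (which are $\cO(\eps^2)$ in $L^1$), not from the smallness of the residual of the naive ansatz. A secondary but nontrivial point is that even with the corrected profiles, the $\delta^{-1}$ advection terms in the linearized equation must be neutralized by a carefully chosen (time-dependent, non-Gaussian) weight in the enstrophy functional; your appeal to ``interpolation with an $L^\infty$ Biot--Savart bound'' and a bootstrap does not address this.
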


This result shows that the solution $\omega^\nu$ of the vorticity equation
\eqref{eq:NS} with initial data \eqref{def:initvort} can be approximated by a
superposition of Lamb-Oseen vortices whose centers follow the evolution defined
by the point vortex system. The approximation is accurate as long as the radius
$\sqrt{\nu t}$ of the vortex cores is much smaller than the distance $d$ between
the centers. Importantly, the constant $C_0$ in \eqref{eq:Nvortconv} depends on
the normalized time $|\Gamma| T/d^2$, but not on the viscosity parameter $\nu > 0$. 
In particular, for any fixed $t \in (0,T]$, we can take the limit $\nu \to 0$ in
\eqref{eq:Nvortconv} so as to obtain the weak convergence result
\begin{equation}\label{eq:weakconv}
  \omega^\nu(\cdot,t) ~\xrightharpoonup[\nu \to 0]{\hbox to 8mm{}}~ 
  \sum_{i=1}^N \Gamma_i \,\delta\bigl(\cdot - z_i(t)\bigr)\,, \quad 
  \text{for all}~\, t \in [0,T]\,,
\end{equation}
which provides a natural link between the dynamics of the vorticity
equation \eqref{eq:NS} and the point vortex system \eqref{eq:PV}. 

Interesting questions arise when trying to extend the approximation result in
Theorem~\ref{thm:gal11} to longer time scales. The first one is related to the
collapse of point vortices. Assume for instance that the solution of system
\eqref{eq:PV} is defined on the maximal time interval $[0,T_*)$, with three
vortices collapsing at the origin as $t \to T_*$. In that case estimate
\eqref{eq:Nvortconv} is valid on the time interval $(0,T)$ for any $T < T_*$,
but the constant $C_0 > 0$ blows up as $T \to T_*$ because the distance $d$
converges to zero in this limit. So it is not clear at all if the weak
convergence \eqref{eq:weakconv} holds up to collision time, although this is
certainly a reasonable conjecture.  A fortiori, we do not know if a limiting
procedure as in \eqref{eq:weakconv} can be used to define a continuation of 
the point vortex dynamics after collapse.

In a different direction, one may consider global solutions of the point vortex
system, for which $|z_i(t) - z_j(t)| \ge d > 0$ for all $t \ge 0$ if $i \neq j$.
This is the case, for instance, if $N = 2$ or if the circulations $\Gamma_i$ are 
all positive \cite{MarPul94}.  Here again the approximation result
\eqref{eq:Nvortconv} can be applied on any time interval $(0,T)$, but the
constant $C_0$ depends on $T$ and increases rapidly as $T \to +\infty$. In
particular, it is not clear if the solution $\omega^\nu(x,t)$ is well
approximated by a superposition of Lamb-Oseen vortices as long as
$\nu t \ll d^2$, namely whenever the radius of the vortex cores is 
small compared to the distance $d$ between the centers.

The present paper is a contribution to the study of the latter question in the
particular case where $N = 2$ and $\Gamma_1 + \Gamma_2 = 0$, which corresponds
to a {\em vortex dipole}. To be specific, given $\Gamma > 0$ and $d > 0$, we
consider initial data of the form
\begin{equation}\label{eq:omnuin}
  \omega^{in} \,=\, \Gamma \delta(\cdot -x_\ell) - \Gamma\delta(\cdot-x_\err)\,,
  \qquad \text{where} \quad x_\ell\,=\, \Bigl(-\frac{d}{2}\,,0\Bigr)\,, \quad x_\err \,=\,
  \Bigl(\frac{d}{2}\,,0\Bigr)\,,
\end{equation}
so that $\omega^{in}$ represents a pair of point vortices of circulations
$\pm \Gamma$ separated by a distance $d$. In that situation, the point vortex
dynamics predicts a uniform translation at speed $V = \Gamma/(2\pi d)$ in a
direction normal to the line segment joining the vortex centers.  More
precisely, the positions $z_\ell(t), z_\err(t)$ of the point vortices with 
circulation $\Gamma$, $-\Gamma$ (respectively) are given by
\begin{equation}\label{eq:HKevol}
  z_\ell(t) \,=\, \Bigl(-\frac{d}{2}\,,Z_2(t)\Bigr)\,, \qquad
  z_\err(t) \,=\, \Bigl(\frac{d}{2}\,,Z_2(t)\Bigr)\,,
\end{equation}
where $Z_2(t) = Vt$. Theorem~\ref{thm:gal11} thus asserts that the solution
$\omega^\nu$ of \eqref{eq:NS} with initial data \eqref{eq:omnuin} is well
approximated, on any fixed time interval $(0,T)$, by the viscous vortex dipole
\begin{equation}\label{eq:omapp}
  \omega_\app^\nu(x,t) \,=\, \frac{\Gamma}{4\pi\nu t}\Bigl(\e^{-\frac{|x - z_\ell(t)|^2}{4\nu t}}
  - \e^{-\frac{|x - z_\err(t)|^2}{4\nu t}}\Bigr)\,, \qquad x \in \RR^2\,, \quad t > 0\,,
\end{equation}
provided $\nu T \ll C_0^{-1} d^2$ where the constant $C_0$ depends on $\Gamma T/d^2$. 

To discuss the ultimate validity of such an approximation, we introduce 
as in \cite{GaSring} the following quantities, which play an important role in 
our analysis:  
\begin{equation}\label{def:all}
  \eps(t) \,=\, \frac{\sqrt{\nu t}}{d}\,, \qquad 
  \delta \,=\, \frac{\nu}{\Gamma}\,, \qquad
  T_\adv \,=\, \frac{d^2}{\Gamma}\,, \qquad 
  T_\diff \,=\, \frac{d^2}{\nu}\,.
\end{equation}
Here $\eps(t)$ is the {\em aspect ratio} of the viscous vortex dipole
\eqref{eq:omapp} at time $t > 0$, namely the ratio of the size of the vortex
cores to the distance between the vortex centers. The dimensionless parameter
$\delta$, which measures the relative strength of the viscous and the advection
effects, is usually called the {\em inverse Reynolds number}. In view of
\eqref{eq:HKevol}, the {\em advection time} $T_\adv$ is the time needed for the
vortex dipole to cover the distance $V T_\adv = d/(2\pi)$. Finally, the {\em
diffusion time} $T_\diff$ is the time at which the aspect ratio $\eps(t)$
becomes equal to one, so that the support of both vortices strongly overlap. We
are interested in the high Reynolds number regime $\delta \ll 1$, for which
$T_\adv \ll T_\diff$. In that situation, the viscous vortex dipole can travel
over a very long distance in the vertical direction before being destroyed by
diffusion.

Our main result shows that the solution of \eqref{eq:NS} can be approximated by
a viscous vortex dipole over a long time interval of size $T_\adv  \delta^{-\sigma}$,
where $0 < \sigma < 1$, provided that the position $Z_2(t)$ of the vortex centers
\eqref{eq:HKevol} is chosen in a suitable way. 

\begin{theorem}\label{thm:main} Fix $\sigma \in [0,1)$. There exist positive constants
$C$ and $\delta_0$ such that, for any $\Gamma > 0$, any $d > 0$ and any $\nu > 0$
with $\delta := \nu/\Gamma \le \delta_0$, the solution $\omega^\nu$ of \eqref{eq:NS}
with initial data \eqref{eq:omnuin} satisfies
\begin{equation}\label{eq:2vortconv}
  \frac{1}{\Gamma}\int_{\RR^2} \bigl|\,\omega^\nu(x,t) - \omega_\app^\nu(x,t)\bigr|\,\dd x 
  \,\le\, C\,\frac{\nu t}{d^2}\,, \qquad \text{for all}~\, t \in \bigl(0,T_\adv
  \delta^{-\sigma}\bigr)\,.
\end{equation}
Here $\omega_\app^\nu$ is the viscous vortex dipole \eqref{eq:omapp}, \eqref{eq:HKevol}
and the vertical position $Z_2(t)$ of the vortex centers is a smooth function 
satisfying $Z_2(0) = 0$ and 
\begin{equation}\label{eq:Zpdef}
  Z_2'(t) \,=\, \frac{\Gamma}{2\pi d}\Bigl(1 - 2\pi \alpha\,\epsilon^4  + \cO\bigl(
  \epsilon^5  + \delta^2 \epsilon + \delta \epsilon^2\bigr)\Bigr)\,, 
  \qquad \text{with}~\, \alpha \,\approx\, 22.24\,.
\end{equation}
\end{theorem}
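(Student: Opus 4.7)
The plan is to build a refined approximation $\omega_\app^\nu$ whose residual in the vorticity equation \eqref{eq:NS} is small enough that a perturbative argument controls the error $r := (\omega^\nu - \omega_\app^\nu)/\Gamma$ on the long interval $(0, T_\adv \delta^{-\sigma})$. Theorem~\ref{thm:gal11} already covers times of order $T_\adv$ (essentially $\sigma = 0$); improving to $\sigma < 1$ requires (i) an ansatz more accurate than a naive superposition of two Lamb-Oseen vortices and (ii) careful exploitation of the dissipative structure of the linearized Navier-Stokes vorticity equation around each vortex.

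First, I would pass to co-moving, self-similar variables centred at each vortex, in which the leading-order Lamb-Oseen profile becomes a stationary solution of a Fokker-Planck type equation with well understood spectral properties. Then I would construct $\omega_\app^\nu$ as a formal two-parameter expansion in $\epsilon(t)$ and $\delta$, obtained by Taylor-expanding the velocity field induced by one vortex at the location of the other and iteratively solving the profile equations that appear at each order. The modulated trajectory $Z_2(t)$ is fixed by the solvability condition that the forcing produced at every order be orthogonal to the translation zero-modes $\partial_i G$ of the linearised operator ($G$ being the Gaussian Oseen profile); otherwise these modes would drive secular growth of $r$. This condition determines the $\epsilon^4$ coefficient in \eqref{eq:Zpdef}, with $\alpha$ expressed as an explicit integral against $G$ that evaluates to $\alpha \approx 22.24$.

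Second, I would derive the remainder equation
\begin{equation*}
\partial_t r - \nu\Delta r + \BS[\omega_\app^\nu]\cdot\nabla r + \BS[r]\cdot\nabla\omega_\app^\nu \,=\, -\BS[r]\cdot\nabla r + R,
\end{equation*}
where $R$ is the approximation residual. By construction, after the expansion is truncated at the appropriate order, $\|R\|_{L^1}$ is controlled by the first omitted terms, of size $\epsilon^5 + \delta\epsilon^2 + \delta^2\epsilon$ (consistent with the error in \eqref{eq:Zpdef}). The linearised operator acting on $r$ coincides, up to $\cO(\epsilon)$ perturbations, with the Fokker-Planck linearisation about each Lamb-Oseen vortex in self-similar variables, which has a spectral gap on the orthogonal complement of the mass and translation eigenmodes. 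I would then close a weighted $L^2$ energy estimate (Gaussian-weighted around each vortex centre), using the modulation in Step~1 to keep $r$ inside the good spectral subspace, and a Gronwall-type bootstrap to obtain $\|r\|_{L^1} \lesssim \epsilon(t)^2$, which is exactly the bound \eqref{eq:2vortconv}.

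The main obstacle is ensuring that the dissipative structure of the linearisation --- classical for a single stationary Lamb-Oseen vortex --- survives when two counter-rotating vortices are coupled and the background evolves on the advective time scale. The operator fails to be autonomous in co-moving self-similar coordinates because the intervortex distance $d$ is fixed while $\sqrt{\nu t}$ grows, so the spectral gap provides only a time-dependent effective decay. Closing the bootstrap on $[0, T_\adv\delta^{-\sigma}]$ requires balancing this decay against the cumulative effect of the residual and of the coupling between the two opposite-sign vortices; since at the endpoint $\epsilon^2 \sim \delta^{1-\sigma}$, the expansion in Step~1 must be pushed to a sufficiently high order, growing as $\sigma \to 1$, so that the improved smallness of $R$ beats the long time horizon.
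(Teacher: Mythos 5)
Your construction of the approximate solution is essentially the one in the paper: self-similar variables centred at the vortices, a two-parameter expansion in $\epsilon$ and $\delta$ pushed to an order $M$ that grows as $\sigma\to 1$, and the trajectory $Z_2(t)$ fixed at each order by orthogonality to the translation modes $\partial_iG$ (equivalently, vanishing of the first moments), which is indeed how the $\epsilon^4$ coefficient $\alpha$ arises. The genuine gap is in your stability step. A Gaussian-weighted $L^2$ energy estimate combined with the Fokker--Planck spectral gap and Gr\"onwall is precisely the method of \cite{Gallay2011}, and it only reaches times of order $T_\adv$ (i.e.\ $\sigma=0$). The obstruction is quantitative: in rescaled variables the linearization contains advection terms with prefactor $\delta^{-1}$. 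The part coming from the Gaussian profile, $\delta^{-1}\Lambda$, is skew-adjoint in the Gaussian-weighted space and drops out of the energy identity, but the corrections $\delta^{-1}\{\Psi_\app-\Psi_0,\cdot\}$ and $\delta^{-1}\{\Delta^{-1}\cdot,\Omega_\app-\Omega_0\}$ are \emph{not} skew-adjoint there and have relative size $\delta^{-1}\epsilon^2 = t/T_\adv$, which equals $\delta^{-\sigma}\gg 1$ at the endpoint of the time interval. They are not $\cO(\epsilon)$ perturbations as you assert, they are not controlled by the $\cO(1)$ spectral gap of $\cL$, and no Gr\"onwall bootstrap can absorb them. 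Keeping $r$ orthogonal to the finitely many bad eigenmodes does not help, since these terms act on the whole space.

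What is missing is a replacement for the Gaussian weight that neutralizes these large advection terms. The paper does this via Arnold's variational principle: since the Euler part $\Omega_\app^E$ of the approximate solution satisfies an approximate functional relationship $\Phi_\app^E+F(\Omega_\app^E)=\cO(\epsilon^{M+1})$ with its co-moving stream function, one can form the nonlocal quadratic functional $E_\eps[w]=\frac12\bigl(\int F'(\Omega_\app^E)w^2\,\dd\xi+\langle\varphi-\cT_\eps\varphi,w\rangle_{L^2}\bigr)$, which is almost exactly conserved by the full $\delta^{-1}$ linearized Euler dynamics at $\Omega_\app^E$ (not merely at $\Omega_0$), and which is coercive on the subspace with vanishing moments. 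The weight $F'(\Omega_\app^E)$ degenerates away from the vortex core, so it must be glued to a constant in an intermediate region and to a slowly growing radial weight far out, and the coercivity of both the energy and the associated dissipation functional must be re-established for this $\epsilon$-dependent weight. Without some such mechanism exploiting the near-steady Euler structure of $\Omega_\app^E$, your bootstrap cannot close for any $\sigma>0$.
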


\begin{remark}\label{rem:Vspeed}
The formula \eqref{eq:Zpdef} shows that the leading order correction
to the constant velocity $V$ predicted by the point vortex system is negative
and proportional to $\epsilon^4$, where $\epsilon$ is as in \eqref{def:all}.
This fact is known in the literature, see \cite{HaNaFu2018} for a detailed
discussion. The constant $\alpha > 0$ has an explicit expression in terms 
of the solution of a linear differential equation on $\RR_+$, see
Section~\ref{ssec34} below. While Theorem~\ref{thm:main} asserts the existence
of {\em some} function $Z_2(t)$ satisfying \eqref{eq:Zpdef} such that estimate
\eqref{eq:2vortconv} holds, it is important to mention that $Z_2(t)$ is
not entirely characterized by \eqref{eq:Zpdef}, unless $\sigma$ is sufficiently
small. 
\end{remark}

If $\sigma = 0$, Theorem~\ref{thm:main} is merely a particular case of
Theorem~\ref{thm:gal11}, and instead of \eqref{eq:Zpdef} we can simply
take $Z_2'(t) = V = \Gamma/(2\pi d)$. The situation is different when
$\sigma \in (0,1)$, because we have to control the solution of \eqref{eq:NS}
on a much longer time scale. According to \eqref{eq:Zpdef}, the total
distance $D$ covered by the vortex dipole over the time interval 
$(0,T_\adv\delta^{-\sigma})$ can be estimated as
\[
  D \,=\, Z_2\bigl(T_\adv\delta^{-\sigma}\bigr) \,\approx\, \frac{\Gamma}{2\pi d}\,
  T_\adv\delta^{-\sigma} \,=\, \frac{d}{2\pi}\,\delta^{-\sigma}\,,
\]
so that $D \gg d$ if $\delta > 0$ is sufficiently small. To prove the validity
of \eqref{eq:2vortconv}, it is therefore necessary to have a very accurate
expression of the vertical velocity $Z_2'(t)$, as can be seen from the following
back-of-the-envelope calculation. As is easy to verify, changing the vertical
position $Z_2(t)$ of the vortex dipole \eqref{eq:omapp} by a small quantity $z(t)$
produces an error proportional to $|z(t)|/(d \epsilon)$ when computing the
left-hand side of \eqref{eq:2vortconv}. This means that we have to know $Z_2(t)$
with a precision of order $d \epsilon^3$ to ensure that the right-hand side
of \eqref{eq:2vortconv} remains $\cO(\epsilon^2)$. Since the length of the time
interval is $T_\adv\delta^{-\sigma}$, it is sufficient to control the vertical
velocity $Z_2'(t)$ up to an error of order $(\Gamma/d)\epsilon^3 \delta^\sigma$. 
To relate this to \eqref{eq:Zpdef}, we observe that
\begin{equation}\label{eq:epsdel}
  \epsilon(t)^2 \,=\, \frac{\nu t}{d^2} \,=\, \frac{\delta t}{T_\adv}\,,
  \qquad \text{so that}\quad \epsilon(t)^2 \,\le\, \epsilon\bigl(
  T_\adv \delta^{-\sigma}\bigr)^2 \,=\, \delta^{1-\sigma}\,.
\end{equation}
Given $M \in \NN$, we thus have $\epsilon^{3+M} \le \epsilon^3\,\delta^{M(1-\sigma)/2}
\le \epsilon^3\,\delta^\sigma$ if $\sigma \le M/(M+2)$. So, for instance, if
we know that $Z_2'(t) = \Gamma/(2\pi d) + \cO(\epsilon^4)$, that information
is sufficient to obtain estimate \eqref{eq:2vortconv} provided $\sigma \le 1/3$.
Similarly, the improved formula \eqref{eq:Zpdef} which includes the leading
order correction to the point vortex dynamics is good enough for our
purposes provided that $\sigma \le 1/2$, but for larger values of $\sigma$
we need to derive a higher order approximation of the vertical speed. 

As in the previous works \cite{Gallay2011,GaSring}, the proof of
Theorem~\ref{thm:main} relies on the construction of an approximate solution of
\eqref{eq:NS}, in the form of a power series expansion in the small parameters
$\epsilon$ and $\delta$. Our expansion involves the vorticity distribution in
suitable self-similar variables, as well as the vertical speed of the vortex
dipole. A linear approximation in $\delta$ turns out to be sufficient, but the
order $M$ of the expansion in $\epsilon$ must satisfy $M > (3+\sigma)/(1-\sigma)$.
As $\sigma$ can be chosen arbitrarily close to $1$, this means that we have to
construct an asymptotic expansion to {\em arbitrarily high order} in $\epsilon$,
which can be done by an iterative procedure that is described in
Section~\ref{sec:appsol}. Note that a fourth-order expansion was sufficient 
in the situations considered in \cite{Gallay2011,GaSring}, because it was 
assumed that $\sigma = 0$ in \cite{Gallay2011} and $0 < \sigma \ll 1$ 
in \cite{GaSring}. As an aside we mention that, in the inviscid case $\delta = 0$, 
our approximate solution shares many similarities with exact traveling wave 
solutions of the two-dimensional Euler equation describing vortex pairs, which 
were constructed by variational methods or perturbation arguments, see
\cite{Bur88,BNLL13,CLZ21,DdPMP23,HmMa17}.

The second step in the proof consists in showing that the exact solution of the
vorticity equation \eqref{eq:NS} remains close to the approximation constructed
in the first step. This is far from obvious because the linearized equation at
the approximate solution contains very large advection terms that could
potentially trigger violent instabilities, and also because we need a control
over the long time interval $(0,T_\adv\delta^{-\sigma})$. We follow here the
earlier study \cite{GaSring} which considers the related, but more complicated,
problem of an axisymmetric vortex ring in the low viscosity limit. The idea
is to control the difference between the solution and its approximation using
suitable energy functionals, which incorporate the necessary information about
the stability of the approximate solution. Unlike in \cite{Gallay2011}, it does
not seem possible to rely on weighted enstrophy estimates only, so we
follow the general approach introduced by Arnold to study the stability of
steady states for the 2D Euler equations, see \cite{GaSarnold}. This
method gives a clear roadmap to design a nonlocal energy-like functional
whose evolution is barely affected by the most dangerous terms in the
linearized equation at the approximate solution. 

There are numerous previous works devoted to the study of localized solutions
of the viscous vorticity equation \eqref{eq:NS}. Most of them consider initial
data that are contained in a disjoint union of balls of radius
$\epsilon_0 \ll 1$, and show that the corresponding solutions remain essentially
concentrated in small regions that evolve according to the point vortex
dynamics, see \cite{Mar98,CeSe18}. The effect of the viscosity is often treated
perturbatively, under the assumption that $\nu \le C \epsilon_0^\alpha$ for some
positive constants $C$ and $\alpha$. The solution is usually controlled on a
time interval $[0,T]$ that is fixed independently of $\epsilon_0$ \cite{Mar98},
but if the solution of the point vortex system is globally defined it is
possible to take $T \ge c\,|\log \epsilon_0|$ \cite{CeSe18}. To our knowledge,
the only previous work where the viscosity can be chosen independently of
$\epsilon_0$, so that initial data of the form \eqref{def:initvort} are allowed,
is \cite{CeSeis24}. Here also, under the assumption that the point vortices do
not collapse in finite time, the solution can be controlled on the time interval
$(0,T)$ with $T \ge c\,|\log(\epsilon_0^2 + \nu)|$. Taking $\epsilon_0 = 0$ we
obtain $T \ge c\,|\log \nu|$ for the initial data \eqref{def:initvort},
whereas Theorem~\ref{thm:main} describes the solution over a much longer
time scale $T = \cO (\nu^{-\sigma})$, but only in the simple example
of a vortex dipole. 

To conclude this introduction, we mention that the techniques 
developed in the present paper have recently been extended to study 
vortex pairs with general circulations \cite{ZZ25}, as well as more 
complicated configurations called vortex crystals \cite{DD25}.

%%%%%%%%%%%%%%%%%%%%%%%%%%%%%%%%%%%%%%%%%%%%%%%%%%%%%%%%%%%%%%%%%%%%%%%%%%%%%%

\section{The rescaled vorticity equation}\label{sec:RVE}

In this section, we introduce the framework needed to prove
Theorem~\ref{thm:main}. Following \cite{Gallay2011} we first define
self-similar variables which allow us to desingularize the dynamics of
\eqref{eq:NS} at initial time, when point vortices are considered as initial
data. We thus obtain a rescaled system describing a ``zoomed-in" evolution for
the vorticity distribution, with the property that the initial data are smooth
in the new variables. We next make an appropriate choice for the positions of
the vortex cores, which ensures that the rescaled vorticity distribution has
vanishing first order moments. 
\subsection{Self-similar variables}\label{ssec21}

Our goal is to study the solution $\omega$ of the vorticity 
equation~\eqref{eq:NS} with initial data $\omega^{in} = \Gamma
\delta(\cdot -x_\ell) - \Gamma\delta(\cdot-x_\err)$, where
$\Gamma > 0$ is the circulation parameter and the initial positions 
$x_\err, x_\ell$ are given by \eqref{eq:omnuin}. For symmetry
reasons, the solution $\omega(x_1,x_2,t)$ is an odd function of $x_1$
for all times. It will be convenient to use the notation  
\begin{equation}\label{def:tilde}
  \widetilde{x} \,=\, (-x_1,x_2)\,, \qquad \text{for any } x \,=\, (x_1,x_2)
  \in \RR^2\,.
\end{equation}
We introduce self-similar variables $\xi_\err$ (for the right vortex) and
$\xi_\ell$ (for the left vortex) defined by
\begin{equation}\label{def:change}
  \xi_\err \,=\, \frac{x-Z(t)}{\sqrt{\nu t}}\,, \qquad \xi_\ell \,=\,
  \frac{\widetilde{x}-Z(t)}{\sqrt{\nu t}}\,,
\end{equation}
where $Z(t) \in \RR^2$ is the time-dependent location of the
right vortex, which is a free parameter at this stage. In view of
the Helmholtz-Kirchhoff dynamics, we anticipate that 
\begin{equation}\label{def:Zt}
  Z(t) \,=\, \left(\frac{d}{2}\,,\,Z_2(t)\right)\,, \qquad \text{where}\quad
  Z_2(t) \,\approx\, \frac{\Gamma t}{2\pi d}\,.
\end{equation}
Using the conserved quantities of \eqref{eq:NS}, we verify below that
the assumption $Z_1(t) = d/2$ is indeed a natural one, and we also make
an appropriate choice for the vertical position $Z_2(t)$. 

For the time being, we look for a solution of equation~\eqref{eq:NS} in
the form
\begin{equation}\label{eq:ansom}
  \omega(x,t) \,=\, -\frac{\Gamma}{\nu t}\,\Omega(\xi_\err,t) +
  \frac{\Gamma}{\nu t}\,\Omega(\xi_\ell,t)\,,
\end{equation}
where the self-similar variables $\xi_\err, \xi_\ell$ are defined in 
\eqref{def:change}, and the rescaled vorticity $\Omega(\xi,t)$ is a new
dependent variable to be determined. It is important to realize that
the same function $\Omega$ is used to describe the vorticity distribution
of both the right and the left vortex, which reflects the fact that
the solution $\omega$ of \eqref{eq:NS} is an odd function of the
variable $x_1$. The corresponding decompositions for the velocity field
$\bu = (u_1,u_2)$ and the stream function $\psi$ are found to be 
\begin{equation}\label{eq:anspsi}
  \bu(x,t) \,=\, -\frac{\Gamma}{\sqrt{\nu t}}\,U(\xi_\err,t) - \frac{\Gamma}{
  \sqrt{\nu t}}\,\widetilde{U}(\xi_\ell,t), \qquad 
 \psi(x,t) = -\Gamma\,\Psi(\xi_\err,t) + \Gamma\,\Psi(\xi_\ell,t)\,,
\end{equation}
where it is understood that
\begin{equation}\label{UPsirel}
  U \,=\, \nabla^\perp \Psi ~\text{ and }~ \Psi = \Delta^{-1}\Omega\,,
  \quad\text{with }~
  (\Delta^{-1}\Omega)(\xi) \,:=\,
  \frac{1}{2\pi}\int_{\RR^2}\log|\xi-\eta|\,\Omega(\eta)\,\dd \eta\,.
\end{equation}
In what follows we write $U = \BS[\Omega]$ and $\Psi = \Delta^{-1}\Omega$
when \eqref{UPsirel} holds. Using \eqref{eq:anspsi} it is easy to verify
that $u_1, \psi$ are odd functions of the variable $x_1$ whereas
$u_2$ is an even function of $x_1$.

To formulate the equation satisfied by $\Omega(\xi,t)$, we introduce the
rescaled diffusion operator 
\begin{align}\label{def:cL}
  \cL \,:=\, \Delta_{\xi}+ \frac12\xi\cdot\nabla_\xi +1\,.
\end{align}
If $f,g$ are $C^1$ functions on $\RR^2$, we also define the Poisson bracket 
\begin{equation}\label{def:PB}
  \{f,g\} \,=\, \nabla^\perp f\cdot \nabla g \,=\, (\partial_1 f)(\partial_2 g)
  - (\partial_2 f)(\partial_1 g)\,. 
\end{equation}

\begin{lemma}
The vorticity distribution $\omega$ defined by \eqref{eq:ansom} is a solution
of \eqref{eq:NS} provided the function $\Omega$ satisfies the evolution
equation
\begin{equation}\label{eq:Omss}
  t\partial_t\Omega \,=\, \cL\Omega + \frac{1}{\delta}\Bigl\{\Psi - \cT_\eps\Psi
  +\frac{\eps d\xi_1}{\Gamma}\,Z_2'\,,\,\Omega\Bigr\}\,, \quad t > 0\,,
\end{equation}
where $\eps = \sqrt{\nu t}/d$, $\delta = \nu/\Gamma$, and $\cT_\eps$
is the translation/reflection operator defined by
\begin{equation}\label{def:Teps}
  (\cT_\eps \Psi)(\xi) \,=\, \Psi\bigl(\widetilde\xi - \eps^{-1}e_1 \bigr)\,,
  \quad e_1 = (1,0)\,.
\end{equation}
\end{lemma}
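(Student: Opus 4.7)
The plan is to substitute the ansatz \eqref{eq:ansom}--\eqref{eq:anspsi} into the vorticity equation \eqref{eq:NS} and verify, via the chain rule, that the resulting identity reduces to \eqref{eq:Omss}. Since $\omega$ is a superposition of a right-vortex piece $-\frac{\Gamma}{\nu t}\Omega(\xi_\err,t)$ and a reflected left-vortex piece $+\frac{\Gamma}{\nu t}\Omega(\xi_\ell,t)$, and the ansatz is automatically odd in $x_1$ under the assumption $Z_1(t) = d/2$ suggested by \eqref{def:Zt}, I would derive the equation for $\Omega$ from the right-vortex piece alone and use the left-vortex contribution as a consistency check.

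First, the chain rule with $\partial_t \xi_\err = -Z'(t)/\sqrt{\nu t} - \xi_\err/(2t)$ produces four contributions to $\partial_t\omega$ from the right piece: the derivative of the $1/(\nu t)$ prefactor, a genuine $\partial_t\Omega$, a drift $\propto Z'(t)\cdot\nabla\Omega$, and the self-similar scaling $-\frac{1}{2t}\xi_\err\cdot\nabla\Omega$. Using $\nabla_x = (\nu t)^{-1/2}\nabla_\xi$ one rewrites the diffusion as $\nu\Delta_x\omega = -\frac{\Gamma}{\nu t^2}\Delta_\xi\Omega(\xi_\err)+\ldots$ Multiplying the vorticity equation by $-\nu t^2/\Gamma$ groups the $+\Omega$, $+\frac{1}{2}\xi_\err\cdot\nabla\Omega$, and $+\Delta_\xi\Omega$ contributions into $\cL\Omega$ as defined in \eqref{def:cL}, leaving only the advection and drift to be recast as a Poisson bracket.

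The most delicate step is handling the advection $\bu\cdot\nabla\omega$. By \eqref{eq:anspsi}, its action on the right-vortex piece equals
\[
  \tfrac{\Gamma^2}{(\nu t)^2}\bigl[U(\xi_\err) + \widetilde{U}(\xi_\ell)\bigr]\cdot\nabla\Omega(\xi_\err)\,.
\]
The self-advection is immediately $\{\Psi,\Omega\}(\xi_\err)$ by \eqref{def:PB} and \eqref{UPsirel}. For the cross term, I would use the key geometric identity $\xi_\ell = \widetilde{\xi_\err} - \eps^{-1}e_1$, which follows from $Z_1 = d/2$ and $\sqrt{\nu t}/d = \eps$, and differentiate $\cT_\eps\Psi$ directly to obtain $-\nabla^\perp_\xi(\cT_\eps\Psi)(\xi) = \widetilde{U}(\widetilde{\xi}-\eps^{-1}e_1)$; this yields $\widetilde{U}(\xi_\ell)\cdot\nabla\Omega(\xi_\err) = \{-\cT_\eps\Psi,\Omega\}(\xi_\err)$. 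Using $\Gamma/\nu = 1/\delta$, the advection takes the form $\frac{1}{\delta}\{\Psi-\cT_\eps\Psi,\Omega\}$. Finally, since $Z_1'(t) = 0$, the drift term reduces to $\frac{\eps d\,Z_2'(t)}{\nu}\partial_2\Omega = \frac{1}{\delta}\bigl\{\frac{\eps d\,\xi_1 Z_2'}{\Gamma},\Omega\bigr\}$, which is precisely the last term in \eqref{eq:Omss}.

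I expect the main obstacle to be bookkeeping rather than substance: one must carefully track signs through the reflections $x\mapsto\widetilde{x}$ and $\xi\mapsto\widetilde{\xi}-\eps^{-1}e_1$, together with the identity $\widetilde{U} = (-U_1,U_2)$ from \eqref{eq:anspsi}. As a final check, the analogous calculation on the left-vortex piece uses the symmetric identity $\xi_\err = \widetilde{\xi_\ell} - \eps^{-1}e_1$ and produces the same equation for $\Omega$, confirming that the ansatz is self-consistent and that no additional constraint on $Z_2(t)$ is imposed at this stage.
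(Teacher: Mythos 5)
Your proposal is correct and follows essentially the same route as the paper: substitute the ansatz, use the chain rule and the prefactor $(\nu t)^{-1}$ to assemble $\cL\Omega - t\partial_t\Omega$, identify the cross-advection via the relation $\xi_\ell = \widetilde{\xi}_\err - \eps^{-1}e_1$ with $-\nabla^\perp\cT_\eps\Psi$, and rewrite the drift $Z'\cdot\nabla\Omega = Z_2'\{\xi_1,\Omega\}$ using $\sqrt{t/\nu} = \eps d/(\Gamma\delta)$. The only cosmetic differences are that you phrase the advection in terms of the velocities $U,\widetilde U$ rather than Poisson brackets of stream functions, and that you derive the equation from the right-vortex piece with the left piece as a symmetry check, whereas the paper writes out both contributions explicitly.
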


\begin{proof}
We write equation \eqref{eq:NS} in the form $\partial_t\omega + \{\psi,\omega\}
= \nu\Delta\omega$, where $\psi = \Delta^{-1}\omega$ is the stream function.
Using \eqref{eq:ansom} it is easy to verify that
\begin{align*}
  \partial_t \omega -\nu\Delta\omega \,=\, \frac{\Gamma}{\nu t^2}
  \biggl[ \Bigl(-t\partial_t\Omega + \cL \Omega + \sqrt{\frac{t}{\nu}}\,Z'\cdot
  \nabla\Omega\Bigr)(\xi_\err,t) + \Bigl(t\partial_t\Omega - \cL \Omega
  - \sqrt{\frac{t}{\nu}}\,Z'\cdot\nabla\Omega\Bigr)(\xi_\ell,t)\biggr]\,.
\end{align*}
On the other hand, in view of \eqref{def:tilde}--\eqref{def:Zt}, the self-similar
variables $\xi_\err, \xi_\ell$ satisfy the relations
\[
  \xi_\ell \,=\, \widetilde{\xi}_\err - \eps^{-1}e_1\,, \qquad
  \xi_\err \,=\,\widetilde{\xi}_\ell - \eps^{-1}e_1\,,
\]
so that $\Psi(\xi_\ell,t) = (\cT_\eps \Psi)(\xi_\err,t)$ and
$\Psi(\xi_\err,t) = (\cT_\eps \Psi)(\xi_\ell,t)$ by \eqref{def:Teps}. 
In view of \eqref{eq:anspsi}, \eqref{eq:ansom}, we thus have
\begin{align*}
  \bigl\{\psi,\omega\bigr\}_x \,&=\, \frac{\Gamma^2}{\nu t}\biggl[\Bigl\{\Psi(\xi_\err,t)
  - \cT_\eps\Psi(\xi_\err,t)\,,\,\Omega(\xi_\err,t)\Bigr\}_x
  + \Bigl\{\Psi(\xi_\ell,t) - \cT_\eps\Psi(\xi_\ell,t)\,,\,\Omega(\xi_\ell,t)
  \Bigr\}_x\biggr]\\
  \,&=\, \frac{\Gamma^2}{\nu^2 t^2}\biggl[\Bigl\{\Psi - \cT_\eps\Psi
  \,,\,\Omega\Bigr\}_\xi(\xi_\err,t) - \Bigl\{\Psi - \cT_\eps\Psi
  \,,\,\Omega\Bigr\}_\xi(\xi_\ell,t)\biggr]\,.
\end{align*}
Here we write $\{\cdot,\cdot\}_x$ or $\{\cdot,\cdot\}_\xi$ according
to whether the Poisson bracket is computed with respect to the
original variable $x$ or the rescaled variable $\xi$. Recalling that
$\delta = \nu/\Gamma$, we deduce from the equalities above that 
\eqref{eq:NS} is satisfied provided
\begin{equation}\label{eq:Omprelim}
  t\partial_t\Omega \,=\, \cL\Omega + \frac{1}{\delta}\Bigl\{\Psi-
  \cT_\eps\Psi\,,\,\Omega\Bigr\} + \sqrt{\frac{t}{\nu}}\,Z'\cdot
  \nabla\Omega\,, \quad t > 0\,.
\end{equation}
Since $\sqrt{t/\nu} = (\eps d)/(\Gamma\delta)$ and $Z'\cdot \nabla\Omega
= Z_2' \partial_2\Omega = Z_2' \{\xi_1,\Omega\}$, we deduce \eqref{eq:Omss}
from \eqref{eq:Omprelim}. 
\end{proof}

\begin{remark}\label{rem:RDE}
The evolution equation \eqref{eq:Omss} can be written in the equivalent
form
\begin{equation}\label{eq:OmssU}
  t\partial_t\Omega \,=\, \cL\Omega + \frac{1}{\delta}\Bigl(
  U + \mathbf{T}_\eps U + \frac{\eps d}{\Gamma}\,Z'(t)\Bigr)\cdot
  \nabla\Omega\,, \quad t > 0\,,
\end{equation}
where $U = \nabla^\perp \Psi$ is the velocity field obtained from $\Omega$
via the usual Biot-Savart law in $\RR^2$, and $\mathbf{T}_\eps U = -\nabla^\perp
\cT_\eps\Psi$ denotes the velocity generated by the mirror vortex, namely
\begin{equation}\label{def:Uelleps}
  (\mathbf{T}_\eps U)(\xi,t) \,=\, \widetilde{U}\bigl(\widetilde{\xi}-\eps^{-1}
  e_1,t\bigr)\,.
\end{equation}
Note that both velocity fields $U$ and $\mathbf{T}_\eps U$ are divergence
free. 
\end{remark}

It is essential to realize that the evolution equation \eqref{eq:Omss} or
\eqref{eq:OmssU} is singular at initial time $t = 0$, because the term
$t\partial_t\Omega$ involving the time derivative vanishes. In particular, the
initial data $\Omega_0$ cannot be chosen arbitrarily, as can be seen by taking
the limit $t \to 0$ (hence also $\eps \to 0$) in \eqref{eq:Omss}. Using the fact
that $\nabla\cT_\eps \Psi \to 0$ in that limit, which is established in 
Lemma~\ref{lem:Teps} below, we obtain the relation
\begin{equation}\label{eq:Om0}
  0 \,=\, \cL \Omega_0 + \frac{1}{\delta}\bigl\{\Psi_0,\Omega_0\bigr\}\,,
  \quad \text{where} \quad \Delta\Psi_0 \,=\, \Omega_0\,.
\end{equation}
This is exactly the equation satisfied by the profile of a self-similar
solution of the Navier-Stokes equation \eqref{eq:NS}. It thus follows
from \cite[Prop.~1.3]{GalWay2005} that any solution of \eqref{eq:Om0} that
is integrable over $\RR^2$ is necessarily a multiple of the {\em Lamb-Oseen vortex}
\begin{equation}\label{def:G}
  \Omega_0(\xi) \,=\, G(\xi), \qquad \text{where} \quad G(\xi) \,=\, \frac{1}{4\pi}
  \,\e^{-|\xi|^2/4}\,. 
\end{equation}
A direct calculation shows that $\cL \Omega_0 = 0$, and since $\Omega_0, \Psi_0$
are both radially symmetric one has $\{\Psi_0,\Omega_0\} = 0$ too. Note that
$\Omega_0$ is normalized so that $\int_{\RR^2}\Omega_0\,\dd\xi = 1$, which
ensures that the vorticity distribution \eqref{eq:ansom} satisfies 
$\omega(\cdot,t) \rightharpoonup \omega^{in}$ as $t \to 0$. For later use,
we recall that the velocity field $U_0$ and the stream function $\Psi_0$
associated with $\Omega_0$ have the following expressions
\begin{equation}\label{def:UG}
  U_0(\xi) \,=\, U^G(\xi) \,:=\, \frac{1}{2\pi}
  \frac{\xi^\perp}{|\xi|^2}\,\Bigl(1-{\rm e}^{-|\xi|^2/4}\Bigr)\,, \qquad
  \Psi_0(\xi) \,=\, \frac{1}{4\pi}\Ein\bigl(|\xi|^2/4\bigr) -
  \frac{\gamma_E}{4\pi}\,,
\end{equation}
where $\Ein(s) = \int_0^s (1-\e^{-\tau})/\tau\,\dd \tau$ is the exponential
integral, and $\gamma_E= 0,5772\dots$ is the Euler-Mascheroni constant.

It follows from the previous works \cite{GaGa2005,Gallay2011,BedMas2014} that
the rescaled vorticity equation \eqref{eq:Omss} has a unique global solution
with initial data $\Omega_0$ given by \eqref{def:G}.  This solution is smooth for
positive times, and is continuous up to $t = 0$ in the weighted $L^2$ space
defined by \eqref{def:cY} below. Our purpose here is to give an accurate
description of that solution in the regime where the viscosity parameter
is small. 

\subsection{Formula for the vertical speed}\label{ssec22}

There is no canonical definition of the center of a viscous vortex. Reasonable
possibilities are the center of vorticity, the maximum point of the vorticity
distribution, or the stagnation point of the flow, but these notions do not
coincide in general. In our approach the vortex center is defined as the 
point $Z(t) \in \RR^2$ where the self-similar variable $\xi_\err$ is centered, 
see \eqref{def:change}. We choose it to be the center of vorticity, which means 
that the function $\Omega(\xi,t)$ satisfies the moment conditions
\begin{equation}\label{eq:momcond}
  \int_{\RR^2} \xi_1 \Omega(\xi,t)\,\dd\xi \,=\, 0\,, \quad \text{and}\quad
  \int_{\RR^2} \xi_2 \Omega(\xi,t)\,\dd\xi \,=\, 0\,. 
\end{equation}
In fact the first condition in \eqref{eq:momcond} is fulfilled if
$Z_1(t) = d/2$, which we already assumed in \eqref{def:Zt}. The second condition
requires a specific choice of the vertical position $Z_2(t)$, which we now
describe. Here and in what follows we use the notation \eqref{def:mean-mom}
for the moments of $\Omega$.

\begin{lemma}\label{lem:momZ2}
Let $\Omega$ be the solution of \eqref{eq:Omss} with initial data \eqref{def:G},
and $U = \nabla^\perp\Psi$ be the associated velocity field. If for all $t > 0$ the
vertical velocity is chosen so that
\begin{equation}\label{def:Z2}
  Z_2'(t) \,=\, -\frac{\Gamma}{\eps d}\int_{\RR^2}U_2\bigl(\widetilde{\xi}-\eps^{-1}
  e_1,t\bigr)\Omega(\xi,t)\,\dd\xi \,\equiv\, \frac{\Gamma}{\eps d}\int_{\RR^2}
  \bigl(\partial_1 \cT_\eps \Psi\bigr)(\xi,t)\Omega(\xi,t)\,\dd\xi\,,
\end{equation} 
then the following moment identities hold true
\begin{equation}\label{eq:momid}
  \mathrm{M}[\Omega(t)] \,=\,1\,, \qquad \mathrm{m}_1[\Omega(t)] \,=\,
  \mathrm{m}_2[\Omega(t)] \,=\, 0\,, \qquad t > 0\,.
\end{equation}
\end{lemma}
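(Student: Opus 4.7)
The plan is to test the evolution equation \eqref{eq:Omss} against $1$, $\xi_1$, $\xi_2$ in turn and to derive a singular linear ODE for each of the three moments, then conclude from the initial datum $\Omega(\cdot,0) = G$, whose moments are $1$, $0$, $0$, that the moments remain at those values for all later times. For the total mass both contributions integrate to zero: the diffusion term $\int \cL\Omega\,\dd\xi$ because $\nabla\cdot\xi = 2$ exactly compensates the zeroth-order part of $\cL$, and $\int\{\Phi,\Omega\}\,\dd\xi$—with $\Phi := \Psi - \cT_\eps\Psi + \tfrac{\eps d\xi_1}{\Gamma}Z_2'$—because any planar Jacobian is the divergence of a smooth decaying vector field. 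Hence $t\partial_t\mathrm{M}[\Omega] = 0$, which combined with $\mathrm{M}[G] = 1$ yields $\mathrm{M}[\Omega(t)] \equiv 1$.

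For the first moments, a standard integration by parts gives $\int\xi_i\cL\Omega\,\dd\xi = -\tfrac{1}{2}\mathrm{m}_i[\Omega]$. Writing the equation in the velocity form of Remark~\ref{rem:RDE}, with $V := U + \mathbf{T}_\eps U + \tfrac{\eps d}{\Gamma}Z'(t)$ divergence-free, the Poisson bracket integrates as
\[
\int \xi_i\,\{\Phi,\Omega\}\,\dd\xi \,=\, -\int V_i\,\Omega\,\dd\xi\,.
\]
The self-interaction $\int U_i\,\Omega\,\dd\xi$ vanishes by the classical antisymmetry of the Biot-Savart kernel under $(\xi,\zeta)\leftrightarrow(\zeta,\xi)$. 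For $i=1$ the mirror-vortex contribution $\int(\mathbf{T}_\eps U)_1\,\Omega\,\dd\xi$ also vanishes: indeed, a direct computation from the definition of $\cT_\eps$ gives $(\mathbf{T}_\eps U)_1(\xi) = -U_1(\widetilde\xi - \eps^{-1}e_1)$, and the resulting double integral has numerator odd in $\xi_2 - \zeta_2$ while the denominator $(\xi_1 + \zeta_1 + \eps^{-1})^2 + (\xi_2 - \zeta_2)^2$ is symmetric under the swap. Since the uniform vertical velocity $\tfrac{\eps d}{\Gamma}Z_2'\,e_2$ does not enter the first component, we find $t\partial_t\mathrm{m}_1[\Omega] = -\tfrac{1}{2}\mathrm{m}_1[\Omega]$, and $\mathrm{m}_1[G] = 0$ gives $\mathrm{m}_1[\Omega(t)] \equiv 0$.

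The analogous computation for the vertical moment produces
\[
t\partial_t\mathrm{m}_2[\Omega] \,=\, -\tfrac{1}{2}\mathrm{m}_2[\Omega] - \frac{1}{\delta}\Bigl(\int(\mathbf{T}_\eps U)_2\,\Omega\,\dd\xi + \frac{\eps d}{\Gamma}Z_2'(t)\,\mathrm{M}[\Omega]\Bigr),
\]
since again $\int U_2\,\Omega\,\dd\xi$ vanishes by Biot--Savart antisymmetry. The identity $(\mathbf{T}_\eps U)_2(\xi) = U_2(\widetilde\xi - \eps^{-1}e_1)$, combined with $\mathrm{M}[\Omega]=1$ already established, shows that the prescription \eqref{def:Z2} is designed precisely so that the parenthesis cancels. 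This reduces the equation to $t\partial_t\mathrm{m}_2 = -\tfrac{1}{2}\mathrm{m}_2$, and hence $\mathrm{m}_2[\Omega(t)] \equiv 0$. The main subtlety to address is that each singular ODE $t\dot f = -\tfrac{1}{2} f$ also admits the non-trivial solutions $C t^{-1/2}$; ruling them out requires continuity of the moments up to $t = 0$, which follows from the well-posedness of \eqref{eq:Omss} in the weighted $L^2$ space invoked at the end of Section~\ref{ssec21}. The integrations by parts and Fubini manipulations are justified by the smoothness and Gaussian-type decay of $\Omega$ and $U$ for positive times.
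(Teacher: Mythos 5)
Your proof is correct and follows essentially the same route as the paper: integrate the equation against $1,\xi_1,\xi_2$, use $\int\xi_j\cL\Omega\,\dd\xi=-\tfrac12\mrm_j[\Omega]$ and the Biot--Savart antisymmetry, observe that the mirror-vortex contribution to $\mrm_1$ vanishes by the swap symmetry while the choice \eqref{def:Z2} cancels it for $\mrm_2$, and rule out the $Ct^{-1/2}$ solutions of the singular ODE by continuity of the moments at $t=0$. The only (immaterial) difference is organizational: the paper first writes the general identity \eqref{eq:mjdot} for both $j$, prescribes $Z_j'$ by \eqref{eq:Zjdef}, and then verifies a posteriori that $Z_1'=0$, whereas you treat $j=1$ directly.
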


\begin{proof}
It is easy to verify that $t\partial_t \int_{\RR^2}\Omega\,\dd\xi = 0$, because the
diffusion operator \eqref{def:cL} is mass preserving and the velocity field
$U +  \mathbf{T}_\eps U + (\eps d/\Gamma)Z'(t)$ in \eqref{eq:OmssU} is divergence
free. Since $\Omega_0$ is normalized so that $\mathrm{M}[\Omega_0] = 1$, it follows
that $\mathrm{M}[\Omega(t)] = 1$ for all $t \ge 0$. 

We next consider the evolution of the moment $\mathrm{m}_j[\Omega]$ for $j = 1,2$. 
We observe that
\[
  \int_{\RR^2}\xi_j\cL\Omega\,\dd \xi \,=\ -\frac12\int_{\RR^2}\xi_j\Omega\,\dd\xi\,,
  \quad \text{and}\quad \int_{\RR^2}\xi_j U\cdot\nabla\Omega\,\dd\xi \,=\,
  - \int_{\RR^2}U_j \Omega\,\dd\xi \,=\, 0\,.
\]
Indeed the first equality is easily obtained using the definition \eqref{def:cL}
and integrating by parts. The second one is a well-known property of the
Biot-Savart kernel in $\RR^2$, related to the conservation of the moment
$\mathrm{m}_j[\omega]$ for the original Navier-Stokes equation \eqref{eq:NS}.
It follows that
\begin{equation}\label{eq:mjdot}
  t\partial_t \mathrm{m}_j[\Omega] \,=\, -\frac12\mathrm{m}_j[\Omega]
  - \frac{1}{\delta}\int_{\RR^2} \bigl(\mathbf{T}_\eps U\bigr)_j \,\Omega\,\dd\xi
  - \frac{\eps d}{\delta\Gamma}\,Z_j'(t)\,, \quad t > 0\,.
\end{equation}
At this point, the idea is to choose $Z_j'(t)$ so that the last two terms
of \eqref{eq:mjdot} cancel, namely
\begin{equation}\label{eq:Zjdef}
  Z_j'(t) \,=\, -\frac{\Gamma}{\eps d}\int_{\RR^2} \bigl(\mathbf{T}_\eps U\bigr)_j
  (\xi,t)\,\Omega(\xi,t)\,\dd\xi\,, \quad t > 0\,.
\end{equation}
Equation \eqref{eq:mjdot} then reduces to $t\partial_t\mathrm{m}_j[\Omega]
= -\frac12\mathrm{m}_j[\Omega]$, which means that $t^{1/2}\mathrm{m}_j[\Omega(t)]$
is independent of time. That quantity obviously converges to zero as $t \to 0$,
and we conclude that $\mathrm{m}_j[\Omega(t)] = 0$ for all times, which gives
\eqref{eq:momid}. 

To understand the precise meaning of \eqref{eq:Zjdef}, we write \eqref{def:Uelleps}
in the more explicit form
\[
  \bigl(\mathbf{T}_\eps U\bigr)_1(\xi_1,\xi_2,t) \,=\, -U_1\bigl(-\xi_1-\eps^{-1},\xi_2,t
  \bigr)\,, \qquad
  \bigl(\mathbf{T}_\eps U\bigr)_2(\xi_1,\xi_2,t) \,=\, U_2\bigl(-\xi_1-\eps^{-1},\xi_2,t
  \bigr)\,. 
\]
In particular, using the Biot-Savart formula, we find for $j = 1$: 
\begin{align*}
  \int_{\RR^2} \bigl(\mathbf{T}_\eps U\bigr)_1\,\Omega\,\dd\xi 
  \,&=\, -\int_{\RR^2} U_1\bigl(-\xi_1-\eps^{-1},\xi_2\bigr)\Omega(\xi)\,\dd\xi\\
  \,&=\, \frac{1}{2\pi}\iint_{\RR^4} \frac{\xi_2-\eta_2}{|\xi_1+\eps^{-1}+\eta_1|^2
  +|\xi_2-\eta_2|^2}\,\Omega(\xi)\,\Omega(\eta)\,\dd\xi\,\dd \eta \,=\,0\,,
\end{align*}
where in the last equality we used the fact that the integrand is odd with respect to
the change of variables $\xi\leftrightarrow\eta$. According to \eqref{eq:Zjdef} we thus
have $Z_1'(t) = 0$, hence $Z_1(t) = d/2$ for all $t \ge 0$, in agreement with
\eqref{def:Zt}. In the case where $j = 2$, the right-hand side of \eqref{eq:Zjdef}
does not vanish in general, and gives the formula \eqref{def:Z2} for the
vertical speed.
\end{proof}

In the particular case where the vorticity $\Omega$ is given by \eqref{def:G}
and the velocity field $U$ by \eqref{def:UG}, the vertical speed $Z_2'$ defined
by \eqref{def:Z2} only depends on the small parameter $\eps$, except for the
prefactor $\Gamma/d$. It is then instructive to note that the speed $Z_2'$
agrees {\em to all orders} with the prediction of the point vortex system.

\begin{lemma}\label{lem:vertspeed}
If $\Omega = G$ and $U = U^G$, the vertical velocity \eqref{def:Z2} satisfies
\begin{equation}\label{eq:Z20}
  Z_2' \,=\, -\frac{\Gamma}{\eps d}\int_{\RR^2}U^G_2\bigl(\widetilde{\xi}-\eps^{-1}
  e_1\bigr)G(\xi)\,\dd\xi \,=\, \frac{\Gamma}{2\pi d} + O(\eps^\infty)\,.
\end{equation}
\end{lemma}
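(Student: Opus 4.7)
The task is to evaluate $I_\eps := \int_{\RR^2} U^G_2(\widetilde{\xi} - \eps^{-1} e_1) G(\xi)\,\dd\xi$ and show that $I_\eps = -\eps/(2\pi) + O(\eps^\infty)$. My plan is to first discard the Gaussian cutoff in the Biot-Savart kernel \eqref{def:UG} at the cost of exponentially small errors, and then to exploit the fact that the remaining kernel is the real part of the holomorphic function $-(2\pi(z+\eps^{-1}))^{-1}$. The resulting harmonicity, combined with the radial symmetry of $G$, reduces the evaluation to a one-line application of the mean value theorem.

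Writing $y = \widetilde{\xi} - \eps^{-1} e_1 = (-\xi_1 - \eps^{-1}, \xi_2)$, so that $|y|^2 = (\xi_1 + \eps^{-1})^2 + \xi_2^2$, I would first argue that replacing the factor $(1 - e^{-|y|^2/4})$ in $U^G_2(y)$ by $1$ introduces only $O(\eps^\infty)$ corrections. Split $\RR^2 = A \cup B$ with $A = \{|\xi| \le \eps^{-1}/2\}$ and $B = \{|\xi| > \eps^{-1}/2\}$: on $A$ we have $|y| \ge \eps^{-1}/2$, so $e^{-|y|^2/4} \le e^{-1/(16\eps^2)}$; on $B$ the Gaussian $G(\xi)$ itself is bounded by $e^{-1/(16\eps^2)}/(4\pi)$ and the singular factor $|y|^{-1}$ is locally integrable near $\xi = (-\eps^{-1},0)$. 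The net effect of this step is
\[
  I_\eps \,=\, \int_{\RR^2} K(\xi) G(\xi)\,\dd\xi + O(\eps^\infty)\,, \qquad K(\xi) \,:=\, \frac{1}{2\pi}\,\frac{-(\xi_1 + \eps^{-1})}{(\xi_1 + \eps^{-1})^2 + \xi_2^2}\,.
\]

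Setting $z = \xi_1 + i\xi_2$, one observes $K(\xi) = -\Re\bigl((2\pi(z + \eps^{-1}))^{-1}\bigr)$, so $K$ is harmonic on $\RR^2 \setminus \{(-\eps^{-1},0)\}$. Passing to polar coordinates and using the radial symmetry of $G$, the remaining integral becomes $\int_0^\infty G(r)\,2\pi r\, A(r)\,\dd r$, where $A(r)$ denotes the angular mean of $K$ on the circle of radius $r$. For $r < \eps^{-1}$ the closed disk avoids the singularity, so the mean value property gives $A(r) = K(0) = -\eps/(2\pi)$; for $r > \eps^{-1}$ the Laurent expansion $(z + \eps^{-1})^{-1} = z^{-1}\sum_{n \ge 0}(-\eps^{-1}/z)^n$ produces only harmonics $\cos((n+1)\theta)/r^{n+1}$, all of which integrate to zero in $\theta$, so $A(r) = 0$. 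Combining and performing the radial integral, $\int K G\,\dd\xi = -(\eps/(2\pi))(1 - e^{-1/(4\eps^2)}) = -\eps/(2\pi) + O(\eps^\infty)$, and multiplying by $-\Gamma/(\eps d)$ yields $Z_2' = \Gamma/(2\pi d) + O(\eps^\infty)$, as claimed. The only delicate point is the uniform control of the exponentially small remainders from the Gaussian cutoff — a routine but necessary domain split — after which the harmonic mean value theorem does all the geometric work.
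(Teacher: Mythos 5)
Your proof is correct, and it takes a recognizably different route from the paper's. The paper restricts the integral to $\{|\xi| < 1/(2\eps)\}$, drops the exponential factor there, and then expands the kernel as the power series $\sum_n (-1)^n\eps^n Q_n^c(\xi)$ (the identity \eqref{eq:Qnid}), killing all terms with $n \ge 1$ by the radial symmetry of $G$. You instead drop the Gaussian cutoff globally (with the domain split to control the exponentially small errors), observe that the resulting kernel is harmonic away from $(-\eps^{-1},0)$, and invoke the mean value property inside the radius $\eps^{-1}$ together with the Laurent expansion outside. The two arguments rest on the same cancellation — the angular average of $r^n\cos(n\theta)$ vanishes for $n\ge 1$, which is exactly why only the $Q_0^c$ term survives in the paper and exactly what the mean value property encodes — so in substance they are the same computation. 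What your version buys is a cleaner bookkeeping: you get the closed form $-\frac{\eps}{2\pi}\bigl(1-e^{-1/(4\eps^2)}\bigr)$ for the simplified integral over all of $\RR^2$, including the exact statement that the exterior region $r>\eps^{-1}$ contributes nothing, rather than an $O(\eps^\infty)$ truncation estimate. What the paper's version buys is that the explicit expansion in the harmonic polynomials $Q_n^c$ is reused verbatim elsewhere (Proposition~\ref{prop:naive}, Lemma~\ref{lem:Teps}, Proposition~\ref{prop:speed}), where the vorticity is no longer radial and one genuinely needs the higher-order coefficients; your mean-value shortcut is specific to the radial case.
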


\begin{proof}
Since the Gaussian function $G$ decays very rapidly at infinity whereas the velocity
field $U^G$ is bounded, it is clear that integrating over the region $|\xi| \ge 1/(2\eps)$
gives a contribution to \eqref{eq:Z20} that is smaller than any power of $\eps$ as $\eps \to 0$.
We can thus write
\begin{align}\nonumber
  \frac{d}{\Gamma}\,Z_2' \,&=\, -\frac{1}{\eps}\int_{\{|\xi|<1/(2\eps)\}}
  U^G_2\bigl(\widetilde{\xi}-\eps^{-1}e_1\bigr)G(\xi)\,\dd\xi + O(\eps^\infty)\\ \label{dZgam}
  \,&=\, \frac{1}{2\pi\eps}\int_{\{|\xi|<1/(2\eps)\}}\frac{\xi_1+1/\eps}{|\xi_1+1/\eps|^2+|\xi_2|^2}
  \,G(\xi)\,\dd\xi + O(\eps^\infty)\,,
\end{align}
where in the second line we used the explicit expression \eqref{def:UG} of the
velocity field $U^G$, and the fact that $|\xi_1+1/\eps|^2+|\xi_2|^2 \ge 1/(4\eps^2)$
on the domain of integration. We next expand the fraction in \eqref{dZgam}
as follows:
\begin{equation}\label{expansion}
  \frac{1}{\eps}\,\frac{\xi_1+1/\eps}{|\xi_1+1/\eps|^2+|\xi_2|^2} \,=\, 
  \frac{1+\eps\xi_1}{|1+\eps\xi_1|^2+|\eps \xi_2|^2} \,=\,
  \sum_{n=0}^\infty(-1)^n \eps^n Q_n^c(\xi)\,,
\end{equation}
where $Q_n^c$ is the homogeneous polynomial on $\RR^2$ defined by
$Q_n^c\bigl(r\cos\theta,r\sin\theta\bigr) = r^n\cos(n\theta)$, see
Section~\ref{ssecA1}. Note that the series converges uniformly for $|\xi| \le
1/(2\eps)$, so that we can exchange the sum with the integral in
\eqref{dZgam}. Since $G$ is a radial function, only the term corresponding to $n
= 0$ gives a nonzero contribution, and we conclude that
\[
  \frac{d}{\Gamma}\, Z_2' \,=\, \frac{1}{2\pi}\int_{\{|\xi|<1/(2\eps)\}}
  G(\xi)\,\dd\xi + O(\eps^\infty) \,=\, \frac{1}{2\pi} + O(\eps^\infty)\,,
\]
which is the desired result. 
\end{proof}

%%%%%%%%%%%%%%%%%%%%%%%%%%%%%%%%%%%%%%%%%%%%%%%%%%%%%%%%%%%%%%%%%%%%%%%%%%%%%%

\section{The approximate solution}\label{sec:appsol}

The purpose of this section is to construct an approximate solution of the
rescaled vorticity equation \eqref{eq:Omss} by performing an asymptotic
expansion in the small parameter $\eps = \sqrt{\nu t}/d$. Given an
integer $M \ge 2$, our approximate solution takes the form
\begin{equation}\label{appOP}
  \Omega_\app(\xi,t) \,=\, \Omega_0(\xi) + \sum_{k=2}^M \epsilon(t)^k\,\Omega_k(\xi)\,,
  \qquad \Psi_\app(\xi,t) \,=\, \Psi_0(\xi) + \sum_{k=2}^M \epsilon(t)^k\,\Psi_k(\xi)\,, 
\end{equation}
where the vorticity profiles $\Omega_k$ have to be determined, and the stream function
profiles are given by $\Psi_k = \Delta^{-1}\Omega_k$ as in \eqref{UPsirel}. The
corresponding expansion for the vertical speed is
\begin{equation}\label{appzeta}
  Z_2'(t) \,=\, \frac{\Gamma}{2\pi d}\,\zeta_\app(t)\,, \quad \text{where}\quad
  \zeta_\app(t) \,=\, 1 + \sum_{k=1}^{M-1} \epsilon(t)^k\,\zeta_k\,,
\end{equation}
for some $\zeta_k \in \RR$. It is important to note that $\Omega_\app$,
$\Psi_\app$ and $\zeta_\app$ depend on time only through the aspect ratio
$\eps = \sqrt{\nu t}/d$. Since $\eps \to 0$ as $t \to 0$, the leading order
terms $\Omega_0,\Psi_0$ in \eqref{appOP} are also the initial data of the
approximate solution $\Omega_\app, \Psi_\app$. For consistency we must choose
$\Omega_0$ to be the Lamb-Oseen vortex \eqref{def:G}, and $\Psi_0 =
\Delta^{-1}\Omega_0$ is the associated stream function given by
\eqref{def:UG}. Similarly, Lemma~\ref{lem:vertspeed} implies that
$\zeta_\app(0) = 1$. 

To measure by how much our approximate solution fails to satisfy \eqref{eq:Omss},
we introduce the remainder
\begin{equation}\label{def:resid}
  \cR_M \,:=\, \delta\bigl(\cL \Omega_\app - t\partial_t \Omega_\app\bigr)
  + \Bigl\{\Psi_\app - \cT_\eps\Psi_\app +\frac{\eps\xi_1}{2\pi}\,\zeta_\app\,,\,
  \Omega_\app\Bigr\}\,.
\end{equation}
Using Lemma~\ref{lem:Teps} below and the important fact that
$t\partial_t \eps = \eps/2$, we deduce from \eqref{appOP}, \eqref{appzeta} that
the remainder \eqref{def:resid} can be expanded into a power series in $\eps$. 
As we shall see in Section~\ref{ssec32} below, the remainder of the trivial 
approximation $(\Omega_\app,\Psi_\app) = (\Omega_0,\Psi_0)$ already 
satisfies $\cR_0 = \cO(\eps^2)$, and this is the reason for which 
the expansions \eqref{appOP} start at $k = 2$ instead of $k = 1$.  
Our goal is to choose the profiles $\Omega_k, \Psi_k, \zeta_k$ in such a way
that $\cR_M = \cO(\eps^{M+1})$ in an appropriate topology.  In fact we can
require a little bit less if we observe that the quantity \eqref{def:resid}
involves another small parameter $\delta = \nu/\Gamma$, which (unlike $\eps$)
does not depend on time.  A priori all profiles $\Omega_k, \Psi_k, \zeta_k$
depend on $\delta$ when $k > 0$, but it turns out that contributions of order
$\cO(\delta^2)$ are negligible for our purposes. So we can assume that
\begin{equation}\label{def:ENS}
  \Omega_k \,=\, \Omega_k^E + \delta\Omega_k^{NS}\,, \qquad
  \Psi_k \,=\, \Psi_k^E + \delta\Psi_k^{NS}\,, \qquad
  \zeta_k \,=\, \zeta_k^E + \delta\zeta_k^{NS}\,,
\end{equation}
where the Euler profiles $\Omega_k^E, \Psi_k^E, \zeta_k^E$ and the viscous
corrections $\Omega_k^{NS}, \Psi_k^{NS},\zeta_k^{NS}$ are now independent of
$\delta$, and can be chosen so that $\cR_M = \cO(\eps^{M+1} + \delta^2\eps^2)$.

A final observation is that the profiles \eqref{def:ENS} are not uniquely
determined unless additional conditions are imposed. For instance, as was already
discussed in Section~\ref{ssec22}, the vorticity $\Omega$ has a vanishing linear
moment with respect to the $\xi_2$ variable only if an appropriate choice
is made for the vertical speed $Z_2'$. Using the notation \eqref{def:mean-mom}
the hypotheses we make on the vorticity profiles can be formulated as follows: 

\begin{hypotheses}\label{Hyp:Om} The vorticity profiles $\Omega_k$ in
\eqref{def:ENS} satisfy:  

\medskip
H1) $\,\mrM[\Omega_0] = 1$ and $\mrM[\Omega_k] = 0$ for all $k \ge 1$; 

\medskip
H2) $\,\mrm_1[\Omega_k] = \mrm_2[\Omega_k] = 0$ for all $k \ge 0$; 

\medskip
H3) $\,\Omega_k^E$ is an {\em even} function of $\xi_2$ for all $k \ge 0$. 
\end{hypotheses}

It is important to note that hypotheses H1, H2 apply to both $\Omega_k^E$
and $\Omega_k^{NS}$, whereas the third assumption H3 only concerns the
Euler profiles $\Omega_k^E$. As a matter of fact, experimental observations
and numerical simulations of counter-rotating vortex pairs in viscous fluids
clearly show that the full vorticity distribution is not symmetric with
respect to the line joining the vortex centers, see \cite{DelRos2009}. 

We are now in position to state the main result of this section.

\begin{proposition}\label{prop:appsol}
Given any integer $M \ge 2$ there exists an approximate solution
of the form \eqref{appOP}, \eqref{appzeta}, \eqref{def:ENS} satisfying
Hypotheses~\ref{Hyp:Om} such that the remainder \eqref{def:resid} satisfies
the estimate
\begin{equation}\label{eq:remest}
  \cR_M \,=\, \cO_\cZ\bigl(\epsilon^{M+1}+\delta^2\epsilon^2\bigr)\,,
\end{equation}
where the notation $\cO_\cZ$ is introduced in Definition~\ref{def:topo}
below. 
\end{proposition}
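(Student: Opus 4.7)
I would substitute the ansatz \eqref{appOP}--\eqref{def:ENS} into the defining expression \eqref{def:resid} and organize the result as a formal double power series in $\eps$ and $\delta$, then choose the profiles $(\Omega_k^E, \Omega_k^{NS}, \zeta_{k-1}^E, \zeta_{k-1}^{NS})$ recursively so that the coefficients vanish up to order $\eps^M + \delta^2\eps^2$. Several ingredients are immediate: since $t\partial_t \eps = \eps/2$, the term $t\partial_t \Omega_\app$ produces a clean polynomial expansion; $\cL \Omega_0 = 0$ because $\Omega_0 = G$ is stationary for the Fokker--Planck operator; and the Poisson bracket $\{\Psi_\app, \Omega_\app\}$ expands bilinearly into $\sum \eps^{j+k}\{\Psi_j,\Omega_k\}$, with $\{\Psi_0,\Omega_0\}=0$ by radial symmetry, so the sum starts at order $\eps^2$ (explaining why the expansions in \eqref{appOP} can skip the linear term).

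The one nontrivial expansion is that of the mirror-vortex contribution $\cT_\eps \Psi_\app(\xi) = \Psi_\app(\widetilde\xi - \eps^{-1}e_1)$. Since the shifted argument is large, I would Taylor expand each $\Psi_k$ about $-e_1/\eps$. Hypotheses H1--H2 force vanishing mass and first moments of $\Omega_k$ for $k \ge 1$, so $\Psi_k$ decays rapidly at infinity and one obtains $\cT_\eps \Psi_\app \sim \sum_{n\ge 0} \eps^n P_n(\xi)$ with coefficients that are explicit harmonic polynomials in $\xi$ (the $Q_n^c, Q_n^s$ alluded to in the paper) weighted by the higher multipole moments of the $\Omega_k$. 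The exception is $\Psi_0$, whose $\frac{1}{2\pi}\log|\eta|$ far-field behavior yields the $\xi_1$-linear piece that, via $\{\cdot,\Omega_0\}$, generates the $\Gamma/(2\pi d)$ translation term precisely cancelled by the leading $\zeta_0 = 1$ in $\zeta_\app$ (consistent with Lemma~\ref{lem:vertspeed}).

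Collecting terms at order $\eps^k$ and separating by powers of $\delta$ yields linear equations of the schematic form
\begin{equation*}
\Lambda\,\Omega_k^E \,+\, \frac{\zeta_{k-1}^E}{2\pi}\,\partial_2 \Omega_0 \,=\, F_k^E, \qquad \Bigl(\Lambda - \cL + \tfrac{k}{2}\Bigr)\Omega_k^{NS} \,+\, \frac{\zeta_{k-1}^{NS}}{2\pi}\,\partial_2\Omega_0 \,=\, F_k^{NS},
\end{equation*}
where $\Lambda\,\Omega := \{\Psi_0,\Omega\} + \{\Delta^{-1}\Omega,\Omega_0\}$ is the linearized 2D Euler operator around the Oseen vortex, and the sources $F_k^\bullet$ depend only on previously constructed profiles plus the multipole-expansion coefficients. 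The scalar $\zeta_{k-1}^\bullet$ is tuned precisely to place the right-hand side in the range of the relevant operator, i.e.\ to enforce the solvability condition dual to the vertical-translation mode $\partial_2\Omega_0$; this is exactly the orthogonality required to preserve Hypothesis H2 for the $\xi_2$-moment. Horizontal-translation solvability is automatic from the $\xi_2$-even symmetry H3, and mass conservation H1 follows because $\Lambda$ and $\cL$ are formally mass-preserving, so $\int F_k = 0$ by induction.

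\textbf{Main obstacle.} The core analytic difficulty is the solvability theory of $\Lambda$: one must identify its kernel (which contains the translation modes $\partial_j\Omega_0$ and the scaling/rotation generators of the Oseen vortex), describe its range, and build a bounded right-inverse on the codimension-one subspace cut out by the moment and symmetry conditions of Hypotheses~\ref{Hyp:Om}, all in a Gaussian-weighted topology compatible with $\cZ$. Granting this, the induction on $k \in \{2,\dots,M\}$ closes, and the remainder estimate \eqref{eq:remest} follows by collecting the leftover pieces: pure $\eps$-contributions of order $\ge M+1$, and mixed $\delta^2$-contributions arising from products of two viscous corrections $\delta\Omega_j^{NS}, \delta\Psi_l^{NS}$ inside the Poisson bracket, each controlled in the $\cZ$-norm through the quantitative decay of the constructed profiles.
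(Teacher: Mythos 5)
Your overall strategy --- iterative construction in powers of $\eps$ and $\delta$, multipole expansion of $\cT_\eps\Psi_\app$, linearized equations governed by $\Lambda$, and tuning of $\zeta_k$ to enforce solvability against the translation mode $\partial_2\Omega_0$ --- is essentially the one the paper follows. There is, however, a concrete gap in your solvability analysis of $\Lambda$. You describe $\Ker(\Lambda)$ as containing ``the translation modes $\partial_j\Omega_0$ and the scaling/rotation generators'' and its range as a ``codimension-one subspace''. In fact $\Lambda$ annihilates the \emph{entire} radial subspace $\cY_0$ (any radially symmetric $\Omega$ has a radially symmetric stream function, and both Poisson brackets in \eqref{def:Lambda2} then vanish), so $\Ker(\Lambda) = \cY_0 \oplus \mathrm{span}\{\partial_1 G,\partial_2 G\}$ is infinite-dimensional and the solvability conditions for $\Lambda f = g$ are $\cP_0 g = 0$ \emph{and} $\mrm_1[g]=\mrm_2[g]=0$. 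For the Euler sources this costs nothing: Hypothesis H3 makes them odd in $\xi_2$, which kills both the radial projection and $\mrm_1$, and $\zeta_k^E$ kills $\mrm_2$. But the viscous sources enjoy no such symmetry; their radial part is genuinely nonzero, cannot be removed by any choice of $\zeta_k^{NS}$, and is not in the range of $\Lambda$. The paper absorbs it by adding a radially symmetric profile $\Omega_{k}^{E,0}$ solving $(\cL - \tfrac{k}{2})\Omega_k^{E,0} + \cP_0\tilde\cH_1 = 0$, which is solvable precisely because $t\partial_t\eps^k = \tfrac{k}{2}\eps^k$ shifts the spectrum of $\cL$ by $k/2>0$ (Lemma~\ref{lem:Linvert}). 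Without this extra ingredient your induction stalls at the first viscous order.

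A secondary remark: your schematic viscous equation $(\Lambda - \cL + \tfrac{k}{2})\Omega_k^{NS} = F_k^{NS}$ does not match the order-$\delta$ coefficient of the expansion, which reads $\Lambda\Omega_k^{NS} + (\cL - \tfrac{k}{2})\Omega_k^{E} + \cH_1 = 0$: the rescaled heat operator acts on the Euler profile, not on the viscous correction. One could instead invert the full operator $\delta(\cL - \tfrac{k}{2}) + \Lambda$ exactly (it is maximal dissipative in $\cY$), but the paper deliberately avoids this because one would then have to check that the solution lies in $\cZ$ and stays bounded as $\delta\to 0$; solving perturbatively in $\delta$ and dumping the leftover $\delta^2(\cL-\tfrac{k}{2})\Omega_k^{NS}$ into the $\cO_\cZ(\delta^2\eps^2)$ error is what keeps the construction clean and uniform.
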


\begin{remark}\label{rem:M}
We do not claim that the approximate solution is uniquely determined
by the properties listed in Proposition~\ref{prop:appsol}, but there
is a canonical choice that makes it unique, see Remark~\ref{rem:unique}
below for a discussion of this question. 
\end{remark}

The choice of the integer $M$, which determines the accuracy of the approximate
solution, depends on the intended purpose. The leading order deformation of the
stream lines and of the level sets of vorticity is already obtained for
$M = 2$, whereas the first correction to the vertical speed $Z_2'$ only
appears when $M = 5$, see Section~\ref{ssec34}. In particular, this means
that $\zeta_k = 0$ for $k = 1,2,3$. As we shall see in Section~\ref{sec:nonlinear},
we need to take $M > 3$ if we want to control the solution of \eqref{eq:NS}
over a time interval $[0,T_\adv]$ that is independent of the viscosity parameter. 
More generally, if $0 \le \sigma < 1$, we need $M > (3+\sigma)/(1-\sigma)$ to reach 
the time $T_\adv \delta^{-\sigma}$. 

\subsection{Function spaces and operators}\label{ssec31}

We first define the function spaces in which our approximate solution will
be constructed. Following \cite{GalWay2005,Gallay2011}, we introduce the
weighted $L^2$ space 
\begin{equation}\label{def:cY}
  \cY \,=\, \biggl\{f\in L^2(\RR^2)\,:\, \int_{\RR^2} |f(\xi)|^2
  \,\e^{|\xi|^2/4}\,\dd \xi < \infty\biggr\}\,,
\end{equation}
which is a Hilbert space equipped with the inner product 
\begin{equation}
  \jap{f,g}_{\cY} \,=\, \int_{\RR^2}f(\xi)\,g(\xi)\,\e^{|\xi|^2/4}\,\dd \xi\,, \qquad
  \forall\,f,g \in \cY\,.
\end{equation}
Using polar coordinates $(r,\theta)$ defined by $\xi  = \bigl(r\cos\theta,r\sin\theta\bigr)$,
we can expand any $f \in \cY$ in a Fourier series with respect to the angular
variable $\theta$. This leads to the direct sum decomposition
\begin{equation}\label{Ydecomp}
  \cY \,= \,\bigoplus_{n=0}^{\infty}\,\cY_n\,,
\end{equation}
where $\cY_n = \bigl\{f\in \cY\,:\, f = a(r)\cos(n\theta) + b(r)\sin(n\theta) \text{ with } a,b:
\RR_+\to\RR\bigr\}$. Note that $\cY_n \perp \cY_{n'}$ if $n \neq n'$, so that the 
decomposition \eqref{Ydecomp} is orthogonal. We also consider the dense subset 
$\cZ \subset \cY$ defined by
\begin{equation}\label{def:calZ}
  \cZ \,=\, \bigl\{f:\RR^2\to \RR \,:\, \xi \mapsto \e^{|\xi|^2/4}f(\xi) \in
  \cS_*(\RR^2)\bigr\}\,,
\end{equation}
where $\cS_*(\RR^2)$ denotes the space of smooth functions with at most
polynomial growth at infinity. More precisely, a function $g : \RR^2 \to \RR$
belongs to $\cS_*(\RR^2)$ if for any multi-index $\alpha=(\alpha_1,\alpha_2) \in \NN^2$
there exists $C>0$ and $N\in \NN$ such that $|\partial^{\alpha}g(\xi)| \le C(1+|\xi|)^N$
for all $\xi\in \RR^2$. As an aside we note that $\cS_*(\RR^2)$ is the multiplier
space of the Schwartz space $\cS(\RR^2)$ and of the space $\cS'(\RR^2)$ of
tempered distributions. Although we do not need to equip $\cS_*(\RR^2)$ with a
precise topology, the following notation will be useful.

\begin{definition}\label{def:topo}
Let $M \in \NN$ be a positive integer. 

\smallskip\noindent 1)
If $g_\eps \in \cS_*(\RR^2)$ depends on a small parameter $\eps > 0$,
we say that $g_\eps \,=\, \cO_{\cS_*}\bigl(\eps^M\bigr)$ if, 
for all $\alpha \in \NN^2$ and all $m \in \{0,\dots,M\}$, there exist 
$C > 0$ such that
\[
  |\partial_\xi^{\alpha} \partial_\eps^{m}g_\eps(\xi)| \,\le\, C(1+|\xi|)^N
  \eps^{M-m}\quad\forall\,\xi \in \RR^2\,.
\]
\noindent 2) Similarly, if $f_\eps \in \cZ$, we write $f_\eps = \cO_{\cZ}
\bigl(\eps^M\bigr)$ if $\e^{|\xi|^2/4}f_\eps = \cO_{\cS_*}\bigl(\eps^M\bigr)$. 
\end{definition}

We next study three linear operators which play an important role in the construction 
of the approximate solution. 

\subsubsection*{{\rm A)} The diffusion operator}

We consider the rescaled diffusion operator $\cL$ defined by \eqref{def:cL} as
a linear operator in $\cY$ with (maximal) domain
\begin{equation}\label{def:cLdom}
  D(\cL) \,=\, \bigl\{f\in \cY \, : \,\Delta f\in \cY, \, 
  \xi\cdot \nabla f \in \cY\bigr\}\,.
\end{equation}
It is well known that $\cL$ is {\em self-adjoint} in the Hilbert space
$\cY$ with compact resolvent and purely discrete spectrum:  
\begin{equation}\label{specL}
  \sigma(\cL) \,=\, \Bigl\{-\frac{n}{2} \, : \, n\in \NN\Bigr\}\,,
\end{equation}
see for instance \cite[Appendix~A]{GalWay2002}. The one-dimensional kernel
of $\cL$ is spanned by the Gaussian function $G$ defined in \eqref{def:G},
whereas the first-order derivatives $\partial_1G, \partial_2 G$ span the
two-dimensional eigenspace corresponding to the eigenvalue $-1/2$. More
generally, the eigenvalue $-n/2$ has multiplicity $n+1$ and the
eigenfunctions are Hermite functions of degree $n$. 

It is easy to verify that the operator $\cL$ is invariant under rotations about
the origin in $\RR^2$, so that it commutes with the direct sum decomposition
\eqref{Ydecomp}: if $f \in \cY_n \cap D(\cL)$, then $\cL f \in \cY_n$.
It is also clear that $\cL \cZ \subset \cZ$, where $\cZ$ is defined by
\eqref{def:calZ}. In the same spirit, the following result will be
established in the Appendix:

\begin{lemma}\label{lem:Linvert}
For any $\kappa > 0$ and any $f \in \cZ$ one has $(\kappa - \cL)^{-1}f
\in \cZ$.   
\end{lemma}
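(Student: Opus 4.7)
The plan is to conjugate $\cL$ by the Gaussian weight so as to reduce the problem to a standard regularity statement for the Ornstein--Uhlenbeck semigroup on $\RR^2$. Setting $u := (\kappa - \cL)^{-1}f \in \cY$, I introduce $v(\xi) := e^{|\xi|^2/4}\,u(\xi)$ and $\widetilde f(\xi) := e^{|\xi|^2/4}\,f(\xi)$, so that $\widetilde f \in \cS_*(\RR^2)$ by definition of $\cZ$. A direct calculation based on the conjugation identity
\begin{equation*}
  \cL\bigl(e^{-|\xi|^2/4}\,v\bigr) \,=\, e^{-|\xi|^2/4}\bigl(\Delta v - \tfrac12\xi\cdot\nabla v\bigr)
\end{equation*}
shows that $v$ satisfies $\kappa v - \widetilde{\cL} v = \widetilde f$, where $\widetilde{\cL} := \Delta - \tfrac12\xi\cdot\nabla$ is the classical Ornstein--Uhlenbeck generator. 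Proving $u \in \cZ$ thus reduces to showing $v \in \cS_*(\RR^2)$.

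Since the spectrum of $\widetilde\cL$ on $L^2(\RR^2,\gamma)$, with $\gamma$ the standard Gaussian measure, coincides with $\{-n/2 : n \in \NN\}$, the point $\kappa > 0$ lies in the resolvent set, and I represent the solution via the semigroup formula
\begin{equation*}
  v \,=\, \int_0^\infty e^{-\kappa t}\,\bigl(e^{t\widetilde\cL}\widetilde f\bigr)\,\dd t\,,
\end{equation*}
where the Mehler representation $(e^{t\widetilde\cL} g)(\xi) = \mathbb{E}\bigl[g\bigl(e^{-t/2}\xi + \sqrt{2(1-e^{-t})}\,Z\bigr)\bigr]$ is valid for any Borel $g$ with polynomial growth, $Z$ denoting a standard Gaussian vector in $\RR^2$. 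If $|\widetilde f(\xi)| \le C(1+|\xi|)^N$, this representation yields the pointwise bound $|(e^{t\widetilde\cL}\widetilde f)(\xi)| \le C'(1+|\xi|)^N$ uniformly in $t \ge 0$, simply by absorbing $|Z|^k$ into the constant after taking expectation.

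For derivatives, differentiating the Mehler formula gives the commutation $\partial^\alpha e^{t\widetilde\cL} = e^{-|\alpha|t/2}\,e^{t\widetilde\cL}\partial^\alpha$, and since $\partial^\alpha\widetilde f \in \cS_*(\RR^2)$, the same polynomial estimates hold for all derivatives $\partial^\alpha(e^{t\widetilde\cL}\widetilde f)$ uniformly in $t$. Integrating against $e^{-\kappa t}$ then yields polynomial bounds on every $\partial^\alpha v$, so $v \in \cS_*(\RR^2)$ and consequently $u = e^{-|\xi|^2/4}\,v \in \cZ$. The main (minor) subtlety is justifying the semigroup integral formula for $v$: this relies on the ergodicity of $e^{t\widetilde\cL}$, which ensures that $e^{t\widetilde\cL}\widetilde f$ converges to the Gaussian mean of $\widetilde f$ at a rate controlled by the spectral gap on the mean-zero subspace, so that the $e^{-\kappa t}$ factor with $\kappa>0$ provides absolute convergence of the integral and identifies it with the unique $L^2(\gamma)$ resolvent.
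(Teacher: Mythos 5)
Your proof is correct and is essentially the paper's argument in a different guise: the paper likewise represents $(\kappa-\cL)^{-1}f$ as the Laplace transform $\int_0^\infty e^{-\kappa\tau}e^{\tau\cL}f\,\dd\tau$ and multiplies the explicit heat-semigroup kernel by $e^{|\xi|^2/4}$, which after completing the square yields exactly the Mehler kernel of your Ornstein--Uhlenbeck semigroup, while your commutation $\partial^\alpha e^{t\widetilde\cL}=e^{-|\alpha|t/2}e^{t\widetilde\cL}\partial^\alpha$ plays the role of the paper's integration by parts that moves derivatives onto $f$. The only (inconsequential) imprecision is that the relevant invariant measure for $\Delta-\tfrac12\xi\cdot\nabla$ is $e^{-|\xi|^2/4}\,\dd\xi$ rather than the standard Gaussian, so $Z$ should have variance $2$ per coordinate --- consistent with your factor $\sqrt{2(1-e^{-t})}$.
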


\subsubsection*{{\rm B)} The advection operator}

Another important operator, denoted by $\Lambda:D(\Lambda)\to \cY$, arises when
linearizing the quadratic term in \eqref{eq:NS} at the Lamb-Oseen vortex.
It is defined by
\begin{equation}\label{def:Lambda}
  \Lambda f \,=\, U^G\cdot \nabla f + \BS[f]\cdot \nabla G\,, \qquad f \in D(\Lambda)\,, 
\end{equation}
where the functions $G,U^G$ are given by \eqref{def:G}, \eqref{def:UG} and
the Biot-Savart operator by \eqref{def:BS}. Equivalently, we have
\begin{equation}\label{def:Lambda2}
  \Lambda f \,=\, \bigl\{\Psi_0\,,\,f\bigr\} + \bigl\{\Delta^{-1}f\,,\,\Omega_0\}\,,
  \qquad f \in D(\Lambda)\,, 
\end{equation}
where $\Omega_0,\Psi_0$ are also defined in \eqref{def:G}, \eqref{def:UG}
and $\{\cdot,\cdot\}$ is the Poisson bracket \eqref{def:PB}. 
The operator $\Lambda$ is considered as acting on the maximal domain
\begin{equation}\label{def:Dlambda}
  D(\Lambda) \,=\, \bigl\{f\in \cY \, : \, U^G\cdot \nabla f \in \cY\bigr\}\,.
\end{equation}
It is not difficult to verify that $\Lambda$ is also invariant under rotations
about the origin, so that it commutes with the the direct sum decomposition
\eqref{Ydecomp}. Moreover, it is clear that $\Lambda f \in \cZ$ if $f \in \cZ$,
because the velocity fields $U^G$ and $\BS[f]$ belong to the multiplier space
$\cS_*(\RR^2)^2$. Finally we recall the following properties established
in \cite{GalWay2005,Maekawa2011,Gallay2011,GaSring}:

\begin{proposition}\label{prop:Lambda}
The operator $\Lambda$ is skew-adjoint in the Hilbert space $\cY$ with
kernel 
\begin{equation}\label{kerLam}
  \Ker(\Lambda) \,=\, \cY_0 \,\oplus\, \bigl\{\beta_1\partial_1 G +
  \beta_2\partial_2 G  \,:\, \beta_1, \beta_2 \in \RR\bigr\}\,.
\end{equation}
In addition, if $g \in \Ker(\Lambda)^\perp \cap \cZ$, the equation $\Lambda f = g$
has a unique solution $f \in \Ker(\Lambda)^\perp \cap \cZ$, and $f$ is an
even function of the variable $\xi_2$ if $g$ is an odd function of $\xi_2$. 
\end{proposition}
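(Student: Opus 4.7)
The plan is to decompose the proposition into three parts: skew-adjointness together with the characterization of $\Ker(\Lambda)$, existence and uniqueness of the solution in $\Ker(\Lambda)^\perp\cap\cZ$, and the parity statement. The first two assertions are essentially contained in the cited references, so my role would be to streamline the argument and pay extra attention to the $\cZ$-regularity. For skew-adjointness I would use the representation $\Lambda f = \{\Psi_0,f\} + \{\Delta^{-1}f,G\}$. The transport piece $\{\Psi_0,f\}$ is antisymmetric with respect to $\jap{\cdot,\cdot}_\cY$ because $U^G = \nabla^\perp\Psi_0$ is orthogonal to $\xi$ and hence to $\nabla e^{|\xi|^2/4}$, so integration by parts absorbs the Gaussian weight with no extra term. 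For the second piece, $\nabla G = -\tfrac{\xi}{2}G$ cancels the weight and reduces the inner product to $\tfrac{1}{8\pi}\int(\partial_\theta\Delta^{-1}f)\,g\,\dd\xi$, which is antisymmetric in $(f,g)$ by self-adjointness of $\Delta^{-1}$ and antisymmetry of $\partial_\theta$. For the kernel, rotational invariance implies that $\Lambda$ preserves each block of the decomposition \eqref{Ydecomp}; on $\cY_0$ one has $\Lambda = 0$ trivially because $U^G$ and $\BS[\cdot]$ are purely azimuthal while $\nabla G$ is radial, and on $\cY_n$ with $n\ge 1$ the restriction reduces to a linear radial ODE system whose nontrivial solutions integrable at $0$ and decaying at $\infty$ exist only at $n = 1$ and are spanned by $\partial_1 G,\partial_2 G$.

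For the existence and uniqueness of $f \in \Ker(\Lambda)^\perp\cap\cZ$ with $\Lambda f = g$, uniqueness is automatic from skew-adjointness. Existence follows block-by-block: on each $\cY_n$ with $n\ge 1$ the equation becomes a pair of linear ODEs for the radial profiles of $f$, and variation of parameters against the two homogeneous branches yields an explicit formula. The orthogonality $g\perp\Ker(\Lambda)$ is precisely the condition that selects the solution with the correct behavior both at $r=0$ and at $r=\infty$. To upgrade from $\cY$ to $\cZ$, I would inspect this explicit formula: if $e^{|\xi|^2/4}g \in \cS_*(\RR^2)$, the Gaussian weight needed to place $f$ in $\cZ$ is already carried by the $G$ factors appearing in the coefficients of the ODE, and the orthogonality condition rules out the Frobenius branches that would otherwise produce exponential growth at infinity, so $e^{|\xi|^2/4}f$ is again smooth with at most polynomial growth.

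For the parity statement, a direct check shows that $U^G_1$ and $\partial_2 G$ are odd functions of $\xi_2$ while $U^G_2$ and $\partial_1 G$ are even; the Biot--Savart integral propagates the same parities to $\BS[f]$, so $\Lambda$ exchanges the even and odd subspaces in $\xi_2$. If $g$ is odd in $\xi_2$ and $f = f_e + f_o$ is the parity decomposition of $f$, projecting $\Lambda f = g$ on the even subspace gives $\Lambda f_o = 0$, so $f_o \in \Ker(\Lambda)\cap\{\text{odd in }\xi_2\} = \mathrm{span}\{\partial_2 G\}$. Since the weight $e^{|\xi|^2/4}$ is even in $\xi_2$, the inner product $\jap{\cdot,\cdot}_\cY$ respects the parity decomposition, so the condition $f\in\Ker(\Lambda)^\perp$ forces $f_o \perp \partial_2 G$, hence $f_o = 0$. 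The main obstacle in this plan is the promotion to $\cZ$: while skew-adjoint theory delivers a bounded $L^2$-inverse on $\Ker(\Lambda)^\perp$, the smoothness and strong Gaussian decay required by $\cZ$ must be extracted from the explicit ODE solutions on each Fourier block, where one has to control both the regularity near $r = 0$ and the exclusion of the exponentially growing branch at infinity.
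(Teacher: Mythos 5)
Your proposal is correct and follows essentially the same route as the paper: the skew-adjointness and the kernel identification \eqref{kerLam} are taken from the cited references (as the paper itself does), and the solvability of $\Lambda f = g$ is obtained mode by mode on the subspaces $\cY_n$ by reducing to a second-order radial ODE whose admissible solution is singled out by its behavior at $r=0$ and $r=+\infty$ — this is exactly the content of Lemmas~\ref{lem:Lambda} and \ref{lem:Lambda2} in Appendix~\ref{ssecA3}, where the paper eliminates the vorticity profile in favor of the stream function profile $\varphi$ and uses the positivity of $n^2/r^2 - h(r)$ for $n \ge 2$ and the Fredholm alternative for $n=1$. Two small remarks. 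First, your statement that the orthogonality $g \perp \Ker(\Lambda)$ ``selects the solution with the correct behavior at $r=0$ and $r=\infty$'' is only accurate for the modes $n=0$ (where it forces $\cP_0 g = 0$, so there is nothing to solve) and $n=1$ (where $\mrm_1[g]=\mrm_2[g]=0$ is the solvability condition for the resonant ODE); for $n \ge 2$ the radial problem is unconditionally and uniquely solvable, with no orthogonality input. Second, your parity argument is a clean abstract alternative to the paper's: you use that $\Lambda$ exchanges the even and odd subspaces in $\xi_2$ and that $\Ker(\Lambda)\cap\{\text{odd in }\xi_2\} = \mathrm{span}\{\partial_2 G\}$, whereas the paper reads the parity off the explicit mode structure ($b(r)\sin(n\theta) \mapsto w(r)\cos(n\theta)$, cf.\ Remark~\ref{rem:sym}); both are valid, and yours has the advantage of not requiring the explicit solution formula.
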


\begin{remark}\label{rem:Lambda}
Since $\Lambda$ is skew-adjoint in $\cY$ we have $\Ker(\Lambda)^\perp =
\overline{\Ran(\Lambda)}$, where $\Ran(\Lambda)$ is the range of $\Lambda$.
So a necessary condition for the equation $\Lambda f = g$ to have a
solution is that $g \perp \Ker(\Lambda)$, which according to \eqref{kerLam} is
equivalent to
\begin{equation}\label{eq:solvcond}
  \cP_0\,g \,=\, 0\,, \quad \text{and}\quad \mrm_1[g] \,=\, \mrm_2[g]
  \,=\, 0\,,
\end{equation}
where $\cP_0$ is the orthogonal projection in $\cY$ onto the subspace $\cY_0$ of
radially symmetric functions, and $\mrm_1, \mrm_2$ are the first-order moments
defined in \eqref{def:mean-mom}. Note that the solvability conditions
\eqref{eq:solvcond} are not sufficient in general to ensure that
$g \in \Ran(\Lambda)$, but under the additional assumption that $g \in \cZ$
Proposition~\ref{prop:Lambda} shows that $g = \Lambda f$ for some $f \in \cZ$.
\end{remark}

\subsubsection*{{\rm C)} The translation/reflection operator}

Finally we study the action of the operator $\cT_\eps$ defined by
\eqref{def:Teps} on functions $\Psi \in \cS_*(\RR^2)$ such that
$\Delta \Psi \in \cZ$. 

\begin{lemma}\label{lem:Teps}
Assume that $\Omega \in \cZ$ and let $\Psi = \Delta^{-1}\Omega$ as in
\eqref{UPsirel}. For each integer $n \ge 1$, let $P_n$ be the polynomial of
degree $n$ given by 
\begin{equation}\label{def:Pn}
  P_n(\xi) \,=\, \frac{(-1)^{n-1}}{n} \frac{1}{2\pi }\int_{\RR^2}Q_n^c
  \bigl(\xi_1 + \eta_1,\xi_2 - \eta_2\bigr)\,\Omega(\eta)\,\dd\eta\,,
\end{equation}
where $Q_n^c$ is the homogeneous polynomial on $\RR^2$ defined by $Q_n^c(r\cos\theta,
r\sin\theta) = r^n\cos(n\theta)$, see Section~\ref{ssecA1}. Then for all
$N \in \NN$ one has the expansion
\begin{equation}\label{Texpansion}
  \bigl(\cT_\eps\Psi\bigr)(\xi) \,=\, C\,\log\frac{1}{\eps}
  + \sum_{n=1}^N \epsilon^n P_n(\xi) + \cO_{\cS_*}\bigl(\epsilon^{N+1}\bigr)\,,
\end{equation}
where $C = \mrM[\Omega]/(2\pi)$ with $\mrM[\Omega]$ given by \eqref{def:mean-mom}. 
\end{lemma}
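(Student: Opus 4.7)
The plan is to compute $(\cT_\eps\Psi)(\xi)$ explicitly using the Biot--Savart representation of $\Psi$, isolate the $\log(1/\eps)$ divergence, expand the remaining logarithm in powers of $\eps$ via a complex-variable identity, and control the remainder by splitting the $\eta$-integration into a near and a far region.

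More precisely, I would first unfold the definitions to obtain
\[
  (\cT_\eps\Psi)(\xi) \,=\, \frac{1}{4\pi}\int_{\RR^2}\log\bigl((\eps^{-1}+\xi_1+\eta_1)^2+(\xi_2-\eta_2)^2\bigr)\,\Omega(\eta)\,\dd\eta\,,
\]
and factor $\eps^{-2}$ out of the argument of the logarithm. This immediately produces the leading term $(\mrM[\Omega]/(2\pi))\log(1/\eps)$, and leaves us to study $\int \log A(\xi,\eta,\eps)\Omega(\eta)\dd\eta$, where $A = |1+\eps w|^2$ with the complex shorthand $w = (\xi_1+\eta_1) + i(\xi_2-\eta_2)$. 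The key identity $\log A = 2\Re\log(1+\eps w)$ combined with the Taylor expansion of $\log(1+z)$ and the observation that $\Re(w^n) = Q_n^c(\xi_1+\eta_1,\xi_2-\eta_2)$ formally reproduces the polynomials $P_n(\xi)$ of \eqref{def:Pn} upon integration against $\Omega$.

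The core difficulty is that $\eps|w|$ is not uniformly small in $\eta$, so the expansion of $\log(1+\eps w)$ only converges in a portion of the $\eta$-plane. I would split $\RR^2$ into the near region $K^+_\eps(\xi) = \{\eta : |\eta - \widetilde\xi| \le 1/(2\eps)\}$ and its complement $K^-_\eps(\xi)$. On $K^+_\eps$ we have $\eps|w| \le 1/2$, so the pointwise Taylor remainder bound $|R_N(\eps w)| \le C(\eps|w|)^{N+1}$ applies, and the Gaussian decay $|\Omega(\eta)| \lesssim (1+|\eta|)^{N_0}e^{-|\eta|^2/4}$ coming from $\Omega \in \cZ$ absorbs the factor $(1+|\xi|+|\eta|)^{N+1}$ to yield a contribution of size $\cO(\eps^{N+1}(1+|\xi|)^{N+1})$. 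On $K^-_\eps$, assuming $\eps(1+|\xi|) \le 1/4$, one has $|\eta| \ge 1/(4\eps)$, so the integral is dominated by $e^{-c/\eps^2}$ even after absorbing the integrable logarithmic singularity of $\log A$ near $\eta = \widetilde\xi - \eps^{-1}e_1$, and this is $\cO(\eps^\infty)$.

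In the complementary regime $\eps(1+|\xi|) > 1/4$, a direct argument suffices: since then $\eps^{N+1} \gtrsim (1+|\xi|)^{-N-1}$, the crude bounds $|(\cT_\eps\Psi)(\xi)| \lesssim \log(2+|\xi|+\eps^{-1})$ and $|\eps^n P_n(\xi)| \lesssim (\eps(1+|\xi|))^n$ are already of the desired form $C(1+|\xi|)^{N'}\eps^{N+1}$ with $N'$ enlarged. Finally, to upgrade the pointwise estimate to the $\cO_{\cS_*}$ statement, I would differentiate under the integral sign: derivatives of $\log A$ in $\xi$ are rational functions involving $1/|1+\eps w|^{2k}$ that are harmless on $K^+_\eps$ and still Gaussian-absorbable on $K^-_\eps$, while derivatives of $Q_n^c$ are lower-degree polynomials. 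The same near/far splitting then yields identical $\eps^{N+1}$ gains with possibly larger polynomial weights in $\xi$, which is exactly what $\cO_{\cS_*}(\eps^{N+1})$ requires.
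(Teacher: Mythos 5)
Your proposal is correct and follows essentially the same route as the paper's proof: isolate the $\log(1/\eps)$ term by factoring $\eps^{-2}$ out of the kernel, expand $\log|1+\eps w|^2 = 2\,\Re\log(1+\eps w)$ into the $Q_n^c$ series (this is exactly the paper's identity \eqref{eq:Qnlog}), and split the $\eta$-integration into a region where $\eps|w|\le 1/2$ and a far region where the Gaussian decay of $\Omega\in\cZ$ makes everything $\cO(\eps^\infty)$. The only cosmetic differences are that you center the near region at $\widetilde\xi$ rather than restricting $|\xi|,|\eta|\le 1/(4\eps)$ as the paper does, and that you spell out explicitly the large-$|\xi|$ regime and the differentiation under the integral sign, both of which the paper dispatches by noting that functions in $\cS_*(\RR^2)$ restricted to $|\xi|\gtrsim \eps^{-1}$ are automatically $\cO_{\cS_*}(\eps^N)$ for all $N$.
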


\begin{proof}
Using the representation \eqref{UPsirel} and the definition \eqref{def:Teps} of the
operator $\cT_\eps$, we find
\begin{align*}
  \bigl(\cT_\eps\Psi\bigr)(\xi) \,&=\, \Psi(-\xi_1-1/\eps,\xi_2) \,=\,
  \frac{1}{2\pi}\int_{\RR^2}\log\sqrt{|\xi_1{+}\eta_1{+}1/\eps|^2 +
  |\xi_2{-}\eta_2|^2}\,\Omega(\eta)\,\dd\eta \\
  \,&=\, \frac{1}{2\pi}\,\log\frac{1}{\eps}\int_{\RR^2}\Omega(\eta)\,\dd\eta
  + \frac{1}{4\pi}\int_{\RR^2}\log\Bigl(|1{+}\eps(\xi_1{+}\eta_1)|^2 + \eps^2
  |\xi_2{-}\eta_2|^2\Bigr)\,\Omega(\eta)\,\dd\eta\,.                              
\end{align*}
The first term in the right-hand side is $C \log(1/\eps)$, so we need only
consider the last integral. We can also assume that $\xi \in B_{R_\eps}$, where
$B_{R_{\eps}}$ is the ball of radius $R_\eps:=1/(4\eps)$ centered at the origin. Indeed, 
any function in $S_*(\RR^2)$ restricted to the complementary region $B_{R_\eps}^c$ is 
already of order $\eps^N$ for all $N$, and we clearly have $\cT_\eps\Psi\in S_*(\RR^2)$. 
Similarly we can restrict the domain of integration so that $\eta \in B_{R_\eps}$, 
up to negligible errors. Now, if $\xi,\eta \in B_{R_\eps}$, we denote 
$x = \eps(\xi_1+\eta_1,\xi_2-\eta_2) \in \RR^2$ and we use the expansion
\begin{equation}\label{eq:Qnlog}
  \frac12 \log\bigl(1 + 2x_1 + |x|^2\bigr) \,=\, \sum_{n=1}^\infty \frac{(-1)^{n-1}}{n}
  \,Q_n^c(x)\,, \qquad |x| < 1\,,
\end{equation}
which is justified in Section~\ref{ssecA1}. Truncating the Taylor series at
order $N$ and estimating the integral remainder together with its derivatives
with respect to $\xi$ and $\eps$, we obtain the formula
\[
  \bigl(\cT_\eps\Psi\bigr)(\xi) \,=\, C\,\log\frac{1}{\eps} + \sum_{n=1}^N
  \frac{(-1)^{n-1}}{n} \frac{\eps^n}{2\pi}\int_{B_{R_\eps}}Q_n^c\bigl(\xi_1+\eta_1,\xi_2-\eta_2)
  \,\Omega(\eta)\,\dd\eta + \cO_{\cS_*}\bigl(\epsilon^{N+1}\bigr)\,.
\]
Since $\Omega\in \cZ$, we can replace the integral over $B_{R_\eps}$ with the
integral over $\RR^2$ up to an error of size $\mathcal{O}_{\cS_*}(\eps^{N+1})$.
Thus \eqref{Texpansion} is proved in view of \eqref{def:Pn}.
\end{proof}

\begin{remark}\label{rem:OO}
In the sequel, when controlling the size of error terms, we shall use the
notation $\cO_{\cS_*}(\eps^M)$ without explicitly checking that all conditions
required by Definition~\ref{def:topo} are satisfied. Indeed, derivatives with
respect to $\xi$ are straightforward to estimate, and the dependence of our
quantities with respect to $\eps$ is either polynomial or occurs through the
operator $\cT_\eps$.  In the latter case, one can argue as in the proof of Lemma
\ref{lem:Teps}.
\end{remark}

\begin{remark}\label{rem:firstPn}
For $n = 1$ and $2$, it follows from \eqref{def:Pn} that
\[
  P_1(\xi) \,=\, \frac{1}{2\pi}\int_{\RR^2}(\xi_1+\eta_1)\,\Omega(\eta)\,\dd\eta\,, \quad
  P_2(\xi) \,=\, -\frac{1}{4\pi}\int_{\RR^2}\Bigl[(\xi_1{+}\eta_1)^2 - (\xi_2{-}\eta_2)^2\Bigr]
  \,\Omega(\eta)\,\dd\eta\,. 
\]
In particular, if $\mrM[\Omega] = 0$, we have $\nabla P_1 = 0$ so that $\nabla\cT_\eps
\Psi = \cO_{\cS_*}(\epsilon^2)$. If in addition $\mrm_1[\Omega] = \mrm_2[\Omega] = 0$, then
$\nabla P_2 = 0$ so that $\nabla\cT_\eps\Psi = \cO_{\cS_*}(\epsilon^3)$. 
\end{remark}

\subsection{The second order approximation}\label{ssec32}

Before starting the construction of our approximate solution, we compute
the error generated by the naive approximation $\Omega_\app = \Omega_0$,
$\zeta_\app = 1$, which corresponds to setting $M = 0$ in \eqref{appOP},
\eqref{appzeta}. In that case, since $\cL\Omega_0 = t\partial_t \Omega_0 = 0$
and $\{\Psi_0,\Omega_0\} = 0$, the remainder \eqref{def:resid} reduces to 
\begin{equation}\label{def:resid0}
  \cR_0 \,:=\, \Bigl\{-\cT_\eps \Psi_0 +\frac{\eps\xi_1}{2\pi}\,,\,
  \Omega_0\Bigr\} \,=\, \Bigl(\mathbf{T}_\eps U^G + \frac{\eps}{2\pi}\,e_2
  \Bigr)\cdot \nabla G\,,
\end{equation}
where the operators $\cT_\eps$ and $\mathbf{T}_\eps$ are defined in \eqref{def:Teps}
and \eqref{def:Uelleps}, respectively. The following statement is a particular
case of the results established in \cite[Section~3.1]{Gallay2011}. We
give a short proof for the reader's convenience. 

\begin{proposition}\label{prop:naive}
For any integer $N \ge 2$ the remainder \eqref{def:resid0} satisfies
\begin{equation}\label{Rem0exp}
  \cR_0(\xi) \,=\, \frac{1}{4\pi} \sum_{n=2}^N (-1)^{n-1} \eps^n Q_n^s(\xi)\,G(\xi)
   + \cO_\cZ\bigl(\eps^{N+1}\bigr)\,,
\end{equation}
where $Q_n^s$ is the homogeneous polynomial on $\RR^2$ defined by $Q_n^s(r\cos\theta,
r\sin\theta) = r^n\sin(n\theta)$, see Section~\ref{ssecA1}. 
\end{proposition}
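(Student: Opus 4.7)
The approach is to substitute the expansion of $\cT_\eps\Psi_0$ provided by Lemma~\ref{lem:Teps} into the formula
\[
  \cR_0 \,=\, \Bigl\{-\cT_\eps\Psi_0 + \frac{\eps\xi_1}{2\pi}\,,\,\Omega_0\Bigr\},
\]
and to exploit the radial symmetry of $\Omega_0 = G$ in order to make all coefficients explicit.

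First, using $\nabla G = -\tfrac12\xi\,G$, one reduces the Poisson bracket with $G$ to a purely angular derivative:
\[
  \{f,G\} \,=\, \nabla^\perp f\cdot\nabla G \,=\, \tfrac12\,G\,\partial_\theta f,
  \qquad \partial_\theta \,=\, -\xi_2\partial_1 + \xi_1\partial_2.
\]
This already explains two features of \eqref{Rem0exp}: the Gaussian factor $G$ appearing in front (hence control in $\cZ$ rather than merely in $\cS_*$), and the fact that only the angular part of $-\cT_\eps\Psi_0 + \eps\xi_1/(2\pi)$ contributes.

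Next I apply Lemma~\ref{lem:Teps} to $\Omega = G$, for which $\mrM[G]=1$. The logarithmic term contributes a constant in $\xi$, which is annihilated by $\partial_\theta$. The main computation is the simplification of the polynomials $P_n$ using the rotation invariance of $G$. Writing $Q_n^c(\xi+\eta) = \Re\bigl((z+w)^n\bigr)$ with $z = \xi_1+i\xi_2$, $w = \eta_1+i\eta_2$, expanding the binomial, and using that $\int_{\RR^2} w^k\,G(\eta)\,\dd\eta = 0$ for every $k\ge 1$ (by angular integration), only the $k=0$ term survives. After a change of variable $\eta_2 \leftrightarrow -\eta_2$, allowed by the evenness of $G$ in $\eta_2$, this yields
\[
  P_n(\xi) \,=\, \frac{(-1)^{n-1}}{2\pi n}\,Q_n^c(\xi), \qquad n \ge 1.
\]
Since $\partial_\theta Q_n^c = -n\,Q_n^s$, one obtains $\partial_\theta P_n = \tfrac{(-1)^n}{2\pi} Q_n^s$. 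At order $\eps^1$, the contribution $-\eps\,\partial_\theta P_1 = \tfrac{\eps}{2\pi}Q_1^s$ coming from $-\cT_\eps\Psi_0$ is exactly cancelled by $\partial_\theta\bigl(\eps\xi_1/(2\pi)\bigr) = -\tfrac{\eps}{2\pi}Q_1^s$, which is why the expansion \eqref{Rem0exp} starts at $n=2$.

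Combining these ingredients gives
\[
  \partial_\theta\Bigl(-\cT_\eps\Psi_0 + \frac{\eps\xi_1}{2\pi}\Bigr)
  \,=\, \sum_{n=2}^N \frac{(-1)^{n-1}}{2\pi}\,\eps^n Q_n^s + \cO_{\cS_*}\bigl(\eps^{N+1}\bigr),
\]
and multiplying by $\tfrac12 G$ yields \eqref{Rem0exp}. The remainder bookkeeping is routine: $\partial_\theta$ preserves the class $\cO_{\cS_*}$ (only raising the polynomial weight by one), and multiplication by the Gaussian $G$ converts $\cO_{\cS_*}(\eps^{N+1})$ into $\cO_\cZ(\eps^{N+1})$ by the very definition of $\cZ$. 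The only substantive step is the rotational-invariance identity $\int Q_n^c(\xi+\eta)G(\eta)\,\dd\eta = Q_n^c(\xi)$; everything else is algebraic manipulation and careful tracking of normalizations, so no real obstacle is expected.
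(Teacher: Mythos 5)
Your proof is correct, and it follows a genuinely different (though closely related) route from the paper's. The paper works directly at the level of the velocity field: it inserts the explicit formula \eqref{def:UG} for $U^G$ into \eqref{def:resid0}, discards the exponentially small term $e^{-|\xi+\eta|^2/4}$ on the ball $|\xi|\le 1/(2\eps)$, and then applies the generating-function identity \eqref{eq:Qnid} to the combination $\xi\cdot\bigl(\tfrac{(\xi+\eta)^\perp}{|\xi+\eta|^2}-\tfrac{\eta^\perp}{|\eta|^2}\bigr)$ with $\eta=\eps^{-1}e_1$, which produces the series $\sum_{n\ge 2}(-1)^{n-1}\eps^n Q_n^s(\xi)$ in one stroke. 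You instead work at the level of the stream function, reusing Lemma~\ref{lem:Teps} to expand $\cT_\eps\Psi_0$, reducing $\{\cdot\,,G\}$ to $\tfrac12 G\,\partial_\theta$, and evaluating the polynomials $P_n$ explicitly via the rotational invariance of $G$ (your identity $\int_{\RR^2}Q_n^c(\xi_1{+}\eta_1,\xi_2{-}\eta_2)G(\eta)\,\dd\eta=Q_n^c(\xi)$ checks out against Remark~\ref{rem:firstPn} for $n=1,2$, and the signs, the $n=1$ cancellation with $\eps\xi_1/(2\pi)$, and the final normalization $\tfrac12\cdot\tfrac{1}{2\pi}=\tfrac{1}{4\pi}$ are all right). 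What your approach buys is economy: it makes the proposition an almost immediate corollary of the already-established Lemma~\ref{lem:Teps}, and the appearance of the Gaussian prefactor (hence the $\cO_\cZ$ rather than $\cO_{\cS_*}$ error) is transparent from $\{f,G\}=\tfrac12 G\,\partial_\theta f$. The paper's computation is more self-contained and is deliberately parallel to the proof of Lemma~\ref{lem:vertspeed}, which computes the companion cosine series for the vertical speed. The only bookkeeping point worth making explicit — and you do address it — is that $\partial_\theta$ maps $\cO_{\cS_*}(\eps^{N+1})$ to $\cO_{\cS_*}(\eps^{N+1})$ and multiplication by $G$ then lands in $\cO_\cZ(\eps^{N+1})$, since $e^{|\xi|^2/4}G$ is constant.
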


\begin{proof}
We introduce the notation $\eta=\eps^{-1}e_1$. Using the definition of $U^G$
in \eqref{def:UG}, we find
\begin{equation}\label{id:U0ell}
  \bigl(\mathbf{T}_\eps\,U^G\bigr)(\xi) \,=\, \widetilde{U^G}\bigl(
  \widetilde{\xi}-\eta\bigr) \,=\, -U^G(\xi+\eta)\,, \quad \text{and}\quad
  \frac{\eps}{2\pi}e_2 \,=\, \frac{1}{2\pi}\frac{\eta^\perp}{|\eta|^2}\,,
\end{equation}
hence
\begin{equation}\label{def:naiveR}
  \cR_0(\xi) \,=\, \frac{1}{2\pi}\biggl(\frac{\eta^\perp}{|\eta|^2} -
  \frac{(\xi+\eta)^\perp}{|\xi+\eta|^2} + \frac{(\xi+\eta)^\perp}{|\xi+\eta|^2}
  \,\e^{-|\xi+\eta|^2/4}\biggr) \cdot \nabla G(\xi)\,.
\end{equation}
To prove \eqref{Rem0exp}, it is sufficient to estimate \eqref{def:naiveR}
for $|\xi| \le 1/(2\epsilon)$, because in the complementary region
the right-hand side of \eqref{def:naiveR} is of order $\epsilon^\infty$ in $\cZ$.
If $|\xi| \le 1/(2\epsilon)$, then $|\xi+\eta|^2 \ge 1/(4\eps^2)$ so that
the exponential factor in \eqref{def:naiveR} is $\cO(\eps^\infty)$. Therefore,
since $\nabla G(\xi) = -(\xi/2) G(\xi)$, we obtain
\begin{equation}\label{Rem0exp2}
  \cR_0(\xi) \,=\, \frac{1}{4\pi}\,\xi\cdot\biggl(\frac{(\xi+\eta)^\perp}{|\xi+\eta|^2}
  - \frac{\eta^\perp}{|\eta|^2}\biggr) G(\xi) \,\chi\bigl(\eps|\xi|\bigr) + 
  \cO_\cZ\bigl(\eps^\infty\bigr)\,,
\end{equation}
where $\chi$ is a smooth function on $\RR_+$ such that $\chi(r) = 1$ for
$r \le 1/4$ and $\chi(r) = 0$ for $r \ge 1/2$. To conclude the proof we observe
that, for $|\xi| \le 1/(2\epsilon)$, 
\[
  \xi\cdot\biggl(\frac{(\xi+\eta)^\perp}{|\xi+\eta|^2}-\frac{\eta^\perp}{|\eta|^2}\biggr)
  \,=\, \eps\xi_2 \biggl(\frac{1}{|1+\eps\xi_1|^2+|\eps \xi_2|^2} - 1\biggr)
  \,=\, \sum_{n=2}^\infty (-1)^{n-1}\epsilon^n Q_n^s(\xi)\,,
\]
where the last equality follows from \eqref{eq:Qnid}. Thus \eqref{Rem0exp2}
implies \eqref{Rem0exp}. 
\end{proof}

\begin{remark}\label{rem:naive}
It is important to notice that, according to \eqref{Rem0exp}, the remainder
$\cR_0$ is already of order $\eps^2$. As is clear from the above proof, 
the cancellation of the first order in $\eps$ is due to the choice $\zeta_\app = 1$,
which in turn is equivalent to $Z_2' = \Gamma/(2\pi d)$. In other words,
in our approach the translation speed given by the Helmholtz-Kirchhoff system
can be recovered by minimizing the error of the naive approximation.
This is a general feature of interacting vortices in the plane, see 
\cite{Gallay2011}. In the case of axisymmetric vortex rings, which
share many similarities with counter-rotating vortex pairs, the
remainder of the naive approximation is $\cO(\eps)$ and not better,
even if the translation speed is chosen appropriately, see \cite{GaSring}. 
\end{remark}

Since $\cR_0 = \cO_{\cZ}(\eps^2)$, the first nontrivial step in our construction is 
the second order approximation, corresponding to $M = 2$, for which 
$\Omega_\app = \Omega_0 + \epsilon^2 \Omega_2$ and $\Psi_\app = \Psi_0 
+ \epsilon^2\Psi_2$. Although this is not obvious, we anticipate 
that $\zeta_1 = 0$, which means that there is no correction to the translation 
speed at this level of approximation. To determine the unknown profile 
$\Omega_2$, the strategy is again to minimize the remainder \eqref{def:resid}, 
which takes the form
\begin{equation}\label{def:Res2}
\begin{split}
  \cR_2 \,&=\, \delta\epsilon^2 (\cL-1)\Omega_2 + \Bigl\{\Psi_0 -
  \cT_\eps\Psi_0 + \frac{\eps\xi_1}{2\pi} + \epsilon^2 \Psi_2 -
  \epsilon^2 \cT_\eps\Psi_2\,,\, \Omega_0 + \epsilon^2\Omega_2\Bigr\} \\
  \,&=\, \delta\epsilon^2 (\cL-1)\Omega_2 + \cR_0 + \epsilon^2
  \Lambda \Omega_2 + \epsilon^2 \cN_2\,,
\end{split}  
\end{equation}
where $\Lambda$ is the linear operator \eqref{def:Lambda2} and
\begin{equation}\label{def:R2}
  \cN_2 \,=\, \Bigl\{-\cT_\eps\Psi_0 + \frac{\eps\xi_1}{2\pi}
  + \epsilon^2 \Psi_2 - \epsilon^2 \cT_\eps\Psi_2\,,\,\Omega_2\Bigr\}
  - \Bigl\{\cT_\eps \Psi_2 \,,\, \Omega_0\Bigl\}\,.
\end{equation}
Invoking Proposition~\ref{prop:naive} with $N = 2$, we obtain
\begin{equation}\label{def:cH2}
  \cR_0 \,=\, \epsilon^2 \cH_2 + \cO_\cZ\bigl(\eps^3\bigr)\,, \qquad \text{where}
  \quad \cH_2(\xi) \,=\, -\frac{1}{2\pi}\,\xi_1\xi_2 G(\xi)\,,
\end{equation}
and we can thus write the remainder \eqref{def:Res21} in the form
\begin{equation}\label{def:Res21}
  \cR_2 \,=\, \epsilon^2\Bigl[\delta(\cL-1)\Omega_2 + \Lambda \Omega_2
  + \cH_2\Bigr] + \epsilon^2 \cN_2 + \cO_\cZ\bigl(\eps^3\bigr)\,.
\end{equation}

The properties recalled in Section~\ref{ssec31} imply that the linear operator
$\delta(\cL-1) + \Lambda$ is maximally dissipative in the Hilbert space $\cY$,
hence invertible for any $\delta > 0$. Since $\cH_2 \in \cZ \subset \cY$, there
exists a unique profile $\Omega_2 \in \cY$ such that the quantity inside
brackets in \eqref{def:Res21} vanishes exactly. However, we need to verify that
$\Omega_2 \in \cZ$ and that $\Omega_2$ does not blow up in the limit
$\delta \to 0$, which is not immediately obvious. For these reasons, we find it
simpler to solve the problem approximately, using only the information given by
Proposition~\ref{prop:Lambda}.  We set
$\Omega_2 = \Omega_2^E + \delta\Omega_2^{NS}$, where

\medskip i) $\Omega_2^E \in \cY_2 \cap \cZ$ is the unique solution
of $\Lambda \Omega_2^E + \cH_2 = 0$;

\medskip ii) $\Omega_2^{NS} \in \cY_2 \cap \cZ$ is the unique
solution of $\Lambda \Omega_2^{NS} + (\cL-1)\Omega_2^E = 0$.

\medskip
We recall that $\cY_2$ is the subspace of $\cY$ corresponding to the angular
Fourier mode $n = 2$, see \eqref{Ydecomp}. The explicit expression \eqref{def:cH2}
shows that $\cH_2 \in \cY_2 \cap \cZ$, hence $\cH_2 \in \Ker(\Lambda)^\perp$
in view of \eqref{kerLam}. Applying Proposition~\ref{prop:Lambda} we conclude
that the equation $\Lambda \Omega_2^E + \cH_2 = 0$ has indeed a unique solution
$\Omega_2^E \in \cY_2\cap \cZ$, which is an even function of $\xi_2$ because
$\cH_2$ is obviously odd with respect to $\xi_2$. Similarly, since 
$(\cL-1)\Omega_2^E \in  \cY_2 \cap \cZ$, it follows from Proposition~\ref{prop:Lambda}
that the equation $\Lambda \Omega_2^{NS} + (\cL-1)\Omega_2^E = 0$ has a unique
solution $\Omega_2^{NS} \in \cY_2 \cap \cZ$. We conclude that the full profile
$\Omega_2$ belongs to $\cY_2 \cap \cZ$, which implies in particular that the moment
conditions in Hypotheses~\ref{Hyp:Om} are automatically satisfied. 

\begin{remark}\label{rem:explicit2}
It is possible to obtain a more explicit expression of the profile $\Omega_2$,
in terms of solutions of linear ODEs on $\RR_+$, see Section~\ref{ssec34}.  In
particular $\Omega_2^E(\xi) = (\xi_2^2 - \xi_1^2)w_2(|\xi|)$ for some smooth and
{\em nonnegative} function $w_2$ with Gaussian decay at infinity.  The profile
$\Omega_2^E$, which represents the leading order correction to the radially
symmetric vortex $\Omega_0$ in the expansion \eqref{appOP}, is responsible for
the deformation of the stream lines and of the level lines of the vorticity,
which are nearly elliptical at this order of approximation, see
Fig.~\ref{fig1}. As an aside, since $\cH_2 = \{h_2, G\}$ for a suitable function
$h_2$, it is observed in \cite{DD25} that $\Omega_2^E$ can be represented by a
Neumann series involving the operator $A^{-1/2}(-\Delta)^{-1}A^{-1/2}$, where
$A$ is defined in \eqref{def:A} below. This is, in fact, a consequence of the
analysis carried out in \cite{GaSarnold}.
\end{remark}

With the above choice of $\Omega_2$ the remainder \eqref{def:Res2} takes the form
\begin{equation}\label{def:Res22}
  \cR_2 \,=\, \epsilon^2 \delta^2(\cL-1)\Omega_2^{NS} + \epsilon^2 \cN_2
  + \cO_\cZ\bigl(\eps^3\bigr) \,=\, \cO_\cZ\bigl(\eps^3 + \delta^2\epsilon^2\bigr)\,,
\end{equation}
because it is easy to verify using \eqref{def:R2}, Lemma~\ref{lem:Teps} and
Remark~\ref{rem:firstPn} that $\cN_2 = \cO_\cZ(\eps^2)$. This concludes the proof
of Proposition~\ref{prop:appsol} in the particular case where $M = 2$. 

\begin{remark}\label{rem:unique}
A minor drawback of solving the linear equation $\delta(\cL-1)\Omega_2 + \Lambda
\Omega_2 + \cH_2 = 0$ perturbatively in $\delta$ is that the solution is not
unique. Indeed, since the subspace $\cY_0$ of radially symmetric functions
is contained in $\Ker(\Lambda)$ by \eqref{kerLam}, we can add to $\Omega_2^{NS}$ any
element of $\cY_0 \cap \cZ$ without affecting the remainder estimate \eqref{def:Res22},
and Hypotheses~\ref{Hyp:Om} are still satisfied as well. Uniqueness is restored
if one assumes that $\cP_0 \Omega_2^{NS} = 0$, where $\cP_0$ is the orthogonal projection
in $\cY$ onto $\cY_0$. Note that we must always impose $\cP_0 \Omega_2^E = 0$, otherwise 
equation ii) above has no solution. 
\end{remark}

\subsection{The induction step}\label{ssec33}

We now use an induction argument to complete the proof of Proposition~\ref{prop:appsol}.
Assume that the conclusion holds for some integer $M \ge 2$ (the case $M = 2$
being settled in Section~\ref{ssec32}.) We consider a refined approximate
solution of the form
\begin{equation}\label{appsol2}
  \tilde\Omega_\app = \Omega_\app + \epsilon^{M+1} \Omega_{M+1}\,, \quad
  \tilde\Psi_\app = \Psi_\app + \epsilon^{M+1} \Psi_{M+1}\,, \quad
  \tilde\zeta_\app = \zeta_\app + \epsilon^M \zeta_M\,,
\end{equation}
where $\Omega_\app, \Psi_\app, \zeta_\app$ are as in \eqref{appOP}, \eqref{appzeta},
and where $\Omega_{M+1} \in \cZ$, $\Psi_{M+1} \in \cS_*(\RR^2)$, $\zeta_M \in \RR$ have
to be determined so that $\cR_{M+1} = \cO_\cZ(\epsilon^{M+2}+\delta^2\epsilon^2)$. 

We first study the remainder $\cR_M$ given by \eqref{def:resid}, which is a quadratic
polynomial in the parameter $\delta$ in view of \eqref{def:ENS}. Using in particular
Lemma~\ref{lem:Teps}, we can expand the right-hand side of \eqref{def:resid}
in powers of $\eps$ and, by induction hypothesis, the expansion starts at order
$\cO_\cZ(\eps^{M+1})$ for the terms that are proportional to $\delta^0$ or $\delta^1$.
In other words, there exist $\cH_0,\cH_1 \in \cZ$ such that
\begin{equation}\label{Remdec}
  \cR_M \,=\, \epsilon^{M+1} \cH_0 + \delta \epsilon^{M+1} \cH_1 +
  \cO_\cZ\bigl(\epsilon^{M+2}+\delta^2\epsilon^2\bigr)\,.
\end{equation}
The idea is of course to choose $\Omega_{M+1},\Psi_{M+1},\zeta_M$ so as to
cancel the terms $\cH_0, \cH_1$ in \eqref{Remdec}. To do that, we need some
information on the first order moments.  Using \eqref{def:resid}, one can check
by a direct calculation that $\mrM[\cR_M] = \mrm_1[\cR_M] = 0$, so that
$\mrM[\cH_j] = \mrm_1[\cH_j] = 0$ for $j = 0,1$. However, we have
$\mrm_2[\cR_M] \neq 0$ in general. In addition, it follows from
Hypotheses~\ref{Hyp:Om} that $\cR_M$ is an odd function of $\xi_2$ when
$\delta = 0$, which implies that $\cH_0$ is an odd function of $\xi_2$.

We next consider the remainder of the refined approximation \eqref{appsol2},
which reads 
\begin{align}\nonumber
  \cR_{M+1} \,&:=\, \delta\bigl(\cL\tilde\Omega_\app - t\partial_t\tilde
  \Omega_\app\bigr) + \Bigl\{\tilde\Psi_\app - \cT_\eps\tilde\Psi_\app
  +\frac{\eps\xi_1}{2\pi}\,\tilde\zeta_\app\,,\, \tilde\Omega_\app\Bigr\} \\ \label{RMa}
  \,&=\, \cR_M + \delta\epsilon^{M+1}\bigl(\cL- {\TS\frac{M+1}{2}}\bigr)\,\Omega_{M+1}
  + \epsilon^{M+1} \Bigl\{\Psi_\app - \cT_\eps\Psi_\app +\frac{\eps\xi_1}{2\pi}\,\zeta_\app\,,\,
  \Omega_{M+1}\Bigr\}\\ \nonumber
  & \quad\, + \,\epsilon^{M+1}\Bigl\{\Psi_{M+1} - \cT_\eps\Psi_{M+1} +\frac{\xi_1}{2\pi}\,\zeta_M
    \,,\, \Omega_\app + \epsilon^{M+1}\Omega_{M+1}\Bigr\}\,.
\end{align}
Using the expansion \eqref{Remdec} and the identity
\[
  \bigl\{\Psi_0\,,\,\Omega_{M+1}\bigr\} + \Bigl\{\Psi_{M+1} +\frac{\xi_1}{2\pi}\,\zeta_M
  \,,\, \Omega_0\Bigr\} \,=\, \Lambda\Omega_{M+1} + \frac{\zeta_M}{2\pi}\,\partial_2
  \Omega_0\,,
\]
where $\Lambda$ is the differential operator \eqref{def:Lambda}, we can write the
quantity $\cR_{M+1}$ in the form
\begin{equation}\label{RMb}
  \cR_{M+1} \,=\, \eps^{M+1} \cA_{M+1} + \eps^{M+1} \cN_{M+1} + \cO_\cZ\bigl(\epsilon^{M+2}
  +\delta^2\epsilon^2\bigr)\,,
\end{equation}
where $\cA_{M+1}$ is the collection of the principal terms:  
\begin{equation}\label{Pidef}
   \cA_{M+1} \,=\,  \delta\bigl(\cL - {\TS\frac{M+1}{2}}
  \bigr)\Omega_{M+1} + \Lambda\Omega_{M+1} + \cH_0 + \delta \cH_1 + \frac{\zeta_M}{2\pi}
  \,\partial_2 \Omega_0\,,
\end{equation}
whereas $\cN_{M+1}$ gathers higher order corrections: 
\begin{align*}
  \cN_{M+1} \,&=\, \Bigl\{\Psi_\app - \Psi_0 - \cT_\eps\Psi_\app +\frac{\eps\xi_1}{2\pi}
  \,\zeta_\app\,,\,\Omega_{M+1}\Bigr\} \\
  & + \Bigl\{\Psi_{M+1} +\frac{\xi_1}{2\pi}\,\zeta_M
  \,,\, \Omega_\app - \Omega_0 + \epsilon^{M+1}\Omega_{M+1}\Bigr\} 
  -\Bigl\{\cT_\eps\Psi_{M+1}\,,\, \Omega_\app + \epsilon^{M+1}\Omega_{M+1}\Bigr\}\,.
\end{align*}

We now determine $\Omega_{M+1},\Psi_{M+1},\zeta_M$ so as to minimize the quantity
$\cA_{M+1}$. We first define the correction to the vertical speed: 
\begin{equation}\label{zMdef}
  \zeta_M \,=\, \zeta_M^E + \delta\zeta_M^{NS}\,, \quad\text{where}\quad  
  \frac{\zeta_M^E}{2\pi} \,=\, \int_{\RR^2}\xi_2 \cH_0(\xi)\,\dd\xi\,, \quad
  \frac{\zeta_M^{NS}}{2\pi} \,=\, \int_{\RR^2}\xi_2 \cH_1(\xi)\,\dd\xi\,.
\end{equation}
We thus have $\cA_{M+1} =  \delta\bigl(\cL - {\TS\frac{M+1}{2}}\bigr)\Omega_{M+1} +
\Lambda\Omega_{M+1} + \tilde\cH_0 + \delta \tilde\cH_1$, where
\begin{equation}\label{def:tildeH}
  \tilde \cH_0 \,:=\, \cH_0 + \frac{\zeta_M^E}{2\pi}\,\partial_2 \Omega_0\,, \qquad
  \tilde \cH_1 \,:=\, \cH_1 + \frac{\zeta_M^{NS}}{2\pi}\,\partial_2 \Omega_0\,,
\end{equation}
and the choice \eqref{zMdef} ensures that $\mrm_2[\tilde\cH_0] = \mrm_2[\tilde\cH_1] = 0$. 
We next define
\begin{equation}\label{OMdef}
  \Omega_{M+1} \,=\, \Omega_{M+1}^{E,0} + \Omega_{M+1}^{E,1} + \delta
  \Omega_{M+1}^{NS}\,, 
\end{equation}
where the vorticity profiles $\Omega_{M+1}^{E,0}, \Omega_{M+1}^{E,1}, \Omega_{M+1}^{NS}$
are determined in the following way: 

\begin{enumerate}[leftmargin=15pt,itemsep=2pt]

\item The radially symmetric function $\Omega_{M+1}^{E,0} \in \cY_0 \cap \cZ$ 
is the unique solution, given by Lemma~\ref{lem:Linvert}, of the elliptic equation
\begin{equation}\label{def:E,0}
  \bigl(\cL - {\TS\frac{M+1}{2}}\bigr)\Omega_{M+1}^{E,0} +
  \cP_0 \tilde \cH_1 \,=\, 0\,,
\end{equation}
where $\cP_0$ is the orthogonal projection in $\cY$ onto the radial subspace $\cY_0$.  

\item The function $\Omega_{M+1}^{E,1} \in \Ker(\Lambda)^\perp \cap \cZ$ is the unique
solution, given by Proposition~\ref{prop:Lambda}, of
\begin{equation}\label{def:E,1}
  \Lambda \Omega_{M+1}^{E,1} + \tilde \cH_0 \,=\, 0\,.
\end{equation}
Remark that $\tilde \cH_0 \in \Ker(\Lambda)^\perp$ because $\tilde \cH_0(\xi)$ is an
odd function of $\xi_2$, which implies that $\cP_0\tilde\cH_0 = 0$ and $\mrm_1[\tilde\cH_0]
= 0$, and because $\mrm_2[\tilde\cH_0] = 0$ by our choice of $\zeta_M^E$.
Note also that $\Omega_{M+1}^{E,1}$ is an even function of $\xi_2$, as asserted
in Proposition~\ref{prop:Lambda}. 

\item The function $\Omega_{M+1}^{NS} \in \Ker(\Lambda)^\perp \cap \cZ$ is the unique
solution, given by Proposition~\ref{prop:Lambda}, of
\begin{equation}\label{def:E,NS}
  \Lambda\Omega_{M+1}^{NS} + (1-P_0)\tilde \cH_1 + \bigl(\cL - {\TS\frac{M+1}{2}}
  \bigr)\Omega_{M+1}^{E,1} \,=\,0\,,
\end{equation}
where the last two terms belong to $\Ker(\Lambda)^\perp \cap \cZ$ by construction.
\end{enumerate}

In view of \eqref{zMdef}--\eqref{def:E,NS} we have $\cA_{M+1} =  \delta^2\bigl(\cL
- {\TS\frac{M+1}{2}}\bigr)\Omega_{M+1}^{NS}$, and the profile $\Omega_{M+1}$ satisfies
Hypotheses~\ref{Hyp:Om}. Returning to \eqref{RMb} we thus find
\begin{align*}
  \cR_{M+1} \,&=\, \epsilon^{M+1}\delta^2\bigl(\cL- {\TS\frac{M+1}{2}}\bigr)
  \Omega_{M+1}^{NS}+ \epsilon^{M+1} \cN_{M+1} + \cO_\cZ\bigl(\epsilon^{M+2}
  +\delta^2\epsilon^2\bigr)\\
  \,&=\, \cO_\cZ\bigl(\epsilon^{M+2}+\delta^2\epsilon^2\bigr)\,,
\end{align*}
because using Lemma~\ref{lem:Teps} it is easy to verify that $\cN_{M+1} =
\cO_\cZ(\eps^2)$. This concludes the induction step, and the proof of
Proposition~\ref{prop:appsol} is now complete. \qed

\subsection{Leading order correction to the vertical speed}\label{ssec34}

The goal of this section is to compute the leading order correction to the
vertical speed $Z_2'$ in the approximate solution \eqref{appOP},
\eqref{appzeta}. It turns out that this correction occurs for $M = 5$, which
means that $\zeta_k = 0$ for $k = 1,2,3$, see \cite{HaNaFu2018}. As is explained
in Section~\ref{ssec33}, the coefficient $\zeta_4$ is chosen so as to ensure the
solvability of the ``elliptic'' equation for the vorticity profile $\Omega_5$,
as in \eqref{zMdef}. Fortunately, it turns out that the expression of $\zeta_4$
only involves the leading order correction $\Omega_2$ to the vorticity
distribution. No information on $\Omega_3$ and $\Omega_4$ is needed at this
stage. 

\begin{lemma}\label{lem:Om23}
Using polar coordinates $\xi = (r\cos\theta,r\sin\theta)$, the leading order
correction $\Omega_2$ in the approximate solution \eqref{appOP} takes the form
\begin{equation}\label{Om2exp}
  \Omega_2(\xi) \,=\, -\sfw_2(r)\cos(2\theta)  + \delta\,\hat\sfw_2(r)
  \sin(2\theta)\,,
\end{equation}
for some $\sfw_2, \hat\sfw_2 : \RR_+ \to \RR$ with $\sfw_2 > 0$. 
\end{lemma}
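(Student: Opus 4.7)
My plan is to extract the angular structure from membership in $\cY_2$ combined with the $\xi_2$-parity of $\Omega_2^E$ and $\Omega_2^{NS}$, and then treat the strict positivity of $\mathsf{w}_2$ separately via the radial ODE satisfied by $\Omega_2^E$.

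\medskip
\emph{Step 1 (Angular decomposition).} Since $\Omega_2^E, \Omega_2^{NS} \in \cY_2 \cap \cZ$ was already established in Section~\ref{ssec32}, each of them, expressed in polar coordinates, has the form $a(r)\cos(2\theta)+b(r)\sin(2\theta)$ with $a,b$ smooth and Gaussian decaying. Under $\xi_2\mapsto -\xi_2$, i.e.\ $\theta\mapsto-\theta$, $\cos(2\theta)$ is even and $\sin(2\theta)$ is odd, so the $\xi_2$-parity of any element of $\cY_2$ is read off directly from which of the two angular modes is present.

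\medskip
\emph{Step 2 (Parity reduction).} Section~\ref{ssec32} already records that $\Omega_2^E$ is an even function of $\xi_2$, because $\cH_2(\xi)=-(2\pi)^{-1}\xi_1\xi_2 G(\xi)$ is odd in $\xi_2$ and Proposition~\ref{prop:Lambda} maps odd-in-$\xi_2$ sources to even-in-$\xi_2$ solutions. I would complement this with the converse statement — even-in-$\xi_2$ sources give odd-in-$\xi_2$ solutions — which is immediate from the fact that $\Lambda$ anticommutes with the reflection $\xi_2\mapsto -\xi_2$; this anticommutation follows by direct inspection of \eqref{def:Lambda2}, using that $\Psi_0$ and $\Omega_0$ are radial, together with the parity rules for $\partial_1,\partial_2$ under this reflection. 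Applying this converse to $\Lambda\Omega_2^{NS}=-(\cL-1)\Omega_2^E$, and using that $\cL$ preserves $\xi_2$-parity, we conclude that $\Omega_2^{NS}$ is odd in $\xi_2$. Together with Step~1, this forces
\begin{equation*}
  \Omega_2^E(\xi)\,=\,-\mathsf{w}_2(r)\cos(2\theta)\,,\qquad
  \Omega_2^{NS}(\xi)\,=\,\hat{\mathsf{w}}_2(r)\sin(2\theta)\,,
\end{equation*}
for some radial profiles $\mathsf{w}_2,\hat{\mathsf{w}}_2:\RR_+\to\RR$, where the minus sign is a convention anticipating the sign claim on $\mathsf{w}_2$. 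Summing the two contributions according to \eqref{def:ENS} yields \eqref{Om2exp}.

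\medskip
\emph{Step 3 (Positivity of $\mathsf{w}_2$).} Projecting the equation $\Lambda\Omega_2^E+\cH_2=0$ onto the $\sin(2\theta)$ mode, using the identities $\{\Psi_0,\cdot\}=(\Psi_0'/r)\partial_\theta$ and $\{\cdot,\Omega_0\}=-(G'/r)\partial_\theta(\cdot)$, produces a linear relation between $\mathsf{w}_2(r)$ and the stream function coefficient $A(r)$ solving the radial equation $A''+A'/r-4A/r^2=-\mathsf{w}_2$. Eliminating $A$ gives a second-order Sturm--Liouville equation for $\mathsf{w}_2$ with a strictly positive Gaussian source proportional to $r^{2}G(r)$. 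Positivity then follows by exploiting the explicit sign information $\Psi_0'/r>0$ and $G'/r<0$ coming from \eqref{def:UG}, together with a Green's function representation (or a maximum-principle comparison) for the resulting radial problem, using the rapid Gaussian decay of $\mathsf{w}_2$ at infinity and smoothness at the origin enforced by $\Omega_2^E\in\cZ$.

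\medskip
The algebraic and structural part of the lemma (Steps~1--2) is largely bookkeeping once the parity rules for $\Lambda$ are made precise. The main obstacle is Step~3: the strict positivity $\mathsf{w}_2>0$ requires genuine analysis of the radial ODE, and the cleanest path appears to be an explicit integral representation via the Green's function of the associated second-order radial operator, together with sign-tracking of the source term.
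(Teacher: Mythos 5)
Your Steps 1--2 are correct and amount to the same content as the paper's argument: the paper reads off the angular structure from the explicit inversion formulas for $\Lambda$ on $\cY_n$ (Lemma~\ref{lem:Lambda}, which states that a $\sin(2\theta)$ source produces a $\cos(2\theta)$ solution and vice versa), while you obtain it from membership in $\cY_2$ plus the anticommutation of $\Lambda$ with the reflection $\xi_2\mapsto-\xi_2$. Both are fine.

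Step 3, however, contains a concrete error. Projecting $\Lambda\Omega_2^E+\cH_2=0$ onto the $\sin(2\theta)$ mode gives the \emph{pointwise} relation $v_0 w+\varphi g=-b/2$, i.e.\ $\varphi=-w/h-r^2/(4\pi)$ with $h=g/v_0$ and $w=-\sfw_2$. If you now eliminate the stream function by substituting this into $\varphi''+\varphi'/r-4\varphi/r^2=w$, the inhomogeneity $-r^2/(4\pi)$ contributes \emph{nothing}: $r^2\cos(2\theta)=\xi_1^2-\xi_2^2$ is harmonic, so $(\partial_r^2+r^{-1}\partial_r-4r^{-2})r^2=0$. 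What remains is the \emph{homogeneous} equation $-u''-u'/r+(4/r^2-h)u=0$ for $u=w/h$, with the forcing encoded only in the boundary behaviour $u\sim -r^2/(4\pi)$ as $r\to\infty$. So there is no ``Sturm--Liouville equation for $\sfw_2$ with a strictly positive Gaussian source'', and a Green's-function representation with sign-tracking of the source is not available. The paper goes the other way: it eliminates $w$ and works with the stream-function equation \eqref{eq:varphidef2} for $n=2$, whose zeroth-order coefficient $4/r^2-h$ is strictly positive and whose right-hand side $b/(2v_0)=r^2h/(4\pi)>0$; the maximum principle then yields $\varphi>0$, and the pointwise relation \eqref{eq:wphi2} immediately gives $\sfw_2=h\,(\varphi+r^2/(4\pi))>0$. (Your route can be repaired --- e.g.\ a no-sign-change argument for the homogeneous equation satisfied by $u$, using $4/r^2-h>0$, regularity at the origin and $u<0$ at infinity --- but that is a different argument from the one you describe.)
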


\begin{proof}
We already know that $\Omega_2 = \Omega_2^E + \delta \Omega_2^{NS}$ where
$\Lambda\Omega_2^E + \cH_2 = 0$ and $\Lambda \Omega_2^{NS} + (\cL-1)\Omega_2^E = 0$,
see Section~\ref{ssec32}. According to \eqref{def:cH2} we have
$-\cH_2 = b(r)\sin(2\theta)$ where $b(r) = r^2g(r)/(2\pi)$ with $g$ as in
\eqref{def:v0gh}.  In particular $\cH_2 \in \cY_2 \cap \cZ$, so that we
can apply Lemma~\ref{lem:Lambda} in Section~\ref{ssecA3}. We thus obtain the
formulas $\Omega_2^E = w(r)\cos(2\theta)$, $\Psi_2^E = \varphi(r)\cos(2\theta)$, 
where
\begin{equation}\label{eq:wphi2}
  w(r) \,=\, -\varphi(r)h(r) - \frac{b(r)}{2v_0(r)} \,=\, -h(r)\Bigl(
  \varphi(r) + \frac{r^2}{4\pi}\Bigr)\,,
\end{equation}
and $\varphi$ is the unique solution of the ODE \eqref{eq:varphidef2} with $n=2$ such that
$\varphi(r) = \cO(r^2)$ as $r \to 0$ and $\varphi(r) = \cO(r^{-2})$ as $r \to +\infty$. 
It follows from the the maximum principle that $\varphi$ is a positive function,
so that $w(r) < 0$ for all $r > 0$ in view of \eqref{eq:wphi2}. We thus have
$\Omega_2^E = -\sfw_2(r)\cos(2\theta)$ with $\sfw_2(r) = -w(r) > 0$. We deduce that
$(\cL-1)\Omega_2^E = a(r)\cos(2\theta)$ for some $a : \RR_+ \to \RR$, and
applying Lemma~\ref{lem:Lambda} again we conclude that $\Omega_2^{NS} =
\hat\sfw_2(r)\sin(2\theta)$ for some $\hat\sfw_2 : \RR_+ \to \RR$.
\end{proof}

\begin{remark}\label{rem:nextorder}
Similarly the next correction $\Omega_3 = \Omega_3^E + \delta \Omega_3^{NS}$ is determined
by the relations $\Lambda\Omega_3^E + \cH_3 = 0$ and $\Lambda \Omega_3^{NS} +
(\cL-\frac32)\Omega_3^E = 0$, where $\cH_3$ is the third order term in the expansion
\eqref{Rem0exp}, namely
\[
  \cH_3(\xi) \,=\, \frac{1}{4\pi}\,Q_3^s(\xi)G(\xi) \,=\, 
  \frac{1}{4\pi}\,\bigl(3\xi_1^2\xi_2 - \xi_2^3 \bigr)G(\xi)\,.
\]
Thus $\cH_3 \in \cY_3 \cap \cZ$ and $\cH_3 = b(r)\sin(3\theta)$ for some
$b : \RR_+ \to \RR$. Proceeding exactly as before, we thus find that
$\Omega_3^E = \sfw_3(r)\cos(3\theta)$ and $\Omega_3^{NS} = \hat\sfw_3(r)\sin(3\theta)$
for some $\sfw_3, \hat\sfw_3 : \RR_+ \to \RR$.
\end{remark}

We are now able to formulate the main result of this section.

\begin{proposition}\label{prop:speed}
If $M \ge 5$ the advection speed \eqref{appzeta} satisfies
\begin{equation}\label{Z2exp}
  Z_2'(t) \,=\, \frac{\Gamma}{2\pi d}\Bigl(1 - 2\pi \alpha\,\epsilon^4  + \cO\bigl(
  \epsilon^5 + \delta^2 \epsilon\bigr)\Bigr)\,,
\end{equation}
where
\begin{equation}\label{def:alpha}
  \alpha \,=\, \frac{1}{\pi}\int_{\RR^2} \bigl(\xi_2^2-\xi_1^2\bigr)\Omega_2(\xi)\,\dd\xi
  \,=\, \int_0^\infty r^3 \sfw_2(r)\,\dd r \,\approx\, 22.24\,.
\end{equation}
\end{proposition}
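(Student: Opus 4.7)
My strategy is to derive an integral identity for the approximate vertical speed analogous to formula \eqref{def:Z2}, and then expand the resulting integral in powers of $\epsilon$ following exactly the argument used for Lemma~\ref{lem:vertspeed}. Since Hypothesis~\ref{Hyp:Om}.H2 yields $\mrm_2[\Omega_\app(t)] \equiv 0$, applying $\mrm_2$ to equation \eqref{eq:OmssU} written for $\Omega_\app$ (which holds modulo the source term $-\cR_M/\delta$) and repeating the calculation in the proof of Lemma~\ref{lem:momZ2} leads to the identity
\[
Z_2'(t) \,=\, -\frac{\Gamma}{\epsilon d}\int_{\RR^2}\bigl(\mathbf{T}_\epsilon U_\app\bigr)_2(\xi,t)\,\Omega_\app(\xi,t)\,\dd\xi \,-\, \frac{\Gamma}{\epsilon d}\,\mrm_2[\cR_M].
\]
By Proposition~\ref{prop:appsol}, the last term is of size $\cO(\Gamma d^{-1}(\epsilon^M + \delta^2\epsilon))$, which is absorbed in the error $\cO(\epsilon^5 + \delta^2\epsilon)$ as soon as $M \geq 5$.

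For the main integral I would use $U_\app = \BS[\Omega_\app]$ to rewrite it as a symmetric double integral of $\Omega_\app(\xi)\Omega_\app(\eta)$, and then apply the series expansion appearing in the proof of Lemma~\ref{lem:vertspeed} to obtain
\[
-\frac{1}{\epsilon}\int\bigl(\mathbf{T}_\epsilon U_\app\bigr)_2\Omega_\app\,\dd\xi \,=\, \frac{1}{2\pi}\sum_{n=0}^{\infty}(-1)^n \epsilon^n \iint_{\RR^4} Q_n^c\bigl(\xi_1+\eta_1,\xi_2-\eta_2\bigr)\Omega_\app(\xi)\Omega_\app(\eta)\,\dd\xi\,\dd\eta + \cO(\epsilon^\infty).
\]
Substituting $\Omega_\app = \Omega_0 + \epsilon^2\Omega_2 + \epsilon^3\Omega_3 + \cdots$ and grouping by total power of $\epsilon$, the leading term ($n=k=0$) equals $1$ by $\mrM[\Omega_0]=1$. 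At orders $\epsilon$, $\epsilon^2$, $\epsilon^3$, each surviving summand reduces either to $\mrM[\Omega_k]$ or $\mrm_j[\Omega_k]$ with $k\geq 2$ (zero by Hypothesis~\ref{Hyp:Om}), or to integrals such as $\iint Q_3^c \Omega_0\Omega_0$ that vanish thanks to the radial symmetry of $\Omega_0$.

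At order $\epsilon^4$, two contributions survive a priori: $n=2$ paired with the mixed $\Omega_0\Omega_2$ term, and $n=4$ paired with $\Omega_0\Omega_0$. A direct Gaussian moment computation gives
\[
\iint Q_4^c(\xi_1+\eta_1,\xi_2-\eta_2)\,\Omega_0(\xi)\Omega_0(\eta)\,\dd\xi\,\dd\eta \,=\, 48 - 6\cdot 16 + 48 \,=\, 0,
\]
so only the cross-term remains, and using $\mrM[\Omega_2]=\mrm_j[\Omega_2]=0$ together with $\mrM[\Omega_0]=1$ it reduces to $2\int_{\RR^2}(\xi_1^2 - \xi_2^2)\Omega_2\,\dd\xi$. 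Finally, Lemma~\ref{lem:Om23} gives $\Omega_2 = -\sfw_2(r)\cos(2\theta) + \delta\hat\sfw_2(r)\sin(2\theta)$, so the angular orthogonality $\int_0^{2\pi}\cos(2\theta)\sin(2\theta)\,\dd\theta = 0$ annihilates the $\delta$-part while $\int_0^{2\pi}\cos^2(2\theta)\,\dd\theta = \pi$ produces $-2\pi\int_0^\infty r^3 \sfw_2(r)\,\dd r = -2\pi\alpha$, yielding \eqref{Z2exp}.

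The main obstacle is the combinatorial bookkeeping: systematically ruling out every contribution at orders $\epsilon$, $\epsilon^2$, $\epsilon^3$, and verifying the fortunate cancellation $\iint Q_4^c\Omega_0\Omega_0 = 0$. Without this last cancellation the leading correction would pick up a Gaussian-moment piece and the clean formula \eqref{def:alpha} for $\alpha$ in terms of $\sfw_2$ alone would be incorrect.
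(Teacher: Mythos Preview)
Your proof is correct and follows essentially the same route as the paper: both start from the moment identity for $\zeta_\app$ and expand the self-interaction integral via the polynomials $Q_n^c$. The paper integrates out $\eta$ first (using Lemma~\ref{lem:Teps} to obtain the polynomials $P_n(\xi)$) and then multiplies by $\Omega_\app(\xi)$, whereas you keep the double integral symmetric in $\xi,\eta$ throughout; the bookkeeping is otherwise identical, and the two $-\pi\alpha$ contributions in the paper (one from $\partial_1 P_3$, one from $\int Q_2^c\,\Omega_2$) correspond exactly to your pair $J_2^{2,0}+J_2^{0,2}$.

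One remark on your ``main obstacle'': the cancellation $\iint Q_4^c(\xi_1+\eta_1,\xi_2-\eta_2)\,\Omega_0(\xi)\Omega_0(\eta)\,\dd\xi\,\dd\eta = 0$ is not fortunate but automatic. When $\xi,\eta$ are i.i.d.\ with the radially symmetric law $\Omega_0$, the vector $(\xi_1+\eta_1,\xi_2-\eta_2)$ has independent components with equal variance, hence is again radially symmetric, so its integral against $Q_n^c$ vanishes for every $n\ge 1$ --- this is the same mechanism that killed the $\Omega_0\Omega_0$ terms at orders $2$ and $3$. The paper's organization makes this transparent: integrating $\eta$ out first gives $\int Q_4^c(\xi_1+\eta_1,\xi_2-\eta_2)\,\Omega_0(\eta)\,\dd\eta = Q_4^c(\xi)$ by the mean-value property of the harmonic polynomial $Q_4^c$, and then $\int Q_4^c\,\Omega_0 = 0$ by radial symmetry of $\Omega_0$.
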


\begin{remark}\label{rem:speed}
The error term $\cO(\epsilon^5 + \delta^2 \epsilon)$ in \eqref{Z2exp} is
probably not optimal, and may be improved using the techniques developed
in \cite{DD25}. We just recall here that $\epsilon^2 = \delta t/T_\adv$,
where $T_\adv$ is defined in \eqref{def:all}. Thus, if $t \gtrsim T_\adv$, then
$\delta \lesssim \epsilon^2$ and the error term in \eqref{Z2exp} can be
replaced by $\cO(\epsilon^5)$. 
\end{remark}

\begin{proof}
We consider the approximate solution \eqref{appOP}, \eqref{appzeta} for some
$M \ge 5$. According to Proposition~\ref{prop:appsol}, the remainder
\eqref{def:resid} satisfies $\cR_M = \cO_\cZ\bigl(\epsilon^{M+1}+\delta^2\epsilon^2\bigr)$.
To obtain a formula for the vertical speed, we multiply both members of \eqref{def:resid}
by $\xi_2$ and we integrate over $\xi \in \RR^2$. Proceeding as in the proof of
Lemma~\ref{lem:vertspeed}, and recalling that $-U_{2,\app}(\tilde{\xi}-\eps^{-1}e_1,t)
=\partial_1\cT_\eps\Psi_{\app}$, we find
\[
  \int_{\RR^2} \Bigl(\partial_1 \cT_\epsilon \Psi_\app - \frac{\epsilon}{2\pi}\,
  \zeta_\app\Bigr)\Omega_\app\,\dd\xi \,=\, \cO\bigl(\epsilon^{M+1} + \delta^2\epsilon^2
  \bigr)\,.
\]
Since $\mrM[\Omega_\app] = 1$ by Hypotheses~\ref{Hyp:Om}, we deduce the
representation formula
\begin{equation}\label{zetaapp}
  \zeta_\app \,=\, \frac{2\pi}{\epsilon} \int_{\RR^2} \bigl(\partial_1 \cT_\epsilon
  \Psi_\app\bigr)\Omega_\app\,\dd\xi +  \cO\bigl(\epsilon^M + \delta^2\epsilon)\,,
\end{equation}
which is the analogue of \eqref{def:Z2}. 

In the rest of the proof, we assume that $M = 5$. To compute the integral in
\eqref{zetaapp}, we apply Lemma~\ref{lem:Teps} with $\Omega = \Omega_\app$ and
$\Psi = \Psi_\app$. This gives the expansion
\begin{equation}\label{cTepsPsi}
  \partial_1\bigl(\cT_\eps\Psi_\app\bigr)(\xi) \,=\, \sum_{n=1}^5 \epsilon^n
  \partial_1 P_n(\xi) + \cO_{\cS_*}\bigl(\epsilon^6\bigr)\,,
\end{equation}
where the polynomials $P_n$ are given by \eqref{def:Pn} with $\Omega =
\Omega_\app$. Using Remark~\ref{rem:firstPn} as well as the information
on the moments of $\Omega_\app$ contained in Hypotheses~\ref{Hyp:Om}, 
we find
\[
  \partial_1 P_1(\xi) \,=\, \frac{1}{2\pi}\,\mrM[\Omega_\app] \,=\,
  \frac{1}{2\pi}\,, \qquad
  \partial_1 P_2(\xi) \,=\, -\frac{1}{2\pi}\,\Bigl(\xi_1 \mrM[\Omega_\app]
  + \mrm_1[\Omega_\app]\Bigr) \,=\, -\frac{\xi_1}{2\pi}\,. 
\]
Similarly, a direct calculation shows that
\begin{align*}
  \partial_1 P_3(\xi) \,&=\, \frac{1}{2\pi} \int_{\RR^2}\Bigl(
 (\xi_1+\eta_1)^2 - (\xi_2-\eta_2)^2\Bigr)\Omega_\app(\eta)\,\dd\eta \\
 \,&=\, \frac{1}{2\pi}\,(\xi_1^2 - \xi_2^2) + \frac{1}{2\pi} \int_{\RR^2}
 (\eta_1^2 - \eta_2^2)\Omega_\app(\eta)\,\dd\eta 
  \,=\, \frac{1}{2\pi}\,Q_2^c(\xi) - \frac{\alpha \epsilon^2}{2} +
  \cO_{\cS_*}\bigl(\epsilon^3\bigr)\,,
\end{align*}
where in the last equality we used the fact that $\Omega_\app = \Omega_0 +
\epsilon^2\Omega_2 + \cO_\cZ(\epsilon^3)$ with $\Omega_2$ as in \eqref{Om2exp}, 
together with the definition of $\alpha$ in \eqref{def:alpha}. Finally, since
$\Omega_\app = \Omega_0 + \cO_\cZ(\epsilon^2)$, we also have
\[
  \partial_1 P_4(\xi) \,=\, -\frac{1}{2\pi}\,Q_3^c(\xi) + \cO_{\cS_*}\bigl(\epsilon^2\bigr)\,,
  \qquad
  \partial_1 P_5(\xi) \,=\, \frac{1}{2\pi}\,Q_4^c(\xi) + \cO_{\cS_*}\bigl(\epsilon^2\bigr)\,.
\]
Note that the homogeneous polynomials $Q_n^c$ already appear in the expansion \eqref{expansion}. 
Summarizing, we have shown that
\begin{equation}\label{cTepsPsi2}
  \frac{2\pi}{\epsilon}\,\partial_1\bigl(\cT_\eps\Psi_\app\bigr)(\xi) \,=\, 
  \sum_{n=0}^4 (-1)^n\epsilon^n Q_n^c(\xi) -\pi \alpha \epsilon^4 + 
  \cO_{\cS_*}\bigl(\epsilon^5\bigr)\,.
\end{equation}

To conclude the proof, we multiply \eqref{cTepsPsi2} by $\Omega_\app(\xi)$ and we integrate
over $\xi \in \RR^2$. The contribution of the leading order term $\Omega_0$
in $\Omega_\app$ is $1 - \pi\alpha\epsilon^4 + \cO(\epsilon^5)$, and using again
\eqref{Om2exp} and Hypotheses~\ref{Hyp:Om} we obtain
\[
  \frac{2\pi}{\epsilon} \int_{\RR^2} \bigl(\partial_1 \cT_\epsilon
  \Psi_\app\bigr)\bigl(\Omega_\app - \Omega_0\bigr)\,\dd\xi \,=\,
  \epsilon^4 \int_{\RR^2}Q_2^c(\xi)\Omega_2(\xi)\,\dd\xi + \cO\bigl(\epsilon^5\bigr)
  \,=\, -\pi\alpha\epsilon^4 + \cO\bigl(\epsilon^5\bigr)\,.
\]
Altogether we thus find $\zeta_\app = 1 - 2\pi\alpha \epsilon^4 + \cO\bigl(
\epsilon^5 + \delta^2 \epsilon\bigr)$. 
\end{proof}

\subsection{Functional relationship in the inviscid case}\label{ssec35}

In this section, we investigate the properties of our approximate solution
\eqref{appOP}, \eqref{appzeta} in the limiting situation where $\delta = 0$,
which corresponds to the inviscid case. In view of \eqref{def:resid} and
Proposition~\ref{prop:appsol}, we have
\begin{equation}\label{def:residE}
  \cR_M^E \,:=\, \Bigl\{\Psi_\app^E - \cT_\eps\Psi_\app^E  + \frac{\eps\xi_1}{2\pi}
  \,\zeta_\app^E\,,\,\Omega_\app^E\Bigr\} \,=\, \cO_{\cZ}\bigl(\epsilon^{M+1}\bigr)\,,
\end{equation}
where, as in \eqref{def:ENS}, the letter $E$ in superscript refers to the Euler
equation. We consider the stream function in the uniformly translating frame
attached to the vortex center, defined as
\begin{equation}\label{def:PhiappE}
  \Phi_\app^E \,:=\, \Psi_\app^E - \cT_\eps\Psi_\app^E  + \frac{\eps\xi_1}{2\pi}
  \,\zeta_\app^E + \frac{1}{2\pi}\log\frac{1}{\epsilon}\,,
\end{equation}
where the last term on the right-hand side is a constant included in order to cancel
an irrelevant $\log(1/\varepsilon)$-order in the expansion \eqref{eq:PhiappE} below.
The remainder $\cR_M^E = \bigl\{\Phi_\app^E\,, \, \Omega_\app^E\bigr\}$ would vanish
identically if we had a functional relationship of the form
$\Phi_\app^E + F(\Omega_\app^E) = 0$ for some (smooth) function
$F : \RR \to \RR$. In reality, since $\cR_M^E = \cO(\eps^{M+1})$, the best we
can hope for is an approximate functional relationship, which holds up to
corrections of order $\cO(\epsilon^{M+1})$.

At leading order ($M = 0$), our approximate solution is $\Omega_\app = \Omega_0$,
$\Phi_\app = \Psi_\app = \Psi_0$, where $\Omega_0$ and $\Psi_0$ are defined in
\eqref{def:G}, \eqref{def:UG}. It follows that $\Phi_0 + F_0(\Omega_0) = 0$ if
we define
\begin{equation}\label{def:F0}
  F_0(s) \,=\, \frac{1}{4\pi}\Bigl(\gamma_E - \Ein\bigl(\log\frac{1}{4\pi s}\bigr)
  \Bigr)\,, \qquad 0 < s \le \frac{1}{4\pi}\,.
\end{equation}
Note that $F_0$ is smooth, strictly increasing, and satisfies $F_0(s) \sim 
-(4\pi)^{-1}\log\log\frac{1}{s}$ as $s \to 0$. For later use we define
\begin{equation}\label{def:A}
  A(\xi) \,:=\, F_0'\bigl(\Omega_0(\xi)\bigr) \,=\, -\frac{\nabla\Psi_0(\xi)}{
  \nabla\Omega_0(\xi)} \,=\, \frac{4}{|\xi|^2}\Bigl(\e^{|\xi|^2/4} - 1\Bigr)\,.
\end{equation}

We next investigate the functional relationship for the second order 
approximation ($M = 2$). As is explained in Section~\ref{ssec32} we 
have $\Omega_\app^E = \Omega_0 + \epsilon^2 \Omega_2^E$ and $\Psi_\app^E = \Psi_0 
+ \epsilon^2 \Psi_2^E$, where the corrections $\Omega_2^E, \Psi_2^E$ are computed in 
Lemma~\ref{lem:Om23}. Using Lemma~\ref{lem:Teps} with $N = 2$, we obtain the following
expansion of the stream function in the moving frame: 
\begin{equation}\label{eq:PhiappE}
  \Phi_\app^E \,=\, \Psi_0 + \epsilon^2 \Psi_2^E + \frac{\epsilon^2}{4\pi}
  \,\bigl(\xi_1^2 - \xi_2^2\bigr) + \cO_{\cS_*}\bigl(\epsilon^3\bigr)\,.
\end{equation}

\begin{figure}[t]
  \begin{center}
  \begin{picture}(450,150)% width and height of the picture
  \put(7,0){\includegraphics[width=0.45\textwidth]{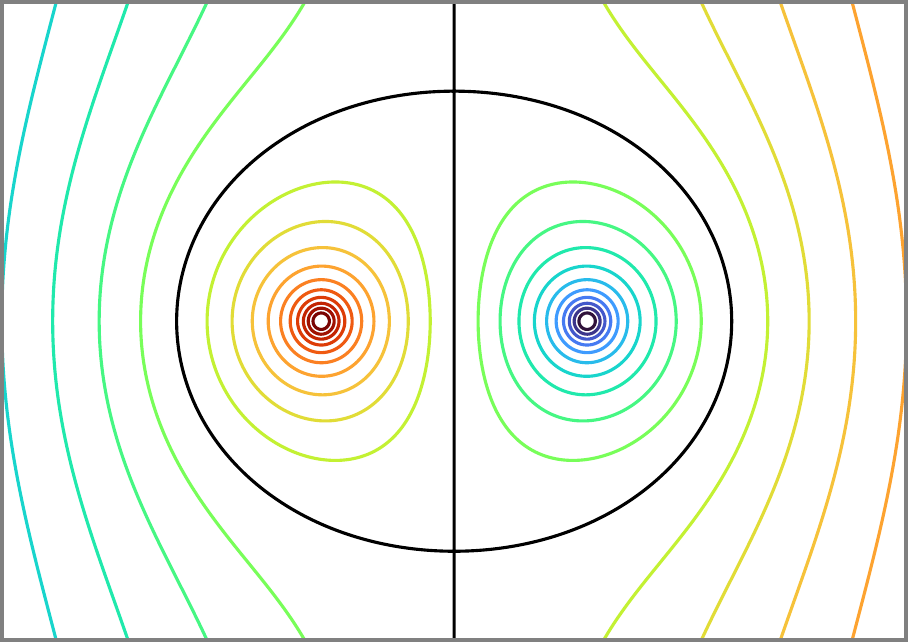}}
  \put(235,0){\includegraphics[width=0.45\textwidth]{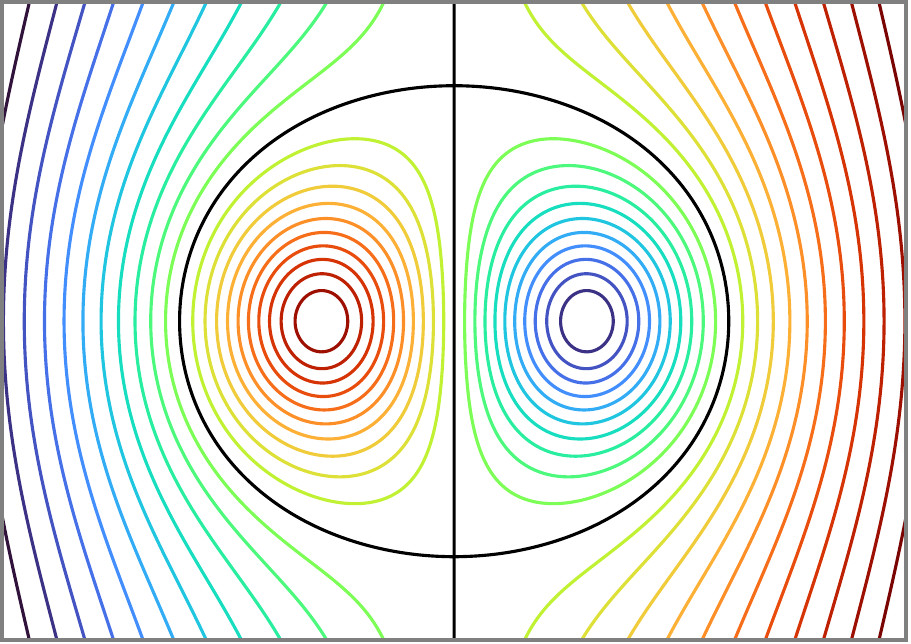}}
  \end{picture}
  \captionsetup{width=.9\linewidth}
  \caption{{\small
The level lines of the function $\Phi_\app^E$ defined in \eqref{def:PhiappE},
which correspond to the stream lines of the inviscid approximate solution
$\Omega_\app^E$ in the co-moving frame, are represented for $M = 2$ and
$\eps = 1/50$ (left) or $\eps=1/8$ (right). Large positive values
of $\Phi_\app^E$ are depicted in red, and large negative values in blue.
The flow has two elliptic stagnation points located at $\xi = 0$ and
$\xi = (-1/\eps,0)$ in our coordinates, as well as two hyperbolic points
on the black line which separates the vortex dipole from the exterior flow.
Near the vortex centers, the stream lines are nearly elliptical with a major 
axis in the $\xi_2$-direction, which reflects the fact that $\sfw_2 > 0$
in \eqref{Om2exp}.}}
  \label{fig1}
  \end{center}
\end{figure}\noindent

The level lines of the second order approximation \eqref{eq:PhiappE} are shown in Fig.~\ref{fig1} 
(ignoring the $\eps^3$ corrections). We now look for a relationship of the form 
$\Phi_\app^E + F(\Omega_\app^E) = \cO_{\cS_*}(\eps^3)$, where $F = F_0 + \epsilon^2 F_2$ for 
some $F_2 : (0,+\infty) \to \RR$. This is problematic, however, because it is not true that 
$\Omega_\app^E(\xi) > 0$ for all $\xi \in \RR^2$. As is explained in Remark~\ref{rem:germ} 
below, this difficulty is avoided if we only require that the second order Taylor
polynomial in $\epsilon$ of the quantity $\Phi_\app^E + F(\Omega_\app^E)$ vanishes. 
This gives the relation
\begin{equation}\label{eq:F2rel}
  \Psi_2^E + \frac{1}{4\pi}\,\bigl(\xi_1^2 - \xi_2^2\bigr) + F_0'(\Omega_0)\Omega_2^E
  + F_2(\Omega_0) \,=\, 0\,,
\end{equation}
which serves as a definition of $F_2$. It turns out that the first three terms
in \eqref{eq:F2rel} sum up to zero, so that we can actually take $F_2 \equiv 0$. Indeed,
this is a consequence of equation \eqref{eq:wphi2} in Lemma~\ref{lem:Om23}, because
$\Psi_2^E = \varphi(r)\cos(2\theta)$, $\xi_1^2 - \xi_2^2 = r^2\cos(2\theta)$,
$\Omega_2^E = w(r)\cos(2\theta)$, and $F_0'(\Omega_0) = A = 1/h$ by \eqref{def:A}.

\smallskip
To establish the functional relationship at any order, we use an
induction argument on the integer $M$, as in the proof of
Proposition~\ref{prop:appsol}. The following definition will be useful:

\begin{definition}\label{def:Kclass}
We say that a smooth function $F : (0,+\infty) \to \RR$ belongs to the class
$\cK$ if $F(\Omega_0) \in \cS_*(\RR^2)$, where $\Omega_0$ is defined in
\eqref{def:G}. 
\end{definition}

Note that, if $\cH \in \cS_*(\RR^2)$, the equation $F(\Omega_0) = \cH$ has a
solution $F \in \cK$ if and only if $\{\cH,\Omega_0\} = 0$, namely if $\cH$
is radially symmetric. In that case the function $F$ is uniquely determined
on the range of $\Omega_0$, which is the interval $(0,(4\pi)^{-1}]$. Since
$\cH$ may grow polynomially at infinity, the function $F$ may have a
logarithmic singularity at $s = 0$. The following observation is also useful. 

\begin{lemma}\label{lem:Fk}
Let $F:(0,+\infty)\to \RR$ be in the class $\cK$. Then, for all $k \in \NN$, the $k$-th 
order derivative of $F$ has the property that $F^{(k)}(\Omega_0)\Omega_0^k \in \cS_*(\RR^2)$.
\end{lemma}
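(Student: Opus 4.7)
The plan is to prove the claim by induction on $k$, exploiting the fact that $\Omega_0$ is radial together with the auxiliary function $\tilde F(s) := sF'(s)$. Writing $g(\xi) = F(\Omega_0(\xi)) = \tilde g(|\xi|)$, the smoothness of $g$ as a function on $\RR^2$ forces $\tilde g$ to be even in $r$, so in particular the ratio $\tilde g'(r)/r$ extends smoothly to $r = 0$.

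The base case $k=1$ is where the only genuine analytic work happens. Differentiating and using $\Omega_0'(r) = -\tfrac{r}{2}\Omega_0(r)$, I would write
\begin{equation*}
  \tilde g'(r) \,=\, -\frac{r}{2}\,\Omega_0(r)\,F'(\Omega_0(r))\,, \qquad \text{so}\qquad
  \Omega_0(r)\,F'(\Omega_0(r)) \,=\, -\frac{2\,\tilde g'(r)}{r}\,.
\end{equation*}
The right-hand side is smooth on $[0,\infty)$ by the evenness observation above, is smooth away from $r = 0$ because $\Omega_0 > 0$ and $F$ is smooth on $(0,\infty)$, and inherits polynomial growth at infinity from $g \in \cS_*(\RR^2)$. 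Hence $\Omega_0\,F'(\Omega_0) \in \cS_*(\RR^2)$, which in particular means $\tilde F \in \cK$.

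The inductive step is then purely algebraic. Assuming the conclusion for all orders $\le k-1$ and all elements of $\cK$, I would apply it to $\tilde F\in \cK$ at order $k-1$ to obtain $\Omega_0^{k-1}\,\tilde F^{(k-1)}(\Omega_0) \in \cS_*(\RR^2)$. Leibniz's rule gives $\tilde F^{(k-1)}(s) = s\,F^{(k)}(s) + (k-1)\,F^{(k-1)}(s)$, so that
\begin{equation*}
  \Omega_0^k\,F^{(k)}(\Omega_0) \,=\, \Omega_0^{k-1}\,\tilde F^{(k-1)}(\Omega_0) \,-\, (k-1)\,\Omega_0^{k-1}\,F^{(k-1)}(\Omega_0)\,,
\end{equation*}
and both terms on the right belong to $\cS_*(\RR^2)$ by the induction hypothesis, applied to $\tilde F$ and to $F$ respectively.

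The main (minor) obstacle is the one handled in the base case: controlling the $r \to 0$ behaviour after dividing by $r$. Away from the origin and at infinity everything is routine, since $\Omega_0$ is strictly positive and smooth with Gaussian decay, so polynomial bounds transfer without difficulty. Radial smoothness of $g = F(\Omega_0)$ is precisely what saves us at $r = 0$, and this is essentially the only place in the argument where the explicit Gaussian form of $\Omega_0$ enters, beyond the Euler-type identity $\Omega_0'(r) = -\tfrac{r}{2}\Omega_0(r)$ that converts the differential operator $\partial_r$ into the dilation operator $s\partial_s$ acting on $F$.
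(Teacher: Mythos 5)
Your proof is correct, and it rests on the same key identity as the paper's, namely $\nabla\Omega_0 = -\tfrac{\xi}{2}\,\Omega_0$, which converts the operator $F \mapsto sF'(s)$ into $-2r^{-1}\partial_r$ acting on $F(\Omega_0)$. The packaging is genuinely different, though: the paper runs a single induction on $\cF_k := F^{(k)}(\Omega_0)\Omega_0^k$ via the pointwise identity $2\nabla\cF_k + \xi\bigl(\cF_{k+1}+k\cF_k\bigr)=0$, applying the gradient relation at every step, whereas you invoke it only once, to show that $\cK$ is closed under $F\mapsto sF'(s)$, and then reduce the higher orders to pure algebra through the Leibniz identity $\tilde F^{(k-1)}(s) = sF^{(k)}(s)+(k-1)F^{(k-1)}(s)$. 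Both routes are short and valid; the paper's avoids the radial/evenness discussion entirely, while yours isolates the analytic input into a single closure property of the class $\cK$. One small remark: the step you single out as ``the only genuine analytic work'' (smoothness of $\tilde g'(r)/r$ at $r=0$) is in fact automatic, since $F^{(k)}(\Omega_0)\Omega_0^k$ is a composition of smooth functions on the set where $\Omega_0>0$, i.e.\ on all of $\RR^2$; the real content of the lemma is the polynomial growth of \emph{all derivatives} as $|\xi|\to\infty$, where $\Omega_0\to 0$ and $F^{(k)}$ may blow up. Your identity does deliver this, because for $r\ge 1$ the factor $1/r$ and its derivatives are harmless and $\tilde g',\tilde g'',\dots$ inherit polynomial bounds from $g\in\cS_*(\RR^2)$, but it would be worth recording the bounds on the derivatives explicitly rather than only on the function itself.
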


\begin{proof}
We verify by induction over $k$ that the function $\cF_k := F^{(k)}(\Omega_0)\Omega_0^k$
belongs to $\cS_*(\RR^2)$. This is the case for $k = 0$ because $F \in \cK$. 
Assume now that $\cF_k \in \cS_*(\RR^2)$ for some $k \in \NN$. Using the identity
$\nabla\Omega_0(\xi) = -(\xi/2)\Omega_0(\xi)$, we obtain by a direct calculation
\begin{equation}\label{eq:k+1}
  2 \nabla\cF_k(\xi) + \xi\bigl(\cF_{k+1}(\xi) + k \cF_k(\xi)\bigr) \,=\, 0\,, 
  \qquad \xi \in \RR^2\,.
\end{equation}
Since both $\nabla\cF_k$ and $\xi \cF_k$ belong to $\cS_*(\RR^2)^2$, so does 
$\xi \cF_{k+1}$ by \eqref{eq:k+1}. This means that $\cF_{k+1}$ and its derivatives 
have at most a polynomial growth at infinity, so that $\cF_{k+1} \in \cS_*(\RR^2)$.  
\end{proof}

\begin{proposition}\label{prop:germ}
Given any integer $M \ge 2$, let $\Omega_\app^E,\Psi_\app^E$ be the approximate
solution \eqref{appOP} given by Proposition~\ref{prop:appsol} with $\delta = 0$.
There exists $F \in \cK$ of the form $F = F_0 + \epsilon^2 F_2 + \dots + \epsilon^M F_M$ 
such that the stream function $\Phi_\app^E$ defined by \eqref{def:PhiappE} satisfies
\begin{equation}\label{eq:Phirel}
  \Pi_M \Bigl(\Phi_\app^E + F\bigl(\Omega_\app^E\bigr)\Bigr) \,=\,0\,, 
  \quad \hbox{where}\quad \Pi_M f \,=\, \sum_{k=0}^M \frac{\epsilon^k}{k!}
  \,\frac{\dd^k f}{\dd \epsilon^k}\biggl|_{\epsilon = 0}\,.
\end{equation}
\end{proposition}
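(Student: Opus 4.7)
The plan is to proceed by induction on $M$, taking advantage of the fact that the approximate solutions from Proposition~\ref{prop:appsol} at successive levels are nested, so that $\Omega_\app^E$, $\Psi_\app^E$, $\zeta_\app^E$ can be viewed as formal power series in $\epsilon$ whose Taylor coefficients lie in $\cS_*(\RR^2)$ (resp.\ $\RR$). The base cases $M=0$ and $M=2$ are already handled in the discussion preceding the statement: $F=F_0$ works at order zero by the explicit formulas for $\Psi_0$ and $F_0$, and $F_2 \equiv 0$ works at second order thanks to the identity \eqref{eq:wphi2} from Lemma~\ref{lem:Om23}.

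The central algebraic observation is that $\{G(\Omega),\Omega\}=0$ for any smooth $G$. Hence, for any partial sum $F^{(M-1)} := F_0+\sum_{k=2}^{M-1}\epsilon^k F_k$ provided by the previous step,
\begin{equation*}
  \bigl\{\Phi_\app^E+F^{(M-1)}(\Omega_\app^E),\,\Omega_\app^E\bigr\} \,=\, \bigl\{\Phi_\app^E,\,\Omega_\app^E\bigr\} \,=\, \cR_{M-1}^E \,=\, \cO_\cZ\bigl(\epsilon^M\bigr)\,.
\end{equation*}
Since Proposition~\ref{prop:appsol} can be invoked at any higher level $M'\ge M$ without affecting the coefficients $\Omega_k^E,\Psi_k^E,\zeta_k^E$ for $k\le M-1$, the right-hand side can be made of arbitrarily high order in $\epsilon$. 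This shows that $\{\Phi_\app^E,\Omega_\app^E\}=0$ holds as a formal power series in $\epsilon$; note that the $\log(1/\epsilon)$ singularity in $\cT_\epsilon\Psi_\app^E$ is canceled by the explicit $\frac{1}{2\pi}\log\frac{1}{\epsilon}$ in \eqref{def:PhiappE}, since the coefficient $C$ of Lemma~\ref{lem:Teps} equals $\mrM[\Omega_\app^E]/(2\pi)=1/(2\pi)$ by Hypotheses~\ref{Hyp:Om}.

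Assume by induction that $F^{(M-1)}$ has been constructed so that $H^{(M-1)} := \Phi_\app^E+F^{(M-1)}(\Omega_\app^E)$ vanishes as a formal series up to order $\epsilon^{M-1}$, and write $H^{(M-1)} = \epsilon^M H_M+\cO(\epsilon^{M+1})$. Extracting the coefficient of $\epsilon^M$ from the formal identity $\{H^{(M-1)},\Omega_\app^E\}=0$ gives $\{H_M,\Omega_0\}=0$, which forces $H_M$ to be radially symmetric (the level sets of the strictly radial function $\Omega_0$ are circles about the origin, so any function Poisson-commuting with $\Omega_0$ is constant on each such circle). We then define $F_M$ on the range $(0,(4\pi)^{-1}]$ of $\Omega_0$ by the prescription $F_M(\Omega_0(\xi)) := -H_M(\xi)$, extend $F_M$ smoothly to $(0,+\infty)$, and set $F^{(M)} := F^{(M-1)}+\epsilon^M F_M$. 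The Taylor polynomial of degree $M$ of $\Phi_\app^E+F^{(M)}(\Omega_\app^E)$ then vanishes by construction, closing the induction.

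The main technical point is verifying that $F_M \in \cK$, equivalently $H_M \in \cS_*(\RR^2)$. The Taylor coefficients of $\Phi_\app^E$ all belong to $\cS_*$ thanks to Lemma~\ref{lem:Teps} together with the fact that $\Psi_k^E = \Delta^{-1}\Omega_k^E$ belongs to $\cS_*$. Each Taylor coefficient of $F^{(M-1)}(\Omega_\app^E)$ is a finite sum of terms of the form $F_k^{(j)}(\Omega_0)\prod_i \Omega_{k_i}^E$, which we rewrite as $\bigl[F_k^{(j)}(\Omega_0)\Omega_0^j\bigr]\prod_i\bigl(\Omega_{k_i}^E/\Omega_0\bigr)$. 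The bracketed factor lies in $\cS_*$ by Lemma~\ref{lem:Fk} (using the induction hypothesis $F_k\in\cK$), and each ratio $\Omega_{k_i}^E/\Omega_0=4\pi e^{|\xi|^2/4}\Omega_{k_i}^E$ lies in $\cS_*$ because $\Omega_{k_i}^E\in\cZ$. Closure of $\cS_*(\RR^2)$ under multiplication gives $H_M\in\cS_*(\RR^2)$, as desired.
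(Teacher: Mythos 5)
Your proof is correct and follows essentially the same route as the paper: an induction on $M$ in which the vanishing of $\{\Phi_\app^E,\Omega_\app^E\}=\cR_M^E$ up to order $\epsilon^{M+1}$ forces the order-$\epsilon^M$ obstruction to Poisson-commute with $\Omega_0$, hence to be radial and absorbable into $F_M(\Omega_0)$, with Lemma~\ref{lem:Fk} and $\Omega_k^E\in\cZ$ guaranteeing $F_M\in\cK$. Your formal-power-series packaging of the solvability condition (extracting the $\epsilon^M$ coefficient of $\{\Phi_\app^E+F(\Omega_\app^E),\Omega_\app^E\}=\{\Phi_\app^E,\Omega_\app^E\}$) is just a compact restatement of the paper's identity \eqref{eq:refcond} combined with $\{F'(\Omega_0)\Omega_{M+1},\Omega_0\}=\{\Phi_0,\Omega_{M+1}\}$, so the two arguments coincide in substance.
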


\begin{remark}\label{rem:germ}
Unfortunately, we cannot write the conclusion of Proposition~\ref{prop:germ}
in the seemingly more natural form
\[
  \Phi_\app^E + F\bigl(\Omega_\app^E\bigr) \,=\, \cO_{\cS_*}\bigl(\epsilon^{M+1}\bigr)\,,
\]
because the quantity $F\bigl(\Omega_\app^E\bigr)$ is not properly defined.
Indeed, our assumptions on the approximate solution do not ensure that
$\Omega_\app^E(\xi) > 0$ for all $\xi \in \RR^2$, whereas a function $F \in \cK$
is only defined on $(0,+\infty)$. However the Taylor polynomial
$\Pi_M F\bigl(\Omega_\app^E\bigr)$ is well defined, because it only involves
derivatives of $F$ evaluated at $\Omega_0$, and its coefficients belong to the
space $\cS_*(\RR^2)$ by Lemma~\ref{lem:Fk}. 
\end{remark}
  
\begin{proof} To simplify the notation, we drop the superscript ``$E$'' and 
the subscript ``$\app$'' everywhere. We proceed by induction over the integer $M$.
Assume that, for some $M \ge 2$, we have constructed $\Phi \in \cS_*(\RR^2)$,
$\Omega \in \cZ$, and $F \in \cK$ of the form
\[
  \Phi \,=\, \Phi_0 + \sum_{k=2}^M \epsilon^k \Phi_k\,, \qquad
  \Omega \,=\, \Omega_0 + \sum_{k=2}^M \epsilon^k \Omega_k\,, \qquad
  F \,=\, F_0 + \sum_{k=2}^M \epsilon^k F_k\,,
\]
such that $\bigl\{\Phi,\Omega\bigr\} = \cO_{\cZ}\bigl(\epsilon^{M+1}\bigr)$ and
$\Pi_M\bigl(\Phi + F(\Omega)\bigr) = 0$. (We have just checked that this holds
for $M = 2$.) Suppose now that we have a refined expansion of the form
\begin{equation}\label{eq:refexp}
  \tilde \Phi \,=\, \Phi + \epsilon^{M+1}\Phi_{M+1}\,, \quad
  \tilde \Omega \,=\, \Omega + \epsilon^{M+1}\Omega_{M+1}\,, \quad
  \text{where} \quad \bigl\{\tilde\Phi,\tilde\Omega\bigr\} = \cO_{\cZ}
  \bigl(\epsilon^{M+2}\bigr)\,.
\end{equation}
We want to find $F_{M+1} \in \cK$ such that $\Pi_{M+1}\bigl(\tilde\Phi
+ \tilde F(\tilde\Omega)\bigr) = 0$ with $\tilde F = F + \epsilon^{M+1}F_{M+1}$. 

\smallskip
First, using the induction hypothesis, we observe that
\begin{equation}\label{eq:F+Phi}
  \Pi_{M+1}\bigl(\Phi + F(\Omega)\bigr) \,=\, \bigl(\Pi_{M+1} - \Pi_M\bigr)
  \bigl(\Phi + F(\Omega)\bigr) \,=\, \epsilon^{M+1} \cH_{M+1}\,,
\end{equation}
for some $\cH_{M+1} \in \cS_*(\RR^2)$. We deduce that $\bigl\{\Phi\,,\,\Omega\bigr\}
= \epsilon^{M+1}\bigl\{\cH_{M+1}\,,\,\Omega_0\bigr\} + \cO_{\cZ}\bigl(\epsilon^{M+2}\bigr)$,
because
\[
  \Pi_{M+1}\bigl\{\Phi\,,\,\Omega\bigr\} \,=\, \Pi_{M+1}\bigl\{\Phi + F(\Omega)\,,
  \,\Omega\bigr\} \,=\, \Pi_{M+1}\bigl\{\Pi_{M+1}(\Phi + F(\Omega))\,,
  \,\Omega\bigr\} \,=\, \epsilon^{M+1}\bigl\{\cH_{M+1}\,,\,\Omega_0\bigr\}\,. 
\]
On the other hand, it follows from the definition of $\tilde\Phi, \tilde\Omega$
in \eqref{eq:refexp} that
\[
  \bigl\{\tilde\Phi\,,\,\tilde\Omega\bigr\} \,=\, \bigl\{\Phi\,,\,\Omega\bigr\}
  + \epsilon^{M+1}\bigl\{\Phi_{M+1}\,,\,\Omega_0\bigr\} + \epsilon^{M+1} \bigl\{\Phi_0
  \,,\,\Omega_{M+1}\bigr\} +  \cO_{\cZ}\bigl(\epsilon^{M+2}\bigr)\,,
\]
so the assumption that $\bigl\{\tilde\Phi,\tilde\Omega\bigr\} = \cO_{\cZ}\bigl(\epsilon^{M+2}\bigr)$
implies
\begin{equation}\label{eq:refcond}
  \bigl\{\cH_{M+1}\,,\,\Omega_0\bigr\} + \bigl\{\Phi_{M+1}\,,\,\Omega_0\bigr\} +
  \bigl\{\Phi_0\,,\,\Omega_{M+1}\bigr\} \,=\, 0\,.
\end{equation}
Finally using \eqref{eq:refexp}, \eqref{eq:F+Phi} we find
\begin{align}\nonumber
  \Pi_{M+1}\bigl(\tilde \Phi + \tilde F(\tilde\Omega)\bigr) \,&=\,
  \Pi_{M+1}\bigl(\Phi + F(\Omega)\bigr) +   \Pi_{M+1}\bigl(\tilde \Phi - \Phi\bigr) +
  \Pi_{M+1}\bigl(\tilde F(\tilde\Omega) - F(\Omega)\bigr) \\  \label{eq:tPhi+F}       
  \,&=\, \epsilon^{M+1}\Bigl(\cH_{M+1} + \Phi_{M+1} + F_0'(\Omega_0)\Omega_{M+1}
  + F_{M+1}(\Omega_0)\Bigr)\,,
\end{align}
because at the points where $\tilde\Omega$ and $\Omega$ are positive we have the
identity
\[
  \tilde F\bigl(\tilde\Omega) - F(\Omega) \,=\, F\bigl(\Omega + \epsilon^{M+1}\Omega_{M+1}\bigr)
 - F(\Omega) \,+\, \epsilon^{M+1}F_{M+1}\bigl(\Omega + \epsilon^{M+1}\Omega_{M+1}\bigr)\,.
\]

\smallskip
In view of \eqref{eq:tPhi+F}, we must choose $F_{M+1} \in \cK$ so that
$\cA_{M+1} + F_{M+1}(\Omega_0) = 0$, where
\[
  \cA_{M+1} \,:=\, \cH_{M+1} + \Phi_{M+1} + F'(\Omega_0)\Omega_{M+1} \,\in\, \cS_*(\RR^2)\,.
\]
As was mentioned after Definition~\ref{def:Kclass}, this is possible if and only
if $\{\cA_{M+1},\Omega_0\} = 0$, but this solvability condition is implied by 
\eqref{eq:refcond} because
\[
  \bigl\{F_0'(\Omega_0)\Omega_{M+1}\,,\,\Omega_0\bigr\} \,=\, \bigl\{\Omega_{M+1}\,,\,F_0(\Omega_0)\bigr\}
  \,=\, \bigl\{\Omega_{M+1}\,,\,-\Phi_0\bigr\} \,=\, \bigl\{\Phi_0\,,\,\Omega_{M+1}\bigr\}\,.
\]
So there exists $F_{M+1} \in \cK$ such that $\Pi_{M+1}\bigl(\tilde\Phi
+ \tilde F(\tilde\Omega)\bigr) = 0$ with $\tilde F = F + \epsilon^{M+1}F_{M+1}$.
This concludes the proof. 
\end{proof}

\begin{corollary}\label{cor:frel}
With the same notation as in Proposition~\ref{prop:germ}, there exist $\sigma_1 > 0$,
$C > 0$, and $N \in \NN$ (depending on $M$) such that, for $\epsilon > 0$ small enough, 
\begin{equation}\label{eq:frelbd}
  \bigl|\nabla \bigl(\Phi_\app^E + F(\Omega_\app^E)\bigr)(\xi)\bigr| \,\le\,
  C\,\epsilon^{M+1}\bigl(1 + |\xi|\bigr)^N\,, \quad \text{for}~\,|\xi| \,\le\, 2\epsilon^{-\sigma_1}\,.
\end{equation}
\end{corollary}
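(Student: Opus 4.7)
The plan is to apply Taylor's theorem in $\epsilon$ to the gradient of $H(\epsilon,\xi) := \Phi_\app^E(\xi) + F(\Omega_\app^E(\xi))$, using that $\Pi_M H = 0$ by Proposition~\ref{prop:germ}. The preliminary step is to choose $\sigma_1 > 0$ so that $F(\Omega_\app^E)$ is well defined on $\{|\xi|\le 2\epsilon^{-\sigma_1}\}$. Since each correction $\Omega_k^E$ lies in $\cZ$, we have $|\Omega_k^E(\xi)/\Omega_0(\xi)| \le C(1+|\xi|)^{N_k}$ for some $N_k \in \NN$. Setting $N_\star := \max_{2\le k\le M} N_k$ and choosing $\sigma_1 \in (0, 2/N_\star)$, for $|\xi|\le 2\epsilon^{-\sigma_1}$ and $\epsilon$ small we obtain
\[
\biggl|\frac{\Omega_\app^E(\xi)}{\Omega_0(\xi)} - 1\biggr| \,\le\, C\sum_{k=2}^M \epsilon^k (1+|\xi|)^{N_k} \,\le\, C\epsilon^{2-\sigma_1 N_\star} \,\le\, \frac{1}{2}\,,
\]
so that $\Omega_\app^E \in [\Omega_0/2,\,3\Omega_0/2]$ on this region, and $F(\Omega_\app^E)$ is a smooth function of $(\epsilon,\xi)$ there.

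Since $\nabla_\xi$ commutes with the Taylor projection $\Pi_M$ in $\epsilon$, Proposition~\ref{prop:germ} gives $\Pi_M(\nabla_\xi H) = 0$, and the integral form of Taylor's theorem yields
\[
\nabla_\xi H(\epsilon,\xi) \,=\, \frac{1}{M!}\int_0^\epsilon (\epsilon-s)^M\,\partial_s^{M+1}\nabla_\xi H(s,\xi)\,\dd s\,.
\]
It therefore suffices to establish the uniform bound $|\partial_s^{M+1}\nabla_\xi H(s,\xi)| \le C(1+|\xi|)^N$ for $s \in [0,\epsilon]$ and $|\xi|\le 2\epsilon^{-\sigma_1}$. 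The contribution coming from $\Phi_\app^E$ is immediate, since $\Phi_\app^E$ is polynomial in $s$ with coefficients in $\cS_*(\RR^2)$: the logarithmic term in $\cT_\epsilon\Psi_\app^E$ cancels with $(1/2\pi)\log(1/\epsilon)$ thanks to $\mrM[\Omega_\app^E] = 1$, cf.~Lemma~\ref{lem:Teps}.

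The essential work lies in estimating the composite term $F(\Omega_\app^E)$. By Faa di Bruno, each $\partial_s^{M+1}\nabla_\xi F(\Omega_\app^E)$ is a sum of products of the form $F^{(j)}(\Omega_\app^E)\prod_i \partial_s^{a_i}\partial_\xi^{\alpha_i}\Omega_\app^E$, with exactly $j$ factors of $\Omega_\app^E$-derivatives. Each such factor is a finite combination of $\Omega_k^E\in\cZ$, hence bounded by $C(1+|\xi|)^{N'}\Omega_0(\xi)$, providing $j$ Gaussian factors overall. To control $F^{(j)}(\Omega_\app^E)$ we rely on Lemma~\ref{lem:Fk} through the auxiliary function $\tilde F_j(t):= F^{(j)}(t)\,t^j$, which satisfies the recursion $t\partial_t \tilde F_j = \tilde F_{j+1} + j\tilde F_j$; by induction $t^k\partial_t^k\tilde F_j$ is a linear combination of $\tilde F_l$ with $j\le l\le j+k$, all of which belong to $\cS_*(\RR^2)$ when evaluated at $\Omega_0$. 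Taylor-expanding $\tilde F_j$ in $t$ around $t = \Omega_0(\xi)$, and using both $\Omega_\app^E - \Omega_0 \in \cZ$ and the ratio bound $\Omega_\app^E/\Omega_0 \in [1/2, 3/2]$, one obtains $|F^{(j)}(\Omega_\app^E)(\Omega_\app^E)^j|\le C(1+|\xi|)^{N''}$ on our region. The $j$ Gaussian factors from the $\Omega_\app^E$-derivatives then absorb the $j$ inverse powers of $\Omega_\app^E$, producing the required polynomial bound. The main obstacle is precisely this last estimate at the edge of the region, where $\Omega_\app^E$ is exponentially small in $|\xi|$: the positivity bound $\Omega_\app^E\ge\Omega_0/2$ keeps us safely inside the domain of $F$, while Lemma~\ref{lem:Fk} supplies the matched growth/decay that offsets the inverse powers of $\Omega_\app^E$ coming from $F^{(j)}$.
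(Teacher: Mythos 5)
Your proposal is correct and follows essentially the same route as the paper: first fix $\sigma_1$ small so that $\Omega_\app^E$ is comparable to $\Omega_0$ on $\{|\xi|\le 2\epsilon^{-\sigma_1}\}$, then apply the integral-remainder form of Taylor's theorem in $\epsilon$ (using $\Pi_M=0$ from Proposition~\ref{prop:germ}) and bound the $(M{+}1)$-st $\epsilon$-derivative polynomially via $\Phi_\app^E\in\cS_*(\RR^2)$, $\Omega_\app^E\in\cZ$, and Lemma~\ref{lem:Fk}; your Faà di Bruno bookkeeping just makes explicit what the paper calls ``straightforward calculations.'' The only slight imprecision is calling $\Phi_\app^E$ polynomial in $s$ (the term $\cT_s\Psi_\app^E$ is not), but the needed derivative bounds follow from Lemma~\ref{lem:Teps} together with the log cancellation you correctly invoke, so this is not a gap.
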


\begin{proof}
We recall that $\Omega_\app^E = \Omega_0 + \epsilon^2\Omega_2 + \dots + \epsilon^M \Omega_M$.
For any $k \in \{2,\dots,M\}$, we have $\Omega_k \in \cZ$, so that $\epsilon^k |\Omega_k(\xi)|
\le C_k\,\epsilon^k\,\Omega_0(\xi) (1+|\xi|)^{N_k}$ for some constants $C_k > 0$ and $N_k \in \NN$.
This means that $\epsilon^k \Omega_k/\Omega_0$ is very small in the region $\{\xi \in \RR^2\,:\,
|\xi| \le 2\epsilon^{-\sigma_1}\}$ if $\sigma_1 < k/N_k$ and $\epsilon > 0$ is small enough.
As a consequence, if $\sigma_1 > 0$ is small enough, we have the estimate
\begin{equation}\label{eq:Ompos}
  \frac12\,\Omega_0(\xi) \,\le\, \Omega_\app^E(\xi) \,\le\, 2\Omega_0(\xi)\,,
  \qquad |\xi| \,\le\, 2\epsilon^{-\sigma_1}\,,
\end{equation}
which implies in particular that $\Omega_\app^E(\xi)$ is positive when $|\xi| \le 2\epsilon^{-\sigma_1}$. 

We now consider the function $\Theta(\epsilon,\xi) \,:=\, \Phi_\app^E(\xi) + 
F(\Omega_\app^E(\xi))$, which is well defined for $\epsilon > 0$ sufficiently small if 
$|\xi| \le 2\epsilon^{-\sigma_1}$. Using a Taylor expansion of order $M$ at $\epsilon = 0$, 
and taking into account the fact that $\Pi_M \Theta = 0$ by Proposition~\ref{prop:germ}, we 
obtain the representation formula
\begin{equation}\label{eq:Thetarep}
  \Theta(\epsilon,\xi) \,=\, \frac{\epsilon^{M+1}}{M!} \int_0^1 (1-\tau)^M
  \partial_\epsilon^{M+1}\Theta(\tau \epsilon,\xi)\,\dd\tau\,, \qquad
  |\xi| \,\le\, 2\epsilon^{-\sigma_1}\,.
\end{equation}
The integrand in \eqref{eq:Thetarep} is estimated by straightforward calculations,
using the bound \eqref{eq:Ompos} and the fact that $\Phi_\app^E \in \cS_*(\RR^2)$,
$\Omega_\app^E \in \cZ$, and $F \in \cK$. We find that $|\partial_\epsilon^{M+1}
\Theta(\tau \epsilon,\xi)| \le C(1+|\xi|)^N$ for some integer $N \in \NN$, and a similar
estimate holds for the derivatives with respect to $\xi$. This gives the bound \eqref{eq:frelbd}
after integrating over $\tau \in [0,1]$. 
\end{proof}

%%%%%%%%%%%%%%%%%%%%%%%%%%%%%%%%%%%%%%%%%%%%%%%%%%%%%%%%%%%%%%%%%%%%%%%%%%%%%%

\section{Correction to the approximate solution}\label{sec:nonlinear} 

In this section, our goal is to show that the exact solution $\Omega(\xi,t)$ of \eqref{eq:Omss}
with initial data \eqref{def:G} remains close to the approximate solution $\Omega_\app(\xi,t)$
constructed in Section~\ref{sec:appsol}. The accuracy of our approximation depends on
the integer $M$ in \eqref{appOP}, which is chosen large enough, and on the inverse
Reynolds number $\delta = \nu/\Gamma$, which is taken sufficiently small. We make
the decomposition
\begin{equation}\label{def:per}
  \Omega(\xi,t) \,=\, \Omega_\app(\xi,t) + \delta \ww(\xi,t)\,, \quad
  \Psi(\xi,t) \,=\, \Psi_\app(\xi,t) + \delta\varphi(\xi,t)\,, 
\end{equation}
where $\ww$ is the vorticity perturbation and $\varphi = \Delta^{-1}\ww$ is the
associated correction of the stream function. We also decompose the vertical
speed $Z_2'$ as
\begin{equation}\label{def:perZ2}
    Z_2'(t) \,=\, \frac{\Gamma}{2\pi d}\bigl(\zeta_\app(t)+\zeta(t)\bigr)\,, \qquad
\end{equation}
where $\zeta_\app$ is the approximation \eqref{appzeta} and, in agreement with
\eqref{def:Z2}, the correction $\zeta$ is given by the formula
\begin{equation}\label{def:zeta}
  \zeta(t) \,=\, \frac{2\pi}{\eps}\int_{\RR^2}\Bigl((\partial_1\cT_\eps \Psi)\Omega\Bigr)
  (\xi,t)\,\dd \xi \,-\, \zeta_{\app}(t)\,.
\end{equation}
Inserting \eqref{def:per}, \eqref{def:perZ2} into \eqref{eq:Omss} and using the definition
\eqref{def:resid} of the remainder $\mathcal{R}_M$, we find that the vorticity
perturbation $\ww$ satisfies
\begin{align}\label{linear0}
  t\partial_t \ww-\cL\ww \,=\, &\frac{1}{\delta}\Bigl\{\Psi_\app-\cT_{\eps}\Psi_\app
  +\frac{\eps \xi_1}{2\pi}\zeta_{\app}\,,\ww\Bigr\} + \frac{1}{\delta}
  \Bigl\{\varphi-\cT_\eps \varphi\,,\Omega_\app\Bigr\}\\ \label{force0}
  & +\Bigl\{\varphi-\cT_\eps \varphi\,,\ww\Bigr\} +\frac{1}{\delta^2}\,\cR_M
  +\frac{\eps\,\zeta}{2\pi\delta^2}\Bigl\{\xi_1,\Omega_{\app}+\delta \ww\Bigr\}\,.
\end{align}

This evolution equation has to be solved with zero initial data at time $t = 0$,
because both $\Omega_{\app}(\cdot,t)$ and $\Omega(\cdot,t)$ converge as
$t \to 0$ to the same limit $\Omega_0$ given by \eqref{def:G}, see the
discussion after Remark~\ref{rem:RDE}. Moreover, using the moment conditions
\eqref{eq:momid} in Lemma~\ref{lem:momZ2}, which are also fulfilled by the
approximate solution $\Omega_\app$ in view of Hypotheses~\ref{Hyp:Om}, we see
that the solution of \eqref{linear0}--\eqref{force0} satisfies
\begin{equation}\label{eq:zeromoments}
  \mathrm{M}[\ww(\cdot,t)] \,=\, \mathrm{m}_1[\ww(\cdot,t)] \,=\,
  \mathrm{m}_2[\ww(\cdot,t)] \,=\, 0\,, \qquad \text{for all } t > 0\,.
\end{equation}
Note that the last condition $\mathrm{m}_2[\ww] = 0$ is ensured by our choice
\eqref{def:zeta} of the correction $\zeta$ to the approximate vertical speed
$\zeta_\app$. 

The structure of the evolution equation \eqref{linear0}--\eqref{force0} is quite
transparent. In the first line we find the linearization of \eqref{eq:Omss} at
the approximate solution $\Omega_{\app}$, in a frame moving with the
approximate vertical velocity \eqref{appzeta}. The nonlinear interaction between
the vorticity perturbation $\ww$ and the associated stream function $\varphi$ is
described by the first term in \eqref{force0}, whereas the last term takes into
account the small correction $\zeta$ to the vertical speed. Since $w(\cdot,0) = 0$,
the solution of \eqref{linear0}--\eqref{force0} is actually driven by the source term
$\delta^{-2}\cR_M$ in \eqref{force0}, which measures by how much $\Omega_\app$ fails 
to be an exact solution of \eqref{eq:Omss}. According to Proposition~\ref{prop:appsol},
this term is small if $M$ is large enough and $\delta$ small enough. 

We are now in a position to state the main result of this section, which is a refined
version of Theorem \ref{thm:main}. 

\begin{theorem}\label{th:mainNL} Fix $\sigma \in [0,1)$, take $M \in \NN$ such that
$M > (3+\sigma)/(1-\sigma)$, and let $\ww$ be the solution of \eqref{linear0}--\eqref{eq:zeromoments}
with initial data $\ww|_{t=0}=0$. There exist positive constants $C$ and $\delta_0$ such that,
for any $\Gamma > 0$, any $d > 0$ and any $\nu > 0$ with $\delta := \nu/\Gamma \le \delta_0$,
the following holds:
\begin{equation}\label{bd:wmain}
  \|(\Omega-\Omega_{\app})(t)\|_{\cX_\eps} \,=\, \delta \|\ww(t)\|_{\cX_\eps} \,\le\,
  C\bigl(\delta^{-1}\eps^{M+1}+\delta\eps^2\bigr), \quad \text{for all}~\, t \in
  \bigl(0,T_\adv\delta^{-\sigma}\bigr)\,,
\end{equation}
where $\eps$ and $T_\adv$ are as in \eqref{def:all}, and the function space $\cX_\eps\hookrightarrow
L^1(\mathbb{R}^2)$ is defined in \eqref{def:Xeps} below. Moreover, the  vertical speed satisfies
\begin{equation}\label{eq:ZpdefNL}
  \frac{d}{\Gamma}\,Z_2'(t) \,=\, \frac{1}{\eps}\int_{\RR^2}\bigl(\partial_1\cT_{\eps}\Psi_\app
  \bigr)\Omega_\app\,\dd \xi + \cO\bigl(\eps^M+\delta^{-1}\eps^{M+1}+\delta^2\eps
 +\delta \eps^2\bigr)\,.
\end{equation}
\end{theorem}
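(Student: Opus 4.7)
The proof relies on an energy method with an Arnold-type nonlocal functional, adapted to the approximate solution $\Omega_{\app}$. The core difficulty is the prefactor $1/\delta$ on the first line of \eqref{linear0}: a direct $L^2$ estimate would produce exponential growth incompatible with the long time horizon $T_\adv\delta^{-\sigma}$. The strategy is therefore to construct a coercive functional whose time derivative is free of any $\cO(\delta^{-1})$ contribution when evaluated along solutions.

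The starting point is the approximate functional relationship from Proposition~\ref{prop:germ}: there exists $F \in \cK$ such that, by Corollary~\ref{cor:frel}, $\nabla \Phi_\app^E + F'(\Omega_\app^E)\,\nabla \Omega_\app^E = \cO(\epsilon^{M+1})$ pointwise on $\{|\xi|\le 2\epsilon^{-\sigma_1}\}$. This identity makes the $\cO(\delta^{-1})$ part of the transport operator in \eqref{linear0} skew-symmetric, up to error terms of size $\cO(\delta^{-1}\epsilon^{M+1})$, with respect to the weighted inner product with weight $A_\app := F'(\Omega_\app^E) = F_0'(\Omega_0) + \cO(\epsilon^2) > 0$, the positivity following from \eqref{def:A}. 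I would therefore introduce an Arnold-type functional of the schematic form
\begin{equation*}
  \cE(w) \,:=\, \frac{1}{2}\int_{\RR^2} A_\app(\xi)\,w(\xi)^2\,\dd\xi \,-\, \frac{1}{2}\int_{\RR^2} w(\xi)\,\bigl(\Delta^{-1}w - \cT_\eps \Delta^{-1}w\bigr)(\xi)\,\dd\xi\,,
\end{equation*}
and establish the coercivity $\cE(w) \gtrsim \|w\|_{\cX_\eps}^2$ on the subspace determined by the moment constraints \eqref{eq:zeromoments}. Coercivity is where the spectral information from Proposition~\ref{prop:Lambda} enters: on the orthogonal complement of $\Ker(\Lambda)$, a positive spectral gap prevents the sign-indefinite Biot-Savart quadratic form from overwhelming the positive weight $A_\app$.

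The main computation is to derive the evolution equation for $\cE(w)$ along solutions of \eqref{linear0}--\eqref{force0}. The construction of $\cE$ guarantees that the $\cO(\delta^{-1})$ advection terms cancel up to an error controlled by Corollary~\ref{cor:frel} and by the viscous discrepancy $\Omega_\app - \Omega_\app^E = \cO(\delta)$. What survives is: (a) a strictly dissipative contribution from $\cL$; (b) a forcing of size $\delta^{-1}\epsilon^{M+1}\sqrt{\cE(w)}$ coming from $\delta^{-2}\cR_M$ via \eqref{eq:remest}; (c) nonlinear terms quadratic or cubic in $\|w\|_{\cX_\eps}$, which are absorbed by the dissipation provided $\delta$ is small; and (d) a contribution from the speed correction $\zeta$, which is itself quadratic in $w$ through \eqref{def:zeta}. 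A Gronwall argument then yields \eqref{bd:wmain}, the algebraic threshold $M > (3+\sigma)/(1-\sigma)$ being exactly what is needed so that the time-integrated forcing remains smaller than $\delta\epsilon^2(T)$ at $T = T_\adv\delta^{-\sigma}$. The velocity formula \eqref{eq:ZpdefNL} then follows by substituting \eqref{def:per} into \eqref{def:zeta} and expanding each cross term via Lemma~\ref{lem:Teps}, the $\cO(\delta^{-1}\epsilon^{M+1})$ contributions being controlled by the bound on $\|w\|_{\cX_\eps}$ just established.

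The hardest step is the simultaneous treatment of coercivity and of the Gronwall closure on the long time interval. Coercivity is delicate because both $\int w\,\Delta^{-1}w$ and its reflected counterpart $\int w\,\cT_\eps\Delta^{-1}w$ are spatially nonlocal and sign-indefinite; the moment constraints \eqref{eq:zeromoments} must be used essentially, which is why the choice \eqref{def:zeta} of $\zeta$ enforcing $\mrm_2[w]=0$ is indispensable for the functional inequality to hold. The outer region $|\xi|\gtrsim \epsilon^{-\sigma_1}$, where Corollary~\ref{cor:frel} provides no control, must be handled separately using the Gaussian weight built into $\cX_\eps$, which furnishes enough decay to render the outer contributions of size $\cO(\epsilon^\infty)$. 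Finally, a bootstrap argument is needed to make sure the nonlinear and speed-correction contributions stay compatible with the target estimate \eqref{bd:wmain} throughout $(0, T_\adv\delta^{-\sigma})$.
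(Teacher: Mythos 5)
Your overall strategy is the paper's: an Arnold-type quadratic functional built on the approximate relation $\Phi_\app^E \approx -F(\Omega_\app^E)$, coercive on the subspace \eqref{eq:zeromoments}, whose time derivative contains no $\cO(\delta^{-1})$ contribution, closed by a Gr\"onwall argument with the threshold $M>(3+\sigma)/(1-\sigma)$ ensuring $\delta^{-2}\eps^{M+1}\ll1$. However, two steps would fail as written.

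First, the sign of the nonlocal term in your functional is wrong. The cancellation of the $\cO(\delta^{-1})$ advection terms requires the combination $\tfrac12\int F'(\Omega_\app^E)\ww^2 + \tfrac12\int(\varphi-\cT_\eps\varphi)\ww$ with a \emph{plus} sign (cf.\ \eqref{def:Arnold_energy}, \eqref{def:Eeps}): the contribution of $\{\Phi_\app^E,\ww\}$ tested against $\varphi-\cT_\eps\varphi$ must cancel that of $\{\varphi-\cT_\eps\varphi,\Omega_\app^E\}$ tested against $F'(\Omega_\app^E)\ww$, and these two terms cancel only with that sign; with your minus sign they add up, leaving an uncontrollable term $\propto \delta^{-1}\int\ww\{\varphi-\cT_\eps\varphi,\Phi_\app^E\}$. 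A warning signal is that with your sign the nonlocal term equals $+\tfrac12\|\nabla\varphi\|_{L^2}^2$ up to a small reflected correction, so coercivity would be trivial; in fact coercivity is the delicate point (Proposition~\ref{prop:coercivity}), and it comes from the variational analysis of the limiting Arnold form at the Gaussian vortex under the moment constraints, not from a ``spectral gap of $\Lambda$'' --- $\Lambda$ is skew-adjoint and has no such gap.

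Second, the term $\frac{\eps\zeta}{2\pi\delta^2}\{\xi_1,\Omega_\app\}$ is not ``quadratic in $\ww$'': by \eqref{def:zeta} and \eqref{eq:expansion_zeta}, $\eps\zeta$ contains a constant part of size $\eps^{M+1}+\delta^2\eps^2$ and a part linear in $\ww$ of size $\delta\eps\|\ww\|_{\cX_\eps}$, so with the $\delta^{-2}$ prefactor this term is a priori of order $\delta^{-1}\eps\|\ww\|_{\cX_\eps}^2$, which cannot be absorbed on $(0,T_\adv\delta^{-\sigma})$. It is tamed only by a second, separate cancellation tied to vertical translation invariance: $\langle\{\xi_1,\Omega_\app^E\},W_\eps\ww+\varphi-\cT_\eps\varphi\rangle=\langle\partial_2\Theta,\ww\rangle=\cO(\eps^{M+1}\|\ww\|_{\cX_\eps})$ in the inner region, with $\Theta$ as in \eqref{def:Theta}. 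This is a needed idea, not a routine absorption. Finally, note that $F'(\Omega_\app^E)$ cannot serve as a global weight ($\Omega_\app^E$ may be negative outside the core, and $F_0'(\Omega_0)=A$ grows Gaussianly, too fast for the outer advection estimate \eqref{bd:A12}); the paper replaces it by a constant in the intermediate region and by the sub-Gaussian weight $\exp(|\xi|^{2\gamma}/4)$, $\gamma<1/2$, in the outer region, and the outer contributions are small but controlled by the dissipation, not $\cO(\eps^\infty)$.
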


\begin{remark}
As is explained in Sections~\ref{sec:Arnold} and \ref{sec:weight}, we need to introduce
a carefully designed weighted space $\cX_\eps$ to fully exploit the stability properties
of our approximate solution $\Omega_\app$ of \eqref{eq:Omss}. At this stage, however,
it is enough to know that $\cX_\eps\hookrightarrow L^1(\RR^2)$ uniformly in $\epsilon$. 
Observing that $\delta^{-1}\eps^{M+1}\le \eps^2$ when $t \le T_\adv\delta^{-\sigma}$ and 
$\delta$ is small enough, and recalling that $\Omega_\app = \Omega_0 + \cO(\eps^2)$ by 
\eqref{appOP}, we see that \eqref{bd:wmain} readily implies the estimate \eqref{eq:2vortconv}
in Theorem~\ref{thm:main}, in view of \eqref{eq:omapp} and \eqref{eq:ansom}. 
Similarly, assuming that $M \ge 5$ is sufficiently large, the expression \eqref{eq:Zpdef}
of the vertical speed follows from \eqref{eq:ZpdefNL} if we use the relation \eqref{zetaapp} 
and the expression of $\zeta_\app$ computed in Proposition~\ref{prop:speed}. 
\end{remark}

\smallskip
The rest of this section is devoted to the proof of Theorem \ref{th:mainNL},
which is organized as follows. In Section~\ref{sec:Arnold} we isolate the most
dangerous terms in the equation \eqref{linear0}--\eqref{force0}, and we explain
why they are difficult to control. We then discuss how to overcome these issues
in a simplified situation where the geometric ideas underlying Arnold's
variational principle can be presented without too many technicalities.  The
functional framework needed to prove Theorem~\ref{th:mainNL} is introduced in
Section~\ref{sec:weight}, where the $\eps$-dependent weighted space $\cX_\eps$
appearing in \eqref{bd:wmain} is precisely defined. The short Section~\ref{sec:velocity}
is entirely devoted to the control of the correction $\zeta$ to the vertical speed.
In Section~\ref{sec:energy} we introduce our main energy functional, inspired
from Arnold's theory, and we study its coercivity properties. We also 
state the key Proposition~\ref{prop:key}, which is the core of the proof
of Theorem~\ref{th:mainNL}. The time derivative of our energy functional is
computed in Section~\ref{sec:energyId}, and consists of various terms
that are estimated in the subsequent sections~\ref{sec:diffusion}--\ref{sec:NL}. Once
this is done, a simple Gr\"onwall-type argument allows us to complete the proof
of Proposition~\ref{prop:key}, hence also of Theorem~\ref{th:mainNL}.

\subsection{Main difficulties and Arnold's variational principle}\label{sec:Arnold}

Before proceeding with the analysis of the evolution equation \eqref{linear0}--\eqref{force0},
it is convenient to identify the terms that produce the main contributions. We recall that,
according to \eqref{def:ENS}, our approximate solution can be decomposed as
\[
  \Omega_\app \,=\, \Omega_\app^E + \delta \Omega_\app^{NS}\,, \qquad
  \Psi_\app \,=\, \Psi_\app^E + \delta \Psi_\app^{NS}\,, \qquad
  \zeta_\app \,=\, \zeta_\app^E + \delta \zeta_\app^{NS}\,,
\]  
where $\Omega_\app^E$ is the Eulerian approximation already considered in
Section~\ref{ssec35}, and $\Omega_\app^{NS}$ is a viscous correction. In analogy with 
the definition of $\Phi_\app^E$ in \eqref{def:PhiappE}, we denote
\begin{equation}\label{def:PhiappNS}
  \Phi_\app^{NS} \,:=\, \Psi_\app^{NS} - \cT_\eps\Psi_\app^{NS}  + \frac{\eps\xi_1}{2\pi}
  \,\zeta_\app^{NS}\,.
\end{equation}
The equation \eqref{linear0}--\eqref{force0} can then be written in the equivalent form
\begin{align}\label{linear}
  t\partial_t \ww-\cL\ww \,=\, \,&\frac{1}{\delta}\left(\left\{\Phi_\app^E,\ww\right\}
  +\left\{\varphi-\cT_\eps \varphi,\Omega_\app^{E}\right\}\right)+\left\{\Phi_\app^{NS},\ww\right\}
  +\left\{\varphi-\cT_\eps \varphi,\Omega_\app^{NS}\right\}\\ \label{force}
  & +\bigl\{\varphi-\cT_\eps \varphi\,,\ww\bigr\} +\frac{1}{\delta^2}\,\cR_M
  +\frac{\eps\,\zeta}{2\pi\delta^2}\bigl\{\xi_1,\Omega_{\app}+\delta \ww\bigr\}\,.
\end{align}

As is explained in the introduction, the main challenge in the proof of Theorem
\ref{th:mainNL} is to control the vorticity perturbation $\ww$ over the long
time interval $(0,T_\adv \delta^{-\sigma})$, where $\sigma\in(0,1)$. The size of
$\ww$ is measured in a function space $\cX_\eps\hookrightarrow L^1(\RR^2)$, keeping
in mind that $L^1(\RR^2)$ is the Lebesgue space whose norm is invariant under
the self-similar scaling \eqref{eq:ansom}.  The best we can hope for is to
propagate the bounds we have on the forcing term, and thanks to
Proposition~\ref{prop:appsol} we know that $\delta^{-2}\cR_M = \cO_\cZ (\delta^{-2}
\eps^{M+1}+\eps^2)$. This explains the right-hand side of estimate
\eqref{bd:wmain} in Theorem~\ref{th:mainNL}.

To control the nonlinear terms in \eqref{force}, we have to make sure that $\ww$
remains small, and in the light of the above we must therefore assume that
$\delta^{-2}\eps^{M+1}\ll 1$. Recalling that $\eps \le \delta^{(1-\sigma)/2}$
when $t \le T_\adv \delta^{-\sigma}$, we see that this condition is met when
$\delta > 0$ is small if we suppose that $M$ is large enough so that 
$M>(3+\sigma)/(1-\sigma)$. The term involving $\delta^{-2}\eps\zeta \{\xi_1,
\Omega_{\app}\}$ is also potentially problematic, because the bound on $\zeta$
that will be established in Section~\ref{sec:velocity} below does not compensate
for the large prefactor $\delta^{-2}$, but due to a subtle cancellation (related 
to translation invariance in the vertical direction) this term will not seriously
affect our energy estimates.

More importantly, we have to handle carefully the linear terms in
\eqref{linear}, especially the nonlocal ones involving the stream function
perturbation $\varphi$. The most dangerous terms are multiplied by $\delta^{-1}$
and correspond to the linearization of \eqref{eq:Omss} at the Eulerian
approximate solution $\Omega_\app^E$. In contrast, the contributions due to the
viscous corrections are of size $\cO(\eps^2)$ and will be easy to control. As
was already observed in \cite{Gallay2011}, the linearized operator at the naive
approximation $\Omega_0$ is skew-symmetric in a Gaussian weighted space, so that
the leading order terms in \eqref{linear} do not contribute to the energy
estimates in that space. Moreover, the corrections due to the difference
$\Omega_\app - \Omega_0$ are proportional to $\delta^{-1}\epsilon^2$, a quantity that
remains small as long as $t \ll T_\adv$.

The real difficulties begin when one tries to control the solution $\ww$ of
\eqref{linear}--\eqref{force} on a time interval $(0,T)$ with $T \gg T_\adv$.
If $T$ is independent of $\delta = \nu/\Gamma$, this can still be done using an
appropriate modification of the Gaussian weight in the energy estimates, see
\cite{Gallay2011}. However, to reach longer time scales corresponding to
$T = T_\adv\delta^{-\sigma}$ with $\sigma > 0$, we need a different and more
robust approach that fully exploits the stability properties of our approximate
solution $\Omega_{\app}$. In the particular case under consideration, we know
that $\Omega_{\app}^E$ is very close to a traveling wave solution of the 2D
Euler equation, and we even have an approximate functional relationship
between $\Omega_{\app}^E$ and $\Phi^E_\app$, as shown in Section~\ref{ssec35}.
For steady states (or traveling waves) of the 2D Euler equations with a global
functional relationship between vorticity and stream function, Arnold
\cite{Arnold66} introduced a beautiful and general variational principle that
can be used to investigate stability. This approach was recently revisited in
\cite{GaSarnold} and successfully applied to the study of the vanishing viscosity
limit for axisymmetric vortex rings \cite{GaSring}, a problem that has many
similarities with the case of vortex dipoles. In the rest of this paragraph, 
we briefly present Arnold's general strategy and we explain how it can be
implemented in our situation. 

\medskip
Assume that we are given a steady state of the two-dimensional Euler equation,
with the property that the vorticity $\omega_*$ and the associated stream function 
$\psi_* = \Delta^{-1}\omega_*$ satisfy a global functional relationship of the form
$\psi_*= F(\omega_*)$, where $F$ is a smooth function. Following \cite{Arnold66}
we consider the energy functional 
\begin{equation}\label{def:toyEE}
  \cE[\omega] \,:=\, \int \cF(\omega)\,\dd x - \frac12\int \psi \omega\,\dd x\,,
\end{equation}
where $\cF$ is a primitive of $F$. Here and in what follows we are vague about
the spatial domain under consideration, and we do not perform rigorous
calculations.  The right-hand side of \eqref{def:toyEE} is the sum of the
kinetic energy of the fluid and a Casimir functional, so that
$\cE[\omega]$ is conserved under the evolution defined by the Euler
equation. The first variation of $\cE$ at $\omega_*$ vanishes by construction,
and taking the second variation we obtain the quadratic form
\begin{equation}\label{def:ArnQ}
  \cE''(\omega_*)[\omega,\omega] \,=\, \int F'(\omega_*)\omega^2\,\dd x -
  \int \psi\omega\,\dd x\,.
\end{equation}
A key observation is that this quadratic function of $\omega$ is invariant
under the evolution defined by the linearized Euler equation at $\omega_*$. Indeed,
if $\partial_t \omega = -\{\psi_*,\omega\}-\{\psi,\omega_*\}$, a direct calculation 
shows that $\cE''(\omega_*)[\omega,\omega]$ does not vary in time. As a consequence, 
if the quadratic form \eqref{def:ArnQ} has a definite sign, it can be used to prove
the stability of the steady state $\omega_*$, not only for the Euler equation but
for related systems as well, see \cite{GaSarnold} for an application to the
stability of vortices in the 2D Navier-Stokes equations. 

We now explain how to implement Arnold's approach in our situation. We consider
a simplified version of the evolution equation \eqref{linear}--\eqref{force},
where we rescale time and only keep the problematic terms that we have just identified.
Given a small parameter $\eps > 0$ and an arbitrary real number $\zeta$,
our model system reads
\begin{equation}\label{eq:toyArnold}
  \partial_\tau \ww \,=\, \bigl\{\Phi_\app^E\,,\ww\bigr\} + \bigl\{\varphi-\cT_\eps \varphi
  + \zeta \xi_1\,,\Omega_\app^E\bigr\}\,, \qquad \Delta\varphi \,=\, \ww\,,
\end{equation}
where $\Omega_{\app}^E$ is the Eulerian approximate solution and $\Phi_\app^E$
is given by \eqref{def:PhiappE}. Extrapolating the conclusions of
Proposition~\ref{prop:germ}, we assume for simplicity that we have
an exact, global relationship of the form $\Phi_\app^E = -F(\Omega_\app^E)$,
for some $F : \RR \to \RR$ (the minus sign is introduced for convenience,
just to ensure that $F$ is an increasing function.) Inspired by \eqref{def:ArnQ},
we define the quadratic functional
\begin{equation}\label{def:Arnold_energy}
  E(\ww) \,:=\, \frac12 \int F'(\Omega_{\app}^E)\ww^2\,\dd\xi \,+\, \frac12 \int
  \bigl(\varphi-\cT_\eps \varphi\bigr) \ww\,\dd\xi\,,
\end{equation}
where the last term is, up to a sign, the total energy of the fluid, taking into
account the mirror vortex located at $\xi = (-1/\eps,0)$. 

As before, we claim that the functional $E(\ww)$ is invariant under the linear
evolution defined by \eqref{eq:toyArnold}. To prove this, we first observe that
\begin{equation}\label{def:Arn1}
  \partial_\tau E[\ww] \,=\, \int F'(\Omega_{\app}^E)\ww \partial_\tau \ww\,\dd\xi \,+\,
  \int \bigl(\varphi-\cT_\eps \varphi\bigr) \partial_\tau \ww\,\dd\xi\,,
\end{equation}
where, to obtain the second term, we used the identities
\begin{equation}\label{def:Arn2}
  \int (\partial_\tau\varphi)\ww\,\dd\xi \,=\, \int \varphi\partial_\tau\ww\,\dd\xi\,, \qquad
  \int (\cT_\eps\partial_\tau\varphi)\ww\,\dd\xi \,=\, \int (\cT_\eps\varphi)\partial_\tau\ww\,\dd\xi\,.
\end{equation}
The first one is established by writing $w = \Delta\varphi$ and integrating by parts,
the second one using in addition the fact that the translation-reflection operator
$\cT_\eps$ defined in \eqref{def:Teps} is self-ajdoint in $L^2(\RR^2)$ and commutes with
the Laplacian. 

It remains to evaluate the right-hand side of \eqref{def:Arn1}. Starting from 
\eqref{eq:toyArnold} and using the functional relation $\Phi_\app^E + F(\Omega_\app^E) = 0$,
we find
\begin{align*}
  \int F'(\Omega_{\app}^E)\ww \partial_\tau \ww\,\dd\xi \,&=\,
  \int F'(\Omega_{\app}^E)\ww \Bigl(\bigl\{\ww\,,F(\Omega_\app^E)\bigr\} + \bigl\{\varphi-\cT_\eps \varphi
  + \zeta \xi_1\,,\Omega_\app^E\bigr\}\Bigr)\,\dd\xi \\
  \,&=\, 0 + \int \ww \bigl\{\varphi-\cT_\eps \varphi + \zeta \xi_1\,,F(\Omega_\app^E)\bigr\}
  \,\dd\xi\,,
\end{align*}
where we invoke familiar identities involving Poisson brackets, such as
$F'(a)\{a,b\} = \{F(a),b\}$, $\{F(a),G(a)\} = 0$, and $\int \{a,b\}c\,\dd\xi =
\int a\{b,c\}\,\dd\xi$. Similarly we obtain
\begin{align*}
   \int \bigl(\varphi-\cT_\eps \varphi\bigr) \partial_\tau \ww\,\dd\xi \,&=\, 
   \int \bigl(\varphi-\cT_\eps \varphi\bigr)\Bigl(\bigl\{\ww\,,F(\Omega_\app^E)\bigr\}
   + \bigl\{\varphi-\cT_\eps \varphi + \zeta \xi_1\,,\Omega_\app^E\bigr\}\Bigr)\,\dd\xi \\
  \,&=\, 0 +  
   \int \bigl(\varphi-\cT_\eps \varphi\bigr)\Bigl(\bigl\{\ww\,,F(\Omega_\app^E)\bigr\}
   + \zeta \bigl\{\xi_1\,,\Omega_\app^E\bigr\}\Bigr)\,\dd\xi\,.
\end{align*}
So we deduce from \eqref{def:Arn1} that
\begin{equation}\label{def:Arn3}
\begin{split}
  \partial_\tau E[\ww] \,&=\, \zeta \int \Bigl(w\bigl\{\xi_1\,,F(\Omega_\app^E)\bigr\}
  + \bigl(\varphi-\cT_\eps \varphi\bigr) \bigl\{\xi_1\,,\Omega_\app^E\bigr\}\Bigr)\,\dd\xi \\
  \,&=\, \zeta \int w \bigl\{\xi_1\,,F(\Omega_\app^E) + \Phi_\app^E\bigr\}\,\dd\xi \,=\, 0\,,
\end{split}
\end{equation}
which is the desired result. Here, in the last line, we used the identity
\[
  \int \bigl(\varphi-\cT_\eps \varphi\bigr) \bigl\{\xi_1\,,\Omega_\app^E\bigr\}\Bigr)\,\dd\xi
  \,=\, \int w \bigl\{\xi_1\,,\Psi_\app^E - \cT_\eps\Psi_\app^E\bigr\}\,\dd\xi
  \,=\, \int w \bigl\{\xi_1\,,\Phi_\app^E\bigr\}\,\dd\xi\,,
\]
which is established in the same way as \eqref{def:Arn2}. 

The main purpose of the heuristic arguments above is to explain how to construct
an energy functional that is naturally adapted to the leading order terms in the
evolution equation \eqref{linear}--\eqref{force}. Although the general strategy
is clear, many technical difficulties arise when turning these ideas into a
rigorous proof. For instance, we can only exploit the functional relationship
between $\Phi_\app^E$ and $\Omega_\app^E$ in the region where we are able to
justify it, namely for $|\xi| \le 2\eps^{-\sigma_1}$ with $\sigma_1 > 0$
sufficiently small. Outside the vortex core, the energy functional has to be
substantially modified, but we can exploit the fact that our approximate
solution $\Omega_\app$ is extremely small in that region. In addition,
understanding the coercivity properties of the energy functional and its
interplay with the dissipation operator $\cL$ is highly non trivial. Similar
problems were addressed in the previous works \cite{GaSarnold,GaSring}, but here
we have to face the additional difficulty of handling a perturbative expansion
to an arbitrary order in the parameter $\epsilon$.

\subsection{The weighted space and its properties}\label{sec:weight}

After these preliminaries, we construct the weight function that will
enter our energy functional. We give ourselves three real numbers $\sigma_1$, $\sigma_2$,
and $\gamma$ satisfying
\begin{equation}\label{def:parameters}
  0 \,<\, \sigma_1 \,<\, \frac12\,, \qquad \sigma_2 \,>\, 1\,, \qquad
  \gamma \,=\, \frac{\sigma_1}{\sigma_2} \,<\, \frac12\,.
\end{equation}
In the course of the proof the parameter $\sigma_1$ will be chosen small, depending on 
the order $M$ of the approximate solution \eqref{appOP}, whereas $\sigma_2$ will be large. 
In particular we assume that $\sigma_1$ is small enough to ensure the validity of
Corollary~\ref{cor:frel}, which asserts the existence of an approximate
functional relationship between the vorticity $\Omega_\app^E$ and the stream
function $\Phi_\app^E$ defined by \eqref{def:PhiappE}. More precisely, we define
\begin{equation}\label{def:Theta}
  \Theta(\eps,\xi) \,:=\, \Phi_\app^E(\xi) + F\bigl(\Omega_\app^E(\xi)\bigr)\,,
\end{equation}
where $F \in \cK$ is the function introduced in Proposition~\ref{prop:germ}.
Then, according to \eqref{eq:frelbd}, there exists an integer $N \in \NN$ such that 
\begin{equation}\label{bd:Theta}
  |\nabla_\xi \Theta(\eps,\xi)| \,\lesssim\, \eps^{M+1}(1+|\xi|)^N, \qquad \text{ for }\,
  |\xi| \,\le\, 2\eps^{-\sigma_1}\,.
\end{equation}
For later use, we also recall that $\Omega_0(\xi)/2 \le \Omega_\app^E(\xi) \le 2\Omega_0(\xi)$
when $|\xi| \le 2\eps^{-\sigma_1}$, see \eqref{eq:Ompos}. 

\smallskip
We now decompose the space domain $\RR^2$ into three disjoint regions: 
\begin{align}
  \label{inner}\tag{Inner}\mathrm{I}_{\eps} \,&=\, \bigl\{\xi\in \RR^2 \,:\, |\xi| < 2\eps^{-\sigma_1}\,,
  ~F'\bigl(\Omega_\app^E(\xi)\bigr) < \exp(\eps^{-2\sigma_1}/4)\bigr\}\,,\hspace{0.8cm}\\
  \label{intermediate}\tag{Intermediate}\mathrm{II}_{\eps} \,&=\, \bigl \{\xi\in \RR^2\,:\, 
  \xi \notin \mathrm{I}_\eps\,,~|\xi| \le \eps^{-\sigma_2}\bigr\}\,,\\
  \label{outer}\tag{Outer}\mathrm{III}_{\eps} \,&=\, \bigl\{\xi\in \RR^2\,:\, |\xi| > \eps^{-\sigma_2}\bigr\}\,,
\end{align}
which depend on time through the parameter $\eps = \sqrt{\nu t}/d$. Our weight function is defined as
\begin{equation}\label{def:Weps}
  W_{\eps}(\xi) \,=\, \begin{cases}
  F'\bigl(\Omega_{\app}^E(\xi)\bigr) \quad &\text{in } \,\mathrm{I}_{\eps}\,,\\[1mm]
  \exp(\eps^{-2\sigma_1}/4) \quad &\text{in } \,\mathrm{II}_{\eps}\,,\\[1mm]
  \exp(|\xi|^{2\gamma}/4) \quad &\text{in } \,\mathrm{III}_{\eps}\,.
\end{cases}
\end{equation}
For any fixed $\epsilon > 0$, the weight $W_\epsilon$ is a positive, locally Lipschitz and piecewise
smooth function, but the derivative $\nabla W_\epsilon$ has a discontinuity at the boundaries
of the regions $\mathrm{I}_\eps, \mathrm{II}_\eps, \mathrm{III}_\eps$, see Fig.~\ref{fig2}. 
Finally we introduce the weighted $L^2$ space 
\begin{equation}\label{def:Xeps}
  \cX_{\eps} \,=\, \Bigl\{f\in L^2(\RR^2)\,:\, \norm{f}_{\cX_\eps}^2 := \int_{\RR^2}W_\eps(\xi)|f(\xi)|^2
  \,\dd\xi < \infty\Bigr\}\,.
\end{equation}
The estimates established in Proposition~\ref{propIW} below readily imply that
\[
  \cY \,\hookrightarrow \cX_{\eps} \,\hookrightarrow\, L^p(\RR^2), \qquad \text{for all } p \in [1,2]\,,
\]
where $\cY$ is the Gaussian space defined in \eqref{def:cY}. Moreover, in the
limit where $\eps \to 0$, it is easy to verify that $W_\eps(\xi) \to W_0(\xi) :=
F_0'(\Omega_0(\xi)) \equiv A(\xi)$, where $A$ is defined in \eqref{def:A}. We denote
by $\cX_0$ the space {\eqref{def:Xeps}} with $\eps = 0$.

\begin{figure}[ht]
  \begin{center}
  \begin{picture}(200,160)% width and height of the picture
  \put(0,5){\includegraphics[width=1.2\textwidth]{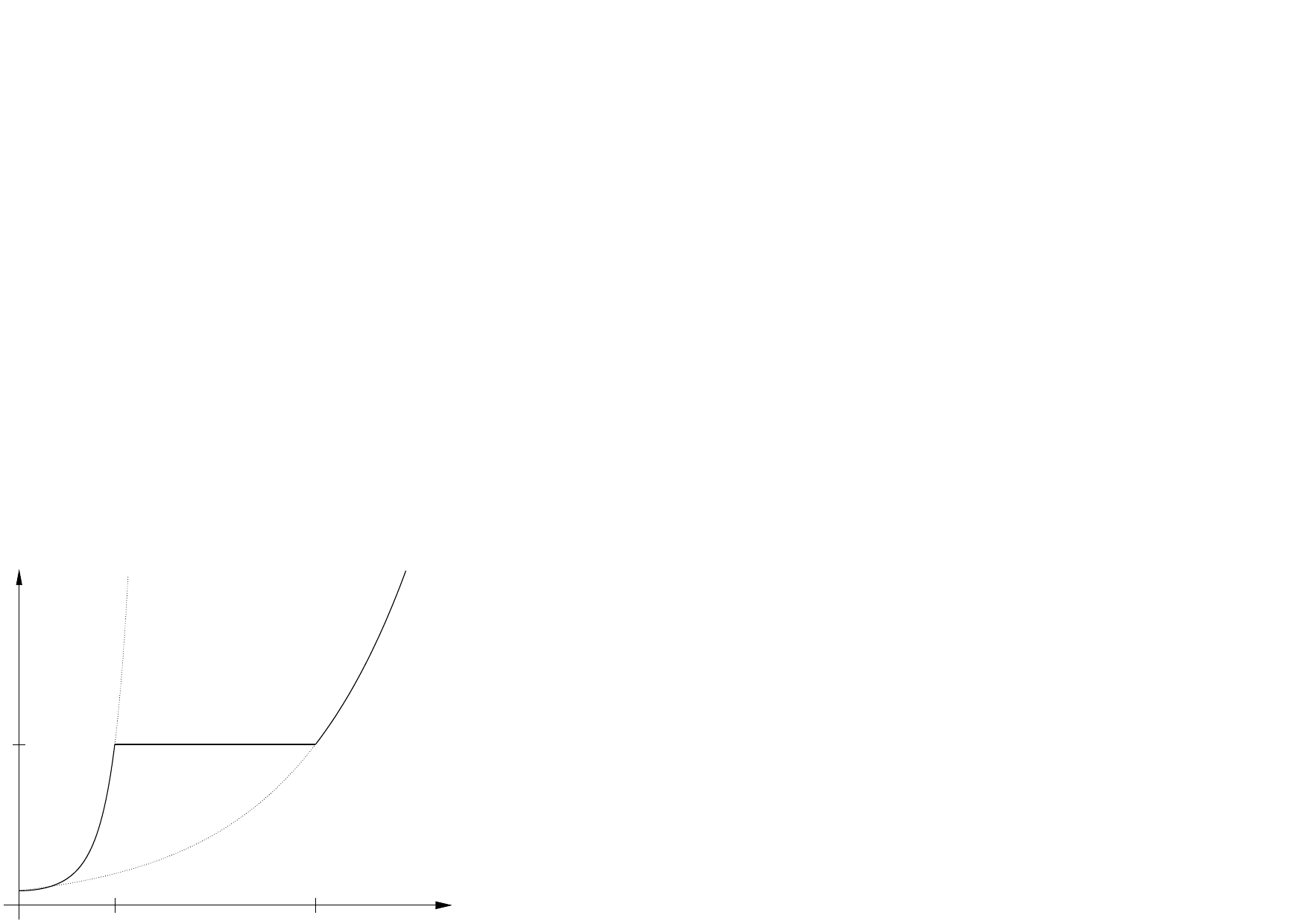}}
  \put(46,17){$\epsilon^{-\sigma_1}$}
  \put(129,17){$\epsilon^{-\sigma_2}$}
  \put(170,18){$|\xi|$}
  \put(1,16){$1$}
  \put(-60,77){$\exp(\epsilon^{-2\sigma_1}/4)$}  
  \put(80,85){$W_\epsilon(\xi)$}  
  \put(56,130){$W_0(\xi)$}
  \put(168,130){$\exp(|\xi|^{2\gamma}/4)$}
  \put(23,0){$\mathrm{I}_\eps$}
  \put(85,0){$\mathrm{II}_\eps$}
  \put(150,0){$\mathrm{III}_\eps$}
  \end{picture}
  \captionsetup{width=.9\linewidth}
  \caption{{\small 
  A schematic representation of the graph of the weight function $W_\epsilon$ defined
  in \eqref{def:Weps}. In the inner region $\mathrm{I}_\eps$, the weight is close for 
  $\eps > 0$ small to the radially symmetric function $W_0(\xi) = 4|\xi|^{-2} 
  \bigl(\e^{|\xi|^2/4}-1\bigr)$. It then takes constant values in the intermediate
  region $\mathrm{II}_\eps$, and grows like $\exp(|\xi|^{2\gamma}/4)$ in
  the outer region $\mathrm{III}_\eps$. The dashed lines illustrate the bounds
  \eqref{bd:Weps}, where the constants $C_1, C_2$ are independent of $\eps$.}}\label{fig2}
  \end{center}
\end{figure}

\begin{remark}\label{rem:weight}
In the inner region $\mathrm{I}_\eps$, the weight \eqref{def:Weps} is constructed
following exactly Arnold's approach as discussed in Section~\ref{sec:Arnold}.
This is possible because the quantity $\Theta = \Phi_\app^E + F(\Omega_\app^E)$ is
small in that region, which means that we almost have a functional relationship between
the vorticity and the stream function. In the intermediate region $\mathrm{II}_\eps$ the
weight is just a constant, so that the dangerous advection terms multiplied by
$\delta^{-1}$ in \eqref{linear} will disappear after an integration
by parts. Finally, in the outer region $\mathrm{III}_\eps$, the evolution
defined by \eqref{linear}--\eqref{force} is essentially driven by the diffusion operator
$\cL$, and it turns out that a radially symmetric weight with moderate growth at infinity is
appropriate in that case. Note that our definition of the weight in the intermediate and outer
regions is the same as in \cite{Gallay2011}, whereas the Arnold strategy for the inner region
was put forward in \cite{GaSring}. There is a minor simplification here with respect to \cite{GaSring}:
the weight \eqref{def:Weps} is independent of the inverse Reynolds number $\delta$, 
because it only involves the Eulerian approximation $\Omega_\app^E$. 
\end{remark}

We collect elementary properties of the inner region $\mathrm{I}_\eps$ and of the weight function
$W_\eps$ in the following lemma, which is the analogue of \cite[Lemmas 4.2 \& 4.3]{GaSring}.

\begin{proposition}\label{propIW}
If $\eps,\sigma_1 > 0$ are small enough, the following holds true:
\begin{itemize}[leftmargin=30pt,itemsep=2pt]
\item[i)] The inner region $\mathrm{I}_\eps$ is diffeomorphic to an open disk, and there exists a constant
$\kappa>0$ such that 
\begin{equation}\label{bd:Ieps}
  \left\{|\xi|\le \eps^{-\sigma_1}\right\} \,\subset\, \mathrm{I}_\eps\subset \left\{|\xi|^2 \le 
  \eps^{-2\sigma_1}+\kappa|\log (\eps)|\right\}\,.
\end{equation}
\item[ii)] There exist $C_1,C_2 > 0$ such that the weight $W_\eps$ satisfies the uniform bounds 
\begin{equation}\label{bd:Weps}
  C_1\exp(|\xi|^{2\gamma}/4)\,\le\, W_\eps(\xi) \,\le\, C_2 W_0(\xi)\,, \qquad \text{for }\,\xi \in \RR^2\,,
\end{equation}
where $W_0 = A$ is defined in \eqref{def:A} and $\gamma$ in \eqref{def:parameters}. 
\item[iii)] For any $\gamma_2 < 2$, there exists $C_3 > 0$ such that 
\begin{equation}\label{bd:WAI}
  |W_\eps(\xi)-W_0(\xi)| + |\nabla W_\eps(\xi)-\nabla W_0(\xi)|\,\le\, C_3\eps^{\gamma_2}W_0(\xi)\,,
  \qquad \text{for }\,\xi \in \mathrm{I}_\eps\,.
\end{equation}
\end{itemize} 
\end{proposition}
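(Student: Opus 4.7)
The strategy reduces all three assertions to a careful comparison of
$W_\eps = F'(\Omega_\app^E)$ with $W_0 = F_0'(\Omega_0) = A$ on the ball
$\{|\xi| \le 2\eps^{-\sigma_1}\}$, after which each claim follows from
explicit asymptotics of $A(\xi) = 4|\xi|^{-2}(e^{|\xi|^2/4}-1)$ and
bookkeeping on the three regions. Writing $F = F_0 + \sum_{k=2}^M \eps^k F_k$
with $F_k \in \cK$ (Proposition~\ref{prop:germ}) and
$\Omega_\app^E = \Omega_0 + \sum_{k=2}^M \eps^k \Omega_k^E$ with
$\Omega_k^E \in \cZ$, a double Taylor expansion yields
\[
  W_\eps - W_0 \,=\, F_0''(\Omega_0)\bigl(\Omega_\app^E - \Omega_0\bigr)
  + \sum_{k=2}^M \eps^k F_k'(\Omega_0) \,+\, \text{higher order terms.}
\]
By Lemma~\ref{lem:Fk}, $F^{(j)}(\Omega_0)\Omega_0^j \in \cS_*(\RR^2)$ for all $j$;
since $\Omega_k^E \in \cZ$ the ratio $\Omega_k^E/\Omega_0$ lies in $\cS_*$;
and the explicit computation $A\Omega_0 = (1-e^{-|\xi|^2/4})/(\pi|\xi|^2)$
shows that $1/(W_0 \Omega_0) \in \cS_*$ as well. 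Combining these three facts
writes each term above as $W_0$ times a polynomially growing function, with
a gain of $\eps^k$. Since $|\xi| \le 2\eps^{-\sigma_1}$ absorbs any factor
$(1+|\xi|)^N$ into $C\eps^{-N\sigma_1}$, for any $\gamma_2 < 2$ one obtains
\[
  |W_\eps - W_0|(\xi) + |\nabla W_\eps - \nabla W_0|(\xi) \,\le\, C\eps^{\gamma_2}
  W_0(\xi)\,, \qquad |\xi| \,\le\, 2\eps^{-\sigma_1}\,,
\]
provided $\sigma_1$ is chosen small enough depending on $M$ and $\gamma_2$;
the gradient estimate uses the same expansion together with
$\nabla \Omega_0 = -(\xi/2)\Omega_0$. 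This proves (iii).

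For (i), the function $A$ is radial, strictly increasing in $|\xi|$, and
satisfies $A(\xi) = (4/|\xi|^2)e^{|\xi|^2/4}(1+\cO(e^{-|\xi|^2/4}))$ as
$|\xi| \to \infty$. Inverting this asymptotic shows that the level set
$\{A = e^{\eps^{-2\sigma_1}/4}\}$ is the circle of radius $r_\eps$ with
$r_\eps^2 = \eps^{-2\sigma_1} + 8\sigma_1|\log\eps| + \cO(1)$. The key
comparison above together with the implicit function theorem, applied on the
annulus $\{\eps^{-\sigma_1}/2 \le |\xi| \le 2\eps^{-\sigma_1}\}$ where
$|\nabla A|$ is bounded below and the perturbation is small, then shows
that the boundary curve $\{W_\eps = e^{\eps^{-2\sigma_1}/4}\}$ is a smooth
simple closed curve, a small perturbation of $\{|\xi| = r_\eps\}$.
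Consequently $\mathrm{I}_\eps$ is diffeomorphic to an open disk and
\eqref{bd:Ieps} follows.

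For (ii), the two-sided estimate is checked region by region. In
$\mathrm{I}_\eps$, the key comparison gives $W_\eps \sim W_0$. In
$\mathrm{II}_\eps \cap \{|\xi| \le 2\eps^{-\sigma_1}\}$, the defining
condition forces $F'(\Omega_\app^E) \ge e^{\eps^{-2\sigma_1}/4}$, hence
$W_0 \gtrsim W_\eps$ by the comparison; in the complementary part of
$\mathrm{II}_\eps$ and throughout $\mathrm{III}_\eps$, the exponential
growth of $W_0$ dominates both $W_\eps$ and $e^{|\xi|^{2\gamma}/4}$
since $\gamma < 1$. For the lower bound $W_\eps \gtrsim e^{|\xi|^{2\gamma}/4}$:
equality holds in $\mathrm{III}_\eps$; in $\mathrm{II}_\eps$ one uses
$|\xi|^{2\gamma} \le \eps^{-2\sigma_2\gamma} = \eps^{-2\sigma_1}$ since
$\gamma\sigma_2 = \sigma_1$; and in $\mathrm{I}_\eps$ the elementary
inequality $W_0 \gtrsim e^{|\xi|^{2\gamma}/4}$ combined with the key
comparison finishes the argument.

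The main obstacle is the quantitative comparison step: one must track
carefully the multiplicative structure $F^{(j)}(\Omega_0)\Omega_0^j \in \cS_*$
and the companion estimates for $1/(W_0 \Omega_0)$ in order to express the
chain-rule expansion uniformly as a polynomial perturbation of $W_0$, and
simultaneously respect the quantitative constraint $\sigma_1 < (2-\gamma_2)/N(M)$
where the integer $N(M)$ arises from the polynomial growth of all the
profiles involved. Once this comparison is in place, (i) is routine topology
via the implicit function theorem and (ii) is direct computation from the
explicit asymptotics of $A$ and the piecewise definition of $W_\eps$.
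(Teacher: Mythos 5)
Your proof is correct and follows essentially the same route as the paper: the heart of the matter is the quantitative comparison of $F'(\Omega_\app^E)$ with $W_0=A$ on the ball $\{|\xi|\le 2\eps^{-\sigma_1}\}$ (using Lemma~\ref{lem:Fk}, the polynomial control of the profiles, and the smallness condition on $\sigma_1$), from which (iii) is immediate and (i), (ii) follow by region-by-region bookkeeping. The only point to tighten is the "higher order terms" in your Taylor expansion of $F'(\Omega_\app^E)-F'(\Omega_0)$: one needs bounds on $F''$ evaluated at intermediate values $(1-s)\Omega_0+s\Omega_\app^E$ rather than at $\Omega_0$ itself, which the paper handles by the uniform estimate $\sup_{1/2\le\lambda\le 2}|F''(\lambda\Omega_0)|\lesssim (1+|\xi|)^N\Omega_0^{-2}$ combined with \eqref{eq:Ompos}.
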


\begin{proof}
The main step is to show that, if $\sigma_1 > 0$ is small enough, 
\begin{equation}\label{bd:claim}
  \bigl|F'(\Omega_{\app}^E(\xi))-W_0(\xi)\bigr| + \bigl|\nabla \bigl(F'(\Omega_{\app}^E(\xi)) - W_0(\xi)\bigr)
  \bigr| \,\lesssim\, \eps^{\gamma_2}W_0(\xi)\,, \quad \text{for }\, |\xi| \le 2\eps^{-\sigma_1}\,,
\end{equation}
where $F = F_0 + \eps^2 F_2 + \dots + \eps^M F_M$ is the function defined in
Proposition~\ref{prop:germ}. Since $F \in \cK$ we know in particular that $F''(\Omega_0)\Omega_0^2
\in \cS_*(\RR^2)$, see Lemma~\ref{lem:Fk}. As is easily verified, this implies that there exists 
$C > 0$ and $N \in \NN$ such that
\begin{equation}\label{bd:lambound}
  \sup_{1/2 \le \lambda \le 2} \,\bigl|F''\bigl(\lambda\Omega_0(\xi)\bigr)\bigr| \,\le\, C\,
  (1+|\xi|)^N\,\Omega_0(\xi)^{-2}\,, \qquad \xi \in \RR^2\,.
\end{equation}
In the region where $|\xi| \le 2\eps^{-\sigma_1}$, we observe that
\begin{equation}\label{bd:Fbd1}
  F'(\Omega_{\app}^E(\xi)) - F'(\Omega_0(\xi)) \,=\, \bigl(\Omega_{\app}^E(\xi) - \Omega_0(\xi)\bigr)
  \int_0^1 F''\bigl((1{-}s)\Omega_0(\xi) + s\Omega_\app^E(\xi)\bigr)\,\dd s\,,
\end{equation}
where the integrand can be estimated using \eqref{bd:lambound} since $\Omega_0(\xi)/2 \le
\Omega_\app^E(\xi) \le2\Omega_0(\xi)$. By definition of $F$ we also have
\begin{equation}\label{bd:Fbd2}
  F'(\Omega_0(\xi)) - W_0(\xi) \,=\, F'(\Omega_0(\xi)) - F_0'(\Omega_0(\xi)) \,=\, \sum_{k=2}^M
  \epsilon^k F_k'(\Omega_0(\xi))\,, \qquad \xi \in \RR^2\,.
\end{equation}
Combining \eqref{bd:Fbd1}, \eqref{bd:Fbd2} and using the fact that $\Omega_\app^E - \Omega_0 =
\cO_\cZ(\epsilon^2)$, we obtain an estimate of the form
\[
  \bigl|F'(\Omega_\app^E(\xi)) - W_0(\xi)\bigr| \,\le\, C \eps^2(1+|\xi|)^N\,\Omega_0(\xi)^{-1}\,,
  \qquad |\xi| \le 2\eps^{-\sigma_1}\,,
\]
with a possibly larger exponent $N$. Since $\Omega_0(\xi)^{-1} \lesssim \e^{|\xi|^2/4} \lesssim
(1+|\xi|)^2 W_0(\xi)$, we obtain the desired estimate for the first term in the left-hand side
of \eqref{bd:claim} by taking $\sigma_1$ sufficiently small so that $\eps^{2-\sigma_1(N+2)}
\le \eps^{\gamma_2}$ when $\epsilon \ll 1$. The corresponding bound on $\nabla (F'(\Omega_\app^E)-W_0)$
is obtained by differentiating \eqref{bd:Fbd1}, \eqref{bd:Fbd2} and proceeding similarly. 

Estimate \eqref{bd:claim} shows in particular that $F'(\Omega_\app^E(\xi))$ is very close to
$W_0(\xi)$ when $|\xi| \le 2\eps^{-\sigma_1}$, and this implies that the region $\mathrm{I}_\eps$
satisfies the inclusions \eqref{bd:Ieps}. Also, using the definition \eqref{def:Weps}, 
it is straightforward to verify that the weight $W_\epsilon$ satisfies the bounds \eqref{bd:Weps}
in all three regions $\mathrm{I}_\eps$, $\mathrm{II}_\eps$, and $\mathrm{III}_\eps$. Finally
\eqref{bd:WAI} immediately follows from \eqref{bd:claim} since $W_\epsilon = F'(\Omega_\app^E)$
in region $\mathrm{I}_\eps$. 
\end{proof}

Assuming that the vorticity lies in the function space $\cX_\eps$ for some $\epsilon > 0$,
we next recall a few classical estimates for the stream function and the velocity field.
For the reader's convenience, the proof of the following statements is reproduced
in Section~\ref{ssecA4}.

\begin{lemma}\label{lem:velocityLq}
If $\ww\in \cX_\eps$ and $ \varphi=\Delta^{-1}\ww$ in the sense of \eqref{UPsirel}, there
exists a constant $C > 0$ such that $|\varphi(\xi)| \le C\|\ww\|_{\cX_\eps}\log(2+|\xi|)$
for all $\xi \in \RR^2$. Moreover, for all $q \in (2,\infty)$, there holds
\begin{equation}\label{bd:velocityLq}
  \|(1+|\cdot|)^{-1}\varphi\|_{L^q} + \|\nabla \varphi\|_{L^q} \,\le\, C\norm{\ww}_{\cX_\eps}.
\end{equation}
If in addition $\nabla \ww_\eps \in \cX_\eps$, then
\begin{equation}\label{bd:velocityLinfty}
  \norm{(1+|\cdot|)\nabla \varphi}_{L^\infty}\,\le\, C\bigl(\norm{\nabla\ww}_{\cX_\eps}^{1/2} +
  \norm{\ww}_{\cX_\eps}^{1/2}\bigr)\norm{\ww}_{\cX_\eps}^{1/2}\,.
\end{equation}
\end{lemma}

The conclusions of Lemma~\ref{lem:velocityLq} can be considerably strengthened if
the mean and the first order moments of $\ww$ are assumed to vanish.

\begin{lemma}\label{lem:withmom}
Assume that $\ww \in \cX_\eps$ satisfies $\mrM[\ww]=\mrm_1[\ww]=\mrm_2[\ww]=0$, and let
$\varphi=\Delta^{-1}\ww$ as in \eqref{UPsirel}. Then for all $q > 2$ there exists a constant
$C > 0$ such that
\begin{equation}\label{bd:streammom}
  \|(1+|\cdot|)^2\varphi\|_{L^\infty} + \|(1+|\cdot|)^2\nabla\varphi\|_{L^q} 
   \,\le\, C\norm{\ww}_{\cX_\eps}.
\end{equation}
If in addition $\nabla \ww_\eps \in \cX_\eps$, then 
\begin{equation}\label{bd:velocitymom}
  \|(1+|\cdot|)^3\nabla\varphi\|_{L^\infty} \,\le\, C\bigl(\norm{\nabla\ww}_{\cX_\eps}^{1/2} +
  \norm{\ww}_{\cX_\eps}^{1/2}\bigr)\norm{\ww}_{\cX_\eps}^{1/2}\,.
\end{equation}
\end{lemma}

The bounds \eqref{bd:streammom}, \eqref{bd:velocitymom} will be used in particular
to estimate the translated stream function $\cT_\eps\varphi$ in the inner region near the origin. 

\begin{corollary}\label{cor:TepsXeps}
Under the assumptions of Lemma~\ref{lem:withmom} we have, for any $q \in (2,\infty)$, 
\begin{equation}\label{bd:UTesp}
  \bigl\|\mathbbm{1}_{\mrI_\eps}\cT_\eps \varphi\bigr\|_{L^\infty} \,\le\, C\eps^2
  \|\ww \|_{\cX_\eps}\,, \qquad
  \bigl\|\mathbbm{1}_{\mrI_\eps}(1+|\cdot|)^{-1}\nabla \cT_\eps\varphi\bigl\|_{L^q}
  \,\le\, C\eps^3\|\ww \|_{\cX_\eps}\,.
\end{equation}
\end{corollary}

\subsection{Bound on the vertical speed}\label{sec:velocity}

Before setting up the nonlinear energy estimates, we apply Lemma~\ref{lem:velocityLq} and
Corollary~\ref{cor:TepsXeps} to control the size of the correction $\zeta$ to the vertical speed. 

\begin{lemma}\label{lem:zeta2}
Let $\zeta$ be defined as in \eqref{def:zeta}. Then there exists a constant $C>0$ such that
\begin{equation}\label{bd:zeta2}
  \frac{\eps}{2\pi}|\zeta(t)| \,\le\, C\bigl(\eps^{M+1}+\delta^2\eps^2+\delta\eps
  \norm{\ww}_{\cX_\eps}+\delta^2\eps^3\norm{\ww}_{\cX_\eps}^2\bigr)\,.
\end{equation}
\end{lemma}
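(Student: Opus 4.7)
The proof is a direct expansion-and-estimate. First I would substitute the decompositions \eqref{def:per} of $\Omega$ and $\Psi$ into the defining formula \eqref{def:zeta} of $\zeta$, and eliminate the pure approximate contribution using the representation \eqref{zetaapp} (itself obtained by testing the estimate $\cR_M=\cO_\cZ(\eps^{M+1}+\delta^2\eps^2)$ against $\xi_2$ and exploiting the Biot--Savart identity $\int U_{2,\app}\,\Omega_\app\,\dd\xi=0$). This gives the identity
$$\frac{\eps}{2\pi}\zeta(t) \,=\, \delta I_1 + \delta I_2 + \delta^2 I_3 + \cO\bigl(\eps^{M+1}+\delta^2\eps^2\bigr)\,,$$
with
$$I_1 := \int(\partial_1\cT_\eps\Psi_\app)\ww\,\dd\xi\,, \qquad I_2 := \int(\partial_1\cT_\eps\varphi)\Omega_\app\,\dd\xi\,, \qquad I_3 := \int(\partial_1\cT_\eps\varphi)\ww\,\dd\xi\,,$$
so it remains to show $|I_1|,\,|I_2|\lesssim\eps^3\|\ww\|_{\cX_\eps}$ and $|I_3|\lesssim\eps^3\|\ww\|_{\cX_\eps}^2$, all of which lie comfortably inside the claimed bound.

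For $I_1$, I would expand $\partial_1\cT_\eps\Psi_\app$ via Lemma~\ref{lem:Teps}, exactly as in the proof of Proposition~\ref{prop:speed}: since $\mrM[\Omega_\app]=1$ and $\mrm_1[\Omega_\app]=0$ by Hypotheses~\ref{Hyp:Om}, the $O(\eps)$ term is a constant and the $O(\eps^2)$ term is linear in $\xi$, and both integrate to zero against $\ww$ by the moment identities \eqref{eq:zeromoments}. The pointwise remainder is $\cO_{\cS_*}(\eps^3(1+|\xi|)^N)$, which I would absorb into $\eps^3\|\ww\|_{\cX_\eps}$ using the uniform lower bound $W_\eps\gtrsim\exp(|\cdot|^{2\gamma}/4)$ from \eqref{bd:Weps}.

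For $I_2$, I would split at the boundary of the inner region $\mrI_\eps$. Outside, $\Omega_\app$ inherits Gaussian decay from $\Omega_0=G$ and, since $\mrI_\eps\supset\{|\xi|\le\eps^{-\sigma_1}\}$ by \eqref{bd:Ieps}, this contribution is $\cO(\eps^\infty)$ when paired with $\|\nabla\varphi\|_{L^q}\lesssim\|\ww\|_{\cX_\eps}$ from Lemma~\ref{lem:velocityLq}. Inside $\mrI_\eps$, H\"older combines the key estimate $\|\mathbbm{1}_{\mrI_\eps}(1+|\cdot|)^{-3}\nabla\cT_\eps\varphi\|_{L^q}\lesssim\eps^3\|\ww\|_{\cX_\eps}$ from Lemma~\ref{lem:TepsXeps} (which already encodes the moment conditions on $\ww$) with the trivial $\|(1+|\cdot|)^3\Omega_\app\|_{L^{q'}}=O(1)$.

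The hardest term will be $I_3$, where neither factor is a priori localized. On $\mrI_\eps$ the Lemma~\ref{lem:TepsXeps} bound applies as above, paired now with $\|(1+|\cdot|)^3\ww\|_{L^{q'}(\mrI_\eps)}\lesssim\|\ww\|_{\cX_\eps}$ (valid since the weight grows faster than any polynomial). On $\mrI_\eps^c$ I would combine $\|\nabla\varphi\|_{L^q}\lesssim\|\ww\|_{\cX_\eps}$ with the lower bound $W_\eps\ge\exp(\eps^{-2\sigma_1}/4)$ from \eqref{def:Weps}, which forces $\|\ww\,\mathbbm{1}_{\mrI_\eps^c}\|_{L^{q'}}$ to be super-exponentially small in $\eps$. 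The main subtlety of the lemma is packaged into this last step: the super-polynomial growth of the weight outside the vortex core is what compensates for the lack of localization of $\partial_1\cT_\eps\varphi$ on $\mrI_\eps^c$.
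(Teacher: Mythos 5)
Your proposal is correct and follows essentially the same route as the paper: the same expansion of \eqref{def:zeta} into the three cross terms $I_1,I_2,I_3$ plus the $\cO(\eps^{M+1}+\delta^2\eps^2)$ error from \eqref{zetaapp}, the same splitting at $\partial\mrI_\eps$, and the same reliance on Lemmas~\ref{lem:Teps}, \ref{lem:velocityLq} and \ref{lem:TepsXeps}. The only (harmless) deviations are that for $I_1$ you exploit the moment conditions \eqref{eq:zeromoments} to gain $\eps^3\|\ww\|_{\cX_\eps}$ where the paper is content with the cruder $\eps\|\ww\|_{\cX_\eps}$ (which is exactly the $\delta\eps\|\ww\|_{\cX_\eps}$ term in \eqref{bd:zeta2}), and for the outer part of $I_3$ you attach the weight $W_\eps^{-1/2}$ to $\ww$ rather than to $\partial_1\cT_\eps\varphi$ — both variants close the estimate.
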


\begin{proof}
In the definition \eqref{def:zeta} of $\zeta$, we expand the vorticity $\Omega$ and the
stream function $\Psi$ as in \eqref{def:per}, and we subtract the approximation $\zeta_\app$
given by \eqref{zetaapp}. This gives the expression
\begin{equation}\label{eq:expansion_zeta}
  \frac{\eps}{2\pi}\zeta \,=\, \delta \langle \partial_1 \cT_\eps \Psi_\app,\ww\rangle_{L^2}
  + \delta\langle \partial_1 \cT_\eps \varphi,\Omega_\app\rangle_{L^2}+\delta^2\langle \partial_1
  \cT_\eps \varphi,\ww\rangle_{L^2} + \cO\bigl(\epsilon^{M+1} + \delta^2\epsilon^2\bigr)\,,
\end{equation}
which is the starting point of our analysis. Since $\Omega_\app\in \cZ$, we already know
from Lemma~\ref{lem:Teps} that $\partial_1 \cT_\eps\Psi_\app = \cO_{\cS_*}\bigl(\eps\bigr)$.
Using the fact that $W_\eps^{-1}$ vanishes rapidly at infinity (see Proposition~\ref{propIW}),
we deduce that
\begin{equation}\label{bd:zeta_app_w}
  |\langle \partial_1 \cT_\eps \Psi_\app,\ww\rangle_{L^2}| \,\le\, \bigl\| W_{\eps}^{-1/2}
  \partial_1 \cT_\eps \Psi_\app\bigr\|_{L^2}\|\ww\|_{\cX_\eps} \,\lesssim\, \eps \|\ww\|_{\cX_\eps}\,.
\end{equation}
To bound the term involving $\Omega_\app$, we split the integration domain into the region
$\mrI_{\eps}$ and its complement $\mrI_{\eps}^c$, and we apply H\"older's inequality with
$1/p+1/q=1$ and $q>2$. We thus find
\[
  |\langle \partial_1 \cT_\eps \varphi,\Omega_\app\rangle_{L^2}| \,\lesssim \,
  \|\mathbbm{1}_{\mrI_{\eps}}(1+|\cdot|)^{-1}\partial_1 \cT_\eps \varphi\|_{L^q}
  \|(1+|\cdot|)\Omega_\app\|_{L^p} +\|\partial_1 \varphi\|_{L^q}\|\mathbbm{1}_{\mrI_{\eps}^c}
  \Omega_\app\|_{L^p}\,.
\]
As $\Omega_\app\in \cZ$ and $\mrI_{\eps}^c \subset \{\xi \in \RR^2\,;\, |\xi| > \eps^{-\sigma_1}\}$
by Proposition~\ref{propIW}, it is straightforward to verify that $\|\mathbbm{1}_{\mrI_{\eps}^c}
\Omega_\app\|_{L^p} \le \exp(-c_*\eps^{-2\sigma_1})$ for some $c_*>0$. Therefore
using Corollary~\ref{cor:TepsXeps} to bound the first term in the right-hand side and
Lemma~\ref{lem:velocityLq} for the second one, we arrive that
\begin{equation}\label{bd:zeta_phi_Omapp}
    |\langle \partial_1 \cT_\eps \varphi,\Omega_\app\rangle_{L^2}| \,\lesssim\, \eps^3\|\ww\|_{\cX_\eps}\,.
\end{equation}
Finally, the nonlinear term can be estimated in a similar way: 
\[
  |\langle \partial_1 \cT_\eps \varphi,\ww\rangle_{L^2}| \,\lesssim\, \|\mathbbm{1}_{\mrI_{\eps}}
  (1+|\cdot|)^{-1}\partial_1 \cT_\eps \varphi\|_{L^q}\|(1+|\cdot|)\ww\|_{L^p}
  +\|\mathbbm{1}_{\mrI_{\eps}^c} W_{\eps}^{-1/2}\partial_1 \cT_\eps \varphi\|_{L^2}
  \|\ww\|_{\cX_\eps}\,.
\]
Since $1 < p < 2$, H\"older's inequality readily implies that $\|(1+|\cdot|)\ww\|_{L^p}\lesssim
\|\ww\|_{\cX_\eps}$. Thus, using again \eqref{bd:velocityLq}, \eqref{bd:UTesp} and applying
H\"older's inequality with $1/q+1/r=1/2$, we obtain 
\begin{equation}\label{bd:zeta_phi_w}
  |\langle \partial_1 \cT_\eps \varphi,\ww\rangle_{L^2}| \,\lesssim\, \eps^3 \|\ww\|_{\cX_\eps}^2
  + \|\mathbbm{1}_{\mrI_{\eps}^c} W_{\eps}^{-1/2}\|_{L^r}\|\partial_1 \varphi\|_{L^q}
  \|\ww\|_{\cX_\eps} \,\lesssim\, \eps^3 \|\ww\|^2_{\cX_\eps}\,.
\end{equation}
Combining \eqref{eq:expansion_zeta} with \eqref{bd:zeta_app_w}, \eqref{bd:zeta_phi_Omapp} and
\eqref{bd:zeta_phi_w}, we arrive at \eqref{bd:zeta2}.
\end{proof}

\subsection{The energy functional}\label{sec:energy}

We now have all the necessary ingredients to define the energy functional that will allow us
to control the vorticity perturbation $w$ in \eqref{def:per}. The idea is to mimic the Arnold
quadratic form that was heuristically derived in Section~\ref{sec:Arnold}. 

Using the function space $\cX_\eps$ introduced in \eqref{def:Xeps}, we define
\begin{equation}\label{def:Eeps}
  E_{\eps}[\ww] \,=\, \frac12\Bigl(\|\ww\|_{\cX_\eps}^2 + \langle \varphi-\cT_\eps\varphi\,,
  \ww\rangle_{L^2}\Bigr)\,, \qquad w \in \cX_\eps\,.
\end{equation}
According to \eqref{def:Weps}, the functional $E_\eps[\ww]$ coincides with
\eqref{def:Arnold_energy} when $\ww$ is supported in the inner region $\mrI_\eps$.
That region expands to the whole plane $\RR^2$ as $\eps \to 0$ and, in view of
Proposition~\ref{propIW}, the weight $W_\epsilon$ in \eqref{def:Xeps} converges to $W_0 = A$,
where $A(\xi)$ is defined in \eqref{def:A}. Taking formally the limit $\eps \to 0$ in
\eqref{def:Eeps}, we thus obtain
\begin{equation}\label{def:E0}
  E_0[\ww] \,=\, \frac12\Bigl(\|\ww\|_{\cX_0}^2 + \langle \varphi,\ww\rangle_{L^2}\Bigr)\,,
  \qquad w \in \cX_0\,,
\end{equation}
where $\cX_0 = L^2(\RR^2,W_0\,\dd\xi)$. This limiting functional was studied in detail
in \cite{GaSarnold}, in connection with the stability of the Gaussian vortex
$\Omega_0$ for the Euler and the Navier-Stokes equations. A key property is that $E_0[\ww]$
is coercive on the subspace of functions $w \in \cX_0$ satisfying the moments conditions
\eqref{eq:zeromoments}. The main goal of this section is to establish a similar result for
the functional $E_\eps$ when  $\eps > 0$ is sufficiently small.

\begin{proposition}\label{prop:coercivity}
Assume that $\ww\in \cX_\eps$ satisfies $\mathrm{M}[\ww]=\mathrm{m}_1[\ww]=\mathrm{m}_2[\ww]=0$,
where $\mathrm{M},\mathrm{m}_1,\mathrm{m}_2$ are as in \eqref{def:mean-mom}. Then, if
$\eps,\sigma_1>0$ are sufficiently small, there exists a constant $\kappa_1\in(0,1)$
independent of $\eps$ such that
\begin{equation}\label{bd:coercivity}
  E_{\eps}[\ww] \,\ge\, \kappa_1 \norm{\ww}_{\cX_\eps}^2\,.
\end{equation}
\end{proposition}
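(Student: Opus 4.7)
The plan is to deduce the coercivity of $E_\eps$ from the known coercivity of the limiting functional $E_0$ established in \cite{GaSarnold}: there exists $\kappa_0 \in (0,1/2]$, independent of $\eps$, such that for every $\tilde{\ww} \in \cX_0$ with $\mrM[\tilde{\ww}] = \mrm_1[\tilde{\ww}] = \mrm_2[\tilde{\ww}] = 0$, one has the spectral-gap bound
\[
  \int_{\RR^2}|\nabla\Delta^{-1}\tilde{\ww}|^2\,\dd\xi \,\le\, (1-2\kappa_0)\|\tilde{\ww}\|_{\cX_0}^2\,.
\]
Since $\ww$ has zero mean, the identity $\langle\varphi,\ww\rangle_{L^2} = -\|\nabla\varphi\|_{L^2}^2$ holds, so it suffices to bound $\|\nabla\varphi\|_{L^2}^2$ in terms of $\|\ww\|_{\cX_\eps}^2$ and to control the translation term $\langle\cT_\eps\varphi,\ww\rangle_{L^2}$ perturbatively.

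The translation term is the easier of the two. Splitting the integration into $\mrI_\eps$ and its complement, on the inner region I apply H\"older's inequality using the bound \eqref{bd:UTesp} from Lemma~\ref{lem:TepsXeps}, together with the fact that $W_\eps$ dominates any polynomial weight, which yields $\|(1+|\cdot|)^2\ww\|_{L^p} \lesssim \|\ww\|_{\cX_\eps}$ for a suitable $p \in (1,2)$. On the complement, $W_\eps \ge \exp(\eps^{-2\sigma_1}/4)$, so $\|\mathbbm{1}_{\mrI_\eps^c}W_\eps^{-1/2}\|_{L^2}$ is exponentially small; combined with the bound $\|(1+|\cdot|)\varphi\|_{L^\infty} \lesssim \|\ww\|_{\cX_\eps}$ from Lemma~\ref{lem:TepsXeps}, this gives an exponentially small contribution. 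Altogether $|\langle\cT_\eps\varphi,\ww\rangle_{L^2}| \le C\eps^2\|\ww\|_{\cX_\eps}^2$.

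The main work is bounding $\|\nabla\varphi\|_{L^2}^2$. I reduce to the inner region by truncation and moment correction. Fix a smooth cutoff $\eta_\eps$ supported in $\{|\xi|\le \eps^{-\sigma_1}\}\subset\mrI_\eps$ with $\eta_\eps\equiv 1$ on $\{|\xi|\le\eps^{-\sigma_1}/2\}$, and set
\[
  \tilde{\ww}_1 \,:=\, \eta_\eps\ww \,-\, c_0 G \,-\, c_1\partial_1 G \,-\, c_2\partial_2 G\,,
\]
where $c_0,c_1,c_2\in\RR$ are uniquely determined so that $\tilde{\ww}_1$ has vanishing mass and first moments. Because $\ww$ already has zero moments, the coefficients $c_j$ coincide with the corresponding moments of $(1-\eta_\eps)\ww$ up to sign, and are therefore exponentially small in $\eps^{-\sigma_1}$ (by Cauchy-Schwarz and the lower bound on $W_\eps$ outside $\mrI_\eps$). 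Using \eqref{bd:WAI} in the inner region, together with the exponential smallness of $c_j$, one obtains $\|\tilde{\ww}_1\|_{\cX_0}^2 \le (1+C\eps^{\gamma_2})\|\ww\|_{\cX_\eps}^2$ plus an exponentially small remainder. Applying the $E_0$-coercivity to $\tilde{\ww}_1$ and setting $\psi := \varphi - \Delta^{-1}\tilde{\ww}_1$, a Young-type inequality with small parameter $\eps^{\gamma_2}$ yields
\[
  \|\nabla\varphi\|_{L^2}^2 \,\le\, (1+\eps^{\gamma_2})\|\nabla\Delta^{-1}\tilde{\ww}_1\|_{L^2}^2 \,+\, C\eps^{-\gamma_2}\|\nabla\psi\|_{L^2}^2 \,\le\, (1-2\kappa_0)(1+C\eps^{\gamma_2})\|\ww\|_{\cX_\eps}^2\,,
\]
up to an exponentially small remainder. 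The bound on $\|\nabla\psi\|_{L^2}$ uses that $\ww - \tilde{\ww}_1$ has zero mean and zero first moments by construction, so the associated Biot-Savart velocity decays like $|\xi|^{-3}$ at infinity; combined with exponentially small $L^{4/3}$ and weighted-$L^1$ norms of $\ww-\tilde{\ww}_1$, this gives an exponentially small $L^2$ bound on the remainder velocity.

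Combining the two estimates yields $E_\eps[\ww] \ge (\kappa_0 - C\eps^{\gamma_2} - C\eps^2)\|\ww\|_{\cX_\eps}^2$, and the proposition follows with $\kappa_1 := \kappa_0/2$ provided $\eps$ is sufficiently small, which gives a constant independent of $\eps$ as required. The main subtlety is the truncation step, where one must simultaneously enforce zero moments on $\tilde{\ww}_1$ (to invoke the $E_0$-coercivity) and on $\ww-\tilde{\ww}_1$ (to guarantee that the remainder stream function $\psi$ is well-defined with $\nabla\psi\in L^2$ and that this quantity is exponentially small). Adding the moment correction with opposite signs to the two pieces resolves this tension precisely because $\ww$ itself was assumed to have zero moments.
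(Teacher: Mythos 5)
Your argument is correct and follows essentially the same route as the paper: bound the translation term $\langle\cT_\eps\varphi,\ww\rangle_{L^2}$ by $C\eps^2\|\ww\|_{\cX_\eps}^2$ via Lemma~\ref{lem:TepsXeps}, localize to the inner region where $W_\eps$ is comparable to $W_0$, and invoke the coercivity of the limiting functional $E_0$ from \cite{GaSarnold}, with all outer contributions exponentially small. The only cosmetic difference is that you restore the moment conditions on the truncated vorticity by subtracting small multiples of $G$, $\partial_1 G$, $\partial_2 G$, whereas the paper applies the version of the $E_0$-coercivity that carries an explicit penalty on the (exponentially small) moments of $\mathbbm{1}_{\mrI_\eps}\ww$; both devices accomplish the same thing.
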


\begin{proof}
Following \cite[Section~4.3]{GaSarnold}, the idea is to compare $E_\eps$ with $E_0$. First of all,
using Corollary~\ref{cor:TepsXeps} and arguing as in the proof of \eqref{bd:zeta_phi_w}, it is not
difficult to verify that 
\begin{equation}\label{bd:Eeps1}
  |\langle \cT_\eps\varphi,\ww\rangle_{L^2}| \,\lesssim\, \eps^2\|\ww\|_{\cX_\eps}^2\,.
\end{equation}
As a consequence, there exists a constant $C_0 > 0$ such that
\begin{equation}\label{bd:lowEeps1}
  E_\eps[\ww] \,\ge\, E_\eps^1[\ww] - C_0\,\eps^2\|\ww\|_{\cX_\eps}^2, \qquad\text{where}\quad
  E_\eps^1[\ww] \,=\, \frac12\Bigl(\norm{\ww}^2_{\cX_\eps}+\langle \varphi,\ww\rangle_{L^2}\Bigr)\,.
\end{equation}
To compare $E_\eps^1[\ww]$ and $E_0[\ww]$, we decompose
\[
  \ww \,=\, \mathbbm{1}_{\mrI_\eps}\ww + \bigl(1 - \mathbbm{1}_{\mrI_\eps}\bigr)\ww \,=:\,
  \ww_{\mathrm{in}}+\ww_{\mathrm{out}}\,.
\]
Denoting $\varphi_{\mathrm{in}} = \Delta^{-1}\ww_{\mathrm{in}}$ and $\varphi_{\mathrm{out}}=\Delta^{-1}
\ww_{\mathrm{out}}$, where $\Delta^{-1}$ is defined according to \eqref{UPsirel}, we find 
\begin{equation}\label{Eepsdecomp}
  \begin{split}
  E_{\eps}^1[\ww] \,&=\, E_{\eps}^1[\ww_{\mathrm{in}}] + E_{\eps}^1[\ww_{\mathrm{out}}] +
  \frac12\langle \varphi_{\mathrm{in}},\ww_{\mathrm{out}}\rangle_{L^2}
  +\frac12\langle \varphi_{\mathrm{out}},\ww_{\mathrm{in}}\rangle_{L^2} \\
  \,&=\, E_{\eps}^1[\ww_{\mathrm{in}}] + E_{\eps}^1[\ww_{\mathrm{out}}] + \langle
  \varphi_{\mathrm{in}},\ww_{\mathrm{out}}\rangle_{L^2}\,,  
\end{split}
\end{equation}
where in the second line we integrate by parts using $w_{\mathrm{in}}=\Delta \varphi_{\mathrm{in}}$
and $w_{\mathrm{out}}=\Delta \varphi_{\mathrm{out}}$. To estimate the last term in
\eqref{Eepsdecomp}, we apply H\"older's inequality with $1/q+1/p=1$ and $q>2$, $1<p<2$.
Using \eqref{bd:velocityLq} to bound the stream function $\varphi_{\mathrm{in}}$ and
recalling that $\ww_{\mathrm{out}}$ is supported outside the region $\mrI_\eps$, we obtain
\begin{equation}\label{bd:phiinomout}
  |\langle \varphi_{\mathrm{in}},\ww_{\mathrm{out}}\rangle_{L^2}| \,\lesssim\,
  \|(1+|\cdot|)^{-1}\varphi_{\mathrm{in}}\|_{L^q} \|(1+|\cdot|) \ww_{\mathrm{out}}\|_{L^p}
  \,\lesssim\, \exp(-c_*\eps^{-2\sigma_1})\|\ww\|_{\cX_\eps}^2\,,
\end{equation}
for some $c_*>0$ sufficiently small. The same estimate holds for $\langle \varphi_{\mathrm{out}},
\ww_{\mathrm{out}}\rangle_{L^2}$ too, by the same argument. It follows in particular from 
\eqref{bd:lowEeps1}, \eqref{Eepsdecomp}, \eqref{bd:phiinomout} that 
\begin{equation}\label{Eepsinter}
  E_\eps[\ww] \,\ge\, E_{\eps}^1[\ww_{\mathrm{in}}] + E_{\eps}^1[\ww_{\mathrm{out}}]
  -C_1\,\eps^2\|\ww\|_{\cX_\eps}^2\,,
\end{equation}
for some constant $C_1 > 0$.

\smallskip
It remains to estimate from below the quantities $E_{\eps}^1[\ww_{\mathrm{in}}]$ and
$E_{\eps}^1[\ww_{\mathrm{out}}]$ in \eqref{Eepsinter}. We have just observed that
\begin{equation}\label{Eeps1bd}
  E_\eps^1[\ww_{\mathrm{out}}] \,\ge\, \frac12 \|\ww_{\mathrm{out}}\|_{\cX_\eps}^2 -
  |\langle \varphi_{\mathrm{out}},\ww_{\mathrm{out}}\rangle_{L^2}| \,\ge\, \frac12
  \|\ww_{\mathrm{out}}\|_{\cX_\eps}^2-\exp(-c_*\eps^{-2\sigma_1})\|\ww\|_{\cX_\eps}^2\,.
\end{equation}
To bound the other term, the strategy is to compare $E_{\eps}^1[\ww_{\mathrm{in}}]$
with $E_0[\ww_{\mathrm{in}}]$. Using estimate \eqref{bd:WAI} in Proposition \ref{propIW}
and the fact that $\ww_{\mathrm{in}}$ is supported in region $\mrI_\eps$, we find that,
for any $\gamma_2 < 2$, there exists $C_2 > 0$ such that 
\begin{equation}\label{Eeps2bd}
  \bigl|\|\ww_{\mathrm{in}}\|_{\cX_\eps}^2 - \|\ww_{\mathrm{in}}\|_{\cX_0}^2\bigr|
  \,\le\, 2C_2\,\eps^{\gamma_2}\|\ww\|_{\cX_\eps}^2\,, \quad \text{hence}\quad
  E_{\eps}^1[\ww_{\mathrm{in}}] \,\ge\, E_0[\ww_{\mathrm{in}}] - C_2\,\eps^{\gamma_2}
  \|\ww\|_{\cX_\eps}^2.
\end{equation}
We next invoke Proposition~4.5 in \cite{GaSring}, which provides the lower bound
\begin{equation}\label{Eeps3bd}
  E_{0}[\ww_{\mathrm{in}}] \,\ge\, \kappa_0 \|\ww_{\mathrm{in}}\|_{\cX_0}^2 - C_3
  \bigl(\mathrm{M}[\ww_{\mathrm{in}}]^2 + \mathrm{m}_1[\ww_{\mathrm{in}}]^2 +
  \mathrm{m}_2[\ww_{\mathrm{in}}]^2\bigr)\,,
\end{equation}
for some constants $\kappa_0 \in (0,1/2)$ and $C_3 > 0$. At this point, it is
important to observe that, although the first moments of $\ww_{\mathrm{in}}$ do not vanish
exactly, they are extremely small. Indeed, since $\mathrm{M}[\ww_{\mathrm{in}}] =
-\mathrm{M}[\ww_{\mathrm{out}}]$ by assumption, we have $|\mathrm{M}[\ww_{\mathrm{in}}]|
\le \exp(-c_*\eps^{-2\sigma_1})\|\ww\|_{\cX_\eps}$, and a similar estimate holds for
$\mathrm{m}_1[\ww_{\mathrm{in}}]$ and $\mathrm{m}_2[\ww_{\mathrm{in}}]$.
It thus follows from \eqref{Eeps2bd}, \eqref{Eeps3bd} that
\begin{equation}\label{Eeps4bd}
  E_{0}[\ww_{\mathrm{in}}] \,\ge\, \kappa_0 \|\ww_{\mathrm{in}}\|_{\cX_\eps}^2 - C_4
  \,\eps^{\gamma_2}\|\ww\|_{\cX_\eps}^2\,.
\end{equation}

Finally, combining \eqref{Eepsinter}, {\eqref{Eeps1bd}}, \eqref{Eeps2bd} and \eqref{Eeps4bd}, we obtain
\[
  E_\eps[\ww] \,\ge\, \kappa_0\|\ww_{\mathrm{in}}\|_{\cX_\eps}^2  + \frac12 \|\ww_{\mathrm{out}}
  \|_{\cX_\eps}^2 - C_5\eps^{\gamma_2}\|\ww\|_{\cX_\eps}^2 \,\ge\, \bigl(\kappa_0 - C_5
  \,\eps^{\gamma_2}\bigr)\|\ww\|_{\cX_\eps}^2\,,
\]
which gives the desired estimate \eqref{bd:coercivity} if $\eps > 0$ is small enough. 
\end{proof}

\subsection{The energy identity}\label{sec:energyId}

We next compute the time evolution of the energy $E_{\eps}[\ww]$ defined in \eqref{def:Eeps},
assuming that $\ww$ is the solution of \eqref{linear}-\eqref{force} with zero initial data.

\begin{lemma}\label{lem:EnId}
Let $\ww$ be the solution of \eqref{linear}-\eqref{force} and $E_\eps[\ww]$ be defined as
in \eqref{def:Eeps}. Then
\begin{equation}\label{EnergyIdentity}
  t\partial_t E_{\eps}[\ww] + D_{\eps}[\ww] \,=\, \mathrm{A} + \mathrm{F} + \mathrm{NL}\,,
\end{equation}
where we define:
\begin{itemize}[leftmargin=15pt,itemsep=2pt]

\item The diffusion functional 
\begin{equation}\label{def:Deps}
  D_\eps[\ww] \,=\, -\frac12\jap{(t\partial_t W_\eps)\ww\,,\ww}_{L^2} -\jap{\cL\ww\,,W_\eps
  \ww+\varphi-\cT_\eps \varphi}_{L^2}\,;
\end{equation}

\item The advection terms
\begin{equation}\label{def:Adv}
\begin{split}
  \mathrm{A} \,=\, &\,\frac{1}{\delta} \jap{\left\{\Phi_\app^E\,,\ww\right\} + \left\{\varphi-\cT_\eps\varphi\,,
  \Omega_\app^E\right\},W_\eps \ww + \varphi-\cT_\eps \varphi}_{L^2}\\
  &+\jap{\left\{\Phi_\app^{NS}\,,\ww\right\} + \left\{\varphi-\cT_\eps\varphi\,,\Omega_\app^{NS}\right\},
  W_\eps \ww+\varphi-\cT_\eps \varphi}_{L^2}\,;
\end{split}  
\end{equation}

\item The forcing terms generated by the remainder and the vertical speed 
\begin{equation}\label{errF}
  \mathrm{F} \,=\, \frac{1}{\delta^2}\Big\langle \cR_M+\frac{\eps\zeta}{2\pi}\,\{\xi_1,\Omega_{\app}\}\,,
  W_\eps \ww +\varphi-\cT_\eps \varphi\Big\rangle_{L^2}\,;
\end{equation}

\item The nonlinear terms
\begin{equation} \label{errNL}
  \mathrm{NL} \,=\, \frac{1}{4\eps} \langle w, \partial_1(\cT_\eps\varphi)\rangle_{L^2} +
  \Big\langle\left\{\varphi
  -\cT_\eps \varphi,\ww\right\}+\frac{\eps\zeta}{2\pi \delta}\,\{\xi_1,\ww\}\,,W_\eps \ww +\varphi
  -\cT_\eps\varphi\Big\rangle_{L^2}\,.
\end{equation}
\end{itemize}
\end{lemma}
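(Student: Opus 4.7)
My plan is to compute $t\partial_t E_\eps[\ww]$ by differentiating the two pieces of \eqref{def:Eeps} separately, then substitute the evolution equation \eqref{linear}--\eqref{force} for $t\partial_t\ww$ and sort the resulting terms into the four categories $-D_\eps$, $\mathrm{A}$, $\mathrm{F}$, $\mathrm{NL}$. The only nontrivial point is that both $W_\eps$ and the operator $\cT_\eps$ depend on time through the aspect ratio $\eps = \sqrt{\nu t}/d$, and the $t$-dependence of $\cT_\eps$ is what generates the anomalous first term $\frac{1}{4\eps}\langle \ww, \partial_1(\cT_\eps\varphi)\rangle_{L^2}$ appearing in \eqref{errNL}.

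The weighted $L^2$ piece is immediate: $\frac{1}{2}t\partial_t\|\ww\|_{\cX_\eps}^2 = \frac{1}{2}\langle (t\partial_t W_\eps)\ww,\ww\rangle_{L^2} + \langle W_\eps\ww, t\partial_t\ww\rangle_{L^2}$. For the cross term I would use three ingredients: self-adjointness of $\Delta^{-1}$ (which via $\langle\varphi,\ww\rangle_{L^2} = -\|\nabla\varphi\|_{L^2}^2$ yields $t\partial_t\langle\varphi,\ww\rangle_{L^2} = 2\langle\varphi, t\partial_t\ww\rangle_{L^2}$); the fact that $\cT_\eps$ is self-adjoint on $L^2(\RR^2)$ and commutes with $\Delta$, since it is a composition of an isometric reflection and a translation; and the identity
\[
  t\partial_t(\cT_\eps\varphi) \,=\, -\frac{1}{2\eps}\,\partial_1(\cT_\eps\varphi) + \cT_\eps(t\partial_t\varphi)\,,
\]
which follows from $t\partial_t\eps = \eps/2$ and hence $t\partial_t\eps^{-1} = -1/(2\eps)$, together with the chain rule and the anticommutation $\partial_1\cT_\eps = -\cT_\eps\partial_1$. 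Combining these yields
\[
  \tfrac{1}{2}t\partial_t\langle\varphi-\cT_\eps\varphi,\ww\rangle_{L^2} \,=\, \langle\varphi-\cT_\eps\varphi, t\partial_t\ww\rangle_{L^2} + \tfrac{1}{4\eps}\langle \ww,\partial_1(\cT_\eps\varphi)\rangle_{L^2}\,,
\]
the second term being exactly the anomalous piece of $\mathrm{NL}$.

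The remaining argument is then pure bookkeeping. Substituting \eqref{linear}--\eqref{force} into
\[
  t\partial_t E_\eps[\ww] \,=\, \tfrac{1}{2}\langle(t\partial_tW_\eps)\ww,\ww\rangle_{L^2} + \langle W_\eps\ww + \varphi - \cT_\eps\varphi,\, t\partial_t\ww\rangle_{L^2} + \tfrac{1}{4\eps}\langle \ww,\partial_1(\cT_\eps\varphi)\rangle_{L^2}\,,
\]
the diffusive contribution $\langle W_\eps\ww + \varphi - \cT_\eps\varphi, \cL\ww\rangle_{L^2}$ combines with $\tfrac{1}{2}\langle(t\partial_tW_\eps)\ww,\ww\rangle_{L^2}$ to yield exactly $-D_\eps[\ww]$ by \eqref{def:Deps}. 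Pairing the two $\delta^{-1}$-scaled Euler-level advection terms and the two viscous-correction advection terms against $W_\eps\ww + \varphi - \cT_\eps\varphi$ produces $\mathrm{A}$ as in \eqref{def:Adv}. The source $\delta^{-2}\cR_M$ together with the $\zeta$-correction acting on $\Omega_\app$ (obtained by splitting $\tfrac{\eps\zeta}{2\pi\delta^2}\{\xi_1,\Omega_\app+\delta\ww\}$) delivers $\mathrm{F}$ as in \eqref{errF}. Finally the quadratic Poisson bracket $\{\varphi-\cT_\eps\varphi,\ww\}$ and the residual $\frac{\eps\zeta}{2\pi\delta}\{\xi_1,\ww\}$, paired against $W_\eps\ww + \varphi - \cT_\eps\varphi$, complete \eqref{errNL}. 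I do not anticipate a serious obstacle: the proof amounts to a careful reorganization of the equation, with the only delicate point being the time-dependence of $\cT_\eps$. All integrations by parts are justified by the decay of $\ww\in\cX_\eps$ together with the stream-function estimates of Lemmas~\ref{lem:velocityLq} and \ref{lem:TepsXeps}.
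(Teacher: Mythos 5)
Your proposal is correct and follows essentially the same route as the paper's proof: differentiate the two pieces of $E_\eps[\ww]$, use the self-adjointness of $\Delta^{-1}$ and of $\cT_\eps$ together with $t\partial_t\eps^{-1}=-1/(2\eps)$ and $\partial_1\cT_\eps=-\cT_\eps\partial_1$ to produce the anomalous term $\frac{1}{4\eps}\langle\ww,\partial_1(\cT_\eps\varphi)\rangle_{L^2}$, then substitute \eqref{linear}--\eqref{force} and sort terms. The sign and placement of every contribution match the paper's bookkeeping.
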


\begin{proof}
A direct computation gives 
\begin{align*}
  t\partial_t E_{\eps}[\ww] \,=\, &\jap{t\partial_t \ww, W_\eps \ww}_{L^2}+ \frac12\jap{\ww,
  (t\partial_t W_\eps)\ww}_{L^2}\\
  &+\frac12\jap{t\partial_t \ww, \varphi-\cT_\eps \varphi}_{L^2}+\frac12\jap{\ww, t\partial_t
  (\varphi-\cT_\eps \varphi)}_{L^2}\,.
\end{align*}
The last term in the right-hand side can be handled as in Section~\ref{sec:Arnold}, but we have
to be more careful here because $\cT_\eps$ is now a time-dependent operator. From the definition
of $\cT_\eps$ in \eqref{def:Teps} and the fact that $t\partial_t \eps = \eps/2$, we obtain
\[
  t\partial_t \bigl(\cT_\eps \varphi\bigr)(\xi,t) \,=\, t\partial_t\bigl(\varphi(-\xi_1-\eps^{-1}(t),\xi_2,t)
  \bigr) \,=\, \cT_\eps \Bigl(t\partial_t \varphi+\frac{1}{2\eps}\,\partial_1  \varphi\Bigr)(\xi,t)\,.
\]
Observing that $\cT_\eps \partial_1\varphi = -\partial_1\cT_\eps\varphi$ and using the identities
\eqref{def:Arn2}, we thus find
\[
  \frac12\jap{\ww, t\partial_t (\varphi-\cT_\eps \varphi)}_{L^2} \,=\, \frac12\jap{t\partial_t\ww\,,
  \varphi-\cT_\eps \varphi}_{L^2}+\frac{1}{4\eps} \langle w, \partial_1(\cT_\eps\varphi)\rangle_{L^2},
\]
and it follows that
\begin{equation}\label{eq:enId}
  t\partial_t E_{\eps}[\ww] \,=\,\jap{t\partial_t \ww, (W_\eps \ww+\varphi-\cT_\eps \varphi)}_{L^2}
  +\frac12\jap{\ww,(t\partial_t W_\eps)\ww}_{L^2}+\frac{1}{4\eps} \langle w, \partial_1
  (\cT_\eps\varphi)\rangle_{L^2}\,.
\end{equation}
We now replace the time derivative $t\partial_t\ww$ in \eqref{eq:enId} by its expression
\eqref{linear}--\eqref{force}, and this generates exactly the quantities $D_\eps[\ww]$, $\mathrm{A}$,
$\mathrm{F}$ and $\mathrm{NL}$ defined in \eqref{def:Deps}, \eqref{def:Adv}, \eqref{errF} and
\eqref{errNL}. Note that we chose to include the last term in  \eqref{eq:enId} in the 
nonlinearity $\mathrm{NL}$, and the previous one in the diffusion functional $D_\eps[\ww]$.
\end{proof}

Our goal in what follows is to estimate each term in \eqref{def:Deps}-\eqref{errNL}. In
Section~\ref{sec:diffusion} we show that the diffusion functional $D_\eps[\ww]$ controls,
roughly speaking, the $H^1$ analogue of the weighted $L^2$ norm $\|\ww\|_{\cX_\eps}$.
The remaining terms in \eqref{EnergyIdentity} can be estimated using the properties of
the approximate solution $\Omega_\app$ and of the weight function $W_\eps$. The calculations
are performed in Sections~\ref{sec:A}--\ref{sec:NL}, and result in the following crucial
energy estimate, which is the core of the proof of Theorem~\ref{th:mainNL} and will
be established in Section~\ref{sec:continuity}. 

\begin{proposition}\label{prop:key}
Fix $\sigma \in [0,1)$, take $M \in \NN$ such that $M > (3+\sigma)/(1-\sigma)$, and let $\ww$ be
the solution of \eqref{linear0}--\eqref{eq:zeromoments} with initial data $\ww|_{t=0}=0$. Let $\sigma_1,
\sigma_2, \gamma$ be as in \eqref{def:parameters}, with $\sigma_1$ sufficiently small and
$\sigma_2$ sufficiently large. Then for any $\kappa_* > 0$ there exist positive constants
$s_* \in (0,1)$ and $C_*\ge 1$ such that, if $\delta > 0$ is sufficiently small, the quantities
\eqref{def:Adv}--\eqref{errNL} satisfy
\begin{align}
   \label{bd:Aprop} |\mathrm{A}| \,&\le\, \delta^{s_*} D_{\eps}[\ww]\,,\\
  \label{bd:Fprop} |\mathrm{F}| \,&\le\, C_*\bigl(\delta^{-4}\eps^{2(M+1)}+\eps^4\bigr) +
  \kappa_* D_\eps[\ww]\bigl(1+\sqrt{E_{\eps}[\ww]}\bigr)\,,\\
  \label{bd:NLprop} |\mathrm{NL}| \,&\le\, \delta^{s_*}D_\eps[\ww]+C_*(\sqrt{E_{\eps}[\ww]}
  +E_{\eps}[\ww])D_{\eps}[\ww]\,,
\end{align}
as long as $t \le T_\adv\delta^{-\sigma}$. As a consequence, the energy \eqref{def:Eeps}
satisfies the estimate
\begin{equation}\label{bd:key}
  E_{\eps}[\ww](t) + \frac{1}{2}\int_0^t \frac{D_{\eps}[\ww](\tau)}{\tau}\dd \tau \,\le\,
  C_*\bigl(\delta^{-4}\eps^{2(M+1)}+\eps^4\bigr)\,, \qquad t \in \bigl(0,T_\adv\delta^{-\sigma}\bigr)\,.
\end{equation}
\end{proposition}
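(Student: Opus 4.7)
The plan is to prove the three bounds \eqref{bd:Aprop}--\eqref{bd:NLprop} separately and then close the energy identity \eqref{EnergyIdentity} via a continuity argument. The starting point is a coercivity estimate on the diffusion functional: by a standard computation replacing $\cL$ with $\Delta + \tfrac{1}{2}\xi\cdot\nabla + 1$ and integrating by parts against the weighted test function $W_\eps\ww + \varphi - \cT_\eps\varphi$, the functional $D_\eps[\ww]$ in \eqref{def:Deps} should control a weighted $H^1$-type norm: something like $D_\eps[\ww] \gtrsim \|\nabla\ww\|_{\cX_\eps}^2 + \|\ww\|_{\cX_\eps}^2$ modulo lower order terms, using that $t\partial_t W_\eps$ is nonpositive where it matters (the weight shrinks in $\epsilon$ in the inner region). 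This lower bound is the engine that will absorb the $\delta^{s_*}$ terms in A and NL.

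The delicate step, and the main obstacle, is the advection estimate \eqref{bd:Aprop}. The $\delta^{-1}$ piece in \eqref{def:Adv} must be shown to produce no net energy contribution at leading order. In the inner region $\mathrm{I}_\eps$, the weight satisfies $W_\eps = F'(\Omega_\app^E)$, so when we expand the Poisson brackets and use the approximate functional relationship $\Phi_\app^E + F(\Omega_\app^E) = \Theta$ (with $|\nabla\Theta| \lesssim \eps^{M+1}(1+|\xi|)^N$ by Corollary \ref{cor:frel}), the problematic terms combine into the total Poisson bracket $\{-F(\Omega_\app^E), W_\eps \ww + \varphi - \cT_\eps\varphi\}$ plus an $\cO(\eps^{M+1})$ error times the velocity, mirroring the computation in \eqref{def:Arn3}. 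Explicitly, the Arnold cancellation uses $F'(\Omega_\app^E)\nabla\Omega_\app^E = \nabla F(\Omega_\app^E)$ so that integration by parts eliminates the dominant contributions. In the intermediate region $\mathrm{II}_\eps$ the weight is constant, so $\jap{\{\Phi_\app^E,\ww\},W_\eps\ww} = 0$ directly; the nonlocal cross-term is controlled by integration by parts since $\Omega_\app^E$ decays faster than any polynomial there. In the outer region $\mathrm{III}_\eps$, the approximate vorticity is exponentially small (supported essentially in $\mathrm{I}_\eps$), so all $\delta^{-1}$ contributions are $\exp(-c\eps^{-2\sigma_1})$-small. Collecting the leftover Arnold errors gives a bound of the form $C\eps^{M+1}\delta^{-1}\|\nabla\ww\|_{\cX_\eps}\|\ww\|_{\cX_\eps} \lesssim \delta^{(1-\sigma)(M+1)/2 - 1} D_\eps[\ww]$, which is $\delta^{s_*} D_\eps[\ww]$ for some $s_* > 0$ provided $M > (3+\sigma)/(1-\sigma)$. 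The $\delta^0$ viscous Navier--Stokes corrections in \eqref{def:Adv} are handled by brute force, using that $\Psi_\app^{NS}, \Omega_\app^{NS} \in \cZ$ and Cauchy--Schwarz against $W_\eps\ww$.

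The forcing estimate \eqref{bd:Fprop} follows by a direct Cauchy--Schwarz against $W_\eps \ww + \varphi - \cT_\eps\varphi$, using $\cR_M = \cO_\cZ(\eps^{M+1} + \delta^2\eps^2)$ from Proposition \ref{prop:appsol} together with the velocity bounds in Lemma \ref{lem:velocityLq}. For the $\zeta$-piece, we exploit $\{\xi_1,\Omega_\app\} = \partial_2\Omega_\app$: the term $\delta^{-2}\eps\zeta\jap{\partial_2\Omega_\app, W_\eps\ww + \varphi - \cT_\eps\varphi}$ is estimated using Lemma \ref{lem:zeta2}, where the bound $\eps|\zeta| \lesssim \eps^{M+1} + \delta^2\eps^2 + \delta\eps\|\ww\|_{\cX_\eps} + \delta^2\eps^3\|\ww\|_{\cX_\eps}^2$ provides just enough smallness, after multiplying by $\delta^{-2}$, to produce terms of size $\delta^{-4}\eps^{2(M+1)}$, $\eps^4$, or absorbable multiples of $D_\eps[\ww]\sqrt{E_\eps[\ww]}$. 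The nonlinear estimate \eqref{bd:NLprop} is proved similarly: split $\jap{\{\varphi - \cT_\eps\varphi,\ww\}, W_\eps\ww}$ by integrating by parts to obtain terms involving $\nabla W_\eps\cdot(\varphi - \cT_\eps\varphi)^\perp$, which are estimated using Lemmas \ref{lem:velocityLq} and \ref{lem:TepsXeps} and the Gagliardo--Nirenberg-style bound \eqref{bd:velocityLinfty} at the cost of a factor $\|\nabla\ww\|_{\cX_\eps}^{1/2}\|\ww\|_{\cX_\eps}^{3/2} \lesssim E_\eps[\ww] + \sqrt{E_\eps[\ww]} D_\eps[\ww]$.

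To close, insert \eqref{bd:Aprop}--\eqref{bd:NLprop} into \eqref{EnergyIdentity}: taking $\delta$ small so that $\delta^{s_*} \le 1/4$ and $\kappa_* \le 1/4$, we obtain
\begin{equation*}
  t\partial_t E_\eps[\ww] + \tfrac{1}{2}D_\eps[\ww] \,\le\, C_*\bigl(\delta^{-4}\eps^{2(M+1)} + \eps^4\bigr) + C_*\bigl(\sqrt{E_\eps[\ww]} + E_\eps[\ww]\bigr) D_\eps[\ww]\,.
\end{equation*}
Set $T_* = \sup\{t \in (0,T_\adv\delta^{-\sigma}]\,:\, C_*(\sqrt{E_\eps[\ww](s)} + E_\eps[\ww](s)) \le 1/4 \text{ for all } s \le t\}$. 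On $(0,T_*)$ the $D_\eps[\ww]$ terms are absorbed, and dividing by $t$ and integrating from $0$ to $t$ (using $E_\eps[\ww](0) = 0$ and the identity $\int_0^t \tau^{-1}(\tau \partial_\tau E_\eps) \dd\tau = E_\eps[\ww](t)$ after a small regularisation at the origin handled by the fact that $\epsilon(\tau) \to 0$) yields \eqref{bd:key} on $(0, T_*)$. Since the right-hand side of \eqref{bd:key} is $o(1)$ as $\delta \to 0$ under the assumption $M > (3+\sigma)/(1-\sigma)$, a standard continuity argument shows $T_* = T_\adv\delta^{-\sigma}$ for $\delta$ small enough, completing the proof.
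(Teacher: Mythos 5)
Your overall architecture matches the paper's: coercivity of $D_\eps$, Arnold cancellation via $W_\eps = F'(\Omega_\app^E)$ in $\mathrm{I}_\eps$, constancy of the weight in $\mathrm{II}_\eps$, and a Gr\"onwall/continuity closure. However, there are two concrete gaps that would prevent the argument from closing.

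First, the forcing term. You propose to estimate $\delta^{-2}\eps\zeta\,\jap{\partial_2\Omega_\app,\,W_\eps\ww+\varphi-\cT_\eps\varphi}_{L^2}$ by combining the bound of Lemma~\ref{lem:zeta2} with a direct Cauchy--Schwarz on the pairing. But Cauchy--Schwarz only gives $|\jap{\partial_2\Omega_\app,W_\eps\ww}_{L^2}|\lesssim\|\ww\|_{\cX_\eps}$, and the term $\delta\eps\|\ww\|_{\cX_\eps}$ inside the $\zeta$-bound then produces a contribution $\delta^{-1}\eps\,\|\ww\|_{\cX_\eps}^2$. Since $\eps\le\delta^{(1-\sigma)/2}$ one has $\delta^{-1}\eps\le\delta^{-(1+\sigma)/2}\to\infty$, so this cannot be absorbed into $\kappa_* D_\eps[\ww]$. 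The indispensable ingredient is the cancellation $W_\eps\,\partial_2\Omega_\app^E+\partial_2\Phi_\app^E=\partial_2\bigl(F(\Omega_\app^E)+\Phi_\app^E\bigr)=\partial_2\Theta=\cO(\eps^{M+1})$ in $\mathrm{I}_\eps$, obtained by first rewriting $\jap{\{\xi_1,\Omega_\app^E\},\varphi-\cT_\eps\varphi}_{L^2}=\jap{\partial_2\Phi_\app^E,\ww}_{L^2}$; only then does the pairing gain the factor $\eps^{M+1}+\delta\eps^2$ that makes the $\zeta$-term harmless. Your write-up invokes only the trivial identity $\{\xi_1,\Omega_\app\}=\partial_2\Omega_\app$ and does not use this cancellation.

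Second, the coercivity and the outer region. Your claimed lower bound $D_\eps[\ww]\gtrsim\|\nabla\ww\|_{\cX_\eps}^2+\|\ww\|_{\cX_\eps}^2$ omits the quantity $\|\rho_\eps\ww\|_{\cX_\eps}^2$ (coming from the sign of $t\partial_t W_\eps$ in $\mathrm{II}_\eps$ and from $\tfrac14\jap{\ww,\ww\,\xi\cdot\nabla W_\eps}$ in $\mathrm{I}_\eps$ and $\mathrm{III}_\eps$), and this quantity is genuinely needed. Indeed, your assertion that all $\delta^{-1}$ advection contributions in $\mathrm{III}_\eps$ are $\exp(-c\eps^{-2\sigma_1})$-small is false for $\jap{\{\Phi_\app^E,\ww\},W_\eps\ww}_{L^2}=-\tfrac12\jap{\{\Phi_\app^E,W_\eps\}\ww,\ww}_{L^2}$: the stream function $\Phi_\app^E$ decays only algebraically, and with $|\nabla W_\eps|\sim|\xi|^{2\gamma-1}W_\eps$ one gets a contribution of order $\delta^{-1}\eps^{1+\sigma_2}\|\rho_\eps\ww\|_{\cX_\eps}^2$, which is only polynomially small and can only be absorbed if $D_\eps$ controls $\|\rho_\eps\ww\|_{\cX_\eps}^2$ and $\sigma_2$ is taken large. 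The same issue affects your nonlinear estimate: $|\jap{\{\varphi-\cT_\eps\varphi,W_\eps\}\ww,\ww}_{L^2}|\lesssim\|\nabla\varphi\|_{L^\infty}\|\rho_\eps\ww\|_{\cX_\eps}\|\ww\|_{\cX_\eps}$ because $\nabla W_\eps$ grows like $\rho_\eps W_\eps$, so the factor $\|\rho_\eps\ww\|_{\cX_\eps}$ is unavoidable there too. (You also leave the term $\tfrac{1}{4\eps}\jap{\ww,\partial_1\cT_\eps\varphi}_{L^2}$ unaddressed; it needs the moment conditions via Lemma~\ref{lem:TepsXeps} to compensate the $\eps^{-1}$ prefactor.)
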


In view of Propositions~\ref{prop:appsol} and \ref{prop:coercivity}, estimate \eqref{bd:key}
implies that the solution $w$ of \eqref{linear0}--\eqref{eq:zeromoments} does not become much larger
than the source term $\delta^{-2}\cR_M$, which is of size $\delta^{-2}\eps^{M+1} + \eps^2$. 

\begin{remark}\label{rem:scales}
The assumption that $t \le T_\adv\delta^{-\sigma}$ translates into an upper bound on the aspect ratio
$\eps = \sqrt{\nu t}/d$ in terms of the inverse Reynolds number $\delta = \nu/\Gamma$. Indeed,
if we define $s_0 > 0$ such that
\begin{equation}\label{def:sigma}
  s_0 \,=\, \frac{M-3}{M+1}-\sigma, \qquad \text{namely}\qquad 1 - \sigma \,=\, \frac{4}{M+1} + s_0\,,
\end{equation}
we then have
\begin{equation}\label{eq:tTadv}
  \eps \,\le\, \delta^{\frac{1-\sigma}{2}} \,=\, \delta^{\frac{2}{M+1}+\frac{s_0}{2}}\,, \qquad \text{or}
  \qquad \delta^{-2}\eps^{M+1} \,\le\, \delta^{\frac{M+1}{2}s_0}\,.
\end{equation}
The constant $s_*$ in \eqref{bd:Aprop}, \eqref{bd:NLprop} can be taken as a small multiple of $s_0$. 

\end{remark}

\subsection{The diffusion functional}\label{sec:diffusion}

As is clear from Proposition~\ref{prop:key}, our strategy is to control the various terms
in the right-hand side of the energy identity \eqref{EnergyIdentity} using the diffusion
functional $D_{\eps}[\ww]$ defined in \eqref{def:Deps}. We thus need an accurate lower 
bound on $D_{\eps}[\ww]$, which can be obtained for $\eps > 0$ small enough by exploiting
the coercivity properties of a limiting quadratic form that was already studied  in
\cite{GaSarnold}, see also \cite{GaSring} for a similar approach. To state our result, it is
convenient to introduce the continuous function $\rho_\eps : \RR^2 \to \RR_+$ defined by
\begin{equation}\label{def:rhoeps}
  \rho_\eps(\xi) \,=\, \begin{cases}
  |\xi|\,, \quad & \text{if}\quad |\xi| < \eps^{-\sigma_1}, \\
  \eps^{-\sigma_1}\,, \quad & \text{if}\quad \eps^{-\sigma_1}\le |\xi| \le \eps^{-\sigma_2}, \\
  |\xi|^\gamma\,, \quad & \text{if}\quad |\xi| > \eps^{-\sigma_2}.
  \end{cases}
\end{equation}
In agreement with \eqref{def:rhoeps}, we denote $\rho_0(\xi) = |\xi|$ in the limiting case $\eps = 0$. 

\begin{proposition}\label{prop:Diffusion}
If $\sigma_1$ is sufficiently small and $\sigma_2$ sufficiently large, there exists $\kappa_D>0$
such that the following holds for $\eps > 0$ small enough. If $\ww \in \cX_\eps$ satisfies
$\rho_\eps\ww \in \cX_\eps$, $\nabla\ww \in \cX_\eps^2$, and $\mathrm{M}[\ww] = \mathrm{m}_1[\ww] =
\mathrm{m}_2[\ww]=0$, then
\begin{equation}\label{bd:Deps}
  D_{\eps}[\ww] \,\ge\, \kappa_D \bigl(\norm{\nabla \ww}^2_{\cX_{\eps}} + \norm{\rho_\eps
    \ww}^2_{\cX_\eps} + \norm{ \ww}^2_{\cX_\eps}\bigr)\,.
\end{equation}
\end{proposition}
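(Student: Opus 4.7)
The plan is to decompose the diffusion functional along the three regions $\mathrm{I}_\eps, \mathrm{II}_\eps, \mathrm{III}_\eps$ and to combine a standard integration by parts in the local part with the coercivity results for the limiting ($\eps=0$) nonlocal quadratic form already established in \cite{GaSarnold}, to which we add weighted far-field estimates in the spirit of \cite{Gallay2011, GaSring}.

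First, I would work on the local term $-\jap{\cL\ww,W_\eps\ww}_{L^2}$. Since $\cL=\Delta+\tfrac12\xi\cdot\nabla+1$ and $W_\eps$ is continuous and piecewise smooth (with gradient jumps on $\partial \mathrm{I}_\eps$ and on $\{|\xi|=\eps^{-\sigma_2}\}$), integration by parts yields
\begin{equation*}
  -\jap{\cL\ww,W_\eps\ww}_{L^2} \,=\, \|\nabla\ww\|_{\cX_\eps}^2 + \int_{\RR^2} V_\eps \,\ww^2\,\dd\xi + \text{(boundary surface terms)},
\end{equation*}
where the effective potential $V_\eps$ combines $-\tfrac12\Delta W_\eps+\tfrac14\xi\cdot\nabla W_\eps-\tfrac12 W_\eps$ in the interior of each region. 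A direct computation using the explicit formulas of \eqref{def:Weps} shows: in the inner region $V_\eps\ge c\,|\xi|^2 W_\eps$ up to bounded corrections (this follows from $W_0=A$ and the identity $(1/4)\xi\cdot\nabla A-(1/2)\Delta A-(1/2)A \asymp |\xi|^2 A$ at infinity, transferred to $W_\eps$ via \eqref{bd:WAI}); in the outer region the stretched-exponential weight yields $V_\eps\ge c\,|\xi|^{2\gamma}W_\eps$; in the intermediate region $W_\eps$ is constant, so $V_\eps=-\tfrac12 W_\eps$, but there the prefactor $\rho_\eps^2=\eps^{-2\sigma_1}$ in the desired conclusion is still dominated by the gradient term $\|\nabla w\|_{\cX_\eps}^2$ after a Poincaré-type argument. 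Moreover, the jumps of $\nabla W_\eps$ at $\partial\mathrm{I}_\eps$ and at $|\xi|=\eps^{-\sigma_2}$ give surface integrals with the correct sign (outward jumps), so they can be discarded in a lower bound.

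Second, I would handle the time-derivative term $-\tfrac12\jap{(t\partial_t W_\eps)\ww,\ww}_{L^2}$. Since $t\partial_t\eps=\eps/2$ and $W_\eps$ depends on $t$ only through $\eps$, one checks that $|t\partial_t W_\eps|\lesssim \eps^2 W_\eps$ in $\mathrm{I}_\eps$ (the only $\eps$-dependence comes from $\Omega_\app^E-\Omega_0=\cO_\cZ(\eps^2)$), vanishes in $\mathrm{III}_\eps$, and equals $-\tfrac{\sigma_1}{2}\eps^{-2\sigma_1}\,W_\eps$ in $\mathrm{II}_\eps$; the last term has a good sign and is in any case absorbable by the gradient contribution $\|\nabla w\|_{\cX_\eps}^2$ on that annular region.

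Third, I would address the nonlocal contribution $-\jap{\cL\ww,\varphi-\cT_\eps\varphi}_{L^2}$. The $\cT_\eps\varphi$ piece is dealt with by Lemma~\ref{lem:TepsXeps}, which gives a pointwise control on $\mathrm{I}_\eps$ with a gain $\eps^3$, so it is a negligible perturbation for $\eps$ small. For the remaining piece $-\jap{\cL\ww,\varphi}_{L^2}$, I would localize: writing $\ww=\ww_\mathrm{in}+\ww_\mathrm{out}$ with $\ww_\mathrm{in}=\mathbbm 1_{\mathrm I_\eps}\ww$ and using the smallness of the moments of $\ww_\mathrm{in}$ (by the argument already used in Proposition~\ref{prop:coercivity}, exploiting the exponential decay of $W_\eps^{-1}$ outside $\mathrm I_\eps$ and $\mathrm{M}[\ww]=\mathrm{m}_j[\ww]=0$), one reduces the inner contribution to the limiting nonlocal quadratic form
\begin{equation*}
  -\jap{\cL\ww_\mathrm{in},W_0\ww_\mathrm{in}+\Delta^{-1}\ww_\mathrm{in}}_{L^2},
\end{equation*}
which is coercive on the subspace of functions with vanishing first three moments, with constants computed in \cite[Thm.~1.2]{GaSarnold} (see also \cite[Prop.~4.5]{GaSring}). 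The cross terms between $\ww_\mathrm{in}$ and $\ww_\mathrm{out}$ are estimated by Hölder and Lemma~\ref{lem:velocityLq} and carry an exponentially small factor $\exp(-c_*\eps^{-2\sigma_1})$.

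Putting the three contributions together and choosing $\sigma_1$ small and $\sigma_2$ large, all perturbative terms are absorbed and one recovers \eqref{bd:Deps} with a uniform $\kappa_D>0$. \emph{The main obstacle} is the nonlocal inner estimate: the coercivity from \cite{GaSarnold} applies to the exact limiting form $D_0$, not to $D_\eps$, so the reduction requires that the three error sources (the deviation $W_\eps-W_0$ controlled by \eqref{bd:WAI}, the tiny but nonzero moments of $\ww_\mathrm{in}$, and the $\cT_\eps\varphi$ contribution) be estimated simultaneously in a way that leaves room for the final positive lower bound. A secondary technical point is the correct handling of the gradient-jump surface terms on $\partial \mathrm I_\eps$ and $\{|\xi|=\eps^{-\sigma_2}\}$, for which it is essential that the weight $W_\eps$ has been designed to be continuous and nondecreasing along $|\xi|$ increasing across these boundaries.
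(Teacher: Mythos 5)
Your overall architecture --- region-by-region decomposition, Lemma~\ref{lem:TepsXeps} to dispose of the $\cT_\eps\varphi$ contribution, the coercivity of the limiting nonlocal form from \cite{GaSarnold,GaSring} for the inner part, and exponentially small cross terms and moments of $\ww_{\mathrm{in}}$ --- is the same as the paper's. But two of your key lower bounds fail. First, the intermediate region: you claim the factor $\rho_\eps^2=\eps^{-2\sigma_1}$ on $\mathrm{II}_\eps$ is recovered from the gradient term "after a Poincar\'e-type argument". This cannot work: the annulus $\eps^{-\sigma_1}\lesssim|\xi|\le\eps^{-\sigma_2}$ has diameter of order $\eps^{-\sigma_2}$, so any Poincar\'e inequality produces the tiny constant $\eps^{2\sigma_2}$, not the large constant $\eps^{-2\sigma_1}$ you need (and $\ww$ satisfies no vanishing or mean-zero condition on that annulus in any case). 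The only source of the $\|\1_{\mathrm{II}_\eps}\rho_\eps\ww\|_{\cX_\eps}^2$ contribution is precisely the term you propose to absorb: since $W_\eps=\exp(\eps(t)^{-2\sigma_1}/4)$ decreases in time, $-\tfrac12\langle(t\partial_tW_\eps)\ww,\ww\rangle=\tfrac{\sigma_1}{8}\|\1_{\mathrm{II}_\eps}\rho_\eps\ww\|_{\cX_\eps}^2$ exactly (using $t\partial_t\eps=\eps/2$; your coefficient $\sigma_1/2$ should be $\sigma_1/4$ before the extra $-1/2$). You must keep this term; it is the reason the weight was chosen to be the time-dependent constant $\exp(\eps^{-2\sigma_1}/4)$ in that region.

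Second, your inner-region pointwise bound $V_\eps\ge c\,|\xi|^2W_\eps$ is false. With $W_0=A=4|\xi|^{-2}(e^{|\xi|^2/4}-1)$ one has $\tfrac14\,\xi\cdot\nabla W_0\sim+\tfrac{|\xi|^2}{8}W_0$ but also $-\tfrac12\Delta W_0\sim-\tfrac{|\xi|^2}{8}W_0$: the leading terms cancel exactly, so after a full integration by parts no $|\xi|^2W_\eps$ coercivity survives in the effective potential. The paper integrates by parts only once (so $\Delta W_\eps$ and the surface terms never appear --- note also that the jump of $\partial_rW_\eps$ across $|\xi|=\eps^{-\sigma_2}$ has the \emph{wrong} sign for your "discard by sign" argument, since $W_\eps$ goes from constant to increasing there), keeps $\langle\ww,\nabla\ww\cdot\nabla W_\eps\rangle$, and applies Young's inequality with the sharp constant $3/8$, which leaves the narrow margin $\tfrac18-\tfrac{3}{32}=\tfrac1{32}$ in front of $|\xi|^2W_\eps$. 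Even this only works for the far part $\ww_{\mathrm{out}}$; for $\ww_{\mathrm{in}}$ the $\rho$-weighted coercivity must come from the full nonlocal form via \cite[Prop.~4.14]{GaSring}, as in your third step, so your first and third steps are not consistent with each other. A last technical point: the sharp cutoff $\1_{\mathrm{I}_\eps}$ is incompatible with a quadratic form containing $\nabla\ww$; one needs a smooth quadratic partition $\chi_1^2+\chi_2^2=1$ with $|\nabla\chi_i|\lesssim\eps^{\sigma_1}$.
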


The proof of Proposition~\ref{prop:Diffusion} is rather lengthy and can be divided into
four main steps.

\medskip\noindent
{\it Step 1: Preliminaries}. We recall the definitions of $\cL$ in \eqref{def:cLdom},
$W_\eps$ in \eqref{def:Weps} and $D_{\eps}[\ww]$ in \eqref{def:Deps}. We first handle the term
involving $\cT_\eps$ in \eqref{def:Deps}. Since $\cL\ww = \Delta\ww+\mathrm{div}(\xi \ww)/2$,
a simple integration by parts yields
\[
  \jap{\cL\ww,\cT_\eps \varphi}_{L^2} \,=\, -\jap{\nabla \ww,\nabla (\cT_\eps\varphi)}_{L^2}
  -\frac12\jap{ \ww,\xi\cdot \nabla (\cT_\eps\varphi)}_{L^2}\,.
\]
To estimate the right-hand side, we can argue as in the proof of Lemma~\ref{lem:zeta2},
by splitting the integration domain into the region $\mrI_{\eps}$ and its complement
$\mrI_{\eps}^c$. Using the bound \eqref{bd:UTesp} in the inner region and the rapid
decay of the weight $W_\eps^{-1/2}$ in the complement, we easily obtain
\[
  |\jap{\cL\ww,\cT_\eps \varphi}_{L^2}| \,\lesssim\, \eps^3\|\ww\|_{\cX_\eps}\bigl(\|\ww\|_{\cX_\eps}
  + \|\nabla \ww\|_{\cX_\eps}\bigr) \,\lesssim\, \eps^3\bigl(\|\ww\|_{\cX_\eps}^2
  +\|\nabla \ww\|_{\cX_\eps}^2\bigr)\,.
\]
We deduce that there exists a constant $C_1 > 0$ such that
\begin{equation}\label{bd:Deps1}
  D_\eps[\ww] \,\ge\, \cD_{W_\eps}[\ww] + \cD_{\cL,\eps}[\ww] - C_1\eps^3
  \bigl(\|\ww\|_{\cX_\eps}^2+\|\nabla \ww\|_{\cX_\eps}^2\bigr)\,,
\end{equation}
where we denote
\begin{equation}\label{def:cDWL}
  \cD_{W_\eps}[\ww] \,:=\, -\frac12\jap{(t\partial_t W_\eps)\ww,\ww}_{L^2}\,, \qquad
  \cD_{\cL,\eps}[\ww] \,:=\, -\jap{\cL \ww, W_\eps \ww+\varphi}_{L^2}\,.
\end{equation}

Our next task is to estimate the quadratic form $\cD_{W_\eps}$ which involves the time
derivative of the weight function. From the definition of $W_\eps$ in \eqref{def:Weps} we get
\[
  \cD_{W_\eps}[\ww] \,=\, -\frac12 \int_{\mathrm{I}_\eps}t\partial_t (F'(\Omega_{\app}^E)) |\ww|^2\,\dd \xi
  -\frac12\int_{\mathrm{II}_\eps}\bigl(t\partial_t(\exp(\eps(t)^{-2\sigma_1}/4))\bigr) |\ww|^2\,\dd \xi\,.
\]
Since $t\partial_t \eps = \eps/2$ and $\rho_\eps=\eps^{-\sigma_1}$ in $\mathrm{II}_\eps$ by \eqref{bd:Ieps} 
and \eqref{def:rhoeps}, a direct calculation shows that
\begin{equation}
\label{bd:goodDWeps}
  -\frac12\int_{\mathrm{II}_\eps}\bigl(t\partial_t(\exp(\eps(t)^{-2\sigma_1}/4))\bigr)|\ww|^2\,\dd \xi
  \,=\, \frac{\sigma_1}{8}\int_{\mathrm{II}_\eps}\rho_\eps^2|\ww|^2W_\eps\,\dd \xi \,=\,
  \frac{\sigma_1}{8}\norm{\1_{\mathrm{II}_\eps}\rho_\eps\ww}^2_{\cX_\eps}\,.
\end{equation}
To bound the other term, we recall that $F = F_0 + \eps^2 F_2 + \dots + \eps^M F_M$, so that
\[
  t\partial_t\bigl(F'(\Omega_{\app}^E)\bigr) \,=\, \sum_{k=2}^M\frac{k}{2}\,\eps^{k} F_k'
  \bigl(\Omega_\app^E\bigr) + F''(\Omega_\app^E)\bigl(t\partial_t\Omega_\app^E\bigr)\,.
\]
According to \eqref{appOP} we have $\Omega_\app^E = \Omega_0 + \eps^2 \Omega_2^E + \dots +
\eps^M \Omega_M^E$, so that $t\partial_t\Omega_\app^E = \cO_\cZ(\eps^2)$ in the sense 
of Definition~\ref{def:topo}. Moreover, since $\Omega_0/2 \le \Omega_\app^E\le 2\Omega_0$
in $\mathrm{I}_\eps$ by \eqref{eq:Ompos}, we can proceed as in \eqref{bd:lambound} to
prove that, for some $N \in \NN$, 
\[
  \sum_{k=2}^M \bigl|F_k'\bigl(\Omega_\app^E(\xi)\bigr)\bigr|+
  \bigl|F''(\Omega_\app^E(\xi))\bigr|\,\Omega_0(\xi) \,\lesssim\, (1+|\xi|)^N
  \,\Omega_0(\xi)^{-1} \,, \qquad \xi \in \mathrm{I}_\eps\,.
\]
It follows that
\begin{equation}\label{bd:badDWeps}
  \int_{\mathrm{I}_\eps}\bigl|t\partial_t (F'(\Omega_{\app}^E))\bigr| |\ww|^2\,\dd \xi \,\lesssim\,
  \eps^2\int_{\mathrm{I}_\eps}(1+|\xi|)^N\e^{|\xi|^2/4}|w|^2\,\dd \xi \,\lesssim\,
  \eps^{2-\sigma_1(N+2)}\|\ww\|_{\cX_\eps}^2\,,
\end{equation}
for a possibly larger integer $N$. In the last inequality we used Proposition~\ref{propIW}
which implies that $W_\eps(\xi) \approx W_0(\xi) \approx \e^{|\xi|^2/4}(1+|\xi|)^{-2}$ in region
$\mathrm{I}_\eps$. Combining \eqref{bd:Deps1}, \eqref{bd:goodDWeps}, \eqref{bd:badDWeps}
and taking $\sigma_1$ sufficiently small, we arrive at
\begin{equation}\label{bd:Deps2}
  D_\eps[\ww] \,\ge\, \frac{\sigma_1}{8}\norm{\1_{\mathrm{II}_\eps}\rho_\eps\ww}^2_{\cX_\eps}
  +\cD_{\cL,\eps}[\ww]-C_2\eps\bigl(\|\ww\|_{\cX_\eps}^2+\|\nabla \ww\|_{\cX_\eps}^2\bigr)\,,
\end{equation}
for some constant $C_2 > 0$.

Finally we derive a more explicit expression of the diffusive quadratic form $\cD_{\cL,\eps}[\ww]$.
Using the definition \eqref{def:cDWL} and integrating by parts, we easily find
\begin{equation}\label{eq:Dclfirst}
  \cD_{\cL,\eps}[\ww] \,=\, \jap{\nabla\ww, \nabla(W_\eps \ww)+\nabla\varphi}_{L^2}+\frac12
  \jap{\ww, \xi \cdot \nabla(W_\eps \ww+\varphi)}_{L^2}\,.
\end{equation}
Similarly, since $\ww = \Delta \varphi$, we observe that $\langle\nabla \ww,\nabla
\varphi\rangle_{L^2}=-\|\ww\|_{L^2}^2$ and
\[
  \langle\ww, \xi\cdot \nabla \varphi\rangle_{L^2} \,=\, \langle\Delta\varphi, \xi\cdot
  \nabla \varphi\rangle_{L^2} \,=\, -\int_{\RR^2}\Bigl(|\nabla\varphi|^2 + \frac12 \,\xi\cdot\nabla
  \,|\nabla\varphi|^2\Bigr)\dd\xi \,=\, 0\,.
\]
On the other hand, since $\ww(\xi\cdot\nabla \ww)=\mathrm{div}(\xi \ww^2/2)-\ww^2$, it is
not difficult to verify that
\begin{equation}\label{eq:idD12}
\begin{split}
  \jap{\ww, \xi \cdot \nabla(W_\eps \ww)}_{L^2} \,&=\, \jap{\ww, (\xi\cdot \nabla W_\eps)\ww}_{L^2}
  +\jap{\ww, (\xi \cdot \nabla \ww) W_\eps}_{L^2}\\
  \,&=\, \frac12\jap{\ww, (\xi\cdot \nabla W_\eps)\ww}_{L^2}-\norm{\ww}_{\cX_\eps}^2\,.
\end{split}
\end{equation}
Thus we can write the quantity $\cD_{\cL,\eps}[\ww]$ in the equivalent form 
\begin{equation}\label{eq:Dclfinal}
  \cD_{\cL,\eps}[\ww] \,=\, \norm{\nabla \ww}_{\cX_\eps}^2 + \jap{\ww, \nabla \ww\cdot \nabla
  W_\eps}_{L^2}+\frac14 \jap{\ww,\ww(\xi\cdot \nabla W_\eps)}_{L^2} -\norm{\ww}^2_{L^2}
  -\frac12 \norm{\ww}_{\cX_\eps}^2\,.
\end{equation}

\medskip\noindent
{\it Step 2: Decomposition of the diffusive quadratic form}. The expression \eqref{eq:Dclfinal}
is the analogue of the quadratic form $Q_\eps[\ww]$ appearing in \cite[Section~4.7]{GaSring}.
Taking formally the limit $\eps \to 0$, we obtain 
\begin{equation}\label{eq:Dcl0final}
  \cD_{\cL,0}[\ww] \,=\, \norm{\nabla \ww}_{\cX_0}^2 + \jap{\ww,\nabla \ww\cdot
    \nabla W_0}_{L^2}+\frac14 \jap{\ww,\ww(\xi\cdot \nabla W_0)}_{L^2} -\norm{\ww}^2_{L^2}
  -\frac12 \norm{\ww}_{\cX_0}^2\,,
\end{equation}
which is exactly the quadratic form $Q_0[\ww]$ studied \cite{GaSarnold,GaSring}.
In particular, it is established in \cite[Proposition~4.14]{GaSring} that, for any
$\ww \in \cX_0$ with $\rho_0\ww \in \cX_0$ and $\nabla \ww \in \cX_0^2$, the following
holds
\begin{equation}\label{bd:lwD0}
  \cD_{\cL,0}[\ww] \,\ge\, \kappa_2 \bigl(\norm{\nabla \ww}^2_{\cX_0} + \norm{ \rho_0\ww}^2_{\cX_0}
  +\norm{\ww}^2_{\cX_0}\bigr) - C_3\bigl(\mathrm{M}[\ww]^2 + \mathrm{m}_1[\ww]^2 +
  \mathrm{m}_2[\ww]^2\bigr)\,,
\end{equation}
for some constants $\kappa_2\in (0,1/2)$ and $C_3>0$. 

To obtain a similar lower bound on $\cD_{\cL,\eps}[\ww]$ for $\eps>0$, the idea is
to decompose the vorticity $\ww$ so as to single out the contribution of the inner
region. As opposed to what we have done in the proof of Proposition~\ref{prop:coercivity},
we use here a smooth cut-off since our quadratic form involves first-order derivatives.
Let $\chi_1,\chi_2 : \RR^2 \to [0,1]$ be smooth functions satisfying $\chi_1^2+\chi_2^2=1$,
and such that $\chi_1(\xi) = 1$ when $|\xi|\le \eps^{-\sigma_1}/2$ and $\chi_1(\xi) = 0$
when $|\xi|\ge \eps^{-\sigma_1}$. In addition, we assume that $|\nabla \chi_i| \lesssim
\eps^{\sigma_1}$. Then, with a slight abuse in notation, we define
\begin{equation}\label{def:partition}
  \ww_{\mathrm{in}} \,:=\, \chi_1\ww\,, \qquad \ww_{\mathrm{out}} \,:=\, \chi_2\ww\,, \qquad
  \text{so that}\quad \ww^2 \,=\, \ww_{\mathrm{in}}^2+\ww_{\mathrm{out}}^2\,.
\end{equation}
We obviously have $\nabla (\chi_i \ww)=(\nabla\chi_i)\ww+\chi_i\nabla\ww$ and $\chi_1\nabla \chi_1
+\chi_2\nabla\chi_2=0$, hence
\begin{equation}\label{def:partition0}
  |\nabla \ww|^2 \,=\, |\nabla \ww_{\mathrm{in}}|^2 + |\nabla \ww_{\mathrm{out}}|^2
  -(|\nabla \chi_1|^2+|\nabla \chi_2|^2)\ww^2\,.
\end{equation}
Since
\[
  \jap{\nabla \ww\cdot \nabla W_\eps, \ww }_{L^2} \,=\, \frac12\int_{\RR^2}\nabla(\ww^2)\cdot
  \nabla W_\eps\,\dd \xi\,,
\]
and since the remaining terms in $\cD_{\cL,\eps}$ do not involve derivatives of $\ww$,
it follows from \eqref{def:partition}, \eqref{def:partition0} that
\begin{equation}\label{Dclepsdecomp}
\begin{split}
  \cD_{\cL,\eps}[\ww] \,&=\, \cD_{\cL,\eps}[\ww_{\mathrm{in}}] + \cD_{\cL,\eps}[\ww_{\mathrm{out}}]
  -\norm{\sqrt{|\nabla \chi_1|^2+|\nabla \chi_2|^2}\,\ww}_{\cX_\eps}^2\,, \\
  \,&\ge\, \cD_{\cL,\eps}[\ww_{\mathrm{in}}] + \cD_{\cL,\eps}[\ww_{\mathrm{out}}]
  - C_4 \eps^{2\sigma_1}\|\ww\|_{\cX_\eps}^2\,,
\end{split}
\end{equation}
where in the last inequality we use the assumption that $|\nabla \chi_i| \lesssim\eps^{\sigma_1}$.

\medskip\noindent
{\it Step 3: Contribution of the inner region}. We now decompose
\[
  \cD_{\cL,\eps}[\ww_{\mathrm{in}}] \,=\, \cD_{\cL,0}[\ww_{\mathrm{in}}]
  + \cD_{\mathrm{err}}[\ww_{\mathrm{in}}]\,,
\]
where
\begin{align*}
  \cD_{\mathrm{err}}[\ww_{\mathrm{in}}] \,&:=\, \bigl(\norm{\nabla \ww_{\mathrm{in}}}_{\cX_\eps}^2
  -\norm{\nabla \ww_{\mathrm{in}}}_{\cX_0}^2\bigr) + \jap{\ww_{\mathrm{in}},\nabla \ww_{\mathrm{in}}
  \cdot \nabla(W_\eps-W_0)}_{L^2}\\
  &\quad\, +\frac14 \jap{\ww_{\mathrm{in}},\ww_{\mathrm{in}}(\xi\cdot \nabla (W_\eps-W_0)}_{L^2}
  -\frac12 \bigl(\norm{\ww_{\mathrm{in}}}_{\cX_\eps}^2-\norm{\ww_{\mathrm{in}}}_{\cX_0}^2\bigr)\,.
\end{align*}
Since $\ww_{\mathrm{in}}$ is supported in the region $\mathrm{I}_{\eps}$, the bounds \eqref{bd:WAI}
in Proposition~\ref{propIW} readily imply that
\[
  \bigl|\cD_{\mathrm{err}}[\ww_{\mathrm{in}}]\bigr| \,\lesssim\, \eps^{\gamma_2}\bigl
  (\norm{\nabla \ww_{\mathrm{in}}}_{\cX_0}^2+\norm{\rho_0\ww_{\mathrm{in}}}_{\cX_0}^2
  +\norm{ \ww_{\mathrm{in}}}_{\cX_0}^2\bigr)\,,
\]
for some $\gamma_2 \in (1,2)$. To estimate $\cD_{\cL,0}[\ww_{\mathrm{in}}]$, we use the lower
bound \eqref{bd:lwD0}. As was observed in the proof of Proposition~\ref{prop:coercivity},
our assumptions on $\ww$ imply that the moments of $\ww_{\mathrm{in}}$ are extremely small,
namely
\[
  \mathrm{M}[\ww_{\mathrm{in}}]^2 + \mathrm{m}_1[\ww_{\mathrm{in}}]^2 +
  \mathrm{m}_2[\ww_{\mathrm{in}}]^2 \,\le\, \exp(-c_*\eps^{-2\sigma_1})\|\ww\|_{\cX_\eps}^2
  \,\le\, \epsilon \|\ww\|_{\cX_\eps}^2\,.
\]
Altogether, if $\eps > 0$ small enough, we obtain
\begin{align}\label{bd:lowomin}
  \cD_{\cL,\eps}[\ww_{\mathrm{in}}] \,&\ge\, \kappa_2\bigl(\norm{\nabla \ww_{\mathrm{in}}}_{\cX_0}^2
  + \norm{\rho_0\ww_{\mathrm{in}}}_{\cX_0}^2 +\norm{ \ww_{\mathrm{in}}}_{\cX_0}^2\bigr)
  - C_3\epsilon \|\ww\|_{\cX_\eps}^2 - \bigl|\cD_{\mathrm{err}}[\ww_{\mathrm{in}}]\bigr| \\ \nonumber
  \,&\ge\, \frac{\kappa_2}{2}\bigl(\norm{\nabla\ww_{\mathrm{in}}}_{\cX_\eps}^2 +
  \norm{\rho_\eps\ww_{\mathrm{in}}}_{\cX_\eps}^2 +\norm{ \ww_{\mathrm{in}}}_{\cX_\eps}^2\bigr)
  - C_3\epsilon \|\ww\|_{\cX_\eps}^2\,.
\end{align}
In the second line, we used Proposition~\ref{propIW} again to compare the norms of $\cX_0$ and $\cX_\eps$. 

\medskip\noindent
{\it Step 4: Contribution of the outer region}. It remains to estimate the term
$\cD_{\cL,\eps}[\ww_{\mathrm{out}}]$ in \eqref{Dclepsdecomp}, which is more complicated
because $\ww_{\mathrm{out}}$ is nonzero in all three regions $\mathrm{I}_{\eps}$,
$\mathrm{II}_{\eps}$, and $\mathrm{III}_{\eps}$. We recall, however, that $\ww_{\mathrm{out}}$
is supported in the domain where $|\xi|\ge\eps^{-\sigma_1}/2$, which implies 
\begin{equation}\label{eq:wweps}
  \norm{\ww_{\mathrm{out}}}_{L^2} \,\le\, \norm{\ww_{\mathrm{out}}}_{\cX_\eps}
  \,\lesssim\, \eps^{\sigma_1}\norm{\rho_{\eps}\ww_{\mathrm{out}}}_{\cX_\eps}
  \,\le\, \eps^{\sigma_1}\norm{\rho_{\eps}\ww}_{\cX_\eps}\,.
\end{equation}
On the other hand, using Young's inequality, we find 
\[
  \bigl|\jap{\ww_{\mathrm{out}},\nabla \ww_{\mathrm{out}}\cdot \nabla W_{\eps}}_{L^2}\bigr|
  \,\leq\, \frac38\,\bigl\|\bigl(W_\eps^{-1} \nabla W_\eps\bigr)\ww_{\mathrm{out}}
  \bigr\|_{\cX_\eps}^2 + \frac23\,\|\nabla \ww_{\mathrm{out}}\|_{\cX_\eps}^2\,.
\]
In view of \eqref{eq:Dclfinal} we thus have, for some constant $C_5>0$,
\begin{equation}\label{bd:wout1}
  \cD_{\cL,\eps}[\ww_{\mathrm{out}}] \,\ge\, \frac13 \norm{\nabla \ww_{\mathrm{out}}}^2_{\cX_\eps}
  + \cI_\eps[\ww_{\mathrm{out}}] -C_5\eps^{2\sigma_1}\norm{\rho_{\eps}\ww}^2_{\cX_\eps}\,,
\end{equation}
where
\[
  \cI_\eps[\ww_{\mathrm{out}}] \,:=\, \frac14 \jap{\ww_\mathrm{out}^2, \xi\cdot \nabla W_\eps}_{L^2}
  - \frac38\,\bigl\|\bigl(W_\eps^{-1} \nabla W_\eps\bigr)\ww_{\mathrm{out}}
  \bigr\|_{\cX_\eps}^2\,.
\]

Our goal is to find a lower bound on $\cI_\eps[\ww_{\mathrm{out}}]$ in regions $\mathrm{I}_{\eps}$
and $\mathrm{III}_{\eps}$, keeping in mind that $\nabla W_\eps = 0$ in $\mathrm{II}_{\eps}$.
In the outer region $\mathrm{III}_{\eps}$ we have $W_\eps = \e^{|\xi|^{2\gamma}/4}$ with
$\gamma = \sigma_1/\sigma_2 < 1/2$, so that
\begin{equation}\label{eq:gradWeps}
  \xi\cdot\nabla W_\eps \,=\, \frac{\gamma}{2}\,|\xi|^{2\gamma}\,W_\eps \,=\,
  \frac{\gamma}{2}\,\rho_\eps^2\,W_\eps\,, \qquad |\nabla W_\eps| \,\le\, \frac{\gamma}{2}\,|\xi|^{\gamma-1} 
  \rho_\eps W_\eps \,\le\ \frac{\gamma}{2}\,\eps^{\sigma_2/2}\rho_\eps W_\eps\,.
\end{equation}
It follows that
\begin{equation}\label{bd:IepsIII}
  \cI_\eps\bigl[\1_{\mathrm{III}_\eps}\ww_{\mathrm{out}}\bigr]  \,=\, 
  \frac{\gamma}{8}\norm{\1_{\mathrm{III}_\eps}\rho_\eps \ww}^2_{\cX_\eps} + 
  \cO\bigl(\eps^{\sigma_2}\norm{\rho_\eps \ww}_{\cX_\eps}^2\bigr)\,.
\end{equation}
In the inner region $\mathrm{I}_\eps$, we can use Proposition~\ref{propIW}
to compare the weight $W_\eps$ with the function $W_0 = A$ defined in \eqref{def:A}. 
This gives the estimates
\begin{equation}\label{eq:comp1}
  \bigl| \langle \1_{\mathrm{I}_\eps}\ww_\mathrm{out}^2, \xi\cdot \nabla W_\eps\rangle_{L^2}
  - \langle\1_{\mathrm{I}_\eps}\ww_\mathrm{out}^2, \xi\cdot \nabla W_0\rangle_{L^2}\bigr|
  \,\lesssim\, \eps^{\gamma_2+\sigma_1}\norm{\rho_\eps \ww}_{\cX_\eps}^2\,,
\end{equation}
and
\begin{equation}\label{eq:comp2}
  \Bigl|\bigl\|\1_{\mathrm{I}_\eps}\bigl(W_\eps^{-1}\nabla W_\eps\bigr)\ww_{\mathrm{out}}\bigr\|_{\cX_\eps}^2
  - \bigl\|\1_{\mathrm{I}_\eps}\bigl(W_0^{-1}\nabla W_0\bigr)\ww_{\mathrm{out}}
  \bigr\|_{\cX_0}^2\Bigr| \,\lesssim\, \eps^{\gamma_2+2\sigma_1}\norm{\rho_\eps\ww}_{\cX_\eps}^2\,.
\end{equation}
Moreover, using the explicit expression \eqref{def:A}, we easily find
\[
  \xi \cdot\nabla W_0(\xi) \,=\, \frac{|\xi|^2}{2}\,W_0(\xi) - 2W_0(\xi) + 2\,, \qquad
  \xi \in \RR^2\,.
\]
In view of \eqref{eq:wweps}, we deduce that
\begin{equation}\label{eq:comp3}
\begin{split}
  \frac14 \langle \1_{\mathrm{I}_\eps}\ww_\mathrm{out}^2, \xi\cdot \nabla W_0\rangle_{L^2}
  \,&=\, \frac18\,\bigl\|\1_{\mrI_\eps}\rho_0\ww_{\mathrm{out}}\bigr\|_{\cX_0}^2 +
  \cO\bigl(\eps^{2\sigma_1}\norm{\rho_\eps\ww}_{\cX_\eps}^2\bigr)\,, \\
  \frac38\,\bigl\|\1_{\mathrm{I}_\eps}\ww_{\mathrm{out}}\bigl(W_0^{-1} \nabla W_0\bigr)
  \bigr\|_{\cX_0}^2 \,&=\, \frac{3}{32}\,\bigl\|\1_{\mrI_\eps}\rho_0\ww_{\mathrm{out}}
  \bigr\|_{\cX_0}^2 + \cO\bigl(\eps^{2\sigma_1}\norm{\rho_\eps\ww}_{\cX_\eps}^2\bigr)\,.
\end{split}
\end{equation}
Combining \eqref{eq:comp1}, \eqref{eq:comp2}, \eqref{eq:comp3}, and using \eqref{bd:WAI} again,
we obtain
\begin{equation}\label{bd:IepsI}
\begin{split}
  \cI_\eps\bigl[\1_{\mathrm{I}_\eps}\ww_{\mathrm{out}}\bigr]  \,&\ge\,
  \frac{1}{32}\,\bigl\|\1_{\mrI_\eps}\rho_0\ww_{\mathrm{out}}\bigr\|_{\cX_0}^2
  - C\bigl(\eps^{2\sigma_1} + \eps^{\gamma_2+\sigma_1}\bigr)\norm{\rho_\eps \ww}_{\cX_\eps}^2 \\
  \,&\ge\, \frac{1}{32}\,\bigl\|\1_{\mrI_\eps}\rho_\eps\ww_{\mathrm{out}}\bigr\|_{\cX_\eps}^2
  - C_6\bigl(\eps^{2\sigma_1} + \eps^{\gamma_2}\bigr)\norm{\rho_\eps \ww}_{\cX_\eps}^2\,,
\end{split}
\end{equation}
for some $C_6 > 0$. Finally, in view of \eqref{bd:wout1}, \eqref{bd:IepsIII}, \eqref{bd:IepsI},
we arrive at
\begin{equation}\label{bd:lowomout}
  \cD_{\cL,\eps}[\ww_{\mathrm{out}}]\geq \frac13\norm{\nabla \ww_{\mathrm{out}}}^2_{\cX_\eps}
  + \frac{1}{32}\,\bigl\|\1_{\mrI_\eps}\rho_\eps\ww_{\mathrm{out}}\bigr\|_{\cX_\eps}^2
  +\frac{\gamma}{8}\norm{\1_{\mathrm{III}_\eps}\rho_\eps \ww}^2_{\cX_\eps}
  -C_7\eps^{2\sigma_1} \norm{\rho_\eps\ww}_{\cX_\eps}^2\,,
\end{equation}
for some $C_7 > 0$. Here we used the fact that $2\sigma_1 < 1 < \min(\gamma_2,\sigma_2)$. 

\smallskip
It is now a simple task to conclude the proof of Proposition~\ref{prop:Diffusion}.
If we combine \eqref{bd:Deps2}, \eqref{Dclepsdecomp}, \eqref{bd:lowomin},
\eqref{bd:lowomout}, we see that there exist $\kappa_3 > 0$ and $C_8>0$ such that
\begin{align*}  
  \cD_{\cL,\eps}[\ww] \,&\ge\,
  \kappa_3 \bigl(\norm{\nabla\ww_{\mathrm{in}}}_{\cX_\eps}^2 + \norm{\nabla\ww_{\mathrm{out}}}_{\cX_\eps}^2 +
  \norm{\rho_\eps\ww}_{\cX_\eps}^2 +\norm{\ww}_{\cX_\eps}^2\bigr) \\
  &\quad -C_8\,\eps^{2\sigma_1} \bigl(\norm{\nabla\ww}_{\cX_\eps}^2 + \norm{\rho_\eps\ww}_{\cX_\eps}^2
  + \norm{\ww}_{\cX_\eps}^2\bigr)\,. 
\end{align*}
Since $|\nabla \ww_{\mathrm{in}}|^2 + |\nabla \ww_{\mathrm{out}}|^2 \ge |\nabla \ww|^2$ by 
\eqref{def:partition0}, we arrive at \eqref{bd:Deps} by taking $\eps > 0$ small enough. \qed

\subsection{The advection terms}\label{sec:A}

In this section we estimate the advection terms defined in \eqref{def:Adv}, which are
potentially dangerous because of the large prefactor $\delta^{-1}$. In the inner region
$\mathrm{I}_\eps$, we exploit crucial cancellations related to the structure of the energy
functional \eqref{def:Eeps}, which were explained in an informal way in Section~\ref{sec:Arnold}.
In the other regions the dominant term in the energy is the weighted enstrophy 
$\frac12\|\ww\|_{\cX_\eps}^2$, and the influence of the large advection terms can be 
controlled by an appropriate choice of the parameters \eqref{def:parameters}.

Starting from \eqref{def:Adv} we decompose $\mathrm{A} = \delta^{-1}\bigl(\mathrm{A}_1
+ \mathrm{A}_2+  \mathrm{A}_3\bigr) +\mathrm{A}_{NS}$, where
\begin{align}
  \label{def:A1} \mathrm{A}_1 \,&=\, \jap{\left\{\Phi_\app^E,\ww\right\},W_\eps \ww}_{L^2}\,, \\
  \label{def:A2} \mathrm{A}_2 \,&=\, \jap{\left\{\varphi-\cT_\eps\varphi,\Omega_\app^E\right\},
  W_\eps \ww}_{L^2} + \jap{\1_{\mrI_\eps}\left\{\Phi_\app^E, \ww\right\},\varphi-\cT_\eps \varphi}_{L^2}\\
  \label{def:A3} \mathrm{A}_3 \,&=\, \jap{\1_{\mathrm{II}_\eps\cup\mathrm{III}_\eps}\left\{\Phi_\app^E,
  \ww\right\},\varphi-\cT_\eps \varphi}_{L^2},\\
  \label{def:ANS}\mathrm{A}_{NS} \,&=\, \jap{\left\{\Phi_\app^{NS},\ww\right\}+\left\{\varphi-\cT_\eps
  \varphi,\Omega_\app^{NS}\right\},W_\eps \ww+\varphi-\cT_\eps \varphi}_{L^2}\,.
\end{align}
Here we use the fact that $\big\langle\{\varphi-\cT_\eps\varphi,\Omega_\app^E\},\varphi-\cT_\eps
\varphi\big\rangle_{L^2} = 0$. This is a consequence of standard identities such as
\begin{equation}\label{eq:basic}
  \jap{\{f,g\},h}_{L^2} \,=\, -\jap{\{f,h\},g}_{L^2}\,, \qquad \jap{\{f,g\},hg} \,=\,-\frac12\jap{\{f,h\}g,g}\,,
\end{equation}
which are repeatedly used in the sequel. Our goal is to prove the following set of
estimates. 

\begin{lemma}\label{lem:Abdd}
There exists an integer $N > 0$ depending only on $M$ such that the quantities 
\eqref{def:A1}--\eqref{def:ANS} satisfy 
\begin{align}
  \label{bd:A1}  \delta^{-1}|\mathrm{A}_1|\,&\lesssim\,  \delta^{-1}\eps^{M+1-\sigma_1 N}
  \|\ww\|_{\cX_\eps}^2+\delta^{-1}\eps^{1+\sigma_2}\|\rho_\eps \ww\|_{\cX_\eps}^2\,, \\
  \label{bd:A2} \delta^{-1}|\mathrm{A}_2|\,&\lesssim\, \delta^{-1}\eps^{M+1}\bigl(
  \|\ww_\eps\|_{\cX_\eps}^2+\|\nabla \ww\|_{\cX_\eps}^2\bigr)\,, \\
  \label{bd:A3} \delta^{-1}|\mathrm{A}_3|\,&\lesssim\, \delta^{-1}\exp(-c_*\eps^{-2\sigma_1})
  \bigl(\|\ww\|_{\cX_\eps}^2+\|\nabla \ww\|_{\cX_\eps}^2\bigr)\,,\\
  \label{bd:ANS} |\mathrm{A}_{NS}|\,&\lesssim\, \eps^2\bigl(\|\ww\|_{\cX_\eps}^2+\|\nabla \ww\|_{\cX_\eps}^2
  \bigr)\,,
\end{align}
for some $c_*>0$ sufficiently small. Consequently, under the assumptions of Proposition~\ref{prop:key}, 
there exists $s_* \in (0,1)$ such that estimate \eqref{bd:Aprop} holds. 
\end{lemma}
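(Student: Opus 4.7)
The plan is to estimate each of the four quantities \eqref{def:A1}--\eqref{def:ANS} separately, then to combine the bounds via Cauchy--Schwarz and Proposition~\ref{prop:Diffusion}, and finally to invoke the constraint $\eps\le \delta^{(1-\sigma)/2}$ from \eqref{eq:tTadv} to absorb everything into $\delta^{s_*}D_\eps[\ww]$. The heart of the argument lies in $\mathrm{A}_1$ and $\mathrm{A}_2$, which are engineered to exploit the approximate functional relationship $\Phi_\app^E = -F(\Omega_\app^E) + \Theta$ from Proposition~\ref{prop:germ} together with the choice $W_\eps = F'(\Omega_\app^E)$ on $\mathrm{I}_\eps$.

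For $\mathrm{A}_1$, the first step is to symmetrize using the identity \eqref{eq:basic} to rewrite
\begin{equation*}
  \mathrm{A}_1 \,=\, -\frac{1}{2}\jap{\{\Phi_\app^E, W_\eps\}\,\ww\,,\,\ww}_{L^2}\,,
\end{equation*}
which reduces matters to a pointwise estimate of $\{\Phi_\app^E, W_\eps\}$ on each of the three regions. In $\mathrm{I}_\eps$ I would use $W_\eps = F'(\Omega_\app^E)$ to obtain $\{\Phi_\app^E, W_\eps\} = F''(\Omega_\app^E)\{\Theta, \Omega_\app^E\}$, since $\{F(\Omega_\app^E), F'(\Omega_\app^E)\} = 0$; Corollary~\ref{cor:frel} then supplies $|\nabla \Theta| \lesssim \eps^{M+1}(1+|\xi|)^N$, which, combined with the polynomial--Gaussian envelope on $F''(\Omega_\app^E)$ coming from Lemma~\ref{lem:Fk} and the bounds $\Omega_0/2\le \Omega_\app^E\le 2\Omega_0$, delivers the first term in \eqref{bd:A1} after using $|\xi|<2\eps^{-\sigma_1}$ to convert polynomial losses into $\eps^{-\sigma_1 N}$. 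In $\mathrm{II}_\eps$ the bracket vanishes because $W_\eps$ is constant. In $\mathrm{III}_\eps$ I would combine $|\nabla\Phi_\app^E|\lesssim\eps$ (dominated by the linear piece $\eps\xi_1\zeta_\app^E/(2\pi)$, as the Biot--Savart pieces decay rapidly at infinity) with $|\nabla W_\eps|\lesssim |\xi|^{-1}\rho_\eps^2 W_\eps$ and the elementary bound $|\xi|^{-1}<\eps^{\sigma_2}$ to produce the $\eps^{1+\sigma_2}\|\rho_\eps \ww\|_{\cX_\eps}^2$ contribution.

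The bound on $\mathrm{A}_2$ is the Arnold cancellation sketched in Section~\ref{sec:Arnold}. Using $W_\eps = F'(\Omega_\app^E)$ on $\mathrm{I}_\eps$ and the algebraic identity $F'(\Omega_\app^E)\{\varphi-\cT_\eps\varphi, \Omega_\app^E\} = \{\varphi-\cT_\eps\varphi, F(\Omega_\app^E)\}$, together with the Gaussian decay of $\Omega_\app^E$ outside $\mathrm{I}_\eps$, $\mathrm{A}_2$ equals, up to exponentially small errors,
\begin{equation*}
  \jap{\{\varphi-\cT_\eps\varphi, F(\Omega_\app^E)\}\,,\,\ww}_{L^2} + \jap{\{\Phi_\app^E, \ww\}\,,\,\varphi-\cT_\eps\varphi}_{L^2}\,,
\end{equation*}
where in the second summand the indicator $\1_{\mathrm{I}_\eps}$ has been dropped, again up to exponentially small errors coming from the decay of $\Omega_\app^E$ inside $\nabla\Phi_\app^E$. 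Substituting $\Phi_\app^E = -F(\Omega_\app^E) + \Theta$ and applying \eqref{eq:basic} to the now global Poisson brackets, the two $F(\Omega_\app^E)$ pieces cancel exactly, leaving only $\jap{\{\Theta, \ww\}\,,\,\varphi-\cT_\eps\varphi}_{L^2}$. Moving one derivative off $\ww$ via \eqref{eq:basic}, then invoking Corollary~\ref{cor:frel} jointly with Lemmas~\ref{lem:velocityLq}--\ref{lem:TepsXeps} (the latter crucially supplying the $\eps^3$ gain from the decay of $\cT_\eps\varphi$ near the vortex center), yields \eqref{bd:A2}. The companion $\mathrm{A}_3$ is then easier: the integration is supported in $\{|\xi|\ge \eps^{-\sigma_1}/2\}$, where $\Omega_\app^E$ and all pieces of $\nabla\Phi_\app^E$ except the linear one decay like $\exp(-c_*\eps^{-2\sigma_1})$; the residual linear contribution equals $-\eps\zeta_\app^E\partial_2 \ww/(2\pi)$ paired with $\varphi-\cT_\eps\varphi$, with the $\1_{\mathrm{II}_\eps\cup\mathrm{III}_\eps}$ cutoff supplying the exponential factor after using Lemmas~\ref{lem:velocityLq}--\ref{lem:TepsXeps}.

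Finally, $\mathrm{A}_{NS}$ admits a routine estimate: since $\Omega_\app^{NS},\Psi_\app^{NS}\in\cZ$ and $\zeta_\app^{NS}$ is bounded, $\Phi_\app^{NS}$ and its derivatives enjoy polynomial--Gaussian bounds, so Lemma~\ref{lem:velocityLq} together with the rapid decay of $W_\eps^{-1/2}$ outside $\mathrm{I}_\eps$ directly yields \eqref{bd:ANS}, the $\eps$ prefactor arising from the size of the first Biot--Savart and stream function corrections. Once \eqref{bd:A1}--\eqref{bd:ANS} are in hand, substituting them into $\mathrm{A}=\delta^{-1}(\mathrm{A}_1+\mathrm{A}_2+\mathrm{A}_3)+\mathrm{A}_{NS}$, dominating $\|\ww\|_{\cX_\eps}^2+\|\nabla\ww\|_{\cX_\eps}^2+\|\rho_\eps\ww\|_{\cX_\eps}^2$ by $D_\eps[\ww]$ via Proposition~\ref{prop:Diffusion}, and invoking $\eps\le\delta^{(1-\sigma)/2}$ together with $M>(3+\sigma)/(1-\sigma)$, the smallness of $\sigma_1$, and the largeness of $\sigma_2$, produces \eqref{bd:Aprop} with $s_*$ a small positive multiple of $s_0$ from \eqref{def:sigma}. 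The hard part will be balancing the polynomial loss $(1+|\xi|)^N$ on $\mathrm{I}_\eps$ (an unavoidable cost of working with the approximate relationship $\Theta$ rather than an exact one) against the $\eps^{M+1}$ gain of Corollary~\ref{cor:frel}: this dictates that $\sigma_1$ must be taken small depending on $M$ and is the source of the $\sigma_1 N$ dependence in \eqref{bd:A1}.
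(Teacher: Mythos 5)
Your strategy coincides with the paper's in all essential respects: the symmetrization of $\mathrm{A}_1$ via \eqref{eq:basic}, the region-by-region treatment using $W_\eps=F'(\Omega_\app^E)$ on $\mathrm{I}_\eps$ and $\nabla W_\eps=0$ on $\mathrm{II}_\eps$, the Arnold cancellation reducing $\mathrm{A}_2$ to a pairing against $\nabla\Theta$, and the final absorption into $\delta^{s_*}D_\eps[\ww]$ via Proposition~\ref{prop:Diffusion} and \eqref{eq:tTadv}. Two points in your write-up are, however, not quite right as stated. First, in $\mathrm{A}_2$ you pass to ``global'' Poisson brackets involving $F(\Omega_\app^E)$, but $F\in\cK$ is only defined on $(0,+\infty)$ and the positivity of $\Omega_\app^E$ is only guaranteed on $\{|\xi|\le 2\eps^{-\sigma_1}\}$ (see \eqref{eq:Ompos} and Remark~\ref{rem:germ}); the cancellation must therefore be performed with the cutoff $\1_{\mathrm{I}_\eps}$ kept in place, the complementary contributions being controlled by pairing $W_\eps^{1/2}|\nabla\Omega_\app^E|$ (not $W_\eps|\nabla\Omega_\app^E|$, which is only $\cO(1)$) against $W_\eps^{1/2}\nabla\ww$ via Cauchy--Schwarz, which is what actually produces the factor $\exp(-c_*\eps^{-2\sigma_1})$.

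Second, in $\mathrm{A}_3$ your claim that all pieces of $\nabla\Phi_\app^E$ except the linear one decay like $\exp(-c_*\eps^{-2\sigma_1})$ on $\{|\xi|\ge\eps^{-\sigma_1}/2\}$ is false: since $\mrM[\Omega_\app^E]=1$, the Biot--Savart field $\nabla^\perp\Psi_\app^E$ decays only like $|\xi|^{-1}$, so on that set it is merely $\cO(\eps^{\sigma_1})$, and the same applies to $\nabla\cT_\eps\Psi_\app^E$. The exponential gain must instead come from the weight, namely from $\|\1_{\mathrm{II}_\eps\cup\mathrm{III}_\eps}W_\eps^{-1/2}\nabla\Phi_\app^E\|_{L^2}\lesssim\exp(-c_*\eps^{-2\sigma_1})$ after Cauchy--Schwarz --- exactly the mechanism you invoke for the linear piece, applied uniformly to all of $\nabla\Phi_\app^E$. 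With these two local repairs, neither of which requires a new idea, your argument matches the paper's proof; the estimates for $\mathrm{A}_1$, $\mathrm{A}_{NS}$ and the final bookkeeping of exponents are as in the paper.
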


\begin{proof}
We start with the bound for $\mathrm{A}_1$. Since $W_\eps = F'(\Omega_\app^E)$ in region $\mathrm{I}_\eps$
and $\nabla W_\eps = 0$ in region $\mrI\mrI_\eps$, we find using \eqref{eq:basic}\:
\begin{align*}
  \mathrm{A}_1 \,&=\, \jap{\left\{\Phi_\app^E,\ww\right\},W_\eps \ww}_{L^2} \,=\,
  -\frac12\jap{\left\{\Phi_\app^E,W_\eps\right\}\ww, \ww}_{L^2}\\ \notag
  \,&=\, -\frac12\jap{\1_{\mrI_\eps}\left\{\Phi_\app^E,F'(\Omega_\app^E)\right\}\ww,\ww}_{L^2}
  -\frac12\jap{\1_{\mrI\mrI\mrI_\eps}\left\{\Phi_\app^E,W_\eps\right\}\ww, \ww}_{L^2}\,.
\end{align*}
The first term in the right-hand side is clearly unchanged if we replace $\Phi_\app^E$
by the quantity $\Theta =\Phi_\app^E + F(\Omega_\app^E)$ which is introduced in
\eqref{def:Theta}. We thus obtain
\begin{equation}\label{A1first}
   \mathrm{A}_1 \,=\, -\frac12\jap{\1_{\mrI_\eps}\left\{\Theta,W_\eps\right\}\ww, \ww}_{L^2}
  -\frac12\jap{\1_{\mrI\mrI\mrI_\eps}\left\{\Phi_\app^E,W_\eps\right\}\ww, \ww}_{L^2}\,.
\end{equation}
To control the first term in \eqref{A1first}, we use the bound \eqref{bd:Theta} on $\nabla \Theta$
and the estimate \eqref{bd:WAI} on $\nabla W_\eps$ in region $\mrI_\eps$, which give
$|\{\Theta,W_\eps\}| \lesssim \eps^{M+1}(1+|\xi|)^NW_\eps$ for some $N \in \NN$. Since
$|\xi| \le 2\eps^{-\sigma_1}$ in that region, we infer that
\begin{equation}\label{bd:A11}
  \bigl|\jap{\1_{\mrI_\eps}\left\{\Theta,W_\eps\right\}\ww, \ww}_{L^2}\bigr|
  \,\lesssim\, \eps^{M+1-\sigma_1 N}\|\ww\|_{\cX_\eps}^2\,.
\end{equation} 
To bound the second term in \eqref{A1first}, we recall that $W_\eps = \exp(|\xi|^{2\gamma}/4)$
in region $\mathrm{III}_\eps$, so that
\[
  \bigl|\jap{\1_{\mrI\mrI\mrI_\eps}\left\{\Phi_\app^E,W_\eps\right\}\ww, \ww}_{L^2}\bigr| \,\lesssim\,
  \int_{\{|\xi|>\eps^{-\sigma_2}\}}|\nabla \Phi_{\app}^E|\,|\xi|^{2\gamma-1}\,W_\eps|\ww|^2\,\dd \xi\,.
\]
In view of \eqref{def:PhiappE} we have $|\nabla\Phi_{\app}^E| \le |\nabla \Psi_{\app}^E| +
|\nabla\cT_\eps\Psi_{\app}^E| + C\eps$, and applying estimate \eqref{bd:velocityLinfty} with
$\ww = \Omega_\app^E$ we obtain $|\nabla \Psi_{\app}^E(\xi)| \le C(1+|\xi|)^{-1}$ and
$|\nabla \cT_\eps\Psi_{\app}^E(\xi)| \le C(1+|\xi+\eps^{-1}e_1|)^{-1}$. It follows that
$\1_{\{|\xi|>\eps^{-\sigma_2}\}}|\nabla \Phi_\app^E(\xi)| \lesssim \eps+\eps^{\sigma_2}
\lesssim \eps$. Since $\rho_\eps(\xi) = |\xi|^\gamma$ in region $\mathrm{III}_\eps$, we conclude that
\begin{equation}\label{bd:A12}
  |\jap{\1_{\mrI\mrI\mrI_\eps}\left\{\Phi_\app^E,W_\eps\right\}\ww, \ww}_{L^2}|
  \,\lesssim\, \eps^{1+\sigma_2} \|\rho_\eps\ww\|_{\cX_\eps}^2\,,
\end{equation}
and estimate \eqref{bd:A1} follows directly from \eqref{bd:A11}, \eqref{bd:A12}. 

\smallskip
We next consider the term $\mathrm{A}_2$ defined by \eqref{def:A2}. Using \eqref{eq:basic}
and the Leibniz rule, we find 
\[
  \jap{\left\{\varphi-\cT_\eps\varphi,\Omega_\app^E\right\},W_\eps \ww}_{L^2} \,=\,
  \jap{\left\{\Omega_\app^E,\ww\right\}W_\eps,\varphi-\cT_\eps \varphi}_{L^2} +
  \jap{\left\{\Omega_\app^E,W_\eps\right\}\ww,\varphi-\cT_\eps \varphi}_{L^2}\,.
\]
Only the region $\mathrm{III}_\eps$ contributes to the last term, since $W_\eps = F'(\Omega_\app^E)$
in $\mathrm{I}_\eps$ and $\nabla W_\eps = 0$ in $\mrI\mrI_\eps$. As for the first term,
we observe that
\[
  \1_{\mrI_\eps}\bigl\{\Omega_\app^E,\ww\bigr\}W_\eps \,=\, \1_{\mrI_\eps}\bigl\{F(\Omega_\app^E),
  \ww\bigr\} \,=\, \1_{\mrI_\eps}\bigl\{\Theta,\ww\bigr\} - \1_{\mrI_\eps}\bigl\{\Phi_\app^E,\ww\bigr\}\,,
\]
where $\Theta =\Phi_\app^E + F(\Omega_\app^E)$. We thus obtain the following alternative expression
\begin{equation}\label{A2aux}
  \mathrm{A}_2 \,=\, \jap{\1_{\mrI_\eps}\left\{\Theta, \ww\right\} + \1_{\mrI\mrI_\eps\cup\mathrm{III}_\eps}
  \{\Omega_{\app}^E,\ww\}W_\eps + \1_{\mrI\mrI\mrI_\eps}\left\{\Omega_\app^E,W_\eps\right\}\ww,\varphi
  -\cT_\eps \varphi}_{L^2}\,.
\end{equation}
By H\"older's inequality we have 
\[
  |\langle \1_{\mrI_\eps}\{\Theta,\,\ww\},\varphi-\cT_\eps \varphi\rangle_{L^2}| 
  \,\le\, \|\1_{\mrI_\eps} W_\eps^{-1/2}\nabla \Theta\|_{L^2}\,
  \|\varphi-\cT_\eps \varphi\|_{L^\infty}\,\|\nabla \ww\|_{\cX_\eps}\,.
\]
Using the bound \eqref{bd:Theta} on $\nabla\Theta$ and the rapid decay of the function $W_\eps^{-1/2}$,
we see that the first factor in the right-hand side is of order $\eps^{M+1}$. Moreover, we
know from Lemma~\ref{lem:withmom} that $\|\varphi-\cT_\eps \varphi\|_{L^\infty} \le 2
\|\varphi\|_{L^\infty} \lesssim \|\ww\|_{\cX_\eps}$. This gives
\begin{align}\label{bd:A21}
  |\jap{\1_{\mrI_\eps}\left\{\Theta, \ww\right\},\varphi-\cT_\eps \varphi}_{L^2}| \,\lesssim\,
  \eps^{M+1} \|\ww\|_{\cX_\eps}\|\nabla \ww\|_{\cX_\eps}\,.
\end{align}
To treat the remaining terms in \eqref{A2aux}, we use the estimate $|\nabla \Omega_\app^E|
(W_\eps+|\nabla W_\eps|)\lesssim 1$, which follows from the definition \eqref{def:Weps}
of $W_\eps$ and from the properties of the approximate solution $\Omega_\app^E \in \cZ$.
Proceeding as above we find 
\begin{equation}\label{bd:A22}
\begin{split}
  &|\jap{\1_{\mrI\mrI_\eps\cup\mathrm{III}_\eps}\{\ww,\Omega_{\app}^E\}W_\eps
  +\1_{\mrI\mrI\mrI_\eps}\left\{W_\eps ,\Omega_\app^E\right\}\ww,\varphi-\cT_\eps \varphi}_{L^2}|\\
  &\qquad\qquad\qquad \,\lesssim\, \exp(-c_*\eps^{-2\sigma_1})\|\ww\|_{\cX_\eps}(\|\ww\|_{\cX_\eps}
  +\|\nabla\ww\|_{\cX_\eps})\,,
\end{split}
\end{equation}
for some $c_*>0$ sufficiently small, and \eqref{bd:A2} is a direct consequence of \eqref{bd:A21},
\eqref{bd:A22}. We estimate $\mathrm{A}_3$ in a similar way:
\[
  |\mathrm{A}_3| \,\le\, \|\1_{\mathrm{II}_\eps\cup\mathrm{III}_\eps}W_\eps^{-1/2}
  \nabla \Phi_\app^E\|_{L^2}\,\|\ww\|_{\cX_\eps}\|\nabla \ww\|_{\cX_\eps}\,\lesssim\,
  \exp(-c_*\eps^{-2\sigma_1})\|\ww\|_{\cX_\eps}\|\nabla \ww\|_{\cX_\eps}\,,
\]
where in the last inequality we used the fact that $\Phi_\app^E\in \cS_*$. This proves \eqref{bd:A3}.

\smallskip
We now consider the viscous term $\mathrm{A}_{NS}$. Since $\Omega_\app^{NS} = \mathcal{O}_{\cZ}(\eps^2)$
we find using \eqref{bd:velocityLq}
\begin{align}\label{bd:ANS1}
  |\jap{\left\{\varphi-\cT_\eps\varphi,\Omega_\app^{NS}\right\},W_\eps \ww+\varphi-\cT_\eps
  \varphi}_{L^2}| \,=\, |\jap{\left\{\varphi-\cT_\eps\varphi,\Omega_\app^{NS}\right\},W_\eps
  \ww}_{L^2}| \,\lesssim\, \eps^2\|\ww \|_{\cX_\eps}^2\,.
\end{align}
On the other hand, using the definition \eqref{def:PhiappNS}, the bound
\eqref{bd:velocityLinfty}, and the fact that $\Omega_{\app}^{NS} = \cO_{\cZ}(\eps^2)$ 
and $\zeta_\app^{NS} = \cO(\eps)$, one verifies that $\|\nabla \Phi_{\app}^{NS}\|_{L^{\infty}}
\lesssim \eps^2$. This gives
\begin{equation}\label{bd:ANS2}
  \bigl|\jap{\left\{\Phi_\app^{NS},\ww\right\},W_\eps \ww+\varphi-\cT_\eps \varphi}_{L^2}\bigr|
  \,\lesssim\, \eps^2 \,\|\nabla\ww\|_{\cX_\eps}\|\ww\|_{\cX_\eps}\,,
\end{equation}
and \eqref{bd:ANS} results from the combination of \eqref{bd:ANS1}, \eqref{bd:ANS2}.

\smallskip
Finally, if we assume that $\sigma_1 > 0$ is small enough so that $\sigma_1 N \le (M+1)/2$,
and $\sigma_2 > 1$ large enough so that $1 + \sigma_2 \ge (M+1)/2$, it follows from \eqref{eq:tTadv}
that
\[
  \delta^{-1}\eps^{M+1-\sigma_1 N} + \delta^{-1}\eps^{1+\sigma_2} \,\lesssim\,
  \delta^{-1}\eps^{\frac{M+1}{2}} \,\le\, \delta^{s_*}\,,
\]
provided $0 < s_* \le s_0(M+1)/4$. Under this assumption the bound \eqref{bd:Aprop} is a
straightforward consequence of \eqref{bd:A1}--\eqref{bd:ANS} and Proposition~\ref{prop:Diffusion}.
\end{proof}

\subsection{The forcing terms}\label{sec:F}

To control the forcing term $\mathrm{F}$ defined in \eqref{errF}, we exploit the estimate
of the remainder $\cR_M$ given in Proposition~\ref{prop:appsol}, and we use the argument
sketched at the end of Section~\ref{sec:Arnold} to handle the term involving the
correction $\zeta$ to the vertical speed. For the latter, we can rely on the bound
established in Lemma~\ref{lem:zeta2}. 

\begin{lemma}\label{prop:F}
Let $\mathrm{F}$ be defined as in \eqref{errF}. Under the assumptions of
Proposition~\ref{prop:key}, for any $\kappa_* > 0$, there exists a constant $C_* > 0$
such that 
\begin{equation}\label{bd:F}
  |\mathrm{F}| \,\le\, C_*\bigl(\delta^{-4}\eps^{2(M+1)}+\eps^4\bigr) +
  \kappa_* D_\eps[\ww]\bigl(1+\sqrt{E_{\eps}[\ww]}\bigr)\,.
\end{equation}
\end{lemma}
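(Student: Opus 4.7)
The plan is to decompose $\mathrm{F}=\mathrm{F}_{\cR}+\mathrm{F}_\zeta$ corresponding to the two additive contributions to the integrand in \eqref{errF}, and to estimate each piece separately. The piece involving the remainder, $\mathrm{F}_{\cR}:=\delta^{-2}\langle\cR_M,W_\eps\ww+\varphi-\cT_\eps\varphi\rangle_{L^2}$, is routine: by Proposition~\ref{prop:appsol} we have $\cR_M=\cO_\cZ(\eps^{M+1}+\delta^2\eps^2)$, and the Gaussian decay built into $\cZ$ dominates the bound $W_\eps\lesssim e^{|\xi|^2/4}$ from Proposition~\ref{propIW}, giving $\|W_\eps^{1/2}\cR_M\|_{L^2}+\|\cR_M\|_{L^1}\lesssim\eps^{M+1}+\delta^2\eps^2$. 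Combined with the bounds $\|\varphi\|_{L^\infty}+\|\cT_\eps\varphi\|_{L^\infty}\lesssim\|\ww\|_{\cX_\eps}$ from Lemma~\ref{lem:TepsXeps}, Cauchy--Schwarz and H\"older yield $|\mathrm{F}_{\cR}|\lesssim(\delta^{-2}\eps^{M+1}+\eps^2)\|\ww\|_{\cX_\eps}$. Young's inequality together with the coercivity $\|\ww\|_{\cX_\eps}^2\lesssim D_\eps[\ww]$ of Proposition~\ref{prop:Diffusion} then produces a contribution bounded by $C_*(\delta^{-4}\eps^{2(M+1)}+\eps^4)+\tfrac12\kappa_* D_\eps[\ww]$.

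The delicate piece is $\mathrm{F}_\zeta=\frac{\eps\zeta}{2\pi\delta^2}\langle\{\xi_1,\Omega_\app\},W_\eps\ww+\varphi-\cT_\eps\varphi\rangle_{L^2}$, where a naive Cauchy--Schwarz fails because the prefactor $\delta^{-2}$ cannot be compensated by the bound on $\eps|\zeta|$ from Lemma~\ref{lem:zeta2}. Splitting $\Omega_\app=\Omega_\app^E+\delta\Omega_\app^{NS}$, the viscous part carries only an effective $\delta^{-1}$ and, since $\{\xi_1,\Omega_\app^{NS}\}=\cO_\cZ(\eps^2)$, is controlled directly. For the leading Eulerian piece $\mathrm{F}_\zeta^E$, the plan is to execute the Arnold-type cancellation heuristically described in Section~\ref{sec:Arnold}. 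Two identities are used. First, two integrations by parts combined with the self-adjointness of $\cT_\eps$ in $L^2$ and the commutations $[\cT_\eps,\partial_2]=[\cT_\eps,\Delta]=0$ give
\[
\int\bigl(\varphi-\cT_\eps\varphi\bigr)\{\xi_1,\Omega_\app^E\}\,\dd\xi \,=\, \int\ww\,\{\xi_1,\Psi_\app^E-\cT_\eps\Psi_\app^E\}\,\dd\xi\,.
\]
Second, on the inner region the identity $W_\eps=F'(\Omega_\app^E)$ yields $W_\eps\{\xi_1,\Omega_\app^E\}=\{\xi_1,F(\Omega_\app^E)\}$. Adding the two contributions collapses them on $\mrI_\eps$ to $\{\xi_1,F(\Omega_\app^E)+\Psi_\app^E-\cT_\eps\Psi_\app^E\}\ww=(\partial_2\Theta)\ww$, where $\Theta=\Phi_\app^E+F(\Omega_\app^E)$ is the defect of the functional relation from Proposition~\ref{prop:germ}. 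By Corollary~\ref{cor:frel}, $|\partial_2\Theta(\xi)|\lesssim\eps^{M+1}(1+|\xi|)^N$ on $\mrI_\eps$; outside $\mrI_\eps$, the contribution of $\partial_2\Omega_\app^E$ is exponentially small because $\Omega_\app^E\in\cZ$, and the complementary term $\partial_2(\Psi_\app^E-\cT_\eps\Psi_\app^E)$ is absorbed using $\|\1_{\mrI_\eps^c}\rho_\eps^{-1}W_\eps^{-1/2}\|_{L^2}\lesssim\exp(-c\eps^{-2\sigma_1})$ from the explicit form of $W_\eps$. Overall, the pairing is bounded by $\eps^{M+1}\|\ww\|_{\cX_\eps}$ plus exponentially small terms.

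Inserting the four-term expansion of $\eps|\zeta|/(2\pi)$ from Lemma~\ref{lem:zeta2} into this bound produces several cross-terms. The source-type pieces proportional to $\delta^{-2}\eps^{M+1}$ and $\eps^2$ are absorbed into $C_*(\delta^{-4}\eps^{2(M+1)}+\eps^4)+\kappa_* D_\eps[\ww]$ by Young's inequality; the feedback pieces of the form $\delta^{-1}\eps^{M+2}\|\ww\|_{\cX_\eps}^2$ and $\eps^{M+4}\|\ww\|_{\cX_\eps}^3$ are absorbed into $\kappa_* D_\eps[\ww]$ and $\kappa_* D_\eps[\ww]\sqrt{E_\eps[\ww]}$ respectively, via the coercivity of $E_\eps$ and the interpolation $\|\ww\|_{\cX_\eps}^3\lesssim\sqrt{E_\eps[\ww]}\,D_\eps[\ww]$. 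The smallness of the prefactors of these feedback terms as $\delta\to 0$ relies on the scaling $\eps\le\delta^{(1-\sigma)/2}$ valid for $t\le T_\adv\delta^{-\sigma}$, together with the assumption $M>(3+\sigma)/(1-\sigma)$, as in Remark~\ref{rem:scales}. The main obstacle is executing the Arnold cancellation globally and not merely inside $\mrI_\eps$: since $W_\eps$ is defined by three different formulas on $\mrI_\eps$, $\mathrm{II}_\eps$, $\mathrm{III}_\eps$, the identity $W_\eps=F'(\Omega_\app^E)$ that drives the cancellation only holds on $\mrI_\eps$, and the three-region structure of the weight (notably $W_\eps$ being constant on $\mathrm{II}_\eps$, and the growth rate of $\rho_\eps$) is precisely what ensures that the exterior contributions are negligible to the required order.
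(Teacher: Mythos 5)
Your proposal is correct and follows essentially the same route as the paper's proof: the same splitting into the $\cR_M$ and $\zeta$ contributions, the same Arnold-type collapse $W_\eps\partial_2\Omega_\app^E+\partial_2\Phi_\app^E=\partial_2\Theta$ on $\mathrm{I}_\eps$ (with exponentially small exterior contributions and an $\cO(\eps^2)$ viscous remainder), and the same insertion of the Lemma~\ref{lem:zeta2} bound followed by Young's inequality and the coercivity of $D_\eps$. No substantive differences to report.
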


\begin{proof}
Using the bound \eqref{eq:remest} on $\cR_M$ and applying Lemma~\ref{lem:withmom} to estimate
the stream function $\varphi-\cT_\eps \varphi$, we easily obtain
\begin{equation}\label{bd:F0}
  |\jap{\cR_M,W_\eps \ww +\varphi-\cT_\eps \varphi}_{L^2}| \,\lesssim\, \bigl(
  \eps^{M+1}+\delta^2\eps^2\bigr)\|\ww\|_{\cX_\eps}\,.
\end{equation}
To handle the term in $\mathrm{F}$ involving $\zeta$, we proceed as in 
Section~\ref{sec:Arnold}. We first observe that
\[
  \jap{\{\xi_1,\Omega_{\app}^E\},\varphi-\cT_\eps \varphi}_{L^2} = \jap{\partial_2\Delta
  \Psi^E_{\app},\varphi-\cT_\eps \varphi}_{L^2} = \jap{\partial_2\Psi^E_{\app},
  \ww -\cT_\eps\ww}_{L^2} = \jap{\partial_2\Phi_\app^E,\ww}_{L^2},
\]
because $\partial_2\Phi_\app^E=\partial_2(\Psi^E_{\app}-\cT_\eps\Psi^E_\app)$ by \eqref{def:PhiappE}.
It follows that
\begin{equation}\label{eq:zetaterm}
\begin{split}
  \jap{\{\xi_1,\Omega_{\app}\},W_\eps \ww +\varphi-\cT_\eps \varphi}_{L^2} \,&=\,
  \jap{(W_\eps\partial_2\Omega_{\app}^E+\partial_2\Phi_\app^E),\ww}_{L^2} \\
  &\quad\, +\delta\jap{\partial_2\Omega_{\app}^{NS},W_\eps \ww +\varphi-\cT_\eps \varphi}_{L^2}\,.
\end{split}
\end{equation}
To estimate the first term in the right-hand side, we split the domain of integration.
In the inner region $\mrI_\eps$, we have $W_\eps\partial_2\Omega_{\app}^E = \partial_2
F(\Omega_{\app}^E)$, so using the definition \eqref{def:Theta} and the bound \eqref{bd:Theta}
we obtain 
\begin{equation}\label{bd:F11}
  |\jap{\1_{\mrI_\eps}\partial_2\bigl(F(\Omega_{\app}^E)+\Phi_{\app}^E\bigr), \ww}_{L^2}| \,=\,
  |\jap{\1_{\mrI_\eps}\partial_2\Theta, \ww}_{L^2}| \,\lesssim\, \eps^{M+1}\|\ww\|_{\cX_\eps}\,.
\end{equation}
Outside $\mrI_\eps$ we rely on the bound $W_\eps|\partial_2\Omega_{\app}^E| +
|\partial_2\Phi_\app^E| \lesssim 1$, which gives
\begin{equation}\label{bd:F1}
  |\jap{\1_{\{\mathrm{II_\eps}\cup\mathrm{III}_\eps\}}(W_\eps\partial_2\Omega_{\app}^E
  +\partial_2\Phi_\app^E), \ww}_{L^2}| \,\lesssim\, \exp(-c_*\eps^{-2\sigma_1})\|\ww\|_{\cX_\eps}\,,
\end{equation}
for some $c_*>0$ sufficiently small. To treat the last term in the right-hand
side of \eqref{eq:zetaterm}, we remark that $\Omega_\app^{NS}=\cO_\cZ(\eps^2)$ and we apply
Lemma~\ref{lem:withmom} again to arrive at
\begin{equation}\label{bd:FNS}
  \bigl|\jap{\partial_2\Omega_{\app}^{NS},W_\eps \ww +\varphi-\cT_\eps \varphi}_{L^2}\bigr|
  \,\lesssim\, \eps^2\|\ww\|_{\cX_\eps}\,.
\end{equation}

Summarizing, in view of \eqref{bd:F0}--\eqref{bd:FNS}, the forcing term \eqref{errF}
satisfies
\[
  |\mathrm{F}| \,\lesssim\, \Bigl(\delta^{-2}\eps^{M+1} + \eps^2 + \frac{\eps|\zeta|}{\delta^2}\,
  \bigl(\eps^{M+1} + \delta \eps^2\bigr)\Bigr)\|\ww\|_{\cX_\eps}\,.
\]
Using in addition the bound \eqref{bd:zeta2} on $\eps\zeta$, we thus find
\begin{equation}\label{bd:Final}
  |\mathrm{F}| \,\lesssim\, \bigl(\delta^{-2}\eps^{M+1} + \eps^2\bigr) \|\ww\|_{\cX_\eps} +
  \bigl(\delta^{-1}\eps^{M+2} + \eps^3\bigr)\|\ww\|_{\cX_\eps}^2 + 
  \bigl(\eps^{M+4} + \delta\eps^5\bigr) \|\ww\|_{\cX_\eps}^3\,.
\end{equation}
Under the assumptions of Proposition~\ref{prop:key}, we know that $\delta^{-2}\eps^{M+1} \le
\delta^{s_*} \ll 1$, and the other coefficients in \eqref{bd:Final} are small too. 
Therefore, applying Young's inequality to the first term in the right-hand side,
and using the lower bound \eqref{bd:Deps} on the dissipation functional $D_\eps$,
we obtain the desired inequality \eqref{bd:F}. 
\end{proof}

\subsection{The nonlinear terms}\label{sec:NL}

Finally we estimate the nonlinear term in \eqref{errNL}. 

\begin{lemma}\label{prop:NL}
Let $\mathrm{NL}$ be defined as in \eqref{errNL}. Under the assumptions of
Proposition~\ref{prop:key}, there exist constants $C_*>0$ and $0<s_*<1$ such that 
\begin{equation}\label{bd:NL}
  |\mathrm{NL}| \,\le\, \delta^{s_*}D_\eps[\ww] + C_*\bigl(\sqrt{E_{\eps}[\ww]}+E_{\eps}[\ww]\bigr)
  D_{\eps}[\ww]\,.
\end{equation}
\end{lemma}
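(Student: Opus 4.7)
The plan is to split $\mathrm{NL}$ into four bilinear (or trilinear through $\zeta$) contributions and exploit one key algebraic cancellation. Setting
\[
  T_0 \,=\, \frac{1}{4\eps}\jap{\ww, \partial_1\cT_\eps\varphi}_{L^2}\,, \qquad
  T_1 \,=\, \jap{\{\varphi-\cT_\eps\varphi,\ww\}, W_\eps\ww}_{L^2}\,,
\]
\[
  T_2 \,=\, \jap{\{\varphi-\cT_\eps\varphi,\ww\}, \varphi - \cT_\eps\varphi}_{L^2}\,, \quad
  T_3 \,=\, \frac{\eps\zeta}{2\pi\delta}\jap{\{\xi_1,\ww\}, W_\eps\ww + \varphi - \cT_\eps\varphi}_{L^2}\,,
\]
we have $\mathrm{NL} = T_0 + T_1 + T_2 + T_3$. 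With $f := \varphi - \cT_\eps\varphi$, the second identity in \eqref{eq:basic} gives $T_2 = -\tfrac12\jap{\{f,f\}\ww,\ww}_{L^2} = 0$, since Poisson brackets are antisymmetric. This removes what would otherwise be a quadratic obstruction.

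The term $T_0$ can be handled exactly as in the proof of \eqref{bd:zeta_phi_w}: splitting the integration domain into the inner region $\mathrm{I}_\eps$, where Lemma~\ref{lem:TepsXeps} furnishes a factor $\eps^3$ for $\partial_1\cT_\eps\varphi$ in a suitable weighted $L^q$ norm, and its complement, where $W_\eps^{-1/2}$ decays faster than any polynomial, yields $|T_0|\lesssim \eps^2\|\ww\|_{\cX_\eps}^2$. Since $\eps^2 \le \delta^{1-\sigma}$ by \eqref{eq:tTadv} and $\|\ww\|_{\cX_\eps}^2 \lesssim D_\eps[\ww]$ by Proposition~\ref{prop:Diffusion}, this is absorbed in $\delta^{s_*}D_\eps[\ww]$ for any $s_* \le 1-\sigma$. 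For the genuine nonlinearity $T_1$, I would again apply \eqref{eq:basic} to rewrite $T_1 = -\tfrac12\jap{\{\varphi-\cT_\eps\varphi, W_\eps\}\ww,\ww}_{L^2}$, then use the pointwise bound $|\nabla W_\eps|\lesssim \rho_\eps W_\eps$, which follows from \eqref{def:Weps} and \eqref{bd:WAI} in $\mathrm{I}_\eps\cup\mathrm{III}_\eps$ and from $\nabla W_\eps \equiv 0$ in $\mathrm{II}_\eps$. Combining this with the $L^\infty$ velocity estimate \eqref{bd:velocityLinfty} applied to both $\varphi$ and $\cT_\eps\varphi$, and Cauchy--Schwarz $\int \rho_\eps W_\eps\ww^2\,\dd\xi \le \|\rho_\eps\ww\|_{\cX_\eps}\|\ww\|_{\cX_\eps}$, I obtain
\[
  |T_1| \,\lesssim\, \|\nabla(\varphi-\cT_\eps\varphi)\|_{L^\infty}\,\|\rho_\eps\ww\|_{\cX_\eps}\|\ww\|_{\cX_\eps} \,\lesssim\, D_\eps^{3/4}E_\eps^{3/4} \,\lesssim\, \sqrt{E_\eps}\,D_\eps\,,
\]
where the last step uses the coercivity $E_\eps \lesssim \|\ww\|_{\cX_\eps}^2\lesssim D_\eps$ from Propositions~\ref{prop:coercivity}--\ref{prop:Diffusion} to trade $E_\eps^{1/4}$ against $D_\eps^{1/4}$.

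The speed-correction piece $T_3$ is responsible for the $(\sqrt{E_\eps}+E_\eps)D_\eps$ structure. Using $\{\xi_1,\ww\} = \partial_2\ww$, integrating by parts, and exploiting the identity $\jap{\ww,\partial_2\varphi}_{L^2} = 0$ (same argument as in \eqref{eq:idD11}) reduces $T_3$ to
\[
  T_3 \,=\, -\frac{\eps\zeta}{4\pi\delta}\jap{(\partial_2 W_\eps)\ww,\ww}_{L^2} + \frac{\eps\zeta}{2\pi\delta}\jap{\ww,\partial_2\cT_\eps\varphi}_{L^2}\,.
\]
Both brackets are controlled by $D_\eps[\ww]$: the first by $|\partial_2 W_\eps|\lesssim \rho_\eps W_\eps$ and AM--GM $\|\rho_\eps\ww\|_{\cX_\eps}\|\ww\|_{\cX_\eps}\lesssim D_\eps$; the second by Lemma~\ref{lem:TepsXeps}, which provides a factor $\eps^3$ and gives $|\jap{\ww,\partial_2\cT_\eps\varphi}_{L^2}|\lesssim \eps^3\|\ww\|_{\cX_\eps}^2 \lesssim D_\eps$. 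Hence $|T_3| \lesssim \tfrac{\eps|\zeta|}{\delta}D_\eps$. Substituting Lemma~\ref{lem:zeta2} and using $\|\ww\|_{\cX_\eps}^2\lesssim E_\eps$ gives $\tfrac{\eps|\zeta|}{\delta}\lesssim \delta^{-1}\eps^{M+1} + \delta\eps^2 + \eps\sqrt{E_\eps} + \delta\eps^3 E_\eps$. Under $t \le T_\adv\delta^{-\sigma}$ and $M > (3+\sigma)/(1-\sigma)$, the first two terms are $\ll \delta^{s_*}$ (by \eqref{eq:tTadv}, $\delta^{-1}\eps^{M+1}\lesssim \delta^{(M-1)/2 - \sigma(M+1)/2}$ which is bounded by $\delta^{s_*}$ for $s_*$ small enough), while the last two contribute $\sqrt{E_\eps}D_\eps$ and $E_\eps D_\eps$ respectively. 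Summing the estimates for $T_0$, $T_1$, $T_3$ produces \eqref{bd:NL}.

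The main obstacle is the balancing in $T_1$: because $W_\eps$ grows like a Gaussian in the vortex core, its gradient is proportional to $\rho_\eps W_\eps$ rather than $W_\eps$, and one must spend \emph{simultaneously} a fraction of the $H^1$-type dissipation (to pointwise control the quadratic velocity $\nabla(\varphi-\cT_\eps\varphi)$ via \eqref{bd:velocityLinfty}) and a fraction of the weighted $\rho_\eps$-moment in the dissipation. This is precisely what produces the exponent $D_\eps^{3/4}$, which in turn dictates the $\sqrt{E_\eps}D_\eps$ form appearing on the right-hand side of \eqref{bd:NL}.
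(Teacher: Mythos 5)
Your proof is correct and follows essentially the same route as the paper: the same decomposition, the cancellation of $\jap{\{\varphi-\cT_\eps\varphi,\ww\},\varphi-\cT_\eps\varphi}_{L^2}$, the rewriting of $T_1$ via \eqref{eq:basic} combined with $|\nabla W_\eps|\lesssim\rho_\eps W_\eps$ and \eqref{bd:velocityLinfty}, and Lemma~\ref{lem:zeta2} for the speed correction, all closed by Propositions~\ref{prop:coercivity} and \ref{prop:Diffusion}. Your only deviation is in $T_3$, where you integrate by parts and invoke the (correct, though proved as in Lemma~\ref{lem:momZ2} rather than as in \eqref{eq:idD11}) identity $\jap{\ww,\partial_2\varphi}_{L^2}=0$; the paper reaches the same bound $\tfrac{\eps|\zeta|}{\delta}\,D_\eps[\ww]$ more directly by Cauchy--Schwarz, pairing $\partial_2\ww$ against $W_\eps\ww+\varphi-\cT_\eps\varphi$.
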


\begin{proof}
First of all, applying Cauchy-Schwarz's inequality, we find
\[
  \bigl|\langle \ww,\partial_1(\cT_\eps \varphi)\rangle_{L^2}\bigr| \,\le\,
  \|W_\eps^{-\frac12}\nabla \cT_\eps \varphi\|_{L^2}\|\ww\|_{\cX_\eps}\,.
\]
To estimate the right-hand side, we consider separately the regions $\mrI_\eps$ and
$\mrI_\eps^c$. Applying H\"older's inequality with $1/q+1/p=1/2$ and $q>2$, and
using Lemma~\ref{lem:velocityLq} and Corollary~\ref{cor:TepsXeps} to bound the stream
function $\cT_\eps\varphi$, we obtain
\begin{align*}
  \|\1_{\rm I_\eps}W_\eps^{-\frac12}\nabla \cT_\eps \varphi\|_{L^2} \,&\lesssim\,
  \|\1_{\rm I_\eps}(1+|\cdot|) W_\eps^{-\frac12}\|_{L^p}\|\1_{\rm I_\eps}(1+|\cdot|)^{-1}
  \nabla \cT_\eps \varphi\|_{L^q} \,\lesssim\, \eps^3\|\ww\|_{\cX_\eps}\,,\\
  \|\1_{\rm I_\eps^c}W_\eps^{-\frac12}\nabla \cT_\eps \varphi\|_{L^2} \,&\lesssim\,
  \|\1_{\1_{\rm I_\eps^c}}W_\eps^{-\frac12}\|_{L^p}\| \nabla  \varphi\|_{L^q}
  \lesssim \exp(-c_*\eps^{-2\sigma_1})\|\ww\|_{\cX_\eps}\,,
\end{align*}
for some $c_*>0$ sufficiently small. Altogether this gives
\begin{align}\label{bd:NL1}
  \frac{1}{4\eps}\,\bigl|\langle \ww,\partial_1(\cT_\eps \varphi)\rangle_{L^2}\bigr|
  \,\lesssim\, \eps^2\|\ww\|_{\cX_\eps}^2 \,\lesssim\, \eps^2D_{\eps}[\ww]\,,
\end{align}
where in the last step we used the lower bound \eqref{bd:Deps} in Proposition \ref{prop:Diffusion}. 

Next, in view of the Poisson bracket identities \eqref{eq:basic}, we have
\[
  \jap{\left\{\varphi-\cT_\eps \varphi,\ww\right\},W_\eps \ww +\varphi-\cT_\eps\varphi}_{L^2}
  \,=\, -\frac12\jap{\left\{\varphi-\cT_\eps \varphi,W_\eps\right\}\ww, \ww }_{L^2}\,.
\]
Here we use the bound \eqref{bd:velocityLinfty} in Lemma~\ref{lem:velocityLq} to control
the streamfunction, and we make the following observations concerning $\nabla W_\eps$. 
In region $\mathrm{I}_\eps$, thanks to Proposition \ref{propIW}, we can approximate
$\nabla W_\eps$ by $\nabla W_0$, which gives $|\nabla W_\eps(\xi)| \lesssim 
|\nabla W_0(\xi)| + W_0(\xi) \lesssim (1+\rho_\eps(\xi)) W_\eps(\xi)$, for 
$|\xi|\leq 2\eps^{-\sigma_1}$. In region $\mathrm{II}_\eps$ we have $\nabla W_\eps=0$ 
whereas in region $\mathrm{III}_\eps$ we know that $|\nabla W_\eps|\lesssim 
|\xi|^{\gamma-1} \rho_\eps W_\eps$. Overall, we obtain 
\begin{equation}\label{bd:NL2}
\begin{split}
  &\bigl|\jap{\left\{\varphi-\cT_\eps \varphi,W_\eps\right\}\ww,\ww }_{L^2}\bigr|
  \,\lesssim\, \|\nabla \varphi\|_{L^\infty}\bigl(\|\rho_\eps \ww\|_{\cX_\eps}
  + \|\ww\|_{\cX_\eps}\bigr)\|\ww\|_{\cX_\eps}\\
  &\qquad \,\lesssim\, \bigl(\|\nabla\ww\|_{\cX_\eps}^\frac12+\|\ww\|^\frac12_{\cX_\eps}\bigr)
  \bigl(\|\rho_\eps \ww\|_{\cX_\eps} + \|\ww\|_{\cX_\eps}\bigr)\|\ww\|^\frac32_{\cX_\eps}
  \,\lesssim\, \sqrt{E_\eps[\ww]}D_{\eps}[\ww]\,,
\end{split}
\end{equation}
where in the last inequality we used Proposition \ref{prop:Diffusion}.

Finally, for the remaining term in $\mathrm{NL}$, we use Lemma~\ref{lem:withmom} again
together with the estimate on $\zeta$ in Lemma~\ref{lem:zeta2}. We thus obtain
\begin{align}
  \notag \frac{\eps |\zeta|}{2\pi \delta}\, \bigl|\langle &\{\xi_1,\ww\},W_\eps \ww
  +\varphi-\cT_\eps\varphi\rangle_{L^2}\bigr| \,\lesssim\, \frac{\eps}{\delta}\,|\zeta|\,
  \|\nabla\ww\|_{\cX_\eps}\|\ww\|_{\cX_\eps}\\
  \label{bd:NL3}&\lesssim \bigl(\delta^{-1}\eps^{M+1}+\delta\eps^2+\eps \|\ww\|_{\cX_\eps}
  +\delta \eps^3\|\ww\|_{\cX_\eps}^2\bigr)\|\nabla\ww\|_{\cX_\eps}\|\ww\|_{\cX_\eps}\\
  \notag &\lesssim \bigl(\delta^{-1}\eps^{M+1}+\delta\eps^2+\eps \sqrt{E_{\eps}[\ww]}
  +\delta \eps^3E_{\eps}[\ww]\bigr)D_{\eps}[\ww]\,.
\end{align}
Under the assumptions of Proposition~\ref{prop:key}, we have $\delta^{-1}\eps^{M+1}
+\delta\eps^2 \le \delta^{s_*}$ for some $s_* > 0$, see Remark~\ref{rem:scales}. 
Therefore, combining the the bounds \eqref{bd:NL1}, \eqref{bd:NL2} and \eqref{bd:NL3},
we see that $\mathrm{NL}$ defined by \eqref{errNL} satisfies the estimate
\eqref{bd:NL} for some constant $C_* > 0$.
\end{proof}

\subsection{Conclusion of the proof}\label{sec:continuity}

We are now in position to conclude the proof Proposition~\ref{prop:key}, hence also 
of Theorem~\ref{th:mainNL}. Under the assumptions of Proposition~\ref{prop:key}, we
consider the solution $\ww$ of \eqref{linear0}--\eqref{eq:zeromoments} with initial data
$\ww|_{t=0}=0$. Thanks to Proposition~\ref{prop:coercivity}, we can use the
energy $E_\eps[\ww]$ defined in \eqref{def:Eeps} to control the size of $\ww$ in
the function space $\cX_\eps$. The energy evolves in time according to \eqref{EnergyIdentity},
where the quantities $\mathrm{A}$, $\mathrm{F}$, $\mathrm{NL}$ in the right-hand side 
satisfy the estimates \eqref{bd:Aprop}, \eqref{bd:Fprop}, \eqref{bd:NLprop} for
some constants $s_* > 0$, $\kappa_* > 0$, and $C_* \ge 1$. Without loss of
generality, we assume that $\kappa_* \le 1/8$  and we take $\delta > 0$ small enough
so that $\delta^{s_*} \le 1/16$. 

As long as the energy satisfies $E_\eps[\ww] \le \cE_0 := \min\bigl\{1,(16C_*)^{-2}\bigr\}$,
we have
\begin{align*}
  t\partial_t E_\eps[\ww] + D_\eps[\ww] \,&\le\, 2\delta^{s_*} D_{\eps}[\ww] +
  2\kappa_* D_{\eps}[\ww] + 2 C_* \sqrt{E_{\eps}[\ww]} D_\eps[w] + C_*\bigl(\delta^{-4}
  \eps^{2(M+1)}+\eps^4\bigr)\\
  \,&\le\, \frac18\,D_{\eps}[\ww] + \frac14\,D_{\eps}[\ww] + \frac18\,D_{\eps}[\ww] + 
  C_*\bigl(\delta^{-4}\eps^{2(M+1)}+\eps^4\bigr)\,,
\end{align*}
so that $t\partial_t E_\eps[\ww] + D_\eps[\ww]/2 \le C_*\bigl(\delta^{-4}\eps^{2(M+1)}+\eps^4\bigr)$.  
Recalling that $\eps = \eps(t) = \sqrt{\nu t}/d$, we can integrating this differential 
inequality on the time interval $(0,t)$ to obtain the bound
\[
  E_{\eps}[\ww(t)] + \frac12\int_0^t\frac{D_{\eps}[\ww(\tau)]}{\tau}\,\dd\tau
  \,\le\, \frac{C_*}{M+1}\,\delta^{-4}\eps^{2(M+1)} + \frac{C_*}{2}\,\eps^4
  \,\le\, C_*\bigl(\delta^{-4}\eps^{2(M+1)}+\eps^4\bigr)\,.
\]
According to \eqref{eq:tTadv}, the right-hand side is smaller than a fractional
power of $\delta$ as long as $t \in (0,T_\adv\delta^{-\sigma})$, so we can make it
smaller than the fixed number $\cE_0$ by taking $\delta > 0$ small enough.
In that case, the argument above holds for all $t \in (0,T_\adv\delta^{-\sigma})$,
which concludes the proof of \eqref{bd:key}.

With Proposition~\ref{prop:key} at hand, it is straightforward to conclude the proof
of Theorem~\ref{th:mainNL}. Indeed, as already noticed, the estimate \eqref{bd:wmain}
follows directly from \eqref{bd:key} in view of \eqref{bd:coercivity}, and the formula
\eqref{eq:ZpdefNL} for the vertical speed is a consequence of the decomposition
\eqref{def:perZ2}, the expression \eqref{zetaapp} of $\zeta_\app$, and the bound on 
the correction $\zeta$ in Lemma~\ref{lem:zeta2}.\qed

%%%%%%%%%%%%%%%%%%%%%%%%%%%%%%%%%%%%%%%%%%%%%%%%%%%%%%%%%%%%%%%%%%%%%%%%%%%%%%

\appendix

\section{}\label{sec:app}

\subsection{Homogeneous polynomials}\label{ssecA1}

For any integer $n \in \NN$ we denote by $Q_n^c(x)$ and $Q_n^s(x)$ the
$n$-homogeneous polynomials on $\RR^2$ defined by
\begin{equation}\label{def:Qncs}
  Q_n^c(x) \,=\, \Re \bigl(x_1 + i x_2)^n\,, \qquad
  Q_n^s(x) \,=\, \Im \bigl(x_1 + i x_2)^n\,. 
\end{equation}
Note that $Q_n^c(\cos\theta,\sin\theta) = \cos(n\theta)$ and $Q_n^s(\cos\theta,\sin\theta) =
\sin(n\theta)$ by De Moivre's formula. For the first values of $n$ we have
\begin{align*}
  Q_0^c(x) \,&=\, 1\,, \quad Q_1^c(x) \,=\, x_1\,, \quad Q_2^c(x) \,=\, x_1^2 - x_2^2\,,
  \quad Q_3^c(x) \,=\, x_1^3 - 3x_1 x_2^2\,, \\
  Q_0^s(x) \,&=\, 0\,, \quad Q_1^s(x) \,=\, x_2\,, \quad Q_2^s(x) \,=\, 2 x_1 x_2\,,
  \hspace{18pt} Q_3^s(x) \,=\, 3x_1^2 x_2 - x_2^3\,.
\end{align*}
Assume that $x \in \RR^2$ satisfies $|x|^2 = x_1^2 + x_2^2 < 1$. Denoting $z =
x_1 + ix_2$ we find
\begin{equation}\label{eq:Qnlog2}
  \sum_{n=1}^\infty \frac{(-1)^{n-1}}{n}\,Q_n^c(x) \,=\, \Re\,\sum_{n=1}^\infty \frac{(-1)^{n-1}}{n}\,z^n
  \,=\, \Re\,\log(1+z) \,=\, \frac12 \log(1 + 2x_1 + |x|^2)\,,
\end{equation}
which is \eqref{eq:Qnlog}. Since $\partial_1 Q_n^c(x) = n\,Q_{n-1}^c(x)$ and $\partial_2 Q_n^c(x)
= -n\,Q_{n-1}^s(x)$, we can differentiate both sides of \eqref{eq:Qnlog2} to arrive at the
formulas
\begin{equation}\label{eq:Qnid}
  \sum_{n=0}^\infty (-1)^n Q_n^c(x) \,=\, \frac{1+x_1}{1+2x_1+|x|^2}\,, \qquad
  \sum_{n=1}^\infty (-1)^{n-1} Q_n^s(x) \,=\, \frac{x_2}{1+2x_1+|x|^2}\,,
\end{equation}
which were used several times in the previous sections.

\subsection{Inverting the diffusion operator}\label{ssecA2}

This section is devoted to the proof of Lemma~\ref{lem:Linvert}. Given $\kappa > 0$
and $f \in \cY$, we have the Laplace formula
\begin{equation}\label{eq:kcLinv}
  (\kappa - \cL)^{-1}f \,=\, \int_0^\infty \e^{-\kappa\tau} S(\tau) f\,\dd\tau\,,
\end{equation}
where $S(\tau) = \exp(\tau\cL)$ is the analytic semigroup in $\cY$ generated by the
selfadjoint operator $\cL$, see \cite[Appendix~A]{GalWay2002}. It is well known
that
\begin{equation}\label{eq:Srep}
  \bigl(S(\tau)f\bigr)(\xi) \,=\, \frac{1}{4\pi a(\tau)} \int_{\RR^2} \e^{-|\xi-\eta\,\e^{-\tau/2}|^2
  /4a(\tau)} f(\eta)\,\dd\eta\,, \qquad \tau > 0\,, \quad\xi \in \RR^2\,,
\end{equation}
where $a(\tau) = 1-\e^{-\tau}$. Multiplying both sides by $\e^{|\xi|^2/4}$ and 
using the identity
\[
  \frac{|\xi|^2}{4} - \frac{1}{4 a(\tau)}\,\bigl|\xi-\eta\,\e^{-\tau/2}\bigr|^2
  \,=\, \frac{|\eta|^2}{4} -\frac{1}{4 a(\tau)}\,\bigl|\eta-\xi\,\e^{-\tau/2}\bigr|^2\,,
\]
we obtain the equivalent formula
\[
  G(\xi,\tau) \,:=\, \e^{|\xi|^2/4} \bigl(S(\tau)f\bigr)(\xi) \,=\, \frac{1}{4\pi a(\tau)}
  \int_{\RR^2}\e^{-|\eta - \xi\,\e^{-\tau/2}|^2/4a(\tau)} g(\eta)\,\dd\eta\,,
\]
where $g(\eta) = \e^{|\eta|^2/4}f(\eta)$. 

We assume henceforth that $f \in \cZ$, hence $g \in \cS_*(\RR^2)$. In particular
$|g(\eta)| \le C(1+|\eta|)^N$ for some $C > 0$ and some $N \in \NN$, so that
\[
  |G(\xi,\tau)| \,\le\, \frac{C}{a(\tau)} \int_{\RR^2}(1+|\eta|)^N \e^{-|\eta - \xi\,\e^{-\tau/2}|^2/
  4a(\tau)}\,\dd\eta \,\le\, C(1+|\xi|)^N\,.
\]
Returning to \eqref{eq:kcLinv} we thus obtain
\begin{equation}\label{eq:Sest1}
  \e^{|\xi|^2/4}\bigl|\bigl((\kappa - \cL)^{-1}f\bigr)(\xi)\bigr| \,\le\, \int_0^\infty
  \e^{-\kappa\tau}|G(\xi,\tau)|\,\dd\tau \,\le\, C\kappa^{-1}(1+|\xi|)^N\,.
\end{equation}

The derivative $\partial^\alpha (\kappa - \cL)^{-1}f$ can be estimated in the same way,
for any multi-index $\alpha \in \NN^2$. If $0 < \tau < 1$ we differentiate \eqref{eq:Srep}
and integrate by parts to obtain
\[
  \partial_\xi^\alpha\bigl(S(\tau)f\bigr)(\xi) \,=\, \frac{\e^{|\alpha|\tau/2}}{4\pi a(\tau)}
  \int_{\RR^2} \e^{-|\xi-\eta\,\e^{-\tau/2}|^2/4a(\tau)} \partial_\eta^\alpha f(\eta)\,\dd\eta\,,
  \qquad 0 < \tau < 1\,.
\]
Since $f \in \cZ$ we have $\e^{|\eta|^2/4}|\partial_\eta^\alpha f(\eta)| \le C(1+|\eta|)^{N'}$ for
some $N' \in \NN$ (depending on $\alpha$), and proceeding as before we deduce that 
$\e^{|\xi|^2/4}|\partial_\xi^\alpha\bigl(S(\tau)f\bigr)(\xi)| \le C(1+|\xi|)^{N'}$. If $\tau \ge 1$,
we observe that $1 - \e^{-1} \le a(\tau) \le 1$ and we differentiate \eqref{eq:Srep}
to obtain
\[
  \bigl|\partial_\xi^\alpha\bigl(S(\tau)f\bigr)(\xi)\bigr| \,\le\, C\int_{\RR^2}
  \bigl(1 + |\xi-\eta\,\e^{-\tau/2}|\bigr)^{|\alpha|}\,\e^{-|\xi-\eta\,\e^{-\tau/2}|^2/4a(\tau)}
  |f(\eta)|\,\dd\eta\,, 
\]
which gives $\e^{|\xi|^2/4}|\partial_\xi^\alpha\bigl(S(\tau)f\bigr)(\xi)| \le
C(1+|\xi|)^{N+|\alpha|}$. Thus using \eqref{eq:kcLinv} we find, as in \eqref{eq:Sest1},
\begin{equation}\label{eq:Sest2}
  \e^{|\xi|^2/4}\bigl|\partial^\alpha\bigl((\kappa - \cL)^{-1}f\bigr)(\xi)\bigr| \,\le\,
  \int_0^\infty \e^{-\kappa\tau} \e^{|\xi|^2/4}\bigl|\partial_\xi^\alpha \bigl(S(\tau)f\bigr)(\xi)\bigr| 
  \,\dd\tau \,\le\, C\kappa^{-1}(1+|\xi|)^{N''}\,,
\end{equation}
for some $N'' \in \NN$. This shows that $(\kappa - \cL)^{-1}f \in \cZ$. \qed

\subsection{Inverting the advection operator}\label{ssecA3}

In this section, for the reader's convenience, we recall some known results
about the (partial) inverse of the operator $\Lambda$ introduced in
\eqref{def:Lambda}. In particular, we prove the second half of
Proposition~\ref{prop:Lambda}. We work in the function space $\cY$ defined by
\eqref{def:cY}, and we recall that $\Lambda$ leaves invariant the direct sum
decomposition \eqref{Ydecomp}, so that it is sufficient to consider the
restriction of $\Lambda$ to each subspace $\cY_n$. To do that, we use polar
coordinates $\xi = (r\cos\theta,r\sin\theta)$ in $\RR^2$, and we define the
radially symmetric functions
\begin{equation}\label{def:v0gh}
  v_0(r) \,=\, \frac{1}{2\pi r^2}\bigl(1-\e^{-r^2/4}\bigr)\,,\qquad
  g(r) \,=\, \frac{1}{8\pi}\,\e^{-r^2/4}\,, \qquad h(r) \,=\,
  \frac{g(r)}{v_0(r)} \,=\, \frac{r^2/4}{\e^{r^2/4}-1}\,.
\end{equation}
We observe that $\partial_r\Omega_0 = -rg$ and $\partial_r\Psi_0 = rv_0$,
where $\Omega_0,\Psi_0$ are given by \eqref{def:G}, \eqref{def:UG}. 

Since $\Lambda$ vanishes on the subspace $\cY_0$ of radially symmetric
functions, we assume henceforth that $n \ge 1$. If $\Omega \in \cY_n$
takes the form $\Omega = w(r)\cos(n\theta)$ for some function $w : \RR_+
\to \RR$, the associated stream function is $\Psi = \varphi(r)\cos(n\theta)$,
where $\varphi$ denotes the unique solution of the ordinary differential equation
\begin{equation}\label{eq:varphidef}
  \varphi'' (r) + \frac{1}{r}\,\varphi'(r) - \frac{n^2}{r^2}\varphi(r)
  \,=\, w(r)\,, \qquad r > 0\,,
\end{equation}
satisfying the boundary conditions $\varphi(r) = \cO(r^n)$ as $r \to 0$ and
$\varphi(r) = \cO(r^{-n})$ as $r \to +\infty$. A direct calculation shows that
\begin{equation}\label{eq:Lamn}
  \Lambda \Omega \,=\, \bigl\{\Psi_0\,,\,\Omega\bigr\} + \bigl\{\Psi\,,\,\Omega_0\}
  \,=\, -n\bigl(v_0 w + \varphi g\bigr)\sin(n\theta)\,.
\end{equation}
Similarly, if $\Omega = w(r)\sin(n\theta)$, then $\Psi = \varphi(r)\sin(n\theta)$
and $\Lambda \Omega = n\bigl(v_0 w + \varphi g\bigr)\cos(n\theta)$.

Suppose now that $f = b(r)\sin(n\theta) \in \cY_n$. If the inhomogeneous
differential equation
\begin{equation}\label{eq:varphidef2}
  -\varphi'' (r) - \frac{1}{r}\,\varphi'(r) + \Bigl(\frac{n^2}{r^2} - h(r)\Bigr)
  \varphi(r) \,=\, \frac{b(r)}{nv_0(r)}\,, \qquad r > 0\,,
\end{equation}
has a solution satisfying the boundary conditions, we can define
$\Omega = w(r)\cos(n\theta)$ with 
\begin{equation}\label{def:wsol}
  w(r) \,=\, -\varphi(r)h(r) - \frac{b(r)}{nv_0(r)}\,.
\end{equation}
Then \eqref{eq:varphidef} is satisfied, and it follows from \eqref{eq:Lamn} that
$\Lambda\Omega = f$. The same conclusion holds if $f = b(r)\cos(n\theta)$ and
$\Omega = -w(r)\sin(n\theta)$. 

So the invertibility of the operator $\Lambda$ in the subspace $\cY_n$ is
reduced to the solvability of the ODE \eqref{eq:varphidef2}. If $n \ge 2$, the
coefficient $n^2/r^2 - h(r)$ is positive, which ensures that equation
\eqref{eq:varphidef2} always has a unique solution satisfying the boundary
conditions. This leads to the following statement, where $\cZ$ denotes the
function space introduced in \eqref{def:calZ}.

\begin{lemma}\label{lem:Lambda} {\bf \cite{Gallay2011}}
If $n\ge 2$ and $f\in \cY_n\cap \cZ$, there exists a unique $\Omega\in \cY_n
\cap \cZ$ such that $\Lambda\Omega =f$. Moreover, if $f=b(r)\sin(n\theta)$
(respectively, $f = b(r)\cos(n\theta)$) then $\Omega = w(r)\cos(n\theta)$
(respectively, $\Omega = -w(r)\sin(n\theta)$) where $w$ is defined
by \eqref{def:wsol} with $\varphi$ given by \eqref{eq:varphidef2}. 
\end{lemma}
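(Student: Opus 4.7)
The plan is to reduce the equation $\Lambda \Omega = f$ within the invariant subspace $\cY_n \cap \cZ$ (for $n \ge 2$) to a one-dimensional Sturm--Liouville two-point boundary value problem in the radial variable, to establish unique solvability using a coercivity/maximum-principle argument, and then to upgrade the solution from $\cY_n$ to $\cZ$ by asymptotic analysis of the ODE.

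First I would use that $\Lambda$ commutes with rotations to treat the two angular modes $b(r)\sin(n\theta)$ and $b(r)\cos(n\theta)$ separately; a rotation $\theta \mapsto \theta + \pi/(2n)$ reduces the second to the first, so it suffices to handle $f = b(r)\sin(n\theta)$. Seeking $\Omega = w(r)\cos(n\theta)$ with $\Psi = \varphi(r)\cos(n\theta)$, the identity \eqref{eq:Lamn} shows that $\Lambda\Omega = f$ is equivalent to the algebraic identity $v_0 w + g\varphi = -b/n$, which yields exactly formula \eqref{def:wsol}; substituting this back into the radial Poisson equation \eqref{eq:varphidef} produces the ODE \eqref{eq:varphidef2} for $\varphi$, with the boundary conditions $\varphi(r) = \mathcal{O}(r^n)$ at $r = 0$ and $\varphi(r) = \mathcal{O}(r^{-n})$ at infinity inherited from the Biot--Savart representation $\Psi = \Delta^{-1}\Omega$.

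Second I would prove unique solvability of \eqref{eq:varphidef2} by first checking the key positivity $n^2/r^2 - h(r) > 0$ for all $r > 0$ and $n \ge 2$: with $s = r^2/4$ this is $s^2/n^2 < e^s - 1$, which even for $n = 2$ follows from the trivial Taylor bound $e^s - 1 - s^2/4 \ge s + s^3/6 > 0$. This strict positivity makes the Sturm--Liouville operator on the left-hand side of \eqref{eq:varphidef2} coercive in the natural weighted $H^1$ space of radial functions with the prescribed boundary decay, and existence and uniqueness then follow from Lax--Milgram (equivalently, from the maximum principle on $(0,+\infty)$). This in turn gives the uniqueness of $\Omega \in \cY_n \cap \cZ$ asserted in the lemma, since any solution of $\Lambda \Omega = 0$ in $\cY_n \cap \cZ$ produces a $\varphi$ solving the homogeneous ODE with the same boundary behavior, hence $\varphi \equiv 0$ and therefore $w \equiv 0$.

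Third I would show that the solution $\varphi$ belongs to $\cZ$, so that the profile $w = -\varphi h - b/(nv_0)$ and the full vorticity $\Omega = w(r)\cos(n\theta)$ do too. Since $f \in \cZ$, the radial profile $b$ and all its derivatives are $\mathcal{O}(e^{-r^2/4}\,\mathrm{poly}(r))$, and since $v_0(r)^{-1} = \mathcal{O}(r^2)$ at infinity while $v_0(0) = 1/(8\pi)$, the inhomogeneity $b/(nv_0)$ lies in $\cZ$. At infinity the leading part of \eqref{eq:varphidef2} is the Euler operator $-\partial_r^2 - r^{-1}\partial_r + n^2/r^2$, whose two independent solutions are $r^{\pm n}$; the perturbation $-h\varphi$ is Gaussian-small, and a variation-of-constants/bootstrap argument driven by the Gaussian source then yields $e^{r^2/4}\varphi(r) = \mathcal{O}(\mathrm{poly}(r))$, with analogous bounds on every derivative obtained by differentiating the ODE. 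Near the origin the $n^2/r^2$ potential forces the $r^n$ behavior and standard elliptic regularity gives smoothness. Consequently $\varphi \in \cZ$, hence $w \in \cZ$ and $\Omega \in \cY_n \cap \cZ$.

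The hard part, I expect, will be the Gaussian-decay estimate at infinity: the decaying branch $r^{-n}$ of the limiting Euler operator has only polynomial decay, so it is not automatic that the solution selected by the boundary condition actually inherits the Gaussian decay of the source. Making this rigorous requires a careful choice of weighted space for the bootstrap, using that both the potential correction $h$ and the forcing are of order $e^{-r^2/4}$, so that the ansatz $\varphi(r) = e^{-r^2/4}\,\mathrm{poly}(r) + (\text{small correction})$ is self-consistent and can be propagated to all derivatives.
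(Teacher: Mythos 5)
Your plan follows the same route as the paper (and the reference \cite{Gallay2011} to which it defers the details): reduce to a single angular mode, convert $\Lambda\Omega=f$ into the radial Sturm--Liouville problem \eqref{eq:varphidef2} via \eqref{eq:Lamn} and \eqref{def:wsol}, use the positivity of $n^2/r^2-h(r)$ for $n\ge 2$ to obtain unique solvability, and then run an ODE asymptotic/bootstrap argument at $r\to 0$ and $r\to\infty$ to place the solution in $\cZ$. One small slip: with $s=r^2/4$ the positivity condition is $e^s-1>4s^2/n^2$ (not $s^2/n^2$), so for $n=2$ you must show $e^s-1>s^2$; this still follows from your Taylor bound $e^s-1\ge s+s^2/4+s^3/6$ because $s+\tfrac{s^3}{6}-\tfrac34 s^2>0$ for all $s>0$ (negative discriminant), but the inequality as you stated it is weaker than what is needed by a factor of $4$.
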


If $n = 1$, the homogeneous ODE \eqref{eq:varphidef2} with $b = 0$ has a
nontrivial solution $\varphi = rv_0$ satisfying the boundary conditions.
As a consequence, the inhomogeneous equation can be solved only if
the source term satisfies $\int_0^\infty b(r)r^2\,\dd r = 0$, and the solution
is not unique. This solvability condition ensures that the function
$f=b(r)\sin\theta$ (or $f=b(r)\cos\theta$) belongs to the subspace $\cY_1'$
defined by
\begin{equation}\label{eq:Y1Kerperp}
  \cY_1' \,=\, \cY_1\cap \Ker(\Lambda)^\perp \,=\, \Bigl\{f\in \cY_1 \, : \,
  \int_{\RR^2}\xi_1f(\xi)\dd\xi = \int_{\RR^2}\xi_2f(\xi)\dd\xi = 0\Bigr\}\,,
\end{equation}
see also Remark~\ref{rem:Lambda}. This leads to the following
result, which complements Lemma~\ref{lem:Lambda}. 

\begin{lemma}\label{lem:Lambda2} {\bf \cite{GaSring}}
If $n = 1$ and $f\in \cY_1'\cap \cZ$, there exists a unique $\Omega\in \cY_1'
\cap \cZ$ such that $\Lambda\Omega =f$. Moreover, if $f=b(r)\sin\theta$
(respectively, $f = b(r)\cos\theta$) then $\Omega = w(r)\cos\theta$
(respectively, $\Omega = -w(r)\sin\theta$) where $w$ is defined
by \eqref{def:wsol} with $\varphi$ given by \eqref{eq:varphidef2}. 
\end{lemma}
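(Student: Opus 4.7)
The plan is to adapt the argument used in Lemma~\ref{lem:Lambda} to the case $n=1$, where the key novelty is the presence of a nontrivial solution of the homogeneous radial ODE \eqref{eq:varphidef2} that satisfies both boundary conditions, namely $\varphi_1(r) := rv_0(r)$. Via \eqref{eq:Lamn} and \eqref{def:wsol}, this solution is responsible for the two-dimensional piece of $\Ker(\Lambda)$ within $\cY_1$, spanned by $\partial_1 G$ and $\partial_2 G$, in agreement with \eqref{kerLam}. Consequently $\Lambda|_{\cY_1}$ is not injective, and for $\Lambda\Omega = f$ to be solvable one must assume $f \in \cY_1' = \cY_1 \cap \Ker(\Lambda)^\perp$.

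First, I would verify that $f \in \cY_1'$ is precisely the Fredholm solvability condition for \eqref{eq:varphidef2}. Writing $f = b(r)\sin\theta$ (the $\cos\theta$ case is analogous), the operator $L_1 := -\partial_r^2 - r^{-1}\partial_r + r^{-2} - h$ is formally self-adjoint with respect to the weight $r\,\dd r$, and the existence of a solution $\varphi$ with the prescribed decay at $0$ and $\infty$ requires orthogonality of the source $b/v_0$ to $\varphi_1$. A direct calculation gives $\int_0^\infty \varphi_1(r)\,b(r)/v_0(r)\,r\,\dd r = \int_0^\infty b(r)\,r^2\,\dd r$, which is (up to the constant $\pi$) exactly $\mrm_2[f]$, while $\mrm_1[f]$ vanishes trivially by oddness in $\xi_1$. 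Hence $f \in \cY_1'$ coincides with the Fredholm condition. Variation of parameters based on $\varphi_1$ and on a second linearly independent solution $\varphi_2(r) = \varphi_1(r)\int^r \bigl(\rho\,\varphi_1(\rho)^2\bigr)^{-1}\,\dd\rho$ then produces a particular solution, and the orthogonality ensures that the integration limits can be chosen so that both boundary conditions $\varphi = \cO(r)$ at $0$ and $\varphi = \cO(r^{-1})$ at infinity are simultaneously satisfied.

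Second, I would address uniqueness within $\cY_1' \cap \cZ$ and the regularity claim $\Omega \in \cZ$. Any two solutions of the boundary-value problem differ by a multiple $c\,\varphi_1$, which via \eqref{def:wsol} translates into adding a scalar multiple of $\partial_1 G$ (in the $\sin\theta$ case, because $rv_0\cos\theta = \partial_1\Psi_0$) or of $\partial_2 G$ (in the $\cos\theta$ case) to $\Omega$; since $\mrm_1[\partial_1 G] = -1$ and $\mrm_2[\partial_2 G] = -1$, imposing the corresponding moment condition from $\cY_1'$ pins down $c$ uniquely. For the regularity, I would use that $f \in \cZ$ gives $b(r) = e^{-r^2/4}\tilde b(r)$ with $\tilde b$ smooth and polynomially bounded, combined with the explicit asymptotics of $v_0$ and $h$ given in \eqref{def:v0gh}, to show that the source $b/v_0$ and the coefficient $h$ in \eqref{eq:varphidef2} are smooth with rapid Gaussian decay. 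Standard ODE estimates on $\varphi$ and its derivatives, together with the formula $w = -\varphi h - b/v_0$, then yield the polynomial-times-Gaussian pointwise bounds on $w$ required by Definition~\ref{def:topo}.

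The main obstacle will be controlling the cancellations in the variation-of-parameters formula uniformly for all derivatives. The second homogeneous solution $\varphi_2$ is logarithmically singular at the origin and grows unboundedly at infinity, so the Fredholm condition must not only restore the correct boundary behavior of $\varphi$ but also transfer to pointwise polynomial bounds for every $\partial^\alpha\bigl(e^{|\xi|^2/4}\Omega\bigr)$. This requires a careful integration-by-parts analysis, which has already been performed in \cite{GaSring} and can be invoked directly, or reproduced with elementary but somewhat lengthy ODE computations parallel to those used for $n \ge 2$ in Lemma~\ref{lem:Lambda}.
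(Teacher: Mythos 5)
Your proposal is correct and follows the same route the paper takes in Section~\ref{ssecA3}: reduce $\Lambda\Omega=f$ on $\cY_1$ to the radial ODE \eqref{eq:varphidef2}, observe that $\varphi_1=rv_0$ is a homogeneous solution satisfying both boundary conditions, identify the Fredholm condition $\int_0^\infty b(r)r^2\,\dd r=0$ with membership in $\cY_1'$, and remove the kernel ambiguity (a multiple of $\partial_1 G$ or $\partial_2 G$) via the moment conditions, the remaining ODE and decay estimates being precisely what the paper delegates to \cite{GaSring}. One cosmetic slip: for $n=1$ the second homogeneous solution behaves like $r^{-1}$ (not logarithmically) at the origin, but this does not affect your argument, since all that matters is that it violates the boundary condition there.
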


\begin{remark}\label{rem:sym}
In the lemmas above, the assumption that $f \in \cY_n$ (respectively,
$f \in \cY_1'$) implies that $f \in \Ker(\Lambda)^\perp$, but does not
ensure that $f \in \Ran(\Lambda)$ because $f(r)$ may not decay to
zero sufficiently rapidly as $r \to +\infty$. This problem disappears
if we assume in addition that $f \in \cZ$, in which case $f \in \Ran(\Lambda)$
and the unique preimage $\Omega = \Lambda^{-1}f$ in $\cY_n$ (respectively,
in $\cY_1'$) still belongs to $\cZ$. Note also that, if $f$ is an
odd (respectively, even) function of $\xi_2$, then $\Omega$ is
an even (respectively, odd) function of $\xi_2$. 
\end{remark}

\subsection{Estimates for the stream function}\label{ssecA4}

In this final section we give a short proof of
Lemmas~\ref{lem:velocityLq}--\ref{lem:withmom} and of Corollary~\ref{cor:TepsXeps}. 

\begin{proof}[Proof of Lemma~\ref{lem:velocityLq}]
In view of \eqref{UPsirel} we have, for all $\xi \in \RR^2$, 
\begin{equation}\label{bd:phinabphi}
  |\varphi(\xi)| \,\lesssim\, \int_{\RR^2} \bigl|\log|\xi-\eta|\bigr|\,|\ww(\eta)|\,\dd\eta\,, \qquad
  |\nabla\varphi(\xi)| \,\lesssim\, \int_{\RR^2} \frac{1}{|\xi-\eta|}\,|\ww(\eta)|
  \,\dd\eta\,.
\end{equation}
To prove the logarithmic upper bound on $|\varphi|$, we use the first inequality in
\eqref{bd:phinabphi} together with the crude estimates 
\[
  \bigl|\log|\xi-\eta|\bigr| \,\lesssim\, \begin{cases} |\xi-\eta|^{-1/2} & 
  \hbox{ if } \,|\xi-\eta| \le 1\,,\\ \log(2+|\xi|)\log(2+|\eta|) & 
  \hbox{ if } \,|\xi-\eta| \ge 1\,.\end{cases}
\]
We thus find
\[
  |\varphi(\xi)| \,\lesssim\, \int_{\{|\xi-\eta| \le 1\}}\frac{1}{|\xi-\eta|^{1/2}}|w(\eta)|\,\dd \eta
  \,+\, \log(2+|\xi|)\int_{\{|\xi-\eta| \ge 1\}}\log(2+|\eta|)|w(\eta)|\,\dd \eta\,.
\]
The first integral is bounded by $C\|\ww\|_{L^2} \le C\|\ww\|_{\cX_\eps}$, and the second
one by $C\|w\|_{\cX_\eps}\log(2+|\xi|)$ in view of \eqref{bd:Weps} and H\"older's inequality. 
Altogether we see that $|\varphi(\xi)| \le C\|w\|_{\cX_\eps}\log(2+|\xi|)$, which implies
in particular that $\|(1+|\cdot|)^{-1}\varphi\|_{L^q} \le C\|w\|_{\cX_\eps}$ for any $q > 2$.

To conclude the proof of \eqref{bd:velocityLq}, we estimate $|\nabla\varphi|$ in
\eqref{bd:phinabphi} using the classical Hardy-Littlewood-Sobolev inequality,
see \cite[Lemma 2.1]{GalWay2002}.  Given $1<p<2<q<\infty$ with $1/p = 1/q+1/2$,
the HLS inequality shows that $\norm{\nabla \varphi}_{L^q} \lesssim \norm{\ww}_{L^p}
\lesssim \norm{\ww}_{\cX_\eps}$, where in the last step we used the fact that
$\cX_\eps\hookrightarrow L^r(\RR^2)$ for any $r \in [1,2]$.

It remains to establish \eqref{bd:velocityLinfty}. Starting from \eqref{bd:phinabphi}
we observe that
\[
  (1+|\xi|)|\nabla\varphi(\xi)| \,\lesssim\, \int_{\RR^2}\Bigl(1 + \frac{1+|\eta|}{|\xi-\eta|}
  \Bigr)\,|w(\eta)|\,\dd \eta \,\lesssim\, \|\ww\|_{\cX_\eps} + \int_{\{|\xi-\eta| \le 1\}}
  \frac{1+|\eta|}{|\xi-\eta|}\,|w(\eta)| \,\dd \eta\,.
\]
The last integrand is equal to $|\xi-\eta|^{-1}\bigl((1+|\eta|)|\ww(\eta)|^{1/2}\bigr)
|\ww(\eta)|^{1/2}$, so we can apply the trilinear H\"older inequality with exponents
$8/5$, $4$, $8$ to obtain
\[
  \int_{\{|\xi-\eta| \le 1\}}\frac{1+|\eta|}{|\xi-\eta|}\,|w(\eta)| \,\dd \eta
  \,\lesssim\, \bigl\|(1+|\cdot|)^2\ww \bigr\|_{L^2}^{1/2}\|\ww\|_{L^4}^{1/2}
  \,\lesssim\, \norm{\ww}_{\cX_\eps}^{1/2} \bigl(\norm{\nabla\ww}_{\cX_\eps}^{1/2} +
  \norm{\ww}_{\cX_\eps}^{1/2}\bigr)\,,
\]
where in the last step we used the embedding $H^1(\RR^2)\hookrightarrow L^4(\RR^2)$.
\end{proof}

\begin{proof}[Proof of Lemma~\ref{lem:withmom}]
In view of Lemma~\ref{lem:velocityLq}, it is sufficient here to estimate $\varphi(\xi)$
and $\nabla\varphi(\xi)$ for $|\xi| \ge 1$. Using \eqref{UPsirel} and the assumptions
on the moments of $\ww$, we find
\[
  \varphi(\xi) \,=\, \frac{1}{4\pi} \int_{\RR^2}\biggl\{\log\Bigl(1 - \frac{2\xi\cdot\eta}{
  |\xi|^2} + \frac{|\eta|^2}{|\xi|^2}\Bigr) + \frac{2\xi\cdot\eta}{|\xi|^2}\biggr\}
  w(\eta)\dd\eta \,=:\, \varphi_1(\xi) + \varphi_2(\xi)\,.
\]
Here the decomposition $\varphi = \varphi_1 + \varphi_2$ is obtained by splitting
the integration domain $\RR^2$ according to whether $|\eta| \le |\xi|/4$ (for $\varphi_1)$
or $|\eta| > |\xi|/4$ (for $\varphi_2$). A simple Taylor expansion shows that
\[
  \biggl|\log\Bigl(1 - \frac{2\xi\cdot\eta}{|\xi|^2} + \frac{|\eta|^2}{|\xi|^2}\Bigr)
  + \frac{2\xi\cdot\eta}{|\xi|^2}\biggr| \,\le\, C\,\frac{|\eta|^2}{|\xi|^2}\,,
  \qquad \hbox{when}~\,|\eta| \,\le\, \frac{|\xi|}{4}\,,
\]
and this readily implies that $|\xi|^2|\varphi_1(\xi)| \le C\|\ww\|_{\cX_\eps}$.
On the other hand, we observe that
\[
  \varphi_2(\xi) \,=\, \frac{1}{4\pi}\int_{\{|\eta| > |\xi|/4\}} \frac{1}{|\eta|^N}
  \Bigl(\log|\xi-\eta|^2 - \log|\xi|^2 + \frac{2\xi\cdot\eta}{|\xi|^2}\Bigr)
  |\eta|^N w(\eta) \dd \eta\,,
\]
where the integer $N \in \NN$ is arbitrary. Proceeding as in the proof of
Lemma~\ref{lem:velocityLq} and using the fact that $|\eta|^N w(\eta)$ decays
rapidy as $|\eta| \to \infty$, we deduce that $|\xi|^N|\varphi_2(\xi)| \le
C\|w\|_{\cX_\eps}\log(2+|\xi|)$. Taking $N > 2$, we conclude that 
$(1+|\xi|)^2|\varphi(\xi)| \le C\|w\|_{\cX_\eps}$ as asserted in \eqref{bd:streammom}. 

We proceed similarly to estimate $\nabla\varphi(\xi)$ for $|\xi| \ge 1$. 
Our starting point is the formula
\[
  \nabla\varphi(\xi) \,=\, \frac{1}{2\pi} \int_{\RR^2}\biggl\{(\xi-\eta)
  \Bigl(\frac{1}{|\xi-\eta|^2} - \frac{1}{|\xi|^2}\Bigr)- \frac{2\xi\cdot\eta}{|\xi|^4}\,
  \xi\biggr\}w(\eta)\dd\eta \,=:\, \psi_1(\xi) + \psi_2(\xi)\,, 
\]
where $\psi_1(\xi), \psi_2(\xi)$ correspond to integrating over the regions
$|\eta| \le |\xi|/4$ and $|\eta| > |\xi|/4$, respectively. Again, a
Taylor expansion shows that
\[
  \biggl|(\xi-\eta)\Bigl(\frac{1}{|\xi-\eta|^2} - \frac{1}{|\xi|^2}\Bigr)-
  \frac{2\xi\cdot\eta}{|\xi|^4}\,\xi\biggr| \,\le\, C\,\frac{|\eta|^2}{|\xi|^3}\,,
  \qquad \hbox{when}~\,|\eta| \,\le\, \frac{|\xi|}{4}\,,
\]
so that $|\xi|^3 |\psi_1(\xi)| \le C\|\ww\|_{\cX_\eps}$. On the other hand,
given any $N \in \NN$, we have
\[
  |\psi_2(\xi)| \,\le\, C \int_{\{|\eta| > |\xi|/4\}} \frac{1}{|\eta|^N}
  \Bigl(\frac{1}{|\xi-\eta|} + \frac{|\eta|}{|\xi|^2}\Bigr)|\eta|^N w(\eta) \dd \eta\,.
\]
Proceeding as in the proof of Lemma~\ref{lem:velocityLq}, we deduce that
\[
  \|\,|\cdot|^N \psi_2\|_{L^q} \,\le\, C\|\ww\|_{\cX_\eps}\,, \quad \hbox{and} \quad
  \|\,|\cdot|^N \psi_2\|_{L^\infty} \,\le\, C\bigl(\|\nabla\ww\|_{\cX_\eps}^{1/2} +
  \|\ww\|_{\cX_\eps}^{1/2}\bigr)\|\ww\|_{\cX_\eps}^{1/2}\,.
\]
When $N = 2$, the first estimate also holds for $\psi_1$, hence for $\nabla\varphi =
\psi_1 + \psi_2$, which concludes the proof of \eqref{bd:streammom}. Similarly, the
second estimate also holds for $\psi_1$ when $N = 3$, which implies that $\nabla\varphi$
satisfies \eqref{bd:velocitymom}. 
\end{proof}

\begin{proof}[Proof of Corollary~\ref{cor:TepsXeps}]
Denoting $\eta = \widetilde{\xi} - \eps^{-1}e_1$, we have $\bigl(\cT_\eps\varphi\bigr)(\xi)
= \varphi(\eta)$ by \eqref{def:Teps}. We assume here that $\xi \in \mrI_\eps$, so that
$|\xi| \le 2\eps^{-\sigma_1} \ll \eps^{-1}$ if $\eps$ is sufficiently small. This in
turn implies that $(2\eps)^{-1} \le |\eta| \le 2\eps^{-1}$. In particular, we have
\[
  \bigl|\mathbbm{1}_{\mrI_\eps}(\xi) \bigl(\cT_\eps\varphi)(\xi)\bigr| \,=\,
  \Bigl|\mathbbm{1}_{\mrI_\eps}(\xi)\frac{1}{(1+|\eta|)^2}\,(1+|\eta|)^2\varphi(\eta)
  \Bigr| \,\le\, C\eps^2 \bigl\|(1+|\cdot|)^2\varphi\bigr\|_{L^\infty}
  \,\le\, C\eps^2 \|\ww \|_{\cX_\eps}\,,
\]
where the last step follows from \eqref{bd:streammom}. This proves the first inequality
in \eqref{bd:UTesp}. On the other hand, the proof of Lemma~\ref{lem:withmom} shows
that $\nabla\varphi = \psi_1 + \psi_2$ where
\[
  \bigl\|(1+|\cdot|)^3\psi_1\bigr\|_{L^\infty} + \bigl\|(1+|\cdot|)^3\psi_2\bigr\|_{L^q}
  \,\le\, C\|\ww\|_{\cX_\eps}\,,
\]
for any $q > 2$. As a consequence, we have
\begin{align*}
  \bigl\|\mathbbm{1}_{\mrI_\eps}(\xi)(1+|\xi|)^{-1}\psi_1(\eta)\bigl\|_{L^q} \,&\le\,
  \bigl\|\mathbbm{1}_{\mrI_\eps}(\xi)\frac{1}{(1+|\eta|)^3}\,(1+|\eta|)^3
  \psi_1(\eta)\bigl\|_{L^\infty} \,\le\, C \eps^3\|\ww\|_{\cX_\eps}\,, \\
  \bigl\|\mathbbm{1}_{\mrI_\eps}(\xi)(1+|\xi|)^{-1}\psi_2(\eta)\bigl\|_{L^q} \,&\le\,
  \bigl\|\mathbbm{1}_{\mrI_\eps}(\xi)\frac{1}{(1+|\eta|)^3}\,(1+|\eta|)^3
  \psi_2(\eta)\bigl\|_{L^q} \,\le\, C \eps^3\|\ww\|_{\cX_\eps}\,.
\end{align*}
Since $\nabla_\xi \bigl(\cT_\eps\varphi\bigr)(\xi) = \widetilde{\nabla}_\eta \varphi(\eta)
= \widetilde{\psi_1}(\eta) + \widetilde{\psi_2}(\eta)$, this implies the second
inequality in \eqref{bd:UTesp}.
\end{proof}

\medskip\noindent{\bf Acknowledgements.} The research of MD was supported by the
Swiss State Secretariat for Education, Research and lnnovation (SERI) under
contract number MB22.00034 through the project TENSE and by the Swiss National
Science Foundation (SNF Ambizione grant PZ00P2\_223294). The research of TG is
partially supported by the grant BOURGEONS ANR-23-CE40-0014-01 of the French
National Research Agency.

\bibliographystyle{siam}

\end{document}